\theoremstyle{plain}
\newtheorem{theorem}{Theorem}[section]
\newtheorem{corollary}[theorem]{Corollary}
\newtheorem{lemma}[theorem]{Lemma}
\newtheorem{proposition}[theorem]{Proposition}
\theoremstyle{definition}
\newtheorem{example}[theorem]{Example}
\newtheorem{remark}[theorem]{Remark}
\newcommand*{\transpose}{%
  {\mathpalette\@transpose{}}%
}
\newcommand*{\@transpose}[2]{%
  % #1: math style
  % #2: unused
  \raisebox{\depth}{$\m@th#1\intercal$}%
}
\title{Characteristic polynomials of $\{\pm 1\}$-matrices modulo a power of $2$}
\author{Gary Greaves and Huu An Phan}
\date{}
\begin{document}
\maketitle
\begin{abstract}
    For a fixed integer $e \geqslant 3$ and $n$ large enough, we show that the number of congruence classes modulo $2^e$ of characteristic polynomials of $n \times n$ symmetric $\{\pm 1\}$-matrices with constant diagonal is equal to 
    \[\begin{cases}
        2^{\binom{e-2}{2}}, & \text{ if $n$ is even}; \\
        2^{\binom{e-2}{2}+1}, & \text{ if $n$ is odd},
    \end{cases}\]
    thereby solving a conjecture of Greaves and Yatsyna from 2019.
    We also show that, for $n$ large enough, the number of congruence classes modulo $2^e$ of characteristic polynomials of $n \times n$ skew-symmetric $\{\pm 1\}$-matrices with constant diagonal is equal to 
    \[\begin{cases}
        2^{\lfloor \frac{e-1}{2} \rfloor\lfloor \frac{e-2}{2} \rfloor}, & \text{ if $n$ is even}; \\
        2^{\lfloor \frac{e-2}{2} \rfloor\lfloor \frac{e-3}{2} \rfloor}, & \text{ if $n$ is odd}.
    \end{cases}\]
    We introduce the concept of a \textit{lift graph/tournament}, which serves as our main tool.
    We also introduce the notion of the \emph{walk polynomial} of a graph, which enables us to show the existence of the requisite lift tournaments.
\end{abstract}

\section{Introduction}

A fundamental problem in discrete random matrix theory is to understand the arithmetic and algebraic properties of random $\{\pm1\}$-matrices.  
Tikhomirov~\cite{T20} proved that the probability that a random $\{\pm1\}$-matrix of order $n$ is singular tends to $2^{-n}$ as $n \to \infty$.  
Eberhard~\cite{eberhard2022characteristic} established that the characteristic polynomial of a random $\{\pm1\}$-matrix is irreducible with high probability, using reductions modulo a prime~$p$ as a key ingredient.  
In this paper we consider the corresponding reduction modulo powers of~$2$.

Characteristic polynomials of $\{\pm1\}$-matrices also arise naturally in combinatorial matrix theory, notably in the study of equiangular lines~\cite{GS24,GSY21,GSY23,GY19}.  
Let
\[
\mathsf S_n = \{\,M \in \textsf{Mat}_n(\{\pm1\}) : M^\transpose = M,\; [M]_{i,i} = 1\ \text{for all } i\,\},
\]
so that $\mathsf S_n$ is the set of symmetric $\{\pm1\}$-matrices with all diagonal entries equal to~$1$.  
The determinant of any $\{\pm1\}$-matrix of order~$n$ is divisible by $2^{n-1}$.  
Greaves and Yatsyna~\cite{GY19} strengthened this observation as follows.

\begin{theorem}[Greaves--Yatsyna~{\cite{GY19}}]
\label{thm:symUB}
Let $e \geqslant 3$. 
Then for all $n \in \mathbb N$,
\[
\left |\{\,\det(xI-S) \bmod 2^e : S \in \mathsf S_n\,\}\right |
   \leqslant
   \begin{cases}
     2^{\binom{e-2}{2}}, & \text{if $n$ is even},\\[3pt]
     2^{\binom{e-2}{2}+1}, & \text{if $n$ is odd}.
   \end{cases}
\]
\end{theorem}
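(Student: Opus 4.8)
The plan is to study the coefficients of $\chi_S(x)=\det(xI-S)=x^n+c_1x^{n-1}+\cdots+c_n$ directly. Here $c_k=(-1)^k\sigma_k(S)$, where $\sigma_k(S)$ is the sum of the $\binom{n}{k}$ principal $k\times k$ minors of $S$. Two coefficients are pinned down by $n$ alone: $c_1=-\operatorname{tr}(S)=-n$, and $c_2=\sum_{i<j}(S_{ii}S_{jj}-S_{ij}^2)=0$ since every entry of $S$ squares to $1$. The engine for the rest is a $2$-adic valuation estimate: each $k\times k$ principal submatrix of $S$ is a $\{\pm1\}$-matrix of order $k$, so its determinant is divisible by $2^{k-1}$; hence the $2$-adic valuation of $c_k$ is at least $k-1$, so $c_k\equiv 0\pmod{2^e}$ whenever $k\ge e+1$. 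Consequently $\chi_S(x)\bmod 2^e$ is determined by $c_3,\dots,c_e$, and $c_k$ only matters modulo $2^{\,e-k+1}$.

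To describe those residues I would pass to graphs: write $S=J-2A$ with $A$ the adjacency matrix of a graph $G$ on $n$ vertices, and apply the (polynomial form of the) matrix determinant lemma,
\[
\det(xI-S)=\det(xI+2A)-\mathbf 1^\transpose\operatorname{adj}(xI+2A)\,\mathbf 1 .
\]
Each of the two terms admits a combinatorial (Sachs-type) expansion in which the coefficient of $x^{n-k}$ is $2^{k}$ (or a higher power of $2$, since cycles of $G$ contribute further factors of $2$) times a signed count of subgraphs of $G$ on at most $k$ vertices. Comparing the two expansions reveals cancellations that push the valuations above the generic value $k-1$ — one finds, for instance, that the valuation of $c_3$ is at least $2$ and that of $c_4$ at least $4$ — and, more usefully, identifies the surviving $2$-adic digits of $c_3,\dots,c_e$ with a bounded list of subgraph-count parameters of $G$, each read modulo a small power of $2$. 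It helps to first observe that $\chi_S$ is an invariant of the Seidel switching class of $G$: conjugating $S$ by a diagonal $\{\pm1\}$-matrix is a similarity that keeps the matrix in $\mathsf S_n$ and corresponds to switching $G$, so normalising $G$ inside its switching class shortens the parameter list.

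The crux is then that these parameters are far from independent. Every subgraph count satisfies congruences obtained by double counting — a $k$-subset of vertices extends to $n-k$ many $(k+1)$-subsets, and one iterates — and there are also relations tying together the contributions of different coefficients $c_k$. When $n$ is even these force precisely the right digits to vanish, leaving $\binom{e-2}{2}$ binary degrees of freedom; when $n$ is odd exactly one more survives, carried by $c_3$. Concretely, $c_3=4t$ where $t$ is the number of vertex triples of $G$ spanning an odd number of edges, and a one-line double count gives $t\equiv(n-2)\,|E(G)|\pmod 2$, so the parity of $t$ is frozen iff $n$ is even. Already the case $e=3$ then falls out: $\chi_S\equiv x^n-nx^{n-1}\pmod 8$ for every $S$ when $n$ is even, with the single additional class $x^n-nx^{n-1}+4x^{n-3}$ appearing when $n$ is odd.

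I expect the main obstacle to be exactly this last step: making the $2$-adic bookkeeping exact for all $e$, and uniformly in $n$ of the given parity. Counting only the digits guaranteed by ``valuation of $c_k$ at least $k-1$'' overshoots $\binom{e-2}{2}$, so one genuinely needs both the sharpened, $k$-dependent valuation estimates produced by the cancellation between $\det(xI+2A)$ and the adjugate term, and the full system of double-counting congruences and cross-coefficient relations among the relevant subgraph counts. Organising these so that they combine to give precisely the exponent $\binom{e-2}{2}$, respectively $\binom{e-2}{2}+1$, is where the real work lies; the identities for $c_1$ and $c_2$ and the basic valuation bound are comparatively routine.
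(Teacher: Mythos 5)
This theorem is not proved in the paper at all: it is quoted from Greaves--Yatsyna \cite{GY19}, and the present paper only uses it as a black box (its own contribution is the matching lower bound). So the relevant comparison is with the argument in \cite{GY19}, whose shape is visible in the remarks after Theorem~\ref{thm:m2even}: for even $n$ one needs the sharpened divisibility $2^{k}\mid \mathsf c_k(\operatorname{Char}_S)$ (one power of $2$ better than the generic $2^{k-1}$ coming from principal minors), together with $\mathsf c_2=0$ and $\mathsf c_e\equiv 0\pmod{2^e}$, which gives exactly $\sum_{k=3}^{e-1}(e-k)=\binom{e-2}{2}$ free binary digits; for odd $n$ one additionally needs a genuine cross-coefficient congruence (Lemma~3.12 of \cite{GY19}) to cut the count down to $\binom{e-2}{2}+1$.

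Your proposal sets up the right framework — the identities $c_1=-n$, $c_2=0$, the valuation bound $2^{k-1}\mid c_k$, the passage to $S=J-2A$ via the matrix determinant lemma, and the correct computation $c_3=4t$ with $t\equiv(n-2)|E(G)|\pmod 2$ — and this correctly disposes of the case $e=3$. But for general $e$ there is a genuine gap, and you have flagged it yourself: the naive count from $2^{k-1}\mid c_k$ gives $2^{\binom{e-1}{2}}$, which exceeds the claimed bound by a factor of $2^{e-2}$, and you only assert that ``sharpened, $k$-dependent valuation estimates'' and ``a full system of double-counting congruences'' will close this factor, without producing them. Those missing ingredients are precisely the content of the theorem: one must prove the extra power of $2$ in $\nu_2(c_k)$ for even $n$ for every $k$ (not just $k=3,4$), prove $2^e\mid c_e$, and for odd $n$ derive and count the exact relations among $c_3,\dots,c_e$ that remove all but one extra degree of freedom. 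As written, the proposal is a plausible plan plus a complete proof of the case $e=3$, not a proof of the stated bound.
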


They conjectured \cite[Page 5 and Page 9]{GY19} that this bound is sharp for every fixed $e \geqslant 3$\footnote{It was, in fact, conjectured for all $e \in \mathbb N$, however, when $e \leqslant 2$, the cardinality of $\mathcal C_e(\mathsf S_n)$ is just $1$ for all $n$.} and all sufficiently large~$n$.  
Our first main result confirms this conjecture (Theorem~\ref{thm:m2}).

Analogous behaviour is observed for skew-symmetric matrices.  
Klanderman et~al.~\cite{SeidelTourn24} studied Seidel matrices of tournaments, i.e., skew-symmetric $\{0,\pm1\}$-matrices with nonzero off-diagonal entries, and computed their characteristic polynomial coefficients.  
Their congruence patterns modulo powers of~$2$ mirror those of the symmetric case, leading naturally to a parallel investigation for skew-symmetric $\{\pm1\}$-matrices.  
Our corresponding result is Theorem~\ref{thm:m3}.

\subsection{Main definitions and results}

For $a \in \mathbb Z$ and $e \in \mathbb N$, write $\overline {a}^{(e)}$ for the residue class of $a$ modulo $2^e$.  
Given $\mathsf M_n \subseteq \textsf{Mat}_n(\mathbb Z)$, define
\[
\mathcal C_e(\mathsf M_n)
   = \left \{
   \left (\overline{a_2}^{(e)},\overline{a_3}^{(e)},\dots,\overline{a_e}^{(e)} \right)
   : \det(xI-M) = \sum_{i=0}^n a_i x^{n-i} \text{ for some } M \in \mathsf M_n
   \right \}.
\]

Let $\mathsf U_n$ be the set of $\{\pm1\}$-matrices with all diagonal entries equal to~$1$.  
We call $M \in \mathsf U_n$ \textbf{skew-symmetric} if $(M-I)^\transpose = -(M-I)$, and write $\mathsf S_n$ and $\mathsf T_n$ for the symmetric and skew-symmetric subsets of~$\mathsf U_n$, respectively.

\begin{theorem}
\label{thm:m1}
For each $e \in \mathbb N$ there exists $N$ such that for all $n > N$,
\[
\mathcal C_e(\mathsf U_n)
   = \left\{\, \left (\overline{a_2}^{(e)},\overline{a_3}^{(e)},\dots,\overline{a_e}^{(e)} \right ) : a_i \in \mathbb Z \text{ and }2^{i-1} \text{ divides } a_i \text{ for each } i = 2,\dots,e\,\right\}.
\]
\end{theorem}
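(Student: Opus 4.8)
The plan is to establish Theorem~\ref{thm:m1} by proving the two inclusions separately, the containment ``$\subseteq$'' being routine. For $M\in\mathsf U_n$ with $\det(xI-M)=\sum_i a_ix^{n-i}$, the coefficient $a_i$ equals $(-1)^i$ times the sum of the $i\times i$ principal minors of $M$; each such minor is the determinant of an $i\times i$ $\{\pm1\}$-matrix, and subtracting its first row from the other $i-1$ rows exhibits a factor $2^{i-1}$, whence $2^{i-1}\mid a_i$. Thus $\mathcal C_e(\mathsf U_n)$ is contained in the set on the right-hand side (call its elements \emph{admissible tuples}), and it remains to show every admissible tuple is attained for $n$ large.

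For the reverse inclusion I would use the \emph{join}: given $A\in\mathsf U_r$, let $A\uplus J_{n-r}\in\mathsf U_n$ be obtained from the all-ones matrix $J_n$ by overwriting an $r\times r$ principal block with $A$. Write $w_M(x):=\mathbf 1^{\transpose}\operatorname{adj}(xI-M)\mathbf 1$ for the \emph{walk polynomial} (of degree $n-1$, leading coefficient $n$) and set $u_M:=\chi_M+w_M$; the matrix determinant lemma gives $u_M=\det\!\big(xI-(M-J_n)\big)$, while a Schur-complement computation gives the recursions
\[
\chi_{G_1\uplus G_2}=\chi_{G_1}\chi_{G_2}-w_{G_1}w_{G_2},\qquad w_{G_1\uplus G_2}=\chi_{G_1}w_{G_2}+w_{G_1}\chi_{G_2}+2w_{G_1}w_{G_2}.
\]
In particular $u$ is multiplicative under $\uplus$, $u_{J_n}=x^{n}$, $w_{J_n}=nx^{n-1}$, and hence
\[
\chi_{A\uplus J_{n-r}}(x)=x^{n-r}\chi_A(x)-(n-r)\,x^{n-r-1}u_A(x),
\]
so that $[x^{n-i}]\chi_{A\uplus J_{n-r}}=a_i(A)-(n-r)\,\hat a_{i-1}(A)$ for each $i\in\{2,\dots,e\}$ (assuming $r\ge e$), where $\hat a_{i-1}(A):=[x^{r-i+1}]u_A$. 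Since $u_A(x)=\det(xI+2C)$ for the $\{0,1\}$-matrix $C:=\tfrac12(J_r-A)$, both $a_i(A)$ and $\hat a_{i-1}(A)$ are divisible by $2^{i-1}$; and since the right-hand side above depends on $n$ only through $n-r\bmod 2^{e}$ while $A\uplus J_{n-r}$ has order exactly $n$ for every $n\ge r$, the theorem reduces to the \emph{finite} assertion that there is $r=r(e)$ such that, for each $c\in\mathbb Z/2^{e}\mathbb Z$, the map $A\mapsto\big(\,a_i(A)-c\,\hat a_{i-1}(A)\bmod 2^{e}\,\big)_{i=2}^{e}$ from $\mathsf U_r$ is onto the set of admissible tuples.

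To prove this finite assertion I would build $A$ — equivalently the digraph $C$ on $r$ vertices — as a disjoint union of a bounded family of small \emph{lift} digraphs (which specialise to graphs in the symmetric case and to tournaments in the skew case), using the displayed recursions to track $(\chi_A,w_A)\bmod 2^{e}$ as each gadget is adjoined, and proceeding by induction on $e$. Handling the coordinates $i=2,3,\dots,e$ in turn, the step for coordinate $i$ calls for a small gadget whose adjunction changes the $i$-th coordinate by a generator of $2^{i-1}\mathbb Z/2^{e}\mathbb Z$ while fixing coordinates $2,\dots,i-1$ modulo $2^{e}$ (coordinates beyond $i$ are re-corrected afterwards). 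Producing these gadgets is where the walk polynomial is essential: the effect of a join on the coefficient vector is controlled by the pair $(\chi_G,w_G)\bmod 2^{e}$, so one must understand, modulo $2^{e}$, which polynomials arise as walk polynomials of small $\{\pm1\}$-matrices, and this is established by lifting from $2^{e-1}$ to $2^{e}$. The main obstacle is precisely this — proving the existence of the requisite lift graphs/tournaments, i.e.\ that the available gadget moves generate the target group coordinate by coordinate and stay independent as $e$ increases; granted that, producing a matrix of the prescribed order $n$ and confirming the count $2^{\binom e2}$ is routine bookkeeping.
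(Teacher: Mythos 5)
Your upper bound is fine (the principal-minor argument gives $2^{i-1}\mid a_i$, matching the paper's Lemma~\ref{lem:first3coeffs}), and the join/walk-polynomial algebra you set up is correct: the recursions for $\chi$ and $w$ under $\uplus$ check out, and the reduction to a finite surjectivity statement for some $r=r(e)$ is legitimate. But the proof has a genuine gap exactly where the theorem's content lies: you never establish that the ``gadget moves'' exist. You write that the main obstacle is ``proving the existence of the requisite lift graphs/tournaments'' and that granted this the rest is bookkeeping --- but that existence statement \emph{is} the lower bound, and deferring it means the surjectivity onto the set of admissible tuples is asserted rather than proved. In particular, your plan to induct on $e$ and lift walk polynomials from $2^{e-1}$ to $2^{e}$ requires, at each coordinate $i$, a digraph whose adjunction shifts $a_i$ by a generator of $2^{i-1}\mathbb Z/2^{e}\mathbb Z$ while fixing $a_2,\dots,a_{i-1}$ modulo $2^{e}$; no candidate for such a digraph is exhibited or shown to exist.

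For $\mathsf U_n$ specifically this machinery is also unnecessary, and the missing step can be filled explicitly without any induction on $e$. Because arbitrary digraphs are allowed, one may take $\Gamma$ to be a union of directed paths $\vec P_m$, whose adjacency matrices are nilpotent; then $\operatorname{Char}_{A(\Gamma)}(x)=x^{n}$, so Lemma~\ref{lem:coefficient_link_formula} collapses to $\mathsf c_k(\operatorname{Char}_{J-2A(\Gamma)})=(-1)^k2^{k-1}\,\mathbf 1^{\transpose}A(\Gamma)^{k-1}\mathbf 1$, and the walk counts $\mathbf 1^{\transpose}A(\vec P_m)^{k}\mathbf 1=\max(m-k,0)$ are explicit. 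Taking the combinations $\Gamma_f=(2^{e}-1)\vec P_{f-1}\cup\vec P_{f-1+2^{e}}$ produces walk-count vectors that vanish modulo $2^{e}$ for $k<f$, so the resulting system in the multiplicities $d_1,\dots,d_{e-1}$ is triangular over $\mathbb Z/2^{e}\mathbb Z$ and solvable for every admissible tuple. This is the paper's route; your framework is compatible with it, but as written your argument stops short of the one step that cannot be called routine.
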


Theorem~\ref{thm:m1} is proved as a warm-up result in Section~\ref{sec:proofm1}.

\begin{theorem}
\label{thm:m2}
Let $e \geqslant 3$. Then there exists $N$ such that for all $n > N$,
\[
|\mathcal C_e(\mathsf S_n)|
   =
   \begin{cases}
     2^{\binom{e-2}{2}}, & \text{if $n$ is even},\\[3pt]
     2^{\binom{e-2}{2}+1}, & \text{if $n$ is odd}.
   \end{cases}
\]
\end{theorem}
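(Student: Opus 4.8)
The plan is to prove the matching lower bound to Theorem~\ref{thm:symUB}: since the upper bound is already known, it suffices to exhibit, for $n$ large, a family of matrices in $\mathsf S_n$ whose characteristic polynomials realise all $2^{\binom{e-2}{2}}$ (resp.\ $2^{\binom{e-2}{2}+1}$) congruence classes that the upper bound permits. First I would recall the structure behind Theorem~\ref{thm:symUB}: writing $S = J - 2A$ where $A$ is the adjacency matrix of a graph $G$ on $n$ vertices (with a correction for the diagonal), the coefficients $a_i$ of $\det(xI-S)$ are, modulo $2^e$, controlled by low-order symmetric-function data of the eigenvalues of $S$, which in turn is governed by a bounded amount of combinatorial information about $G$ — essentially the counts of short closed walks and related subgraph statistics modulo small powers of $2$. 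The task is to show every admissible combination of these residues is attainable.

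The main tool, as the abstract advertises, is the notion of a \emph{lift graph}: starting from a fixed small ``seed'' graph $H$ that already witnesses a particular target residue vector, one builds $n$-vertex graphs by a controlled blow-up/lifting construction that (i) preserves, modulo $2^e$, the relevant walk counts of $H$, and (ii) lets $n$ range over all sufficiently large integers of a given parity. I would first set up the lift operation precisely, compute how each coefficient $a_i \bmod 2^e$ transforms under a lift (it should be invariant, or change in a predictable way depending only on parity of $n$), and then verify that the \emph{walk polynomial} machinery guarantees that the seed graphs needed to hit every residue class actually exist — this is where the parity split ($n$ even vs.\ odd) enters, since odd $n$ gives one extra degree of freedom (the ``$+1$'' in the exponent), reflecting the extra eigenvalue/constant-walk contribution. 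Concretely I would: (1) enumerate the target set of residue vectors allowed by Theorem~\ref{thm:symUB} and parametrise it; (2) for each target, construct a bounded-size seed graph realising it, using the walk polynomial to certify realisability; (3) apply the lift construction to push the seed up to order $n$ for all large $n$ of the correct parity; (4) check the lift leaves the residues unchanged (or shifts them uniformly), so the image of $\mathcal C_e$ contains the full target set; (5) combine with Theorem~\ref{thm:symUB} to get equality, and read off $N$.

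The hard part will be step (2) together with the invariance claim in step (4): showing that there genuinely exist seed graphs covering \emph{all} $2^{\binom{e-2}{2}}$ residue vectors — not just some — and that the lift operation is ``residue-neutral''. The obstruction is that the coefficients $a_i \bmod 2^e$ are not independent linear functions of free parameters; they satisfy constraints (exactly those cutting the count down from the naive $2^{\text{(larger)}}$ to $2^{\binom{e-2}{2}}$), so one must arrange the seed constructions to surject onto the constraint variety, not onto a larger affine space. I expect this to be handled by an inductive construction on $e$ (or on the coefficient index $i$), where one shows the walk polynomial of a suitable graph family can be tuned, one $2$-adic digit at a time, to flip exactly the coefficient one wants modulo the next power of $2$ without disturbing the already-fixed lower-order data; disjoint unions or edge-local modifications of small gadgets are the natural mechanism, with the lift graph ensuring the order can be inflated to $n$ afterwards. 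Verifying that these modifications are simultaneously compatible — that the gadget for the $a_i$-digit does not corrupt the $a_j$-digit for $j<i$ — is the crux, and is presumably where the bulk of the paper's technical work lies.
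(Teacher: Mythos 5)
Your overall strategy is the right one and matches the paper's: take the upper bound from Theorem~\ref{thm:symUB}, and prove the matching lower bound by an induction on $e$ in which small disjoint gadgets flip one $2$-adic digit of one coefficient at a time, followed by padding to inflate the order to any sufficiently large $n$ of the right parity. However, what you have written is an outline in which every load-bearing step is deferred, and the deferred steps are precisely where the content of the theorem lives. Concretely: (a) you never produce the gadgets. The paper's $(e,f)$-lift graphs of type I are explicit --- $2P_2$ for $f=2$, $C_f\cup C_{2^{f+1}(f+1)!-f}$ for $f\geqslant 3$, then repeated doubling to raise $e$ --- and verifying property (L1b) requires computing $\operatorname{tr}(A^k)$ for cycles and converting power sums to coefficients via Newton's identities (Lemma~\ref{lem:ptoe}); nothing in your plan substitutes for this. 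Also, the walk polynomial you invoke for ``certifying realisability'' is not used in the symmetric case at all; in the paper it appears only for the skew-symmetric (tournament) constructions.

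(b) Your invariance claim in step (4) --- that the lift is ``residue-neutral'' or shifts residues ``uniformly'' --- is false as stated for odd $n$. Adjoining an $(e,f)$-lift graph $\Lambda$ to $\Gamma$ shifts $\mathsf c_f(\operatorname{Char}_{J-2A})$ by $2^{e-1}$ but, because $\mathbf 1^\transpose A(\Gamma)^0\mathbf 1=n$ is odd, the cross term $\tfrac12\,\mathsf c_f(\operatorname{Char}_{A(\Lambda)})\cdot\mathbf 1^\transpose A^{0}\mathbf 1$ in Lemma~\ref{lem:coefficient_link_formula} also shifts $\mathsf c_{f+1}$; the paper must compensate with the telescoping multiplicities $d_{e-1}\Lambda_{e-2}\cup(d_{e-1}+d_{e-2})\Lambda_{e-3}\cup\cdots$ in Lemma~\ref{lem:2em3below}. (c) For even $n$ a further idea is needed that your plan cannot reach: no type-I gadget exists for $f=e-1$ (the constructions only give $(e,f)$-lift graphs with $e\geqslant f+2$), so the top relevant coefficient must be shifted through the \emph{walk counts} $\mathbf 1^\transpose A^{k}\mathbf 1$ of a second kind of gadget (the type-II lift graphs, built from unions of paths in Lemma~\ref{shift_graph_using_path}) rather than through its characteristic polynomial. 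Without (a)--(c) the argument does not close, so as it stands the proposal is a correct plan but not a proof.
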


The proof of Theorem~\ref{thm:m2} consists of Theorem~\ref{thm:m2odd} (for $n$ odd) and Theorem~\ref{thm:m2even} (for $n$ even).

\begin{theorem}
\label{thm:m3}
Let $e \geqslant 3$. Then there exists $N$ such that for all $n > N$,
\[
|\mathcal C_e(\mathsf T_n)|
   =
   \begin{cases}
     2^{\lfloor\frac{e-1}{2}\rfloor\lfloor\frac{e-2}{2}\rfloor}, & \text{if $n$ is even},\\[3pt]
     2^{\lfloor\frac{e-2}{2}\rfloor\lfloor\frac{e-3}{2}\rfloor}, & \text{if $n$ is odd}.
   \end{cases}
\]
\end{theorem}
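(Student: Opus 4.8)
The plan is to mirror the structure used for the symmetric case in Theorem~\ref{thm:m2}, splitting into the even and odd cases and treating the upper and lower bounds separately. For the upper bound, I would first establish the skew-symmetric analogue of Theorem~\ref{thm:symUB}: if $M \in \mathsf T_n$, then $M - I$ is a skew-symmetric integer matrix, so over $\mathbb{Z}/2^e\mathbb{Z}$ its characteristic polynomial is tightly constrained. The key arithmetic input is that a skew-symmetric matrix over $\mathbb{Z}/2\mathbb{Z}$ has even rank, and more precisely that the $2$-adic valuations of the coefficients $a_i$ of $\det(xI - M)$ grow at a controlled rate (roughly $\nu_2(a_i) \geqslant i - 1$ plus an extra boost from skew-symmetry / the Pfaffian structure); combined with the relation $M = (M-I) + I$ and a binomial expansion of $\det(xI - (M-I) - I)$, this should pin down which residue tuples $(\overline{a_2}^{(e)},\dots,\overline{a_e}^{(e)})$ can occur, and counting the free parameters yields exactly $\lfloor\frac{e-1}{2}\rfloor\lfloor\frac{e-2}{2}\rfloor$ (respectively $\lfloor\frac{e-2}{2}\rfloor\lfloor\frac{e-3}{2}\rfloor$) bits of freedom. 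I expect this half to be analogous to, but genuinely distinct from, the symmetric computation, because skew-symmetry forces additional vanishing of odd-index coefficients modulo higher powers of $2$.

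For the lower bound — showing every admissible residue tuple is actually attained for $n$ large — I would use the lift tournament machinery introduced in the paper. The idea is to start from a fixed small tournament $T_0$ whose Seidel/$\{\pm1\}$-matrix realizes a desired characteristic polynomial residue, and then show that one can enlarge $n$ while controlling the effect on the characteristic polynomial modulo $2^e$: adding a suitable "lift" (a controlled blow-up or extension of the tournament by a few vertices, or by a vertex-transitive padding) changes $\det(xI - M) \bmod 2^e$ in a predictable, additive way. Concretely I would aim to produce, for each target coordinate, a family of lift tournaments realizing a basis of the space of achievable residue tuples, so that by composing lifts (and using that the characteristic polynomial modulo $2^e$ depends on the lift in a way that lets us add contributions), every point of the admissible set is hit once $n$ exceeds some threshold $N$. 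The walk polynomial of the underlying graph is the tool that certifies these lift tournaments exist: one shows a certain walk-polynomial non-degeneracy condition is satisfied generically (or for an explicit construction), which guarantees the lift has the required spectral effect.

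The main obstacle I anticipate is the \emph{existence of the requisite lift tournaments} and the precise bookkeeping of how a lift perturbs the coefficient residues. In the symmetric case one has graphs and their adjacency structure to work with, and switching classes behave well; in the skew-symmetric/tournament case the combinatorics of reversing arcs and of "doubling" a tournament are more delicate, and one must verify that the walk polynomial does not vanish identically modulo the relevant power of $2$ — this is where a clever explicit family of tournaments, or a dimension/counting argument over $\mathbb{F}_2$, will be needed. I would handle it by isolating a "local move" (reversing the arcs incident to a single new vertex) whose effect on $\det(xI - M) \bmod 2^e$ I can compute via a rank-one-type update formula adapted to the skew setting, then iterate. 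A secondary technical point is making the threshold $N$ uniform: I would argue that once $n$ is large enough to accommodate the fixed "seed" tournament plus the padding needed to adjust parity, all subsequent increases of $n$ by $2$ (to preserve parity of $n$) leave the achievable set unchanged, so a single $N$ works for all larger $n$ of the correct parity. Finally, the even/odd split in $n$ enters exactly as in the symmetric case: an odd-order skew-symmetric-type matrix has an extra structural constraint (its determinant-related invariant), shifting the count by the ratio seen in the two branches of the theorem.
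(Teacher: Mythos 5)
Your outline matches the paper's architecture (upper bound from coefficient constraints forced by skew-symmetry; lower bound by inductively lifting residue tuples from modulus $2^{e-1}$ to $2^e$ using lift tournaments certified by walk polynomials), but both halves stop short of the content that actually produces the stated exponents, and the one place where you propose a concrete mechanism differs from what is needed. For the upper bound, ``even rank'' and ``Pfaffian structure'' do not by themselves give the count $\lfloor\frac{e-1}{2}\rfloor\lfloor\frac{e-2}{2}\rfloor$. What is needed is the pair of exact relations in Lemma~\ref{lem:TcoeffRel}: writing $S=M-I$, the identity $\operatorname{Char}_S(x)=\operatorname{Char}_{-S}(x)$ kills all odd-indexed coefficients of $\operatorname{Char}_S$, and transporting this through the shift $\operatorname{Char}_S(x)=\operatorname{Char}_M(x+1)$ gives \eqref{eqn:akeq}, which determines every odd-indexed $\mathsf c_k(\operatorname{Char}_M)$ from the earlier coefficients; for odd $n$ the additional congruence \eqref{eqn:akeq2} removes one further bit from each even-indexed coefficient. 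Only after these two reductions does counting residues of the remaining free even-indexed coefficients (each divisible by $2^{k-1}$) yield the two exponents. Your sketch asserts the answer of this count without the relations that make it come out right.

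The more serious gap is the lower bound. The induction requires, for each even $f$, a tournament $\Lambda_f$ whose join with an arbitrary tournament $\Gamma$ shifts $\mathsf c_f(\operatorname{Char}_{J-2A(\Gamma\oplus\Lambda_f)})$ by exactly $2^{e-1}$ modulo $2^e$ while leaving every other relevant coefficient and every walk count $\mathbf 1^\transpose A^k\mathbf 1$ unchanged modulo the appropriate powers of $2$ (this selectivity is the content of Lemma~\ref{lem:tournament_lift_graph_effect}, and it is what lets the $\{0,1\}$-vector $(d_4,d_6,\dots)$ of corrections be chosen coordinate-by-coordinate). Your proposed local move --- reversing the arcs at a single new vertex and tracking the effect by a rank-one update --- perturbs all coefficients simultaneously, and you give no argument that the resulting perturbation vectors are triangular or even span the required $\mathbb F_2$-space of admissible shifts; note also that only the \emph{even}-indexed coefficients are free to be shifted, since the odd ones are slaved to them by \eqref{eqn:akeq}. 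The paper obtains the selectivity non-constructively: $\Lambda$ is a join $\bigoplus_{t} a_t T_t$ of almost-transitive tournaments, the multiplicities $a_t$ solve a Vandermonde-type system \eqref{eqn:sumCongr}, and the decisive input is that the power sums of both $\operatorname{Char}_{A(T_t)}$ and the walk polynomial $\operatorname{Walk}_{T_t,d}$ are polynomials in $t$ with controlled degree and leading coefficient (Corollary~\ref{cor:pChar}, Lemma~\ref{lem:pWalk}), so that all power sums below a designated index can be annihilated to high $2$-adic order at once (Proposition~\ref{pro:tournyP}), after which Lemma~\ref{lem:ptoe} converts this into the coefficient conditions (LT1a)--(LT1b). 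Nothing in your proposal substitutes for this degree/leading-coefficient analysis, and the ``walk-polynomial non-degeneracy holds generically'' step you invoke is precisely the part that has to be proved.
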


The proof of Theorem~\ref{thm:m3} consists of Theorem~\ref{thm:m3odd} (for $n$ odd) and Theorem~\ref{thm:m3even} (for $n$ even).

\subsection{Outline of the proofs}

Our arguments are elementary, relying on relations between elementary-symmetric and power-sum polynomials together with combinatorial constructions involving paths, cycles, and almost-transitive tournaments.  
A central tool is the notion of a \emph{lift graph}, introduced in Section~\ref{sec:sumapp}.  
For the symmetric case, the relevant lift graphs can be constructed explicitly from unions of paths and cycles.  
The skew-symmetric case requires a non-explicit construction based on joins of $1$-almost transitive tournaments in the sense of~\cite{fox2021acyclic}.  
We introduce the \emph{walk polynomial} in Section~\ref{sec:wp}, which facilitates the verification of the required congruence conditions for the existence of the relevant lift tournaments.

Our results determine, for each fixed $e$, the precise number of distinct congruence classes of characteristic polynomials of natural families of $\{\pm1\}$-matrices for sufficiently large~$n$.  
Our bounds on the minimal such~$n$ are recursive and, based on empirical evidence, far from optimal.
We leave the problem of optimising these bounds as an avenue for further research.

\subsection{Organisation}

In Section~\ref{sec:pelim}, we outline preliminary definitions and results.
In Section~\ref{sec:summary}, we prove Theorem~\ref{thm:m1} and motivate the notion of a lift graph, which is fundamental to our approach towards the proofs of Theorem~\ref{thm:m2} and Theorem~\ref{thm:m3}.
Lift graphs are defined in Section~\ref{sec:liftgraph} together with constructions of lift graphs of type I and type II.
These constructions are explicit and consist of path and cycle graphs.
Our proof of Theorem~\ref{thm:m2} is presented in Section~\ref{sec:pm2}.
Lift tournaments are defined in Section~\ref{sec:lt} together with constructions of lift tournaments of type I and type II.
In contrast to lift graphs, our constructions of lift tournaments are not explicit and depend on a solution to a Vandermonde-type equation and corresponding joins of almost-transitive tournaments.
Finally, our proof of Theorem~\ref{thm:m3} is presented in Section~\ref{sec:pm3}.

\bigskip

\noindent{\bf Acknowledgements.} GG was supported by the Singapore Ministry of Education Academic Research Fund; grant numbers: RG14/24 (Tier 1) and MOE-T2EP20222-0005 (Tier 2).

\section{Preliminaries}
\label{sec:pelim}

Let $p(x) = c_0\prod_{i=1}^{n}(x-\theta_i) \in \mathbb Q[x]$ be a polynomial.
Denote by $\mathsf c_k(p)$ for the coefficient of $x^{n-k}$ in $p(x)$. 
For $k\in \{0,\dots,n\}$, we define 
the corresponding elementary symmetric polynomial and the power sum polynomial in the zeros of $p(x)$ as
\[
\mathsf e_k(p) :=\sum_{1\leqslant i_1<i_2<\cdots<i_k\leqslant \deg p}\theta_{i_1}\theta_{i_2}\cdots \theta_{i_k} \quad \text{ and } \quad \mathsf p_k(p):=\sum^{\deg p}_{i=1}\theta_i^{k}.\] 

It follows from these definitions and from Newton's identities that 
$\mathsf e_k(p) = (-1)^k\mathsf c_{k}(p)/\mathsf c_0(p)$ and 
\begin{equation}
\label{eqn:power_sum_elementary_symmetric_sum_relation}
            \mathsf p_k(p)=-\frac{1}{\mathsf c_0(p)}\left (\sum_{i=1}^{k-1}\mathsf c_i(p) \mathsf p_{k-i}(p)+ k \mathsf c_k(p)\right ).
        \end{equation}
We further remark here that, for polynomials $p(x)$ and $q(x)$ we have
\begin{equation}
    \label{eqn:powerProdSum}
    \mathsf p_k(p\cdot q)= \mathsf p_k(p)+\mathsf p_k(q).
\end{equation}

For a nonzero integer $a$, the $2$-adic valuation $\nu_2(a)$ of $a$ is the multiplicity of $2$ in the prime factorisation of $a$. 
The 2-adic valuation of $0$ is defined to be $\infty$. 
For an integer $b$ relatively prime to $a$, the $2$-adic valuation $\nu_2(a/b)$ of $a/b$ is defined as $-\nu_2(b)$ if $b$ is even and $\nu_2(a)$ otherwise. 

\begin{lemma}
    \label{lem:ptoe}
    Let $p(x) \in \mathbb Z[x]$ and $m, n \in \mathbb N$ with $m > n$.
    Suppose that
    \[
    \mathsf p_k(p) \equiv 0 \mod {2^{m}} \text{ for each } k \in \{1,\dots,n-1\}\cup \{n+1\}.
    \]
    Then $\mathsf c_0(p)\mathsf p_{n}(p) \equiv -n\mathsf c_{n}(p) \mod{2^{m- \nu_2((n-1)!)}}$ and
    \[
    \mathsf c_k(p) \equiv 0 \mod {2^{m- \nu_2(k!)}} \text{ for each } k \in \{1,\dots,n-1\}\cup \{n+1\}.
    \]
\end{lemma}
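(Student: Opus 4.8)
The plan is to run Newton's identities, equation~\eqref{eqn:power_sum_elementary_symmetric_sum_relation}, inductively on $k$ from $k=1$ up to $k=n-1$, carefully tracking how the hypothesised divisibility of the power sums transfers to divisibility of the coefficients, with a controlled loss of $2$-adic valuation coming from the factor $k$ appearing in $k\mathsf c_k(p)$.

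First I would set up the induction. For $k=1$, equation~\eqref{eqn:power_sum_elementary_symmetric_sum_relation} reads $\mathsf c_0(p)\mathsf p_1(p) = -\mathsf c_1(p)$, so the hypothesis $\mathsf p_1(p)\equiv 0 \pmod{2^m}$ immediately gives $\mathsf c_1(p)\equiv 0\pmod{2^m}$ (using that $\mathsf c_0(p)=1$, since $p$ is monic — or at worst a unit times the leading coefficient; I would note $\mathsf c_0$ is a unit in the relevant localisation, or simply that for the characteristic polynomials we care about $\mathsf c_0 = 1$). Then, assuming $\mathsf c_i(p)\equiv 0\pmod{2^{m-\nu_2(i!)}}$ for all $i<k$ (with $k\leqslant n-1$), rearrange~\eqref{eqn:power_sum_elementary_symmetric_sum_relation} as
\[
k\,\mathsf c_k(p) = -\mathsf c_0(p)\,\mathsf p_k(p) - \sum_{i=1}^{k-1}\mathsf c_i(p)\,\mathsf p_{k-i}(p).
\]
On the right, $\mathsf p_k(p)\equiv 0\pmod{2^m}$ by hypothesis (as $k\in\{1,\dots,n-1\}$), and each term $\mathsf c_i(p)\,\mathsf p_{k-i}(p)$ in the sum is divisible by $2^{m-\nu_2(i!)}$ times $2^{m}$ — actually just by $2^{m-\nu_2(i!)}$ from the coefficient factor alone, and since $i\leqslant k-1$ we have $\nu_2(i!)\leqslant\nu_2((k-1)!)$. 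Hence the right-hand side is divisible by $2^{m-\nu_2((k-1)!)}$, so $k\,\mathsf c_k(p)\equiv 0\pmod{2^{m-\nu_2((k-1)!)}}$. Dividing by $k$ costs $\nu_2(k)$ more, and since $\nu_2((k-1)!) + \nu_2(k) = \nu_2(k!)$, we conclude $\mathsf c_k(p)\equiv 0 \pmod{2^{m-\nu_2(k!)}}$, completing the induction up to $k=n-1$. For the two final assertions: applying~\eqref{eqn:power_sum_elementary_symmetric_sum_relation} at $k=n$ and using that $\mathsf c_i(p)\,\mathsf p_{n-i}(p)$ for $1\leqslant i\leqslant n-1$ is divisible by $2^{m-\nu_2((n-1)!)}$ (each $\mathsf p_{n-i}$ with $n-i\in\{1,\dots,n-1\}$ is divisible by $2^m$, so these sum terms vanish mod $2^{m-\nu_2((n-1)!)}$ trivially, but we need the weaker bound anyway), we get $\mathsf c_0(p)\mathsf p_n(p)\equiv -n\,\mathsf c_n(p)\pmod{2^{m-\nu_2((n-1)!)}}$ directly. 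Finally, for $k=n+1$: run~\eqref{eqn:power_sum_elementary_symmetric_sum_relation} once more, now using $\mathsf p_{n+1}(p)\equiv 0\pmod{2^m}$ and the already-established bound $\mathsf c_i(p)\equiv 0\pmod{2^{m-\nu_2(i!)}}$ for $i\leqslant n-1$, plus the $k=n$ relation to handle the $i=n$ term; dividing by $n+1$ (which is coprime to $2$ when... hmm, it need not be — but then one argues as before that $(n+1)\mathsf c_{n+1}(p)$ is divisible by the stated power and $\nu_2((n+1)!)=\nu_2(n!)+\nu_2(n+1)$, and also $\nu_2(n!) = \nu_2((n-1)!)+\nu_2(n)$, so the valuation budget works out to $2^{m-\nu_2((n+1)!)}$).

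The main obstacle is bookkeeping the $2$-adic valuations precisely across the three regimes ($k\leqslant n-1$, $k=n$, $k=n+1$) — in particular making sure the division by $k$ (or $n+1$) is accounted for correctly via Legendre's formula $\nu_2(k!) = \nu_2((k-1)!)+\nu_2(k)$, and checking that when we reach $k=n$ and $k=n+1$ the index $n$ itself is \emph{skipped} in the power-sum hypothesis (which is why only $\mathsf c_0(p)\mathsf p_n(p)\equiv -n\mathsf c_n(p)$, rather than a divisibility of $\mathsf c_n(p)$, is claimed there). There is also a minor point to dispatch at the start: one needs $\mathsf c_0(p)$ to be odd (a $2$-adic unit) so that congruences can be divided by it; since we will only apply this to monic polynomials $\det(xI-M)$ this is automatic, but I would state the hypothesis or the convention explicitly. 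Everything else is a routine induction.
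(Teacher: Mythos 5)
Your proposal is correct and follows essentially the same route as the paper: induction on $k$ via Newton's identities \eqref{eqn:power_sum_elementary_symmetric_sum_relation}, with the loss of $\nu_2(k)$ at each step absorbed by $\nu_2(k!)=\nu_2((k-1)!)+\nu_2(k)$, and the indices $n$ and $n+1$ treated separately exactly as you describe. Your side worry about $\mathsf c_0(p)$ being a unit is not actually needed, since the identity is only ever used in the direction $k\,\mathsf c_k(p) = -\mathsf c_0(p)\,\mathsf p_k(p)-\cdots$, i.e.\ one multiplies by $\mathsf c_0(p)$ rather than divides by it.
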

\begin{proof}
    We prove by induction on $k$ that
$2^{m-\nu_2(k!)}$ divides $\mathsf c_k(p)$ for each $k \in \{1,\dots,n\}$.
The base case $k=1$ follows immediately since $\mathsf c_1(p) = -\mathsf c_0(p)\mathsf p_1(p)$.
Assume that
$2^{m-\nu_2(j!)}$ divides $\mathsf c_k(p)$ for each $k \in \{1,\dots,j\}$.
Using \eqref{eqn:power_sum_elementary_symmetric_sum_relation}, we have
\[
\mathsf c_0(p)\mathsf p_{j+1}(p)+\sum_{i=1}^{j}\mathsf c_i(p) \mathsf p_{j+1-i}(p)+(j+1) \mathsf c_{j+1}(p) \equiv 0 \mod{2^{m-\nu_2(j!)}}.
\]
Hence $\mathsf c_{j+1}(p) \equiv 0 \mod{2^{m-\nu_2((j+1)!)}}$, as required.

It further follows that $\mathsf c_0(p)\mathsf p_{n}(p) \equiv -n\mathsf c_{n}(p) \mod{2^{m-\nu_2((n-1)!)}}$.
Lastly, 
again using \eqref{eqn:power_sum_elementary_symmetric_sum_relation}, we have
\begin{align*}
    \mathsf c_0(p)\mathsf p_{n+1}(p) &= -\sum_{i=1}^{n}\mathsf c_i(p) \mathsf p_{n+1-i}(p)-(n+1) \mathsf c_{n+1}(p) \\
    &\equiv  \mathsf c_1(p)\mathsf c_n(p) -(n+1) \mathsf c_{n+1}(p) \mod{2^{m-\nu_2((n-1)!)}}.
\end{align*}
Hence, $\mathsf c_{n+1}(p)  \equiv 0 \mod{2^{m-\nu_2((n-1)!)-\nu_2(n+1)}}$, as required.
\end{proof}

Throughout, $J$ denotes the all-ones matrix, whose order can be determined by context.
We denote the characteristic polynomial of a matrix $M$ by $\operatorname{Char}_M(x) := \det(xI-M)$.
We use $M^\transpose$ to denote the transpose of the matrix $M$, and $\mathbf 1$ to denote the all-ones (column) vector where $\mathbf 1 \cdot \mathbf 1^\transpose = J$.
Denote the $(u,v)$-entry of $M$ by $[M]_{u,v}$ and the $v$th entry of a vector $\mathbf v$ by $[\mathbf v]_v$.

We will frequently make use of the next lemma, which is a consequence of the matrix determinant lemma that relates the characteristic polynomial of a matrix $A$ to that of $J-2A$.

\begin{lemma}[{\cite[Lemma 3.1]{GY19}}]
\label{lem:coefficient_link_formula}
Let A be a matrix of order $n$. 
Then, for each $k \in \{0,\dots,n\}$, we have
    \begin{equation*}
\mathsf c_{k}(\operatorname{Char}_{J-2A})=(-2)^k \left (\mathsf c_{k}(\operatorname{Char}_A)+\frac{1}{2}\sum_{i=1}^{k}\mathsf c_{k-i}(\operatorname{Char}_A) \mathbf 1^\transpose A^{i-1}\mathbf 1 \right ).
\end{equation*}
\end{lemma}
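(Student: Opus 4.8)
The plan is to realise $J$ as the rank-one matrix $\mathbf 1\mathbf 1^\transpose$ and apply the matrix determinant lemma, then compare the coefficients of $x^{n-k}$ on the two sides by expanding the resolvent of $-2A$ as a Laurent series in $x^{-1}$. Treating $x$ as an indeterminate (so that $xI+2A$ is invertible over $\mathbb Q(x)$), I would write
\[
\operatorname{Char}_{J-2A}(x) = \det\bigl((xI+2A)-\mathbf 1\mathbf 1^\transpose\bigr) = \det(xI+2A)\left(1-\mathbf 1^\transpose(xI+2A)^{-1}\mathbf 1\right),
\]
using the identity $\det(B-uv^\transpose)=\det(B)\,(1-v^\transpose B^{-1}u)$.

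Next I would identify the two factors as Laurent series in $x$. Rescaling the variable gives $\det(xI+2A) = (-2)^n\operatorname{Char}_A(-x/2)$, and expanding $\operatorname{Char}_A(-x/2)=\sum_{k=0}^n\mathsf c_k(\operatorname{Char}_A)(-x/2)^{n-k}$ and simplifying the powers of $-2$ yields $\det(xI+2A)=\sum_{k=0}^n(-2)^k\mathsf c_k(\operatorname{Char}_A)\,x^{n-k}$. For the other factor, the Neumann-type expansion $(xI+2A)^{-1}=\sum_{j\geqslant 0}(-2)^jA^jx^{-j-1}$, valid in $\mathbb Q((x^{-1}))$, gives $\mathbf 1^\transpose(xI+2A)^{-1}\mathbf 1=\sum_{j\geqslant 0}(-2)^j(\mathbf 1^\transpose A^j\mathbf 1)\,x^{-j-1}$; the manipulation is legitimate because, after multiplying through by $\det(xI+2A)$, the product is a priori the polynomial $\det(xI+2A)-\mathbf 1^\transpose\operatorname{adj}(xI+2A)\mathbf 1$.

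Finally I would multiply the two series and read off the coefficient of $x^{n-k}$. The summand $1$ contributes $(-2)^k\mathsf c_k(\operatorname{Char}_A)$, while matching $x^{n-m}\cdot x^{-j-1}=x^{n-k}$ forces $m+j+1=k$, so the remaining part contributes
\[
-\sum_{j=0}^{k-1}(-2)^{k-1-j}\mathsf c_{k-1-j}(\operatorname{Char}_A)\,(-2)^j\,\mathbf 1^\transpose A^j\mathbf 1 = -(-2)^{k-1}\sum_{j=0}^{k-1}\mathsf c_{k-1-j}(\operatorname{Char}_A)\,\mathbf 1^\transpose A^j\mathbf 1.
\]
Reindexing with $i=j+1$ and using $-(-2)^{k-1}=(-2)^k/2$ rewrites this as $\tfrac12(-2)^k\sum_{i=1}^k\mathsf c_{k-i}(\operatorname{Char}_A)\,\mathbf 1^\transpose A^{i-1}\mathbf 1$; adding the two contributions gives the stated formula. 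No step here is a real obstacle: the only care needed is the bookkeeping of the exponents of $-2$, and even the Laurent-series step can be bypassed by verifying the resulting polynomial identity for all sufficiently large real $x$ (where $xI+2A$ is genuinely invertible) and then invoking that two polynomials agreeing on infinitely many points coincide.
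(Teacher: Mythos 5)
Your argument is correct: the matrix determinant lemma applied to $(xI+2A)-\mathbf 1\mathbf 1^\transpose$, the identification $\det(xI+2A)=\sum_k(-2)^k\mathsf c_k(\operatorname{Char}_A)x^{n-k}$, and the Laurent expansion of $\mathbf 1^\transpose(xI+2A)^{-1}\mathbf 1$ combine exactly as you say, and your justification that the product is genuinely a polynomial (via the adjugate, or by evaluating at large real $x$) closes the only delicate point. This is essentially the same route as the cited source — the paper itself states the lemma as ``a consequence of the matrix determinant lemma'' and defers to \cite[Lemma 3.1]{GY19} — so no further comparison is needed.
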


The next result shows that the top two (resp.\ three) coefficients of the characteristic polynomial of a matrix in $\mathsf U_n$ (resp.\ $\mathsf S_n$ or $\mathsf T_n$) are determined by $n$.

\begin{lemma}
    \label{lem:first3coeffs}
    Let $M \in \mathsf U_n$.
    Then $\mathsf c_0(\operatorname{Char}_{M}) = 1$, $\mathsf c_{1}(\operatorname{Char}_{M}) = -n$, and $2^{k-1}$ divides $\mathsf c_{k}(\operatorname{Char}_{M})$ for each $k \in \{1,\dots,n\}$.
    Furthermore, if $M \in \mathsf S_n$ then $\mathsf c_{2}(\operatorname{Char}_{M}) = 0$, and if $M \in \mathsf T_n$ then $\mathsf c_{2}(\operatorname{Char}_{M}) = n(n-1)$.
\end{lemma}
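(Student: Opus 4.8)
The plan is to dispatch the four claims in turn: the first two are immediate, the divisibility claim follows from Lemma~\ref{lem:coefficient_link_formula}, and the value of $\mathsf c_2$ follows from a trace computation via the Newton-type identity \eqref{eqn:power_sum_elementary_symmetric_sum_relation}. First, $\operatorname{Char}_M(x) = \det(xI-M)$ is monic of degree $n$, so $\mathsf c_0(\operatorname{Char}_M) = 1$. Next, $\mathsf c_1(\operatorname{Char}_M) = -\operatorname{tr}(M)$, and since every matrix in $\mathsf U_n$ has all diagonal entries equal to $1$ we get $\operatorname{tr}(M) = n$, hence $\mathsf c_1(\operatorname{Char}_M) = -n$.

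For the divisibility claim, put $A := \tfrac12(J-M)$. As the off-diagonal entries of $M$ lie in $\{\pm1\}$ and the diagonal entries equal $1$, the matrix $A$ has entries in $\{0,1\}$ with zero diagonal; in particular $A$ is an integer matrix and $M = J - 2A$. Applying Lemma~\ref{lem:coefficient_link_formula} to $A$, for each $k \in \{1,\dots,n\}$ we obtain
\[
\mathsf c_k(\operatorname{Char}_M) = (-2)^k\left(\mathsf c_k(\operatorname{Char}_A) + \tfrac12\sum_{i=1}^{k}\mathsf c_{k-i}(\operatorname{Char}_A)\,\mathbf 1^\transpose A^{i-1}\mathbf 1\right) = (-1)^k 2^{k-1}\left(2\,\mathsf c_k(\operatorname{Char}_A) + \sum_{i=1}^{k}\mathsf c_{k-i}(\operatorname{Char}_A)\,\mathbf 1^\transpose A^{i-1}\mathbf 1\right).
\]
Since $A$ is an integer matrix, each $\mathsf c_{k-i}(\operatorname{Char}_A)$ is an integer and each $\mathbf 1^\transpose A^{i-1}\mathbf 1$ (the sum of the entries of $A^{i-1}$) is an integer, so the second bracketed factor is an integer; hence $2^{k-1}$ divides $\mathsf c_k(\operatorname{Char}_M)$. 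The only point requiring a little care is that the factor $\tfrac12$ produced by Lemma~\ref{lem:coefficient_link_formula} must be absorbed into $(-2)^k$ without leaving a half-integer, which works precisely because $A$ has integer entries.

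Finally, since $\mathsf c_0(\operatorname{Char}_M) = 1$ and $\mathsf p_1(\operatorname{Char}_M) = \operatorname{tr}(M) = n$, the identity \eqref{eqn:power_sum_elementary_symmetric_sum_relation} with $k = 2$ gives $\mathsf c_2(\operatorname{Char}_M) = \tfrac12\bigl(n^2 - \mathsf p_2(\operatorname{Char}_M)\bigr) = \tfrac12\bigl(n^2 - \operatorname{tr}(M^2)\bigr)$. Writing $\operatorname{tr}(M^2) = \sum_{i,j}[M]_{i,j}[M]_{j,i}$: if $M \in \mathsf S_n$ then $[M]_{i,j}[M]_{j,i} = [M]_{i,j}^2 = 1$ for all $i,j$, so $\operatorname{tr}(M^2) = n^2$ and $\mathsf c_2(\operatorname{Char}_M) = 0$; if $M \in \mathsf T_n$ then $[M]_{i,i}^2 = 1$ while $[M]_{i,j}[M]_{j,i} = -[M]_{i,j}^2 = -1$ for the $n(n-1)$ ordered pairs with $i \neq j$, so $\operatorname{tr}(M^2) = n - n(n-1) = 2n - n^2$ and $\mathsf c_2(\operatorname{Char}_M) = \tfrac12\bigl(n^2 - (2n-n^2)\bigr) = n(n-1)$. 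The lemma is entirely routine, so no step presents a genuine obstacle beyond the bookkeeping just noted.
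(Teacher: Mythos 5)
Your proof is correct and follows the same route as the paper: the divisibility of $\mathsf c_k(\operatorname{Char}_M)$ by $2^{k-1}$ via Lemma~\ref{lem:coefficient_link_formula} applied to the $\{0,1\}$-matrix $A=(J-M)/2$, and the value of $\mathsf c_2$ via $\operatorname{tr}(M^2)$ and the Newton identity \eqref{eqn:power_sum_elementary_symmetric_sum_relation}. You have merely written out the bookkeeping (absorbing the $\tfrac12$ into $(-2)^k$, and the computation $\operatorname{tr}(M^2)=n(2-n)$ in the skew-symmetric case) that the paper leaves implicit.
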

\begin{proof}
    Clearly, $A=(J-M)/2$ is a $\{0,1\}$-matrix.
    By Lemma~\ref{lem:coefficient_link_formula}, for each $k \in \{1,\dots,n\}$,
     $2^{k-1}$ divides $\mathsf c_{k}(\operatorname{Char}_{M})$.
     Finally, observe that $\operatorname{tr}(M) = n$ and
     \[
     \operatorname{tr}(M^2) = \begin{cases}
         n^2, & \text{ if } M \in \mathsf S_n; \\
         n(2-n), & \text{ if } M \in \mathsf T_n.
     \end{cases}
     \]
     The lemma then follows from \eqref{eqn:power_sum_elementary_symmetric_sum_relation}.
\end{proof}

If $M \in \mathsf T_n$ we can obtain a further relation between the coefficients of $\operatorname{Char}_M(x)$, which we highlight in the next lemma.

\begin{lemma}
    \label{lem:TcoeffRel}
    Let $M \in \mathsf T_n$.
    Then, for odd $k \in \{1,\dots,n\}$, we have 
        \begin{equation}
    \label{eqn:akeq}
        \mathsf c_k(\operatorname{Char}_M) = -\sum_{i=0}^{k-1}\binom{n-i}{n-k}\mathsf c_i(\operatorname{Char}_M).
    \end{equation}
    Furthermore, if in addition, $n$ is odd, then
    \begin{equation}
    \label{eqn:akeq2}
        (n-k+1)\mathsf c_{k-1}(\operatorname{Char}_M)+\sum_{i=0}^{k-2}\binom{n-i}{n-k}\mathsf c_i(\operatorname{Char}_M) \equiv 0 \mod {2^{k-1}}.
    \end{equation}
\end{lemma}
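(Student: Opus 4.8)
The key structural fact about a matrix $M \in \mathsf T_n$ is that $M - I$ is skew-symmetric, so $M = I + K$ where $K^\transpose = -K$. I would start from the observation that the eigenvalues of a skew-symmetric real matrix come in conjugate pairs $\pm i\lambda$ (plus zeros), so $\operatorname{Char}_K(x)$ is either even or $x$ times an even polynomial depending on the parity of $n$. Shifting, $\operatorname{Char}_M(x) = \operatorname{Char}_K(x-1)$, which means $\operatorname{Char}_M(x)$ and $(-1)^n\operatorname{Char}_M(2-x)$ have the same roots (the eigenvalues of $M$ are $1 \pm i\lambda$, and $2 - (1\pm i\lambda) = 1 \mp i\lambda$). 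Hence
\[
\operatorname{Char}_M(x) = (-1)^n \operatorname{Char}_M(2-x).
\]
This single functional equation is the source of both displayed identities. Expanding both sides in powers of $x$ and comparing coefficients will produce linear relations among the $\mathsf c_i(\operatorname{Char}_M)$.

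For the first part, write $\operatorname{Char}_M(x) = \sum_{i=0}^n \mathsf c_i x^{n-i}$ (suppressing the argument). Substituting $2-x$ and using the binomial theorem on $(2-x)^{n-i} = \sum_{j}\binom{n-i}{j}(-x)^{n-i-j}2^{j}$, the functional equation becomes, after collecting the coefficient of $x^{n-k}$ on the right,
\[
\mathsf c_k = (-1)^n \sum_{i=0}^{k}\binom{n-i}{k-i}2^{\,k-i}(-1)^{n-k}\mathsf c_i = (-1)^{k}\sum_{i=0}^{k}\binom{n-i}{n-k}2^{\,k-i}\mathsf c_i.
\]
For odd $k$ the $(-1)^k$ is $-1$, the $i=k$ term on the right is $2^0\mathsf c_k = \mathsf c_k$, and moving it to the left cancels, giving $2\mathsf c_k = -\sum_{i=0}^{k-1}\binom{n-i}{n-k}2^{\,k-i}\mathsf c_i$. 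To land exactly on \eqref{eqn:akeq} I need the factor $2^{\,k-i}$ to reduce to a single power of $2$ that cancels with the $2$ on the left; this works because of Lemma~\ref{lem:first3coeffs}: $2^{i-1} \mid \mathsf c_i$, so $2^{\,k-i}\mathsf c_i$ is divisible by $2^{k-1}$, and in fact one checks that after dividing the whole relation by $2$ the identity holds as stated in $\mathbb Z$ — more carefully, one should verify the combinatorial identity directly rather than invoking divisibility, since \eqref{eqn:akeq} is an exact equation. The cleanest route is: the relation $\mathsf c_k = (-1)^k\sum_{i=0}^k\binom{n-i}{n-k}2^{k-i}\mathsf c_i$ for odd $k$ rearranges to $2\mathsf c_k = \sum_{i=0}^{k-1}\binom{n-i}{n-k}2^{k-i}\mathsf c_i \cdot(-1)$... and then one uses the analogous even-index relations recursively to simplify. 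Actually the most robust approach is to take the functional equation and its derivative, or to argue inductively: assume \eqref{eqn:akeq} for all smaller odd indices and substitute. I would present it via the generating-function identity and then do the bookkeeping.

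For the second part (\eqref{eqn:akeq2}), when $n$ is additionally odd, $K$ is an odd-order skew-symmetric matrix, hence singular: $\det K = 0$, equivalently $\operatorname{Char}_M(1) = 0$. More is true — the reduction of $\operatorname{Char}_K$ modulo $2$ relates to a Pfaffian-type structure — but the clean input is $\operatorname{Char}_M(1) = 0$, i.e. $\sum_{i=0}^n \mathsf c_i = 0$, together with the functional equation which, being symmetric about $x = 1$ for odd $n$ (since $(-1)^n = -1$ forces $\operatorname{Char}_M(1)=0$ automatically), gives a relation on the derivative as well. Taking $\tfrac{d}{dx}$ of $\operatorname{Char}_M(x) = -\operatorname{Char}_M(2-x)$ and evaluating, combined with the exact relation from part one for index $k-1$ (note $k-1$ is even), should yield the congruence. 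The $\bmod\ 2^{k-1}$ weakening (rather than an exact identity) comes precisely from discarding the $2^{\,k-i}$-weighted terms with $i$ small, which are automatically divisible by $2^{k-1}$ by Lemma~\ref{lem:first3coeffs} — so \eqref{eqn:akeq2} is what survives of an exact even-index relation after reducing modulo $2^{k-1}$.

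The main obstacle is the second part: getting the modulus exactly $2^{k-1}$ right requires carefully tracking which binomial-weighted terms $2^{\,k-i}\mathsf c_i$ fall below the threshold $2^{k-1}$ and which do not, and confirming that the "boundary" terms assemble into precisely the left-hand side of \eqref{eqn:akeq2}. I expect one needs both the functional equation (to handle the top terms) and the singularity $\operatorname{Char}_M(1)=0$ (to handle a low-order term), and the interplay of these two facts modulo the relevant power of $2$ is the delicate point; the first part, by contrast, is a direct if slightly fiddly coefficient comparison.
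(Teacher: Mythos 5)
Your structural starting point is exactly the paper's: $M-I$ is skew-symmetric, so $\operatorname{Char}_M(x+1)=\operatorname{Char}_{M-I}(x)$ has vanishing odd-indexed coefficients. But your execution of part one stalls, and the stall is avoidable. By expanding the functional equation $\operatorname{Char}_M(x)=(-1)^n\operatorname{Char}_M(2-x)$ in powers of $x$ (i.e.\ around $0$ rather than around the centre of symmetry), you pick up the extraneous factors $2^{k-i}$ and land on $2\mathsf c_k=-\sum_{i=0}^{k-1}\binom{n-i}{n-k}2^{k-i}\mathsf c_i$, which is \emph{not} \eqref{eqn:akeq}; you then concede that recovering the exact identity would need "recursion" or "bookkeeping" that you never carry out. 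The fix is to expand where the symmetry lives: with $S=M-I$ one has $\mathsf c_k(\operatorname{Char}_S)=\sum_{i=0}^{k}\binom{n-i}{n-k}\mathsf c_i(\operatorname{Char}_M)$ directly from $\operatorname{Char}_S(x)=\operatorname{Char}_M(x+1)$, and $\mathsf c_k(\operatorname{Char}_S)=0$ for odd $k$ because $\operatorname{Char}_S=\operatorname{Char}_{-S}$. Splitting off the $i=k$ term gives \eqref{eqn:akeq} exactly, with no powers of $2$ and no recursion.

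For part two your plan is off the mark. You reach for $\det(M-I)=0$ and derivatives of the functional equation; neither is needed, neither is carried through to \eqref{eqn:akeq2}, and the "exact relation from part one for index $k-1$" you invoke does not exist, since \eqref{eqn:akeq} holds only for odd indices. In fact \eqref{eqn:akeq2} is just \eqref{eqn:akeq} read modulo $2^{k-1}$: the left-hand side $\mathsf c_k(\operatorname{Char}_M)$ is divisible by $2^{k-1}$ by Lemma~\ref{lem:first3coeffs}, and the $i=k-1$ term of the sum is $\binom{n-k+1}{n-k}\mathsf c_{k-1}=(n-k+1)\mathsf c_{k-1}$. So the modulus comes from the divisibility of $\mathsf c_k$ itself, not, as you suggest, from discarding small-$i$ terms weighted by $2^{k-i}$. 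Once part one is established in the exact form \eqref{eqn:akeq}, part two is a one-line consequence.
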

\begin{proof}
    Let $S = M-I$.
    Then $S^\transpose = -S$.
    Furthermore,
    \[
    \operatorname{Char}_{S}(x) = \det(xI - S) = \det(xI - S^\transpose) = \det(xI + S) = \operatorname{Char}_{-S}(x).
    \]
    It follows that $\mathsf c_k(\operatorname{Char}_S) = 0$ for each odd $k$.
    On the other hand, since  $\operatorname{Char}_{S}(x) = \operatorname{Char}_{M}(x+1)$, we have $\mathsf c_k(\operatorname{Char}_S) = \sum_{i=0}^{k}\binom{n-i}{n-k}\mathsf c_i(\operatorname{Char}_M)$.
    Whence, \eqref{eqn:akeq} follows.
    By Lemma~\ref{lem:first3coeffs}, we have that $2^{k-1}$ divides $\mathsf c_k(\operatorname{Char}_M)$, from which we obtain \eqref{eqn:akeq2}.
\end{proof}

We will have cause to consider both undirected and directed graphs, as well as tournaments.
A directed graph (or \textbf{digraph}) $\Gamma$ is a pair $(X,R)$ where $X$ is a set of \textit{vertices} and $R$ is a subset of $X \times X$ consisting of \textit{arcs}.
The cardinality of $X$ is called the \textbf{order} of $\Gamma$.
If $(i,j) \in R$ then we say that there is an \textbf{arc} from $i$ to $j$ in $\Gamma$. 
Given a digraph $\Gamma = (X,R)$, we denote the set of vertices and arcs by $V(\Gamma) = X$ and $R(\Gamma) = R$, respectively.
The adjacency matrix $A(\Gamma)$ of $\Gamma = (X,R)$ is a $\{0,1\}$-matrix whose rows/columns are indexed by $X$ where $(i,j)$-entry is equal to $1$ if there is an arc from $i$ to $j$ in $\Gamma$.

An \textbf{undirected graph} is a digraph $(X,R)$ where $(i,j) \in R$ if and only if $(j,i) \in R$ and a \textbf{tournament} is a digraph $(X,R)$ where $(i,j) \in R$ if and only if $(j,i) \not \in R$.
A digraph $(X,R)$ is called \textbf{simple} if $(i,i) \not \in R$ for all $i \in X$. 
For the sake of brevity, we refer to an \textit{undirected, simple} graph as merely a \textbf{graph}.
The disjoint union of two digraphs $(X_1,R_1)$ and $(X_2,R_2)$ is defined as $(X_1,R_1) \cup (X_2,R_2) := (X_1 \cup X_2,R_1 \cup R_2)$.
For a positive integer $d$ and a digraph $\Gamma$, by $d\Gamma$, we mean the disjoint union of $d$ copies of $\Gamma$.
If $d > 1$, then $d\Gamma$ cannot be a tournament.
Hence, in the setting of tournaments (see Section~\ref{sec:lt}), we reserve this notation for the ($d$-fold) \emph{join} of a tournament with itself.

Next, we state a result of Harary and Schwenk~\cite[Corollary 5a]{HararySchwenk1979}, which was rediscovered in \cite[Lemma 2.2 and Lemma 2.3]{GY19}.
Let $\varphi(\cdot)$ denote Euler's totient function.

\begin{theorem}
\label{thm:burnside}
    Let $\Gamma$ be an (undirected, simple) graph and $N \geqslant 3$ be an integer.
    Then
        \[
    \sum_{d\,|\,N} \varphi(N/d) \operatorname{tr}(A(\Gamma)^d) \equiv \begin{cases}
        0, & \text{ if $N$ is odd;} \\
        -\frac{N}{2} \mathbf 1^\transpose A(\Gamma)^{N/2} \mathbf 1, & \text{ if $N$ is even}
    \end{cases} \mod {2N}.
        \]
\end{theorem}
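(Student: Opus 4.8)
The plan is to exhibit the left-hand side as part of a Burnside (orbit-counting) computation for the action of the \emph{dihedral} group of order $2N$ on the set of closed walks of length $N$ in $\Gamma$. The point is that passing from the cyclic group of order $N$ (which only yields the trivial divisibility by $N$, since the number of necklaces is an integer) to the full dihedral group of order $2N$ is exactly what promotes the divisibility to $2N$ and, for even $N$, generates the correction term $-\tfrac{N}{2}\,\mathbf 1^\transpose A(\Gamma)^{N/2}\mathbf 1$.

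Write $A = A(\Gamma)$ and let $W$ be the set of maps $f\colon \mathbb Z/N\mathbb Z \to V(\Gamma)$ with $f(i)$ adjacent to $f(i+1)$ for all $i$, so that $|W| = \operatorname{tr}(A^N)$. The dihedral group $D$ of order $2N$ acts on $\mathbb Z/N\mathbb Z$ by the rotations $i\mapsto i+k$ and the reflections $i\mapsto c-i$; since adjacency in $\Gamma$ is symmetric, precomposition induces an action of $D$ on $W$. Burnside's lemma guarantees that $\tfrac{1}{2N}\sum_{g\in D}|\operatorname{Fix}(g)|$ is a non-negative integer, so it suffices to evaluate $\sum_{g\in D}|\operatorname{Fix}(g)|$ and to observe that $2N$ divides it. For the rotation $i\mapsto i+k$, an $f\in W$ is fixed exactly when it is periodic with period $d:=\gcd(k,N)$, and such $f$ correspond bijectively to closed walks of length $d$; hence this rotation fixes $\operatorname{tr}(A^{d})$ elements of $W$. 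Collecting the $\varphi(N/d)$ rotations with $\gcd(k,N)=d$, the rotations contribute $\sum_{d\mid N}\varphi(N/d)\operatorname{tr}(A^d)$ to the Burnside sum.

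It remains to handle the reflections, and this is where loop-freeness of $\Gamma$ and the parity of $N$ enter. A reflection $i\mapsto c-i$ of $\mathbb Z/N\mathbb Z$ has two fixed positions $a,a+N/2$ when $N$ is even and $c$ is even, one fixed position when $N$ is odd, and none when $N$ is even and $c$ is odd. In the last two cases one checks that the reflection always interchanges some pair of cyclically adjacent positions, so a fixed $f$ would map two adjacent positions to the same vertex, which is impossible in a simple (loopless) graph; hence such reflections fix nothing. In the two-fixed-point case ($N$ even, $c$ even, of which there are exactly $N/2$), a fixed $f$ is determined freely by its restriction to the positions $a,a+1,\dots,a+N/2$ subject only to consecutive adjacency, i.e.\ by a walk of length $N/2$, so it fixes $\mathbf 1^\transpose A^{N/2}\mathbf 1$ elements of $W$. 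Therefore $\sum_{g\in D}|\operatorname{Fix}(g)| = \sum_{d\mid N}\varphi(N/d)\operatorname{tr}(A^d)+\varepsilon$, where $\varepsilon=0$ for $N$ odd and $\varepsilon=\tfrac{N}{2}\mathbf 1^\transpose A^{N/2}\mathbf 1$ for $N$ even; since $2N$ divides this quantity, rearranging gives the asserted congruence modulo $2N$.

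The one step requiring genuine care is the reflection analysis: one must correctly count the fixed positions of each reflection (this is the only place the parity of $N$ is used), verify that in the $0$- and $1$-fixed-point cases a pair of neighbouring positions is genuinely swapped so that the absence of loops forces zero fixed walks, and set up the bijection in the $2$-fixed-point case with walks of length $N/2$. The rotation bookkeeping, by contrast, is the routine cyclic-sieving count, and the conclusion is then immediate from Burnside's lemma.
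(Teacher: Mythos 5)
Your proof is correct, and it is essentially the argument behind the cited result: the paper does not prove Theorem~\ref{thm:burnside} itself but attributes it to Harary--Schwenk (and to \cite{GY19}), whose derivation is precisely this Burnside count of closed $N$-walks under the dihedral group of order $2N$, with rotations giving the sum $\sum_{d\mid N}\varphi(N/d)\operatorname{tr}(A^d)$ and the reflection analysis (loop-freeness killing the $0$- and $1$-fixed-point reflections, the $N/2$ even-centre reflections each fixing $\mathbf 1^\transpose A^{N/2}\mathbf 1$ walks) producing the correction term. Your reflection bookkeeping checks out in all three cases, so there is nothing to add.
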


A \textbf{walk} of length $k$ in a digraph $\Gamma$ is sequence of vertices $x_0,x_1,\dots,x_k$ such that $(x_i,x_{i+1})$ is an arc in $\Gamma$ for all $i \in \{0,\dots,k-1\}$.
A walk is called \textbf{closed} if $x_0 = x_k$.
Throughout, we will frequently use the basic fact from algebraic graph theory~\cite[Proposition 1.3.1]{bh} that $\left [A(\Gamma)^k\right ]_{u,v}$ is equal to the number of walks in $\Gamma$ of length $k$ from vertex $u$ to vertex $v$.
Clearly, $\mathbf 1^\transpose A(\Gamma)^{0} \mathbf 1$ is equal to the order of $\Gamma$.
Furthermore, we will require the following proposition.

\begin{proposition}
\label{pro:evensumwalks}
Let $\Gamma$ be an (undirected, simple) graph and $k \in \mathbb N \backslash \{0\}$.
Then
    $\mathbf 1^\transpose A(\Gamma)^{k} \mathbf 1$ is even.
\end{proposition}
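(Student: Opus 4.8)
The plan is to read $\mathbf 1^\transpose A(\Gamma)^k \mathbf 1$ combinatorially as the total number of walks of length $k$ in $\Gamma$, summed over all ordered pairs of endpoints, and then to exploit a parity-preserving symmetry of that walk set. The natural symmetry is walk reversal: since $\Gamma$ is undirected, the map sending $x_0 x_1 \cdots x_k$ to $x_k x_{k-1} \cdots x_0$ is a well-defined involution on the set of walks of length $k$.

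First I would dispose of the case of odd $k$. For odd $k$ this involution is fixed-point-free: a fixed walk would have to satisfy $x_i = x_{k-i}$ for all $i$, and in particular $x_{(k-1)/2} = x_{(k+1)/2}$; but those two vertices are consecutive in the walk and hence adjacent in $\Gamma$, contradicting simplicity (no loops). Hence for odd $k$ the walks pair up and $\mathbf 1^\transpose A(\Gamma)^k \mathbf 1$ is even. This already covers $k=1$, where the count is simply $2|E(\Gamma)|$.

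For even $k = 2m$ the involution does have fixed points (the palindromic walks), so instead I would argue by induction on $k$, reducing to the odd case. Using that $A := A(\Gamma)$ is symmetric, $\mathbf 1^\transpose A^{2m}\mathbf 1 = (A^m\mathbf 1)^\transpose (A^m\mathbf 1) = \sum_{v \in V(\Gamma)} \bigl([A^m\mathbf 1]_v\bigr)^2$, and since $t^2 \equiv t \pmod 2$ for every integer $t$, this sum is congruent modulo $2$ to $\sum_v [A^m\mathbf 1]_v = \mathbf 1^\transpose A^m\mathbf 1$, which is even by the inductive hypothesis (as $1 \leqslant m < k$, with $m$ handled either by the odd-$k$ case or by a further halving). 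This closes the induction. I do not anticipate a genuine obstacle: the only point requiring care is that the odd-$k$ reversal argument is truly fixed-point-free — which is exactly where simplicity of $\Gamma$ enters — and that the even case be organised as this short induction rather than by attempting a direct (and unnecessary) count of palindromic walks.
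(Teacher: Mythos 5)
Your proof is correct, but it takes a genuinely different route from the paper. The paper's argument observes that, since $A(\Gamma)$ is symmetric, the off-diagonal entries of $A(\Gamma)^k$ pair up, so $\mathbf 1^\transpose A(\Gamma)^k\mathbf 1 \equiv \operatorname{tr}(A(\Gamma)^k) = \mathsf p_k(\operatorname{Char}_{A(\Gamma)}) \pmod 2$; it then shows this power sum is even by induction on $k$ via Newton's identities \eqref{eqn:power_sum_elementary_symmetric_sum_relation}, quoting the external fact \cite[Corollary 3.7]{GY19} that $\mathsf c_k(\operatorname{Char}_{A(\Gamma)})$ is even for odd $k$. You instead argue entirely combinatorially: for odd $k$ the walk-reversal involution is fixed-point-free (the midpoint condition $x_{(k-1)/2}=x_{(k+1)/2}$ would force a loop, which simplicity forbids), and for even $k=2m$ you use symmetry of $A$ to write $\mathbf 1^\transpose A^{2m}\mathbf 1 = \sum_v\bigl([A^m\mathbf 1]_v\bigr)^2 \equiv \mathbf 1^\transpose A^m\mathbf 1 \pmod 2$ and recurse down to the odd case; both steps are sound and the recursion terminates since repeated halving of $k\geqslant 1$ reaches an odd integer. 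Your version has the advantage of being self-contained and elementary, needing neither Newton's identities nor the cited coefficient parity from~\cite{GY19}; the paper's version is shorter in context because that machinery is already set up and used elsewhere. Note that both proofs use simplicity and undirectedness in essentially the same places (no loops for the odd case, symmetry of $A$ for the even/trace reduction), so neither is more general than the other.
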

\begin{proof}
    It suffices to show that $\operatorname{tr}(A(\Gamma)^k) = \mathsf p_k\left ( \operatorname{Char}_{A(\Gamma)}\right)$ is even, which can be done by a simple induction on $k$ using \eqref{eqn:power_sum_elementary_symmetric_sum_relation} together with the fact \cite[Corollary 3.7]{GY19} that $\mathsf c_k\left ( \operatorname{Char}_{A(\Gamma)}\right)$ is even when $k$ is odd.
\end{proof}

\section{Proof of Theorem~\ref{thm:m1} and summary of approach}
\label{sec:summary}

\subsection{Proof of Theorem~\ref{thm:m1}}
\label{sec:proofm1}

We begin by establishing an upper bound on $|\mathcal C_e(\mathsf U_n)|$.

\begin{lemma}
\label{lem:Uub}
    Let $e \in \mathbb N$.
    Then, for all $n \in \mathbb N$, we have
    \[
    |\mathcal C_e(\mathsf U_n)| \leqslant 2^{\binom{e}{2}}.
    \]
\end{lemma}
\begin{proof}
Let $M \in \mathsf U_n$.
We count the number of possible congruence classes of each coefficient $\mathsf c_i(\operatorname{Char}_M)$ modulo $2^e$.
    By Lemma~\ref{lem:first3coeffs}, the coefficient $\mathsf c_0(\operatorname{Char}_M) = 1$, $\mathsf c_1(\operatorname{Char}_M) = -n$, and $2^{j-1}$ divides $\mathsf c_j(\operatorname{Char}_M)$ for each $j \in \{1,\dots,n\}$.
    Thus, for $k \in \{2,\dots,e\}$ there are $2^{e-k+1}$ possible congruence classes for $\mathsf c_k(\operatorname{Char}_M)$ modulo $2^e$.
    For $k \geqslant e+1$, there is just one possible congruence class for $\mathsf c_k(\operatorname{Char}_M)$ modulo $2^e$.
    Hence, there are at most $2 \cdot 2^2 \dots \cdot 2^{e-1}=2^{\binom{e}{2}}$ possible congruence classes for $\operatorname{Char}_{M}(x)$ modulo $2^e$.
\end{proof}

It suffices to prove the corresponding lower bound for $|\mathcal C_e(\mathsf U_n)|$.
We denote by $\vec{P}_n$ the digraph $(X,R)$ with vertex set $X = \{1,\dots,n\}$ where $R = \{(i,i+1) \; : \; i \in \{1,\dots,n-1\} \}$.

\begin{proof}[Proof of Theorem~\ref{thm:m1}]
By Lemma~\ref{lem:Uub}, it suffices to show $|\mathcal C_e(\mathsf U_n)| \geqslant 2^{\binom{e}{2}}$ for $n$ large enough.
For each $f \in \{1,\dots,e\}$, define the digraph $\Gamma_f = (2^e-1)\vec{P}_{f-1} \cup \vec{P}_{f-1+2^e}$.
    It is clear that $\operatorname{Char}_{A(\Gamma_f)}(x) = x^{2^e f}$.
    Furthermore, for each $k \in \mathbb N$, we have
    \[
    \mathbf 1^\transpose A(\Gamma_f)^k \mathbf 1 \equiv \mathbf 1^\transpose A(\vec{P}_{f-1+2^e})^k \mathbf 1 - \mathbf 1^\transpose A(\vec{P}_{f-1})^k \mathbf 1 \mod{2^e}.
    \]
    Observe that 
    \[
    \mathbf 1^\transpose A(\vec{P}_{n})^k \mathbf 1 = \begin{cases}
        n - k, & \text{ if $k \leqslant n$}; \\
        0, & \text{ if $k \geqslant n$}.
    \end{cases}
    \]
    Hence, for each $k \in \{0,1,\dots,e\}$,
\begin{equation}
\label{eqn:terms}
    \mathbf 1^\transpose A(\Gamma_f)^k \mathbf 1 \equiv \begin{cases}
        0  & \text{ if } k<f;\\
        f-k-1  & \text{ if } k \geqslant f
    \end{cases} \mod{2^{e}}.
\end{equation}

    We choose $N_e=2^{2e}\cdot \binom{e+1}{2}$. 
    It suffices to show that for $n>N_e$ and $(c_2,c_3,\dots,c_{e}) \in \{1,\dots,2^e\}^{e-1}$, there exists an $n$-vertex digraph $\Gamma$ such that, for each $k \in \{2,3,\dots,e\}$, we have $\mathsf c_k(\operatorname{Char}_{J-2A(\Gamma)}) \equiv 2^{k-1}c_k \mod {2^e}$.
    
    Take $(d_1,d_2,\dots,d_{e-1}) \in \{1,\dots,2^e\}^{e-1}$ such that, for each $i \in \{2,\dots,e\}$, we have
    $$d_{i-1}+2d_{i-2}+3d_{i-3}+\dots+(i-1)d_1\equiv (-1)^{i-1}c_{i}\mod{2^e}.$$
    Let
    \[
    \Gamma=d_1\Gamma_1\cup d_2\Gamma_2\cup \dots \cup d_{e-1}\Gamma_{e-1} \cup (n-2^e\sum_{f=1}^{e-1} f\cdot d_f)\vec{P}_1.
    \]
    Clearly, we have $\operatorname{Char}_{A(\Gamma)}(x)=x^n$.
    By Lemma~\ref{lem:coefficient_link_formula}, for each $k \in \{2,\dots,e\}$,
        \begin{align}
        \label{eqn:arIII}
            \mathsf c_k(\operatorname{Char}_{J-2A(\Gamma)}) &= 2^{k-1}(-1)^{k} \sum_{i=1}^{e-1} d_i \mathbf 1^\transpose A(\Gamma_i)^{k-1} \mathbf 1.
        \end{align}
    Using \eqref{eqn:terms}, \eqref{eqn:arIII} becomes
           \begin{align*}
          \mathsf c_k(\operatorname{Char}_{J-2A(\Gamma)})  &\equiv 2^{k-1}(-1)^k\sum_{i=1}^{k-1}d_i(i-k) \equiv 2^{k-1}c_k\mod{2^e},
        \end{align*}
        as required.
\end{proof}

Note that, in the proof of Theorem~\ref{thm:m1}, for each tuple $\left ( \overline{a_2}^{(e)},\overline{a_3}^{(e)}, \dots, \overline{a_{e}}^{(e)} \right ) \in \mathcal C_e(\mathsf U_n)$ we were able to directly find a digraph $\Gamma$ such that $\mathsf c_k(\operatorname{Char}_{J-2A(\Gamma)}) \equiv a_k$ for each $k \in \{2,\dots,e\}$.
This worked since we can find digraphs with at least one arc whose characteristic polynomial is a monomial.
For the cases of $\mathsf S_n$ and $\mathsf T_n$, however, this does not work.
Instead, our approach is to take a tuple in $\mathcal C_e(\mathsf M_n)$ and ``lift'' it to tuples in $\mathcal C_{e+1}(\mathsf M_{n^\prime})$ for some $n^\prime > n$.

\subsection{Summary of our approach}
\label{sec:sumapp}

In this section, we give an overview of our approach for establishing the requisite lower bounds.
The techniques required to establish the upper bounds have been covered in \cite{GY19}.

To illustrate the idea, we provide a simple example.
\begin{example}
\label{ex:P2P2}
    Suppose $e,n \in \mathbb N$ with $e \geqslant 3$, $n$ odd, and $\left ( \overline{a_2}^{(e)},\overline{a_3}^{(e)},\dots,\overline{a_e}^{(e)} \right ) \in \mathcal C_e(\mathsf S_n)$. 
    Let $\Gamma$ be an $n$-vertex graph such that $c_k = \mathsf c_k(\operatorname{Char}_{J-2A(\Gamma)}) \equiv a_k \mod 2^e$ for each $k \in \{2,\dots,e\}$.
    It is straightforward to show that, for some integer $c_{e+1}$,
    \[
\left ( \overline{0}^{(e+1)},\overline{c_3}^{(e+1)},\dots,\overline{c_e}^{(e+1)},\overline{c_{e+1}}^{(e+1)} \right ) \in \mathcal C_{e+1}(\mathsf S_{n+2^e}).
\]
Indeed, one can use the matrix $J-2A(\Gamma \cup 2^e\vec{P}_1)$.

    Now, consider the graph $\Lambda = 2^{e-2}P_2$ where $P_2 = (\{1,2\},\{(1,2),(2,1)\})$.
    Observe that 
    $\mathbf 1^\transpose A(\Lambda)^k \mathbf 1 = 2^{e-1}$ for all $k \in \mathbb N \cup \{0\}$.
    Furthermore, since $\operatorname{Char}_{A(\Lambda)}(x) = (x^2-1)^{2^{e-2}}$, it follows that, for each $k \in \{1,\dots,e\}$, we have
    \begin{equation}
    \label{eqn:f2}
        \mathsf c_k(\operatorname{Char}_{A(\Lambda)}) \equiv \begin{cases}
        0 , &  \text{ if } k \ne 2;\\
        2^{e-2}, & \text{ if } k=2
    \end{cases} \mod{2^{e+1-k}}.
    \end{equation}

    Let $\Gamma^\prime = \Gamma \cup \Lambda$.
    Then clearly, for each  $k \in \mathbb N \cup \{0\}$, we have $\mathbf 1^\transpose A(\Gamma^\prime)^k \mathbf 1 = \mathbf 1^\transpose A(\Gamma)^k \mathbf 1 + 2^{e-1}$.
    Furthermore, $\operatorname{Char}_{\Gamma^\prime}(x)=\operatorname{Char}_{\Gamma}(x)\operatorname{Char}_{\Lambda}(x)$, whence, for each $k \in \{1,\dots,e\}$,
    \begin{align*}
    \mathsf c_k(\operatorname{Char}_{A(\Gamma^\prime)}) &=\sum_{j=0}^k\mathsf c_j(\operatorname{Char}_{A(\Lambda)})\mathsf c_{k-j}(\operatorname{Char}_{A(\Gamma)}).
    \end{align*}
Combining with \eqref{eqn:f2}, we find that, for each $k \in \{1,\dots,e\}$,
    \[
    (-2)^k\mathsf c_k(\operatorname{Char}_{A(\Gamma^\prime)}) \equiv \begin{cases}
         (-2)^k\mathsf c_k(\operatorname{Char}_{A(\Gamma)}) , & \text{if } k\ne 2;\\
         (-2)^k\mathsf c_k(\operatorname{Char}_{A(\Gamma)})+2^e, & \text{if } k=2
    \end{cases} \mod{2^{e+1}}.
    \]
    
Now, using Lemma~\ref{lem:coefficient_link_formula} and Proposition~\ref{pro:evensumwalks}, for each $k \in \{3,\dots,e\}$, we have
\begin{align*}
\mathsf c_{k}(\operatorname{Char}_{J-2A(\Gamma^\prime)})&=(-2)^k \left (\mathsf c_{k}(\operatorname{Char}_{A(\Gamma^\prime)})+\frac{1}{2}\sum_{i=1}^{k}\mathsf c_{k-i}(\operatorname{Char}_{A(\Gamma^\prime)}) \mathbf 1^\transpose A(\Gamma^\prime)^{i-1}\mathbf 1 \right ) \\
&\equiv   (-2)^k \left (\mathsf c_{k}(\operatorname{Char}_{A(\Gamma)})+\frac{1}{2}\sum_{i=1}^{k}\mathsf c_{k-i}(\operatorname{Char}_{A(\Gamma)}) \mathbf 1^\transpose A(\Gamma)^{i-1}\mathbf 1 \right ) + 2^{e}\mathbf 1^\transpose A(\Gamma)^{k-3}\mathbf 1 \mod{2^{e+1}} \\
    &\equiv \begin{cases}
        \mathsf c_{k}(\operatorname{Char}_{J-2A(\Gamma)}), &  \text{ if } k \ne 3;\\
        \mathsf c_{k}(\operatorname{Char}_{J-2A(\Gamma)})+2^{e}, & \text{ if } k=3
    \end{cases} \mod{2^{e+1}}.
\end{align*}
As a consequence, we see that for some integer $c_{e+1}^\prime$,
\[
\left ( \overline{0}^{(e+1)},\overline{c_3+2^e}^{(e+1)},\dots,\overline{c_e}^{(e+1)}, \overline{c_{e+1}^\prime}^{(e+1)} \right ) \in \mathcal C_{e+1}(\mathsf S_{n+2^e}).
\]
\end{example}

Example~\ref{ex:P2P2} demonstrates that there exist certain graphs that one can use to lift tuples from $\mathcal C_e(\mathsf M_n)$ to $\mathcal C_{e+1}(\mathsf M_{n+2^e})$ in a controlled manner in the sense that, after lifting to modulo $2^{e+1}$, only the congruence class of the third coefficient was shifted by $2^e$.
One consequence of Example~\ref{ex:P2P2} is that the cardinality of $\mathcal C_{e+1}(\mathsf M_{n+2^e})$ is at least twice that of $\mathcal C_{e}(\mathsf M_{n})$.
Below, we will develop this approach to show that, for some $n^\prime > n$, the cardinality of $\mathcal C_{e+1}(\mathsf M_{n^\prime})$ must be a larger multiple that of $\mathcal C_{e}(\mathsf M_{n})$
The graph $\Lambda$ in Example~\ref{ex:P2P2} motivates the definition of a ``lift graph", which we define below.

Our approach is inductive.
Consider a subset $\mathsf M_n \subset \mathsf{Mat}_n(\mathbb Z)$.
For each tuple $\left ( \overline{a_2}^{(e)},\overline{a_3}^{(e)}, \dots, \overline{a_{e}}^{(e)} \right ) \in \mathcal C_e(\mathsf M_n)$, there must exist $M \in \mathsf M_n$ such that $\mathsf c_k(\operatorname{Char}_M) \equiv a_k \mod {2^e}$ for all $k \in \{2,3,\dots,e\}$.
Our strategy is to ``lift'' the matrix $M$ to a larger corresponding matrix $M^\prime$ where $\mathsf c_k(\operatorname{Char}_{M^\prime}) \equiv a_k + \varepsilon_k 2^e \mod {2^{e+1}}$ for each $k \in \{2,3,\dots,e\}$ where $\varepsilon_k \in \{0,1\}$.
The possible values for $\varepsilon_k$ will depend on the set of matrices $\mathsf M_n$ that we are considering, i.e., for some $k \in \{2,3,\dots,e\}$ the corresponding value of $\varepsilon_k$ may always equal $0$.

In order to find this ``lifting" matrix $M^\prime$, we introduce the notion of a lift graph (resp.\ lift tournament).
We will demonstrate that lift graphs and lift tournaments exhibit the desired lifting effect on the coefficients of the resulting characteristic polynomials, and we will also need to show that such digraphs exist.

\section{Lift graphs}
\label{sec:liftgraph}

In this section, we define two types of lift graphs and provide constructions of families of these graphs.
First, we define lift graphs of type I and type II.

\subsection{Definitions}

For positive integers $e$ and $f$, we call a graph $\Gamma$ an $(e,f)$-\textbf{lift graph of type I} if 
\begin{enumerate}
    \item[(L1a)] $2^{e-2}$ divides $\mathbf 1^\transpose A(\Gamma)^k \mathbf 1$ for each $k \in \mathbb N \cup \{0\}$;
    \item[(L1b)] for each $k \in \{1,\dots,e-1\}$,
    \[
\mathsf c_k(\operatorname{Char}_{A(\Gamma)}) \equiv \begin{cases}
        0 , &  \text{ if } k \ne f;\\
        2^{e-f-1}, & \text{ if } k=f
    \end{cases} \mod{2^{e-k}}.
\]
\end{enumerate}

\begin{remark}
    \label{rem:K2K2}
    We remark that the graph $2P_2$ (as in Example~\ref{ex:P2P2}) is a $(4,2)$-lift graph of type I.
    Indeed,  $\operatorname{Char}_{A(2P_2)} = x^4-2x^2+1$ and $\mathbf 1^\transpose A(2P_2)^k \mathbf 1 = 2\mathbf 1^\transpose A(P_2)^k \mathbf 1 =4$ for all $k \in \mathbb N \cup \{0\}$.
\end{remark}

For $e \in \mathbb N$, we call a graph $\Gamma$ an $e$-\textbf{lift graph of type II} if 
\begin{enumerate}
    \item[(L2a)] for each $k \in \{0,1,\dots,e\}$,
\[
\mathbf 1^\transpose A(\Gamma)^k \mathbf 1 \equiv \begin{cases}
        0,  & \text{ if } k \ne e;\\
        2,   & \text{ if } k=e
    \end{cases} \mod{2^{e+2-k}};
\]
\item[(L2b)] $2^{e+2-k}$ divides $\mathsf c_{k}(\operatorname{Char}_{A(\Gamma)})$ for each $k \in \{1,2,\dots,e+1\}$.
\end{enumerate}

\subsection{Lift graphs of type I}

Remark~\ref{rem:K2K2} shows that there exist $(4,2)$-lift graphs of type I.
Before we can provide a construction of $(f+2,f)$-lift graphs of type I for $f \geqslant 3$, we establish a formula for the trace of powers of the adjacency matrix of a cycle graph $C_n$, which is defined as
$C_n = \left (\mathbb Z/n\mathbb Z, \{ (i,i+1) \; : \; i \in \mathbb Z/n\mathbb Z \} \cup \{ (i,i-1) \; : \; i \in \mathbb Z/n\mathbb Z \} \right )$.

\begin{proposition}[cf.~{\cite[Proposition 2.3.]{dsouza2013combinatorial}}]\label{pro:trace:k_not_equal_n}
Let $n \geqslant 3$ be an integer.
Then, for each $k \in \{1,2,\dots,n+1\}$, we have
    \[
    \operatorname{tr}(A(C_n)^k) =
    \begin{cases}
    0, & \text{if } n \ne k \text{ is odd}; \\
    n{\binom{k}{\frac{k}{2}}} , & \text{if } n \ne k \text{ is even}; \\
    2n, & \text{if } n = k \text{ is odd}; \\
    n{\binom{n}{\frac{n}{2}}} +2n, & \text{if } n = k \text{ is even}.
    \end{cases}
    \]
\end{proposition}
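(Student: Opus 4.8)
The plan is to compute the number of closed walks of length $k$ in the cycle $C_n$ directly by a combinatorial/counting argument, since $\operatorname{tr}(A(C_n)^k)$ counts exactly the closed walks of length $k$ in $C_n$. Think of a closed walk of length $k$ starting and ending at a fixed vertex of $\mathbb Z/n\mathbb Z$ as a sequence of $k$ steps, each $+1$ or $-1$, whose total displacement is $\equiv 0 \pmod n$. If $a$ is the number of $+1$ steps and $b$ the number of $-1$ steps, then $a+b = k$ and $a - b \equiv 0 \pmod n$, i.e. $k - 2b \equiv 0 \pmod n$. Summing over the $n$ choices of starting vertex gives $\operatorname{tr}(A(C_n)^k) = n \sum_{b} \binom{k}{b}$, where the sum ranges over $b \in \{0,1,\dots,k\}$ with $k \equiv 2b \pmod n$.

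Next I would analyse this sum in the range $1 \le k \le n+1$. Write $k - 2b = n t$ for $t \in \mathbb Z$. Since $0 \le b \le k \le n+1$, we have $-2(n+1) \le -2b \le 0$, so $k - 2(n+1) \le nt \le k$; combined with $k \le n+1$ this forces $t \in \{-2,-1,0,1\}$ in the extreme cases and usually just a couple of values. I would split into cases:
\begin{itemize}
\item[(i)] If $k < n$: then $k = 2b + nt$ with $|nt| \le k < n$ forces $t = 0$, so the only contribution is from $b = k/2$, which requires $k$ even. This gives $0$ if $k$ odd and $n\binom{k}{k/2}$ if $k$ even.
\item[(ii)] If $k = n$: then $t = 0$ gives $b = n/2$ (only when $n$ even), and $t = \pm 1$ gives $2b = k \mp n$, i.e. $b = 0$ (from $t=1$) or $b = n$ (from $t=-1$); each contributes $\binom{n}{0} = \binom{n}{n} = 1$. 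So for $n$ odd we get $n(1+1) = 2n$, and for $n$ even we get $n(\binom{n}{n/2} + 2)$.
\item[(iii)] If $k = n+1$: then $t=0$ needs $n+1$ even i.e. $n$ odd, giving $b = (n+1)/2$; $t = \pm 1$ gives $b = 1/2$ (impossible) or $b = n+1$ (impossible since $b \le k = n+1$? actually $b = n+1$ is allowed, giving $a = 0$, displacement $-(n+1) \equiv -1 \not\equiv 0$, wait recompute) — I need to be careful here.
\end{itemize}
Let me redo case (iii) cleanly: $k = n+1$, need $n+1 \equiv 2b \pmod n$, i.e. $1 \equiv 2b \pmod n$. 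For $n$ odd, $2$ is invertible mod $n$ and the solutions $b \in \{0,\dots,n+1\}$ are $b_0 := (n+1)/2$ and $b_0 + n = (3n+1)/2 > n+1$ for $n \ge 1$... so actually only $b_0$, but also check $b_0 - n < 0$. Hmm, so only one solution, giving $\operatorname{tr} = n\binom{n+1}{(n+1)/2}$. For $n$ even, $1 \equiv 2b \pmod n$ has no solution, so $\operatorname{tr} = 0$. Comparing with the stated formula, which only covers $k \in \{1,\dots,n+1\}$ but the $k = n+1$ row is not listed separately — so actually the proposition as stated lists four cases parametrised by the parity of $n$ and whether $n = k$; the $k = n+1$ case must fall under "$n \ne k$" and I should double-check it matches $n\binom{k}{k/2}$ for $k$ even (it does: $k = n+1$ even means $n$ odd, contradiction) — wait, if $k = n+1$ is even then $n$ is odd, but the "$n \ne k$ even" row requires $k$ even which is consistent, yet it also implicitly treats $n$; I'll need to verify the formula is internally consistent on this boundary, and if the intended range is really $k \le n+1$ I may restrict attention to showing the four listed cases and note $k=n+1$ with $n$ odd gives $0$ since then $k$ is even but... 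Actually I realize the cleanest reading is that the cases are: $n\ne k$ odd $\Rightarrow 0$; $n \ne k$ even $\Rightarrow n\binom{k}{k/2}$; here "$n$" and "$k$" having the same parity is needed for the even case to be non-vacuous, and when $k = n+1$ with $n$ odd, $k$ is even and $n \ne k$, so the formula predicts $n\binom{n+1}{(n+1)/2}$, matching my computation. Good — so the four cases do cover everything and I just need the parity bookkeeping to line up.

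The main obstacle, then, is purely the careful case analysis of which $b \in \{0,\dots,k\}$ satisfy $k \equiv 2b \pmod n$ within the restricted range $k \le n+1$, making sure no solutions are double-counted or missed at the boundary values $k \in \{n, n+1\}$, and confirming the parity conditions make each branch of the stated piecewise formula consistent. Once the solution set $\{b : k \equiv 2b \pmod n,\ 0 \le b \le k\}$ is pinned down in each case, the identity $\operatorname{tr}(A(C_n)^k) = n\sum_b \binom{k}{b}$ immediately yields the claim. I would also mention at the outset that this matches \cite[Proposition 2.3]{dsouza2013combinatorial}, restricted to the range of $k$ we need, so the proof is essentially a self-contained rederivation tailored to $k \le n+1$.
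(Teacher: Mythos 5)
Your proposal is correct and follows essentially the same route as the paper: both encode a closed walk in $C_n$ as a sequence of $\pm 1$ steps, reduce to counting the step-counts $b$ with $k\equiv 2b \pmod n$, and resolve the boundary cases $k=n$ and $k=n+1$ by the same parity and range arguments. The case analysis you give (including the verification that the $k=n+1$ boundary is consistent with the stated piecewise formula) matches the paper's proof in substance.
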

\begin{proof}
    We count the number of closed walks \(\mathbf{x}=x_0x_1\cdots x_{k-1}\). 
    For $i\in \{0,1,\dots, k-1\}$, $x_{i+1}=x_i+1$ or $x_{i+1}=x_i-1$. Let $m$ be the number of indices $i\in \{0,1,\dots, k-1\}$ such that $x_{i+1}=x_i+1$, then $k-m$ is the number of indices $i$ such that $x_{i+1}=x_i-1$. Since \(\mathbf{x}\) is a closed walk, it follows that $n$ divides $m-(k-m)$.
    Furthermore, if $k \leqslant n$ then $|2m-k| \leqslant n$, which implies $2m = k$ or $n = k$ and $m \in \{0,n\}$.
    Otherwise, if $k = n+1$ then $|2m-k| \leqslant n+1$.
    But, since $2m-k \equiv n+1 \mod 2$, we must have $2m = k$.

    For the case \(m=\frac{k}{2}\), the number of closed walks is $n\cdot {\binom{k}{\frac{k}{2}}}$, that is, $n$ ways to choose $x_0$ and \({\binom{k}{\frac{k}{2}}}\) ways to choose $m$ indices from $\{0,1,\dots,k-1\}$.
\end{proof}

Now we provide a construction of $(f+2,f)$-lift graphs of type I for each integer $f \geqslant 3$.

\begin{lemma}
\label{lem:cycle_unionLiftI}
    Let $f \geqslant 3$ be an integer.
    Then $C_f\cup C_{2^{f+1}(f+1)!-f}$ is an $(f+2,f)$-lift graph of type I.
\end{lemma}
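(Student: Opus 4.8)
The plan is to verify conditions (L1a) and (L1b) for $\Gamma := C_f\cup C_m$ with $m:=2^{f+1}(f+1)!-f$ (so here $e=f+2$), using that cycles are $2$-regular, the trace formula of Proposition~\ref{pro:trace:k_not_equal_n}, and the power-sum-to-coefficient transfer of Lemma~\ref{lem:ptoe}. Note that $f+m=2^{f+1}(f+1)!$ and that $m>f+1\geqslant 3$ since $f\geqslant 3$, so $C_m$ is a genuine cycle. For (L1a): since $A(C_n)\mathbf 1=2\mathbf 1$, we have $\mathbf 1^\transpose A(C_n)^k\mathbf 1=2^kn$ for $k\geqslant 1$ and $=n$ for $k=0$, hence $\mathbf 1^\transpose A(\Gamma)^k\mathbf 1\in\{f+m,\ 2^k(f+m)\}$ for every $k\in\mathbb N\cup\{0\}$; as $f+m=2^{f+1}(f+1)!$ is divisible by $2^{f+1}$, it is divisible by $2^{e-2}=2^f$, which is (L1a).

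For (L1b) we first compute the power sums. By \eqref{eqn:powerProdSum}, $\mathsf p_k(\operatorname{Char}_{A(\Gamma)})=\operatorname{tr}(A(C_f)^k)+\operatorname{tr}(A(C_m)^k)$, and for $k\in\{1,\dots,f+1\}$ both terms lie in the range where Proposition~\ref{pro:trace:k_not_equal_n} applies (for $C_m$ we always have $k\ne m$ and $k\leqslant m+1$). A short case analysis on the parity of $k$ and on whether $k=f$ gives, for all $k\in\{1,\dots,f+1\}$,
\[
\mathsf p_k(\operatorname{Char}_{A(\Gamma)})\equiv
\begin{cases}
0, & \text{if } k\ne f,\\
2f, & \text{if } k=f,
\end{cases}
\mod{2^{f+1}(f+1)!}.
\]
Indeed, when $k\ne f$ is even the two traces combine to the multiple $(f+m)\binom{k}{k/2}$ of $2^{f+1}(f+1)!$; when $k\ne f$ is odd both traces vanish; and when $k=f$ the $\binom{f}{f/2}$-terms again combine to the multiple $f+m$ of $2^{f+1}(f+1)!$, while the remaining $2n$-contributions from the $n=k$ line of Proposition~\ref{pro:trace:k_not_equal_n} sum to $2f$.

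Now apply Lemma~\ref{lem:ptoe} to $p=\operatorname{Char}_{A(\Gamma)}$ with its parameter ``$n$'' taken to be $f$ and its parameter ``$m$'' taken to be $\mu:=f+1+\nu_2((f+1)!)=\nu_2\!\left(2^{f+1}(f+1)!\right)$ (note $\mu>f$); the hypothesis is exactly the display above. We obtain $\mathsf c_k(\operatorname{Char}_{A(\Gamma)})\equiv 0\mod{2^{\mu-\nu_2(k!)}}$ for $k\in\{1,\dots,f-1\}\cup\{f+1\}$, and since $\nu_2((f+1)!)\geqslant\nu_2(k!)$ for $k\leqslant f+1$ we get $\mu-\nu_2(k!)\geqslant f+1\geqslant f+2-k=e-k$, so $\mathsf c_k(\operatorname{Char}_{A(\Gamma)})\equiv 0\mod{2^{e-k}}$ for all $k\in\{1,\dots,e-1\}\setminus\{f\}$, as required. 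The lemma also yields $\mathsf c_0(\operatorname{Char}_{A(\Gamma)})\,\mathsf p_f(\operatorname{Char}_{A(\Gamma)})\equiv-f\,\mathsf c_f(\operatorname{Char}_{A(\Gamma)})\mod{2^{\mu-\nu_2((f-1)!)}}$; using $\mathsf c_0=1$, $\mathsf p_f\equiv 2f$, and $\mu-\nu_2((f-1)!)=f+1+\nu_2(f(f+1))$, cancelling the factor $f$ (both its odd part and its $2$-power) from $-f\,\mathsf c_f\equiv 2f$ leaves $\mathsf c_f(\operatorname{Char}_{A(\Gamma)})\equiv-2\mod{2^{\,f+1+\nu_2(f+1)}}$. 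Since $f\geqslant 3$ this forces $\mathsf c_f(\operatorname{Char}_{A(\Gamma)})\equiv 2\mod 4$, and as $e-f=2$ and $2^{e-f-1}=2$, this is precisely the remaining case of (L1b).

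The main obstacle is the $2$-adic bookkeeping in this last step: one must correctly invert the (possibly even) factor $f$ in $-f\,\mathsf c_f\equiv 2f$ and check that the modulus $2^{\mu-\nu_2((f-1)!)}$ handed out by Lemma~\ref{lem:ptoe} is still at least $4$ after that cancellation, so that $\mathsf c_f$ is genuinely pinned down modulo $4$. One also has to keep Proposition~\ref{pro:trace:k_not_equal_n} within its valid range $k\leqslant n+1$ for both cycles, which is exactly the reason the second cycle length $m$ is chosen larger than $f+1$; the precise value $2^{f+1}(f+1)!-f$ is then just a convenient choice making $f+m$ highly $2$-divisible so that the power-sum congruences above are clean.
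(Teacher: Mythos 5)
Your proof is correct and follows essentially the same route as the paper: (L1a) from $2$-regularity, the trace congruences from Proposition~\ref{pro:trace:k_not_equal_n}, and the transfer to coefficients via Lemma~\ref{lem:ptoe}; you simply make the $2$-adic bookkeeping in the final step (recovering $\mathsf c_f \equiv -2$ and hence $\mathsf c_f \equiv 2^{e-f-1} \bmod 2^{e-f}$) more explicit than the paper does.
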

\begin{proof}
    Let $\Gamma = C_f\cup C_{2^{f+1}(f+1)!-f}$, $n = 2^{f+1}(f+1)!$, and $A = A(\Gamma)$.
    Since $\Gamma$ is $2$-regular, we have
    \[
    \mathbf 1^\transpose A^k \mathbf 1 = n2^k.
    \]
    Thus, $\Gamma$ satisfies (L1a).
    Since $n-f>f+1$, using Proposition~\ref{pro:trace:k_not_equal_n}, it follows that, for $k\leqslant  f+1$, we have
    \[
    \operatorname{tr}(A^k) = \operatorname{tr}(A(C_f)^k )+\operatorname{tr}(A(C_{n-f})^k) \equiv
    \begin{cases}
        0, & \text{if } k \ne f;\\
        2f, & \text{if } k = f
    \end{cases} \mod n.
    \]
Applying Lemma~\ref{lem:ptoe}, we obtain that, for each $k \in \{1,\dots, f+1\}$,
    \[\mathsf c_k(\operatorname{Char}_{A}) \equiv
    \begin{cases}
        0 , & \text{if } k \ne f;\\
        -2, & \text{if } k = f
    \end{cases} \mod 2^{f+1}.
    \]
    Hence, $\Gamma$ satisfies (L1b).
\end{proof}

The next lemma allows us to use the two constructions (Remark~\ref{rem:K2K2} and Lemma~\ref{lem:cycle_unionLiftI}) above to generate $(e,f)$-lift graphs of type I for all integers $e$ and $f$ such that $e \geqslant f+2 \geqslant 4$.

\begin{lemma}\label{lem:inductiveLiftConst}
    Suppose $\Gamma$ is an $(e,f)$-lift graph of type I.
    Then $2\Gamma$ is an $(e+1,f)$-lift graph of type I.
\end{lemma}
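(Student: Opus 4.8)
The plan is to verify directly that $2\Gamma$ satisfies the two defining conditions (L1a) and (L1b) for an $(e+1,f)$-lift graph of type I, using the fact that $\Gamma$ satisfies (L1a) and (L1b) as an $(e,f)$-lift graph of type I. The key observation is that the adjacency matrix of $2\Gamma$ is a block-diagonal matrix $A(2\Gamma) = \operatorname{diag}(A(\Gamma), A(\Gamma))$, so that $\mathbf 1^\transpose A(2\Gamma)^k \mathbf 1 = 2\,\mathbf 1^\transpose A(\Gamma)^k \mathbf 1$ for all $k \in \mathbb N \cup \{0\}$, and $\operatorname{Char}_{A(2\Gamma)}(x) = \operatorname{Char}_{A(\Gamma)}(x)^2$.

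First I would check (L1a) for $2\Gamma$ at level $e+1$: we need $2^{(e+1)-2} = 2^{e-1}$ to divide $\mathbf 1^\transpose A(2\Gamma)^k \mathbf 1 = 2\,\mathbf 1^\transpose A(\Gamma)^k \mathbf 1$. Since $\Gamma$ satisfies (L1a), $2^{e-2}$ divides $\mathbf 1^\transpose A(\Gamma)^k \mathbf 1$, hence $2^{e-1}$ divides twice that quantity. This is immediate. Next I would check (L1b) for $2\Gamma$ at level $e+1$: for each $k \in \{1,\dots,e\}$ we must control $\mathsf c_k(\operatorname{Char}_{A(2\Gamma)})$ modulo $2^{(e+1)-k} = 2^{e+1-k}$. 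Since $\operatorname{Char}_{A(2\Gamma)} = \operatorname{Char}_{A(\Gamma)}^2$, we have $\mathsf c_k(\operatorname{Char}_{A(2\Gamma)}) = \sum_{j=0}^{k} \mathsf c_j(\operatorname{Char}_{A(\Gamma)})\mathsf c_{k-j}(\operatorname{Char}_{A(\Gamma)})$, recalling $\mathsf c_0(\operatorname{Char}_{A(\Gamma)}) = 1$. The right-hand side is $2\mathsf c_k(\operatorname{Char}_{A(\Gamma)}) + \sum_{j=1}^{k-1}\mathsf c_j(\operatorname{Char}_{A(\Gamma)})\mathsf c_{k-j}(\operatorname{Char}_{A(\Gamma)})$.

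For the cross terms $\sum_{j=1}^{k-1}\mathsf c_j\mathsf c_{k-j}$ (suppressing the argument $\operatorname{Char}_{A(\Gamma)}$), I would argue that each summand is divisible by a high power of $2$. By (L1b) for $\Gamma$, for $j \in \{1,\dots,e-1\}$, $\mathsf c_j$ is divisible by $2^{e-j}$ unless $j = f$, in which case $\mathsf c_f \equiv 2^{e-f-1} \pmod{2^{e-f}}$, so $\nu_2(\mathsf c_f) \geqslant e-f-1$; in all cases $\nu_2(\mathsf c_j) \geqslant e-j-1$ for $j \in \{1,\dots,e-1\}$ (and one should double-check the edge indices $j,k-j$ relative to the range $\{1,\dots,e-1\}$, noting $k-1 \leqslant e-1$). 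Hence for $1 \leqslant j \leqslant k-1$ the product $\mathsf c_j\mathsf c_{k-j}$ has $2$-adic valuation at least $(e-j-1) + (e-(k-j)-1) = 2e-k-2 \geqslant e+1-k$ precisely when $e \geqslant 3$, which holds since $e \geqslant f+2 \geqslant 4$ (here $\Gamma$ being an $(e,f)$-lift graph of type I with $f \geqslant 2$ forces $e \geqslant 4$, or more carefully $e \geqslant f+2$; I would state the hypothesis explicitly or note that the base constructions have $e \geqslant 4$). Thus the cross-term sum vanishes modulo $2^{e+1-k}$, and $\mathsf c_k(\operatorname{Char}_{A(2\Gamma)}) \equiv 2\mathsf c_k(\operatorname{Char}_{A(\Gamma)}) \pmod{2^{e+1-k}}$. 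Finally, using (L1b) for $\Gamma$, $2\mathsf c_k(\operatorname{Char}_{A(\Gamma)})$ is $\equiv 0$ if $k \ne f$ and $\equiv 2 \cdot 2^{e-f-1} = 2^{e-f} = 2^{(e+1)-f-1}$ if $k = f$, all modulo $2 \cdot 2^{e-k} = 2^{e+1-k}$, which is exactly condition (L1b) for $2\Gamma$ at level $e+1$.

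The main obstacle is the bookkeeping in the cross-term estimate: one must be careful that the indices $j$ and $k-j$ both lie in the range $\{1,\dots,e-1\}$ where (L1b) gives valuation information (this holds since $1 \leqslant j \leqslant k-1 \leqslant e-1$), and one must confirm the inequality $2e-k-2 \geqslant e+1-k$, i.e. $e \geqslant 3$, is available from the hypotheses. Since the lemma as stated only assumes $\Gamma$ is an $(e,f)$-lift graph of type I, I would either observe that such graphs only exist (via the given constructions) for $e \geqslant 4$, or, more cleanly, note that for the cross terms to even involve the exceptional index $f$ one needs $k > f \geqslant 2$, and otherwise all $\mathsf c_j$ with $j < f$ could be zero; in any case the valuation bound $e \geqslant 3$ is the crux and should be stated. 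Everything else is a routine binomial-expansion-of-a-square computation.
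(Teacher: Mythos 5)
Your proof is correct and follows the same route as the paper: both use $\operatorname{Char}_{A(2\Gamma)} = \operatorname{Char}_{A(\Gamma)}^2$ and $\mathbf 1^\transpose A(2\Gamma)^k\mathbf 1 = 2\,\mathbf 1^\transpose A(\Gamma)^k\mathbf 1$, reducing (L1b) to the congruence $\mathsf c_k(\operatorname{Char}_{A(2\Gamma)}) \equiv 2\,\mathsf c_k(\operatorname{Char}_{A(\Gamma)}) \pmod{2^{e+1-k}}$. You additionally spell out the cross-term valuation estimate $\nu_2(\mathsf c_j\mathsf c_{k-j}) \geqslant 2e-k-2$ and the resulting requirement $e \geqslant 3$ (satisfied by all the paper's constructions), details the paper's one-line proof leaves implicit; this is extra care, not a different argument.
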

\begin{proof}
    For each $k\in \mathbb N$, we have  $\mathbf 1^\transpose A(2\Gamma)^k \mathbf 1 = 2 \mathbf 1^\transpose A(\Gamma)^k \mathbf 1 \equiv 0 \mod {2^{e-1}}$.
    Hence, we obtain (L1a).
    Since $\operatorname{Char}_{2\Gamma}(x) = \operatorname{Char}_{\Gamma}^2(x)$, we can deduce that $\mathsf c_k(\operatorname{Char}_{2\Gamma}) \equiv 2 \mathsf c_k(\operatorname{Char}_{\Gamma}) \mod {2^{e+1-k}}$ for each $k \in \{1,\dots,e-1\}$, from which (L1b) follows.
\end{proof}

\subsection{Lift graphs of type II}

In this section, we provide a construction of $e$-lift graphs on type II.
For this, we will use unions of path graphs.
The path graph $P_n$ is defined as 
\[
P_n=\left (\{0,1,\dots,n-1\},\{(i,i+1) \; : \; i \in \{0,\dots,n-2\}\} \cup \{(i,i-1) \; : \; i \in \{1,\dots,n-1\}\}\right ).
\]
For a nonnegative integer $c$, a vertex $v \in V(P_n)$ is called $c$\textbf{-central} if the minimum distance from $v$ to a leaf vertex is $c$.
The two leaf vertices of $P_n$ are precisely its $0$-central vertices.

Denote by $W_n(v,k)$ the set of walks $\textbf{x}=x_0,x_1,\dots, x_{k}$ in $P_n$ where $x_0 = v$.
    Each walk in $W_n(v,k)$ can be identified with a unique binary string $(b_1,\dots, b_{k}) \in \{0,1\}^k$, with $b_i = 1$ if $x_{i}=x_{i-1}+1$ and $b_i = 0$ otherwise. 
    Denote by $\mathsf w_{n,v,k} : W_n(v,k) \to \{0,1\}^k$ be the corresponding injective map.

    We will require a few results that count the number of certain walks in path graphs.

\begin{proposition}\label{pro:free_walk_of_path_graph}
    Let $n \geqslant 2$ be an integer.
    Suppose $v \in V(P_n)$ is $c$-central.
    Then
    \begin{itemize}
        \item[(i)] for each nonnegative integer $k \leqslant c$,
    \[
    \left [ A(P_n)^k \mathbf 1 \right ]_v = 2^k;
    \]
    \item[(ii)] for each nonnegative integer $k$ satisfying $k \leqslant 2c$,
          \[
    \left [A(P_n)^k\right ]_{v,v}= \begin{cases}
        0,  & \text{ if } k \text{ odd},\\
        {\binom{k}{\frac{k}{2}}}, & \text{ if } k \text{ even.}
    \end{cases}
    \]
    \end{itemize}
\end{proposition}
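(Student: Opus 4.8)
The plan is to prove both parts by analysing the walk--encoding map $\mathsf w_{n,v,k}$ and showing that, as long as $k$ is small enough relative to the distance $c$ from $v$ to the nearest leaf, \emph{every} binary string of length $k$ corresponds to a legitimate walk in $P_n$ starting at $v$. The key observation is that a binary string $(b_1,\dots,b_k)\in\{0,1\}^k$ encodes a walk that never leaves the vertex set $\{0,\dots,n-1\}$ precisely when the partial sums $s_j := \#\{i \leqslant j : b_i = 1\} - \#\{i \leqslant j : b_i = 0\}$ keep the running position $v + s_j$ inside $\{0,\dots,n-1\}$ for all $j \leqslant k$. Since $v$ is $c$-central, the position $v+s_j$ stays in range provided $|s_j| \leqslant c$, and $|s_j| \leqslant j \leqslant k$; so for $k \leqslant c$ the map $\mathsf w_{n,v,k}$ is a bijection onto $\{0,1\}^k$.

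For part (i): $\left[A(P_n)^k \mathbf 1\right]_v$ counts all walks of length $k$ starting at $v$ (ending anywhere), which by the above is $|W_n(v,k)| = 2^k$ whenever $k \leqslant c$, giving the claim.

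For part (ii): $\left[A(P_n)^k\right]_{v,v}$ counts closed walks of length $k$ based at $v$. A walk encoded by $(b_1,\dots,b_k)$ is closed iff $s_k = 0$, i.e.\ iff the string has equally many $1$s and $0$s; this forces $k$ even (and yields $0$ when $k$ is odd), and when $k = 2m$ the number of such strings is $\binom{k}{k/2}$. It remains to check that every such string does encode a genuine walk, i.e.\ that the intermediate positions stay in range. For this I need $|s_j| \leqslant c$ for all $j \leqslant k = 2m$. The crude bound $|s_j| \leqslant j$ only gives $|s_j| \leqslant 2m = k$, which is too weak; but here we may use that the string is \emph{balanced}: by a reflection/ballot-type argument, for a string with $m$ ones and $m$ zeros the partial sum satisfies $|s_j| \leqslant \min(j, k-j) \leqslant k/2 = m \leqslant c$ (using the hypothesis $k \leqslant 2c$). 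Hence all intermediate positions $v + s_j$ lie in $\{0,\dots,n-1\}$, every balanced string corresponds to a valid closed walk, and the count is exactly $\binom{k}{k/2}$.

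The main obstacle, and the only nonroutine point, is exactly this last range check in part (ii): one must resist using the naive bound $|s_j| \leqslant j$ and instead exploit that a closed walk's encoding is balanced to get the sharper bound $|s_j| \leqslant \min(j, k-j)$. Concretely, for $j \leqslant k$ one has $s_j = (\text{ones in first }j) - (\text{zeros in first }j) \leqslant (\text{ones in first } j) \leqslant m$ and symmetrically $s_j = -(\text{zeros after position } j) + (\text{ones after position } j) \geqslant -(\text{zeros after } j) \geqslant -m$ since there are only $m$ zeros total. Combined with $|s_j| \leqslant j$, this gives $|s_j| \leqslant \min(j,m) \leqslant c$, which is what is needed. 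Everything else is a direct translation between walks, binary strings, and binomial counts.
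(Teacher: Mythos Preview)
Your proof is correct and follows essentially the same route as the paper: both use the walk-encoding map $\mathsf w_{n,v,k}$ to identify walks with binary strings and then count. You are more explicit than the paper in verifying that every balanced string actually encodes a walk that stays within $V(P_n)$ when $k\leqslant 2c$ (via the bound $|s_j|\leqslant m$), a step the paper simply asserts by citing $k\leqslant 2c$.
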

\begin{proof}
    Since $k \leqslant c$, we have $\mathsf w_{n,v,k}(W_n(v,k)) = \{0,1\}^k$.
    Whence, we obtain (i).

    Now consider the subset $W^\prime(v,k) \subset W(v,k)$ of closed walks.
    Since $x_k = x_0$, the image $\mathsf w_{n,v,k}(\mathbf x)$ must have the same number of 1s as 0s for each $\mathbf x \in W^\prime(v,k)$.
    Thus, if $k$ is odd then $|W^\prime(v,k)| = 0$.
    Otherwise, since $k \leqslant 2c$, we find that $|W^\prime(v,k)| = \binom{k}{k/2}$, which establishes (ii).
\end{proof}

\begin{proposition}
\label{pro:walk_of_odd_path_graph}
    Let $m,n  \in \mathbb N$ with $n \geqslant 2$, let $v \in V(P_{2n-1})$ and  $w \in V(P_{2n-1+2^m})$ both be $c$-central vertices.
        \begin{enumerate}
        \item[(i)] If $c = n-1$ then
        \[
        \begin{aligned}[c]
        \left [ A(P_{2n-1})^{n} \mathbf 1 \right ]_v &= 2^n - 2 \\
         \left [A(P_{2n-1})^{2n}\right ]_{v,v} &= \binom{2n}{n} - 2
        \end{aligned}
        \quad \text{and } \quad 
        \begin{aligned}[c]
        \left [ A(P_{2n-1+2^m})^{n} \mathbf 1 \right ]_w &= 2^n - 1;
          \\
         \left [ A(P_{2n-1+2^m})^{2n} \right ]_{w,w} &= \binom{2n}{n} - 1.
        \end{aligned}
        \]
        \item[(ii)] If \( c < n-1\) and \(k\leqslant n\) then
        \[
        \left [ A(P_{2n-1})^{k} \mathbf 1 \right ]_v =  \left [ A(P_{2n-1+2^m})^{k} \mathbf 1 \right ]_w.
        \]
        \item[(iii)] If \( c \leqslant n-1\) and \(k \leqslant 2n-1\) then
        \[
        \left [ A(P_{2n-1})^k\right ]_{v,v} = \left [A(P_{2n-1+2^m})^k\right ]_{w,w}.
        \]
        \item[(iv)]
        If \( c < n-1\) then
        \[
        \left [A(P_{2n-1})^{2n}\right ]_{v,v} = \left [A(P_{2n-1+2^m})^{2n}\right ]_{w,w}.
        \]
    \end{enumerate}
\end{proposition}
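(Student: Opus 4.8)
The plan is to reduce every statement to a combinatorial count of lattice-walk-type strings via the injective encoding $\mathsf w_{n,v,k}$ introduced just above, and to exploit the symmetry of the path about its centre. Label the vertices of $P_{2n-1}$ as $\{0,1,\dots,2n-2\}$ so that the unique $(n-1)$-central vertex is the midpoint $n-1$; a $c$-central vertex $v$ then sits at distance $c$ from the nearer end and distance $(2n-2)-c$ from the farther end, with $c\leqslant n-1$. The key observation is that a walk of length $k$ starting at $v$ is encoded by a binary string $(b_1,\dots,b_k)$, and such a string corresponds to a genuine walk in $P_{2n-1}$ exactly when none of its partial sums $s_j := \#\{i\leqslant j : b_i=1\}-\#\{i\leqslant j : b_i=0\}$ drives the walk below $0$ (hitting the near end) or above $2n-2$ (hitting the far end). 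The ``free'' count $2^k$ (resp.\ $\binom{k}{k/2}$ for closed walks) of Proposition~\ref{pro:free_walk_of_path_graph} is what one gets ignoring both constraints, valid as long as $k\leqslant c$ (resp.\ $k\leqslant 2c$). For parts (ii)--(iv) the strategy is to show that, in the stated ranges of $k$ and for the stated $v,w$ chosen $c$-central with the \emph{same} $c$ in $P_{2n-1}$ and in $P_{2n-1+2^m}$, exactly the same set of strings is admissible in both graphs, because the only obstruction that can be hit within that many steps is the near end, which looks identical from both vantage points; the far end is too far away to matter (the $2^m$ extra vertices are ``padding''). Concretely: in (ii), $k\leqslant n$ and $c<n-1$ forces $c\leqslant n-2$, and a walk of length $k\leqslant n$ from a vertex at distance $\geqslant n$ from the far end (which holds since $2n-2-c\geqslant n$) can never reach the far end, so the admissible strings in $P_{2n-1}$ and in $P_{2n-1+2^m}$ coincide; parts (iii) and (iv) are the analogous statements for closed walks, where one tracks length $\leqslant 2n-1$ (resp.\ exactly $2n$) and checks that reaching the far end is impossible in that many steps whenever $c\leqslant n-1$ (resp.\ $c<n-1$).

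For part (i), where $v$ is the exact midpoint $n-1$ of $P_{2n-1}$, both ends are now reachable in $n$ steps, so one genuinely has to count the obstructed strings. I would count by inclusion--exclusion (or by a reflection/subtraction argument): $\left[A(P_{2n-1})^n\mathbf 1\right]_{n-1}$ equals $2^n$ minus the number of length-$n$ binary strings whose walk from the midpoint hits vertex $-1$ (i.e.\ the near end's ``past'') plus those hitting vertex $2n-1$; since from the midpoint it takes exactly $n$ down-steps to fall off the near side, the only offending string on that side is the all-zeros string, contributing $1$, and symmetrically the all-ones string contributes $1$ on the far side, giving $2^n-2$. The same reasoning with closed walks of length $2n$ gives $\binom{2n}{n}-2$: among the $\binom{2n}{n}$ balanced strings the only ones whose walk escapes are those that go straight down $n$ times then straight up $n$ times, and the mirror image, i.e.\ exactly two strings. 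For $w$, the $(n-1)$-central vertex of $P_{2n-1+2^m}$ sits at distance $n-1$ from the near end but at distance $2n-2+2^m-(n-1)=n-1+2^m\geqslant n$ from the far end, so within $n$ steps (resp.\ $2n$ steps) the far end is unreachable and only the near-end obstruction survives, removing exactly one string in each case and yielding $2^n-1$ and $\binom{2n}{n}-1$.

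I expect the main obstacle to be purely bookkeeping: being careful about the precise position of a $c$-central vertex (it is \emph{not} uniquely determined by $c$ when $c<n-1$, but by the reflection symmetry of $P_n$ the walk counts depend only on $c$, which should be remarked), and verifying in each of (ii), (iii), (iv) the exact inequality — $k\leqslant n$ versus $k\leqslant 2n-1$ versus $k=2n$ — that guarantees the far end of the shorter path is out of range while the near end behaves identically in both paths. Once the encoding $\mathsf w$ and the ``near end looks the same, far end is padding'' principle are set up, each item is a one- or two-line verification; the only genuinely arithmetical content is the ``$-2$ versus $-1$'' count in part~(i), which is the straight-down-then-straight-up (and its mirror) enumeration above.
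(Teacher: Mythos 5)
Your proposal is correct and follows essentially the same route as the paper: both encode walks from $v$ as binary strings via $\mathsf w_{n,v,k}$, identify the excluded strings explicitly for part (i) (the all-zeros/all-ones strings for open walks, and $(0,\dots,0,1,\dots,1)$, $(1,\dots,1,0,\dots,0)$ among balanced strings for closed walks, with only one of each surviving as an obstruction in the longer path), and for (ii)--(iv) argue that the far end is out of range so the admissible strings coincide — the paper phrases this last step as an injective graph homomorphism $\phi$ with $\phi(v)=w$ mapping walk sets onto walk sets, but the content is identical. No gaps.
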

\begin{proof}
        Clearly, we have $\mathsf w_{2n-1,v,n}(W_{2n-1}(v,n)) = \{0,1\}^n \backslash \{(0,\dots,0),(1,\dots,1)\}$.
        In a similar manner, one observes $\mathsf w_{2n-1+2^m,w,n}(W_{2n-1+2^m}(w,n)) = \{0,1\}^n \backslash \{(0,\dots,0)\}$ or $\{0,1\}^n \backslash \{(1,\dots,1)\}$.
        Let $W_{2n-1}^\prime(v,k) \subset W_{2n-1}(v,k)$ and $W^\prime_{2n-1+2^m}(w,k) \subset W_{2n-1+2^m}(w,k)$ be the subsets of closed walks.
        Denote by $B(n)$ the subset of $\{0,1\}^{2n}$ consisting of elements that have precisely $n$ entries equal to $1$.
        Since we are considering closed walks, $\mathsf w_{2n-1,v,2n}(W^\prime_{2n-1}(v,2n))$ and $\mathsf w_{2n-1+2^m,w,2n}(W^\prime_{2n-1+2^m}(w,2n))$ are both subsets of $B(n)$.
        Furthermore, $\mathsf w_{2n-1,v,2n}(W^\prime_{2n-1}(v,2n)) = B(n)\backslash \{ (0,\dots,0,1,\dots,1),(1,\dots,1,0,\dots,0)\}$ and, similarly,
        \[
        \mathsf w_{2n-1+2^m,w,2n}(W^\prime_{2n-1+2^m}(w,2n)) \in \left \{ B(n)\backslash \{(1,\dots,1,0,\dots,0)\}, B(n)\backslash \{ (0,\dots,0,1,\dots,1)\} \right \}.
        \]
        Thus, we obtain (i).
        
        Let $\phi : V(P_{2n-1}) \to V(P_{2n-1+2^m})$ be an injective graph homomorphism where $\phi(v) = w$.
        We will apply $\phi$ to a set of walks of $P_{2n-1}$ by applying $\phi$ to each vertex of each walk to obtain a set of walks in $P_{2n-1+2^m}$.
        It is straightforward to check that, if $c < n-1$ and $k \leqslant n$ then $\phi(W_{2n-1}(v,k)) = W_{2n-1+2^m}(w,k)$, which yields (ii).
        Similarly, it follows that $\phi(W_{2n-1}^\prime(v,k)) = W^\prime_{2n-1+2^m}(w,k)$, for $c < n-1$ and $k \leqslant 2n$ and for $c = n-1$ and $k < 2n$.
        Whence we have (iii) and (iv).
\end{proof}

The proof of the next proposition can be obtained from that of Proposition~\ref{pro:walk_of_odd_path_graph} \emph{mutatis mutandis}.

\begin{proposition}\label{pro:walk_of_even_path_graph}
    Let $m,n  \in \mathbb N$ with $n \geqslant 2$, let $v \in V(P_{2n})$ and  $w \in V(P_{2n+2^m})$ both be $c$-central vertices.
    \begin{enumerate}
    \item[(i)] If $c=n-1$ then
        \[
        \left [ A(P_{2n})^{n+1} \mathbf 1 \right ]_{v}+1 = \left [ A(P_{2n+2^m})^{n+1} \mathbf 1 \right ]_{w}.
        \]
        \item[(ii)] If \( c \leqslant n-1\) and \(k \leqslant n+1\) then
        \[
        \left [ A(P_{2n})^k \mathbf 1 \right ]_{v} = \left [ A(P_{2n+2^m})^k \mathbf 1 \right ]_{w}.
        \] 
        \item[(iii)] If \( c \leqslant n-1\) and \(k \leqslant 2n+1\) then
        \[
        \left [A(P_{2n})^k\right ]_{v,v} = \left [A(P_{2n+2^m})^k\right ]_{w,w}.
        \]
    \end{enumerate}
\end{proposition}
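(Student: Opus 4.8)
The plan is to mirror the structure of the proof of Proposition~\ref{pro:walk_of_odd_path_graph}, translating each step from the odd path $P_{2n-1}$ to the even path $P_{2n}$, and adjusting the walk-length thresholds from $n$ and $2n$ to $n+1$ and $2n+1$. The underlying combinatorial principle is the same: a walk of length $k$ starting at a $c$-central vertex $v$ of $P_N$ is encoded by a binary string in $\{0,1\}^k$ via the map $\mathsf w_{N,v,k}$, and this encoding is a bijection onto all of $\{0,1\}^k$ precisely when the walk cannot reach a leaf, i.e.\ when $k \leqslant c$; otherwise certain ``forbidden'' strings (those whose partial sums drift too far) must be excluded. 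The key numerical observation for the even path is that a $c$-central vertex of $P_{2n}$ with $c = n-1$ sits one step away from the ``middle'': the two central-most vertices of $P_{2n}$ are at distance $n-1$ from a leaf, and a walk of length $n+1$ from such a vertex can reach exactly one leaf, while the same length walk from the corresponding vertex in $P_{2n+2^m}$ (which is genuinely $(n-1)$-central but now has more room on the long side) can reach no leaf at all — yielding the single extra walk in part~(i).

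First I would set up the encoding exactly as in Proposition~\ref{pro:free_walk_of_path_graph} and observe that for a $c$-central vertex $v$ of $P_{2n}$ we have $\mathsf w_{2n,v,n+1}(W_{2n}(v,n+1)) = \{0,1\}^{n+1}$ when $c \geqslant n$ (impossible here since the maximal centrality in $P_{2n}$ is $n-1$) and, when $c = n-1$, equals $\{0,1\}^{n+1}$ minus exactly one all-constant string — the one corresponding to walking straight toward the nearer leaf and stepping off. For the longer path $P_{2n+2^m}$, a $(n-1)$-central vertex $w$ has a leaf at distance $n-1$ on the short side but at distance $n-1+2^m > n+1$ on the long side, so a length-$(n+1)$ walk from $w$ reaches no leaf and $\mathsf w_{2n+2^m,w,n+1}$ is onto $\{0,1\}^{n+1}$; comparing cardinalities and using $\left[A(P_N)^k\mathbf 1\right]_v = |W_N(v,k)|$ gives part~(i). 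For parts~(ii) and~(iii) I would, as in the odd case, fix an injective graph homomorphism $\phi : V(P_{2n}) \to V(P_{2n+2^m})$ with $\phi(v) = w$ and check that it carries $W_{2n}(v,k)$ bijectively onto $W_{2n+2^m}(w,k)$ for $k \leqslant n+1$ with $c \leqslant n-1$, and carries closed walks to closed walks for $k \leqslant 2n+1$; the point is that neither family of walks can traverse the extra $2^m$ vertices in the longer path within the allowed length, so the local structure seen by the walk is identical.

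Since the excerpt explicitly says this proposition ``can be obtained from that of Proposition~\ref{pro:walk_of_odd_path_graph} \emph{mutatis mutandis}'', the intended proof is essentially to point to that argument; the only genuine work — and the main (mild) obstacle — is bookkeeping the off-by-one shifts correctly, namely verifying that the relevant thresholds become $n+1$ (for $A^k\mathbf 1$ quantities) and $2n+1$ (for diagonal entries $[A^k]_{v,v}$) rather than $n$ and $2n-1$, and confirming in part~(i) that exactly one string, not two, is excluded on the short path (because $2n$ is even, the ``middle'' is a single vertex-pair situation rather than the symmetric two-excluded-strings picture of the odd case). Once those thresholds are pinned down, every step transfers verbatim. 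I would therefore present the proof as: restate the encoding, handle (i) by the one-extra-walk cardinality count just described, and dispatch (ii) and (iii) by the homomorphism $\phi$, noting at each point where the length bound differs from the odd-path version.
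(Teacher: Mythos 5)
Your high-level plan — transplant the proof of Proposition~\ref{pro:walk_of_odd_path_graph} and rebalance the length thresholds — is exactly what the paper intends by \emph{mutatis mutandis}, and the homomorphism argument you sketch for (ii) and (iii) is the right mechanism. However, the concrete counting you supply for part (i) is wrong, even though the final difference of $1$ is correct. Take $v=n-1$ in $P_{2n}$, so the two leaves lie at distances $n-1$ and $n$ from $v$. Among the $2^{n+1}$ strings in $\{0,1\}^{n+1}$, the ones that fail to encode walks are: the \emph{two} strings whose first $n$ bits all point toward the nearer leaf (after $n$ such steps the walk has already left the path, so the last bit is free), together with the single all-constant string pointing toward the \emph{farther} leaf. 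Hence $[A(P_{2n})^{n+1}\mathbf 1]_{v}=2^{n+1}-3$, not $2^{n+1}-1$. Likewise, the $(n-1)$-central vertex $w$ of $P_{2n+2^m}$ still has a leaf at distance $n-1$, so the same two near-leaf strings are excluded there and $[A(P_{2n+2^m})^{n+1}\mathbf 1]_{w}=2^{n+1}-2$; the map $\mathsf w_{2n+2^m,w,n+1}$ is \emph{not} onto, contrary to your claim that such a walk ``reaches no leaf at all''. (Check $n=2$, $m=1$: $[A(P_4)^3\mathbf 1]_1=5$ and $[A(P_6)^3\mathbf 1]_1=6$.) The discrepancy of $1$ is produced by the all-constant string toward the farther leaf, which survives in the longer path but not in the shorter one; the near-leaf exclusions cancel between the two paths. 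Your count happens to yield the same difference, but both of your stated cardinalities are false, so the part (i) argument as written does not stand.

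A second, related problem: your assertion that $\phi$ carries $W_{2n}(v,k)$ bijectively onto $W_{2n+2^m}(w,k)$ for all $c\leqslant n-1$ and $k\leqslant n+1$ cannot be right, because at $c=n-1$, $k=n+1$ it contradicts part (i), where the two sets have different sizes. The resolution is the same as in the odd proposition, whose part (ii) carries the strict inequality $c<n-1$: for $k=n+1$ the image of a length-$(n+1)$ walk from $w$ stays inside $\phi(V(P_{2n}))$ only when $c<n-1$ (it reaches at most position $c+n+1\leqslant 2n-1$), while for $c=n-1$ surjectivity fails and one is thrown back onto the cardinality count of (i). You should either restrict your (ii) argument to $c<n-1$ when $k=n+1$ (handling $c=n-1$, $k\leqslant n$ separately), or note explicitly that the excluded case is precisely (i). Part (iii) is genuinely fine for all $c\leqslant n-1$ and $k\leqslant 2n+1$: a closed walk of even length $2l\leqslant 2n$ takes exactly $l\leqslant n$ steps in each direction, so it can never overrun the far leaf in either path, and the near-leaf constraint is identical in both — but this is the argument you should make, rather than the vaguer statement that the walk ``cannot traverse the extra $2^m$ vertices''.
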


Now we are ready to provide a construction of lift graphs of type II.

\begin{lemma}\label{shift_graph_using_path}
Let $e \geqslant 3$ be an integer and $m=e+1+\nu_2(e!)$.
Then $P_{e-1}\cup (2^m-1)P_{e-1+2^m}$ is an $e$-lift graph of type II.
\end{lemma}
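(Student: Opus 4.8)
The plan is to set $\Gamma = P_{e-1}\cup (2^m-1)P_{e-1+2^m}$ with $m = e+1+\nu_2(e!)$ and verify conditions (L2a) and (L2b) directly by counting walks in path graphs, exploiting the fact that $\Gamma$ is essentially $2^m$ disjoint paths, one of which is shorter than the rest. First I would handle (L2a): for each $k \in \{0,\dots,e\}$ we have $\mathbf 1^\transpose A(\Gamma)^k \mathbf 1 = \mathbf 1^\transpose A(P_{e-1})^k \mathbf 1 + (2^m-1)\,\mathbf 1^\transpose A(P_{e-1+2^m})^k \mathbf 1$, so modulo $2^m$ (hence \emph{a fortiori} the weaker moduli $2^{e+2-k} \leqslant 2^{e+2} \leqslant 2^m$ that appear in (L2a), once $m$ is large enough relative to $e$) this is congruent to $\mathbf 1^\transpose A(P_{e-1})^k\mathbf 1 - \mathbf 1^\transpose A(P_{e-1+2^m})^k\mathbf 1$. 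The key point is that summing the vertexwise walk counts over the two paths, one compares a vertex of $P_{e-1}$ at a given distance from a leaf with the corresponding vertex of $P_{e-1+2^m}$ at the same distance. For $k < e$ Propositions~\ref{pro:free_walk_of_path_graph} and (the $k < n$ clauses of) Propositions~\ref{pro:walk_of_odd_path_graph} and \ref{pro:walk_of_even_path_graph} give that these counts agree except at the two central vertices of the shorter path, which are one ``wrapping'' walk short; the difference therefore collapses to a small explicit quantity, which I expect to be $0$ for $k \ne e$ and $2$ for $k = e$ (coming from the two deficient central-vertex closed-walk counts $\binom{e}{e/2}$ versus the full $P_{e-1+2^m}$ value, the binomial parts cancelling between the two paths and leaving the $\pm 2$ defect). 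The parity split between even and odd $e-1$ is exactly what Propositions~\ref{pro:walk_of_odd_path_graph} and \ref{pro:walk_of_even_path_graph} were set up to handle, so I would simply case on the parity of $e$.

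For (L2b), the strategy is to bootstrap from the trace information via Lemma~\ref{lem:ptoe}. Note $\operatorname{tr}(A(\Gamma)^k) = \operatorname{tr}(A(P_{e-1})^k) + (2^m-1)\operatorname{tr}(A(P_{e-1+2^m})^k) \equiv \operatorname{tr}(A(P_{e-1})^k) - \operatorname{tr}(A(P_{e-1+2^m})^k) \pmod{2^m}$, and this is a sum of diagonal entries $[A(P_n)^k]_{v,v}$, which by part (ii) of Proposition~\ref{pro:free_walk_of_path_graph} and part (iii)/(iv) of Propositions~\ref{pro:walk_of_odd_path_graph} and \ref{pro:walk_of_even_path_graph} match between the two paths vertex-by-vertex for all $k \leqslant e+1$ — including at the central vertices, where the range $2c \geqslant e-1 \geqslant k-2$ or so is just wide enough. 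Hence $\operatorname{tr}(A(\Gamma)^k) \equiv 0 \pmod{2^m}$ for every $k \in \{1,\dots,e+1\}$, and in particular $\mathsf p_k(\operatorname{Char}_{A(\Gamma)}) \equiv 0 \pmod{2^m}$ for $k \in \{1,\dots,e\}\cup\{e+1\}$. Applying Lemma~\ref{lem:ptoe} with $n = e+1$ and $\mathsf c_0 = 1$ then yields $\mathsf c_k(\operatorname{Char}_{A(\Gamma)}) \equiv 0 \pmod{2^{m-\nu_2(k!)}}$ for $k \in \{1,\dots,e\}\cup\{e+1\}$, together with the analogous statement for $k = e+1$ at modulus $2^{m-\nu_2(e!)-\nu_2(e+1)}$. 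Since $m = e+1+\nu_2(e!)$, we get $m - \nu_2(k!) \geqslant m - \nu_2(e!) = e+1 \geqslant e+2-k$ for all $k \geqslant 1$, and a similar inequality for the $e+1$ case (using $\nu_2((e+1)!) = \nu_2(e!)+\nu_2(e+1)$), which gives (L2b).

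I expect the main obstacle to be the careful bookkeeping at the central vertices of the short path $P_{e-1}$: one must track exactly which $c$-central vertices (with $c = n-1$, i.e., the middle one or two vertices depending on the parity of $e-1$) fail to achieve the ``full'' walk count, by exactly how much, and confirm that all other vertices contribute matching terms that cancel in the difference $\mathbf 1^\transpose A(P_{e-1})^k\mathbf 1 - \mathbf 1^\transpose A(P_{e-1+2^m})^k\mathbf 1$ and $\operatorname{tr}(A(P_{e-1})^k) - \operatorname{tr}(A(P_{e-1+2^m})^k)$. The delicate point is that the stated range bounds ($k \leqslant n$, $k \leqslant 2n$, etc.) in Propositions~\ref{pro:walk_of_odd_path_graph} and \ref{pro:walk_of_even_path_graph} are tight against the values $k \leqslant e+1$ we need, so one must check that the parity of $e$ lines up with these ranges in both cases; a clean way to organise this is to write $e-1 = 2n-1$ or $e-1 = 2n$ according to parity and feed the relevant proposition with this $n$, noting $k \leqslant e+1 = 2n+1$ or $2n+2$ respectively falls within scope after accounting for the one-step slack that Lemma~\ref{lem:ptoe} absorbs. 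Everything else is routine symmetric-function and $2$-adic-valuation arithmetic.
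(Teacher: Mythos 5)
Your overall strategy -- splitting $\mathbf 1^\transpose A^k\mathbf 1$ and $\operatorname{tr}(A^k)$ vertex-by-vertex across the two path lengths, cancelling matched vertices via Propositions~\ref{pro:free_walk_of_path_graph}, \ref{pro:walk_of_odd_path_graph}, \ref{pro:walk_of_even_path_graph}, and then feeding the trace congruences into Lemma~\ref{lem:ptoe} -- is exactly the paper's, and your treatment of (L2a) is sound in outline. But there is a genuine gap in (L2b) at $k=e+1$. You assert that the diagonal entries ``match vertex-by-vertex for all $k\leqslant e+1$, including at the central vertices,'' and conclude $\operatorname{tr}(A(\Gamma)^{e+1})\equiv 0\pmod{2^m}$. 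For $e$ odd, write $e-1=2n$; Proposition~\ref{pro:walk_of_even_path_graph}(iii) only matches $\left[A(P_{2n})^k\right]_{v,v}$ with $\left[A(P_{2n+2^m})^k\right]_{w,w}$ for $k\leqslant 2n+1=e$, and Proposition~\ref{pro:free_walk_of_path_graph}(ii) needs $k\leqslant 2c\leqslant 2n-2=e-3$ at the central vertices. Neither reaches $k=e+1$, and the matching there is in fact false vertex-by-vertex (a closed $(e+1)$-walk from a central vertex of the short path can be blocked by \emph{both} leaves). The congruence $\operatorname{tr}(A^{e+1})\equiv 0\pmod{2^m}$ happens to be true, but none of the tools you cite proves it; the paper sidesteps the issue entirely by invoking Theorem~\ref{thm:burnside} to get only $\operatorname{tr}(A^{e+1})\equiv 0\pmod{2^{\nu_2(e+1)+1}}$, which together with \eqref{eqn:power_sum_elementary_symmetric_sum_relation} is just enough to make $\mathsf c_{e+1}(\operatorname{Char}_A)$ even -- all that (L2b) demands at $k=e+1$. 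Your proposal contains no substitute for this step.

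A secondary inaccuracy: even for $k=e$ (with $e$ even, $e-1=2n-1$) the vertex-by-vertex matching of diagonal entries fails at the central vertices. Proposition~\ref{pro:walk_of_odd_path_graph}(i) gives $\binom{2n}{n}-2$ for the unique central vertex of the short path against $\binom{2n}{n}-1$ for each of the \emph{two} $(n-1)$-central vertices of the long path; the trace congruence survives only because these deficits ($-2$ versus $2\times(-1)$) cancel in aggregate, not because the entries agree. You do flag ``careful bookkeeping at the central vertices'' as the main obstacle, so this is recoverable, but as written the claim is wrong, and the $k=e+1$, $e$ odd case requires an idea (Theorem~\ref{thm:burnside} or an exact trace formula for paths) that is absent from your outline.
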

\begin{proof}
Let $\Gamma = P_{e-1}\cup (2^m-1)P_{e-1+2^m}$.
Suppose $A$, $A_1$, and $A_2$ are the adjacency matrices of $\Gamma$, $P_{e-1}$, and $P_{e-1+2^m}$, respectively.
    
    First, suppose that $e$ is even.
    Let $e = 2f$ for some integer $f \geqslant 2$.
    For $k \leqslant e$, we have
    \begin{align}
        \mathbf 1^\transpose A^k \mathbf 1 &= \mathbf 1^\transpose A_1^k \mathbf 1 + (2^m-1)\mathbf 1^\transpose A_2^k \mathbf 1 \nonumber \\ \nonumber
        &\equiv \mathbf 1^\transpose A_1^k \mathbf 1 -\mathbf 1^\transpose A_2^k \mathbf 1 \mod {2^m} \\
        &\equiv  \sum^{e-2}_{v=0}\left [A_1^{\lfloor k/2 \rfloor} \mathbf 1\right ]_v \left [A_1^{\lceil k/2 \rceil} \mathbf 1\right ]_v - \sum^{e-2+2^m}_{v=0}\left [A_2^{\lfloor k/2 \rfloor} \mathbf 1\right ]_v \left [A_2^{\lceil k/2 \rceil} \mathbf 1\right ]_v \mod {2^m}. \label{eqn:1}
    \end{align}

    Note that, using the reflective symmetry of a path graph, for each $v \in \{0,\dots,f-2\}$ and for all $l \in \mathbb N$, we have $\left [A_1^{l} \mathbf 1\right ]_v = \left [A_1^{l} \mathbf 1\right ]_{e-2-v}$ and $\left [A_2^{l} \mathbf 1\right ]_v = \left [A_2^{l} \mathbf 1\right ]_{e-2+2^m-v}$.
    Furthermore, by Proposition~\ref{pro:walk_of_odd_path_graph} (ii), for each $l \in \{0,1,\dots,f\}$ and each $v \in \{0,1,\dots,f-2\}$, we have 
    $\left [A_1^{l} \mathbf 1\right ]_v = \left [A_2^{l} \mathbf 1\right ]_v$.
    Hence, \eqref{eqn:1} becomes
    \begin{equation}
            \mathbf 1^\transpose A^k \mathbf 1 \equiv  \left [A_1^{\lfloor k/2 \rfloor} \mathbf 1\right ]_{f-1} \left [A_1^{\lceil k/2 \rceil} \mathbf 1\right ]_{f-1} - \sum^{f-1+2^m}_{v=f-1}\left [A_2^{\lfloor k/2 \rfloor} \mathbf 1\right ]_v \left [A_2^{\lceil k/2 \rceil} \mathbf 1\right ]_v \mod {2^m}. \label{eqn:1Ak1}
    \end{equation}
    Combining \eqref{eqn:1Ak1} with Proposition~\ref{pro:free_walk_of_path_graph}, for $k \in \{0,1,\dots,e-2\}$, we obtain $\mathbf 1^\transpose A^k \mathbf 1 \equiv 2^k - (2^m+1)2^k \equiv 0 \mod {2^m}$.
    Using Proposition~\ref{pro:free_walk_of_path_graph} and Proposition~\ref{pro:walk_of_odd_path_graph}, if $k = e-1$ then \eqref{eqn:1Ak1} becomes
    \begin{align*}
        \mathbf 1^\transpose A^{e-1} \mathbf 1 &\equiv  2^{f-1} (2^f-2) - 2\cdot 2^{f-1} (2^f-1) - \sum^{f-2+2^m}_{v=f}\left [A_2^{f-1} \mathbf 1\right ]_v \left [A_2^{f} \mathbf 1\right ]_v \mod {2^m} \nonumber \\
        &\equiv - \sum^{f-2+2^m}_{v=f}\left [A_2^{f-1} \mathbf 1\right ]_v \left [A_2^{f} \mathbf 1\right ]_v \mod {8},
    \end{align*}
    which, by Proposition~\ref{pro:free_walk_of_path_graph}, is congruent to $-(2^m-1)2^{f-1}2^f$ modulo $8$.
    Hence, $\mathbf 1^\transpose A^{e-1} \mathbf 1 \equiv 0 \mod 8$.
    Similarly, using Proposition~\ref{pro:free_walk_of_path_graph} and Proposition~\ref{pro:walk_of_odd_path_graph}, if $k = e$ then \eqref{eqn:1Ak1} becomes
    \begin{align*}
        \mathbf 1^\transpose A^{e} \mathbf 1 &\equiv  (2^f-2)^2 - 2(2^f-1)^2 - \sum^{f-2+2^m}_{v=f}\left [A_2^{f} \mathbf 1\right ]_v \left [A_2^{f} \mathbf 1\right ]_v \mod {2^m} \nonumber \\
        &\equiv 2 - \sum^{f-2+2^m}_{v=f}\left [A_2^{f} \mathbf 1\right ]_v \left [A_2^{f} \mathbf 1\right ]_v \mod {4},
    \end{align*}
    which, by Proposition~\ref{pro:free_walk_of_path_graph}, is congruent to $2-(2^m-1)2^{2f}$ modulo $4$.
    Hence, $\mathbf 1^\transpose A^{e} \mathbf 1 \equiv 2 \mod 4$.
    We have established that $\Gamma$ satisfies (L2a).

    It remains to show that $\Gamma$ satisfies (L2b), that is, $2^{e+2-k}$ divides $\mathsf c_k(\operatorname{Char}_A)$ for each $k \in \{1,2,\dots,e+1\}$.
    Observe that
    \begin{align}
        \operatorname{tr}(A^k) &=  \operatorname{tr}(A_1^k) +  (2^m-1)\operatorname{tr}(A_2^k) \nonumber \\
        &\equiv \operatorname{tr}(A_1^k) -\operatorname{tr}(A_2^k) \mod {2^m} \nonumber \\
        &\equiv \sum^{e-2}_{v=0} \left [A_1^k\right ]_{v,v} - \sum^{e-2+2^m}_{v=0}\left [A_2^k \right ]_{v,v} \mod {2^m} \label{eqn:2}.
    \end{align}
    Again, using the reflective symmetry of a path graph, for all $l \in \mathbb N$, we have $\left [A_1^l \right ]_{v,v}= \left [A_1^l \right ]_{e-2-v,e-2-v}$ and $\left [A_2^l \right ]_{v,v}= \left [A_2^l\right ]_{e-2+2^m-v,e-2+2^m-v}$ for each $v \in \{0,\dots,f-2\}$.
    Furthermore, by Proposition~\ref{pro:walk_of_odd_path_graph} (iii), for each $l \in \{0,\dots,e-1\}$ and $v \in \{0,\dots,f-2\}$, we have 
    $\left [A_1^l\right ]_{v,v}= \left [A_2^l\right ]_{v,v}$.
    Hence, \eqref{eqn:2} becomes
    \begin{align}
        \operatorname{tr}(A^k)  
        &\equiv \left [A_1^k\right ]_{f-1,f-1} - \sum^{f-1+2^m}_{v=f-1}\left [A_2^k\right ]_{v,v} \mod {2^m} \label{eqn:trAk}.
    \end{align}
    Suppose $k \leqslant e-2$ and apply Proposition~\ref{pro:free_walk_of_path_graph} (ii) to \eqref{eqn:trAk} to obtain
    \[
    \operatorname{tr}(A^k) \equiv \left [A_1^k\right ]_{f-1,f-1} - (2^m+1)\left [A_2^k\right ]_{f-1,f-1} \equiv \left [A_1^k\right ]_{f-1,f-1} - \left [A_2^k\right ]_{f-1,f-1} \mod {2^m}.
    \]
    Hence, by Proposition~\ref{pro:walk_of_odd_path_graph} (iii), we obtain $\operatorname{tr}(A^k) \equiv 0 \mod {2^m}$.
    Since $e$ is even, it is obvious that $\operatorname{tr}(A^{e-1}) = \operatorname{tr}(A^{e+1}) = 0$.

For $k = e$, using Proposition~\ref{pro:free_walk_of_path_graph} (ii) and Proposition~\ref{pro:walk_of_odd_path_graph} (iv), \eqref{eqn:trAk} becomes
\begin{align*}
    \operatorname{tr}(A^{e})  
        &= \binom{2f}{f} - 2 - 2\left (\binom{2f}{f} - 1 \right ) - (2^m-1)\binom{2f}{f} \equiv 0 \mod {2^m}.
\end{align*}
 Now we can apply Lemma~\ref{lem:ptoe} to deduce $\mathsf c_k(\operatorname{Char}_A) \equiv 0 \mod{2^{e+1}}$ for each $k \in \{1,\dots,e+1\}$.

    The case when $e$ is odd follows similarly,  using Proposition~\ref{pro:walk_of_even_path_graph} in place of Proposition~\ref{pro:walk_of_odd_path_graph}.
    For the convenience of the reader, we give some details for the proof that $\mathsf c_{e+1}(\operatorname{Char}_A)$ is even.
    Apply Theorem~\ref{thm:burnside}, which states
        \[
    \sum_{d\,|\,e+1}\varphi \left (\frac{e+1}{d} \right )\operatorname{tr}(A^d)+f\mathbf 1^\transpose A^f \mathbf 1\equiv 0 \mod{2(e+1)}.
        \]
    Next, using $\mathbf 1^\transpose A^f\mathbf 1 \equiv 0 \mod {2^m}$ and $\operatorname{tr}(A^d) \equiv 0 \mod {2^m}$ for all $d<e+1$, one can deduce that $\mathsf p_{e+1}(\operatorname{Char}_A) = \operatorname{tr}(A^{e+1})\equiv 0 \mod{2^{\nu_2(e+1)+1}}$. 
    Finally, apply \eqref{eqn:power_sum_elementary_symmetric_sum_relation} to obtain $-(e+1)\mathsf c_{e+1}(\operatorname{Char}_A) \equiv 0 \mod{2^{\nu_2(e+1)+1}}$, whence $\mathsf c_{e+1}(\operatorname{Char}_A)$ is even, as required.
    \end{proof}

\section{Proof of Theorem~\ref{thm:m2}}
\label{sec:pm2}

In this section, we prove Theorem~\ref{thm:m2}.
We begin with an inclusion result on $\mathcal C_e(\mathsf S_n)$.

\begin{proposition}
\label{pro:lifting_vertices}
    For $n, e\in \mathbb{N}$, we have $\mathcal C_e(\mathsf S_n) \subset \mathcal C_e(\mathsf S_{n+2^{e-2}})$.
\end{proposition}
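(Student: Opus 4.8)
The plan is to show that any symmetric matrix $S \in \mathsf S_n$ can be enlarged by $2^{e-2}$ vertices to produce a matrix in $\mathsf S_{n+2^{e-2}}$ with the same characteristic polynomial modulo $2^e$. The natural candidate for this enlargement is to take a block-diagonal-type construction: form a new $\{\pm1\}$-matrix $S'$ of order $n + 2^{e-2}$ that is built from $S$ together with a small symmetric $\{\pm1\}$-block with constant diagonal $1$ on the new vertices, with the off-diagonal coupling between old and new vertices chosen so that the perturbation of the characteristic polynomial is divisible by $2^e$. Since every $S \in \mathsf S_n$ corresponds to a $\{0,1\}$-matrix $A = (J-S)/2$ which is the adjacency matrix of a graph $\Gamma$, the cleanest route is to work on the graph side: I would pick a graph $\Delta$ on $2^{e-2}$ vertices and set $\Gamma' = \Gamma \cup \Delta$ (disjoint union), so that $S' = J - 2A(\Gamma')$ lies in $\mathsf S_{n+2^{e-2}}$.

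First I would use Lemma~\ref{lem:coefficient_link_formula} to express $\mathsf c_k(\operatorname{Char}_{J-2A(\Gamma')})$ in terms of $\mathsf c_k(\operatorname{Char}_{A(\Gamma')})$ and the walk-count quantities $\mathbf 1^\transpose A(\Gamma')^{i-1} \mathbf 1$. Because $\Gamma' = \Gamma \cup \Delta$, we have $\operatorname{Char}_{A(\Gamma')}(x) = \operatorname{Char}_{A(\Gamma)}(x)\operatorname{Char}_{A(\Delta)}(x)$ and $\mathbf 1^\transpose A(\Gamma')^{j}\mathbf 1 = \mathbf 1^\transpose A(\Gamma)^{j}\mathbf 1 + \mathbf 1^\transpose A(\Delta)^{j}\mathbf 1$. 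So the difference $\mathsf c_k(\operatorname{Char}_{J-2A(\Gamma')}) - \mathsf c_k(\operatorname{Char}_{J-2A(\Gamma)})$ is a sum of terms each carrying a factor $(-2)^k$ together with either $\mathsf c_j(\operatorname{Char}_{A(\Delta)})$ for some $j \geqslant 1$, or $\mathbf 1^\transpose A(\Delta)^{j}\mathbf 1$ for some $j \geqslant 0$. The $\mathbf 1^\transpose A(\Delta)^{0}\mathbf 1 = 2^{e-2}$ term contributes $(-2)^{k}\cdot \tfrac12 \cdot \mathsf c_{k-1}(\operatorname{Char}_{A(\Gamma)})\cdot 2^{e-2}$, which is divisible by $2^{k-1+e-2} \cdot 2^{-1+1}$; since $\mathsf c_{k-1}(\operatorname{Char}_{A(\Gamma)})$ is divisible by $2^{k-2}$ by Lemma~\ref{lem:first3coeffs}, this is divisible by $2^{e + 2(k-2)}$, hence by $2^e$ for $k \geqslant 2$. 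For the remaining terms it suffices to choose $\Delta$ so that $\mathsf c_j(\operatorname{Char}_{A(\Delta)}) \equiv 0$ and $\mathbf 1^\transpose A(\Delta)^{j}\mathbf 1 \equiv 0$ modulo a high enough power of $2$ for all the relevant small $j$; the obvious choice is $\Delta = 2^{e-2}\vec{P}_1$, i.e.\ $2^{e-2}$ isolated vertices, for which $A(\Delta) = 0$, so $\operatorname{Char}_{A(\Delta)}(x) = x^{2^{e-2}}$ (all coefficients $\mathsf c_j$ with $j\geqslant 1$ vanish) and $\mathbf 1^\transpose A(\Delta)^{j}\mathbf 1 = 0$ for all $j \geqslant 1$. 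Then only the $j=0$ walk term survives, and we have just checked it is divisible by $2^e$.

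Putting this together: with $\Delta$ the empty graph on $2^{e-2}$ vertices, $\mathsf c_k(\operatorname{Char}_{J-2A(\Gamma')}) \equiv \mathsf c_k(\operatorname{Char}_{J-2A(\Gamma)}) \pmod{2^e}$ for every $k \in \{2,\dots,e\}$ (and trivially for $k > e$ since by Lemma~\ref{lem:first3coeffs} the coefficient is divisible by $2^{k-1}$ which exceeds $2^e$ there, noting also $n$ is large; in any case $\mathcal C_e$ only records coordinates $a_2,\dots,a_e$). Hence the tuple realised by $S = J - 2A(\Gamma)$ is also realised by $S' = J - 2A(\Gamma') \in \mathsf S_{n+2^{e-2}}$, giving $\mathcal C_e(\mathsf S_n) \subseteq \mathcal C_e(\mathsf S_{n+2^{e-2}})$.

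I do not expect a serious obstacle here; this is essentially the observation already made in passing in Example~\ref{ex:P2P2} (``one can use the matrix $J-2A(\Gamma \cup 2^e\vec{P}_1)$''), just with $2^{e-2}$ in place of $2^e$ since adding $2^{e-2}$ isolated vertices is enough for congruence modulo $2^e$ rather than $2^{e+1}$. The only point requiring a little care is the bookkeeping of $2$-adic valuations in the single surviving walk term $(-2)^{k}\tfrac12\,\mathsf c_{k-1}(\operatorname{Char}_{A(\Gamma)})\cdot 2^{e-2}$, where one invokes the divisibility $2^{k-2}\mid \mathsf c_{k-1}(\operatorname{Char}_{A(\Gamma)})$ from Lemma~\ref{lem:first3coeffs} to conclude divisibility by $2^e$; this works for all $k\in\{2,\dots,e\}$ without any lower bound on $n$, so the inclusion in fact holds for all $n, e \in \mathbb N$.
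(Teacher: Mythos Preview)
Your approach is exactly the paper's: take $\Delta$ to be $2^{e-2}$ isolated vertices, so that $\mathsf c_j(\operatorname{Char}_{A(\Gamma')})=\mathsf c_j(\operatorname{Char}_{A(\Gamma)})$ for all $j$ and the only new contribution in Lemma~\ref{lem:coefficient_link_formula} is the $i=1$ term coming from $\mathbf 1^\transpose A(\Delta)^0\mathbf 1=2^{e-2}$.

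There is, however, a slip in your bookkeeping. You invoke Lemma~\ref{lem:first3coeffs} to claim that $2^{k-2}$ divides $\mathsf c_{k-1}(\operatorname{Char}_{A(\Gamma)})$, but that lemma concerns $\operatorname{Char}_M$ for $M\in\mathsf U_n$, not the characteristic polynomial of the $\{0,1\}$-adjacency matrix $A(\Gamma)$. The claim is in fact false: for $\Gamma=P_2$ one has $\mathsf c_2(\operatorname{Char}_{A(P_2)})=-1$. Fortunately you don't need it. The surviving term is
\[
(-2)^k\cdot\tfrac12\cdot\mathsf c_{k-1}(\operatorname{Char}_{A(\Gamma)})\cdot 2^{e-2}=(-1)^k\,2^{\,k+e-3}\,\mathsf c_{k-1}(\operatorname{Char}_{A(\Gamma)}),
\]
and for $k\geqslant 3$ the exponent $k+e-3\geqslant e$ already suffices. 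For $k=2$ that exponent is only $e-1$, but here $\mathsf c_1(\operatorname{Char}_{A(\Gamma)})=-\operatorname{tr} A(\Gamma)=0$ since $\Gamma$ is simple (equivalently, as the paper does, handle $k=2$ directly via $\mathsf c_2(\operatorname{Char}_{J-2A})=0$ from Lemma~\ref{lem:first3coeffs}). With this correction your argument is complete and coincides with the paper's proof.
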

\begin{proof}
    Let $ \mathbf a = \left ( \overline{a_2}^{(e)},\overline{a_3}^{(e)},\dots,\overline{a_e}^{(e)}\right) \in \mathcal C_e(\mathsf S_n)$ and 
    let $\Gamma$ be an $n$-vertex graph such that $\mathsf c_i(\operatorname{Char}_{J-2A(\Gamma)}) \equiv a_i \mod{2^e}$ for each $i \in \{2,\dots,e\}$.
    Let $\Gamma^\prime = \Gamma \cup 2^{e-2}P_1$.
    Clearly, $\mathbf 1^\transpose A(\Gamma^\prime)^0 \mathbf 1 = \mathbf 1^\transpose A(\Gamma)^0 \mathbf 1 + 2^{e-2}$ and $\mathbf 1^\transpose A(\Gamma^\prime)^i \mathbf 1 = \mathbf 1^\transpose A(\Gamma)^i \mathbf 1$ for each $i \in \mathbb N$.

    Since $\operatorname{Char}_{A(\Gamma^\prime)}(x) = \operatorname{Char}_{A(\Gamma)}(x)x^{2^{e-2}}$, we have $\mathsf c_{i}(\operatorname{Char}_{A(\Gamma^\prime)})=\mathsf c_{i}(\operatorname{Char}_{A(\Gamma)})$ for each $i \in \{1,2,\dots,n\}$.
    By Lemma~\ref{lem:first3coeffs}, we have $\mathsf c_2(\operatorname{Char}_{J-2A(\Gamma^\prime)}) = \mathsf c_2(\operatorname{Char}_{J-2A(\Gamma)}) = 0$.
    By Lemma~\ref{lem:coefficient_link_formula}, for each $k \in \{3,4,\dots, e\}$, we have
\begin{align*}
\mathsf c_{k}(\operatorname{Char}_{J-2A(\Gamma^\prime)})
    &= (-2)^k\mathsf c_{k}(\operatorname{Char}_{A(\Gamma^\prime)}) + (-1)^k\sum_{i=1}^{k}\mathsf c_{k-i}(\operatorname{Char}_{A(\Gamma^\prime)})\cdot2^{k-1} \cdot \mathbf 1^\transpose A(\Gamma^\prime)^{i-1} \mathbf 1\\
    &\equiv (-2)^k\mathsf c_{k}(\operatorname{Char}_{A(\Gamma)}) + (-1)^k\sum_{i=1}^{k}\mathsf c_{k-i}(\operatorname{Char}_{A(\Gamma)})\cdot2^{k-1} \cdot \mathbf 1^\transpose A(\Gamma)^{i-1} \mathbf 1 \mod{2^e}\\
    &\equiv \mathsf c_{k}(\operatorname{Char}_{J-2A(\Gamma)}) \mod{2^e}.
\end{align*}
Hence $\mathbf a \in \mathcal C_e(\mathsf S_{n+2^{e-2}})$.
\end{proof}

The main utility of lift graphs of type I is the following lemma.

\begin{lemma}\label{lem:shift_graph}
Let $\Gamma$ be a graph and $\Lambda$ be an $(e,f)$-lift graph of type I.
Then, for each $k \in \{1,\dots,e-1\}$,
    \[
    \mathsf c_k(\operatorname{Char}_{A(\Gamma \cup \Lambda)}) \equiv \begin{cases}
         \mathsf c_k(\operatorname{Char}_{A(\Gamma)}) , & \text{if } k\ne f;\\
         \mathsf c_k(\operatorname{Char}_{A(\Gamma)})+2^{e-f-1}, & \text{if } k=f
    \end{cases} \mod{2^{e-k}}.
    \]
    Moreover, $\mathbf 1^\transpose A(\Gamma\cup \Lambda)^k \mathbf 1\equiv \mathbf 1^\transpose A(\Gamma)^k \mathbf 1 \mod{2^{e-2}}$ for all $k \in \mathbb N$.
\end{lemma}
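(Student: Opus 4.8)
\textbf{Proof plan for Lemma~\ref{lem:shift_graph}.}

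The plan is to exploit the multiplicativity of the characteristic polynomial under disjoint unions together with the defining properties (L1a) and (L1b) of an $(e,f)$-lift graph of type I. First I would record that $A(\Gamma \cup \Lambda)$ is block diagonal with blocks $A(\Gamma)$ and $A(\Lambda)$, so that $\operatorname{Char}_{A(\Gamma \cup \Lambda)}(x) = \operatorname{Char}_{A(\Gamma)}(x)\cdot \operatorname{Char}_{A(\Lambda)}(x)$ and hence, for each $k$,
\[
\mathsf c_k(\operatorname{Char}_{A(\Gamma \cup \Lambda)}) = \sum_{j=0}^{k}\mathsf c_j(\operatorname{Char}_{A(\Lambda)})\,\mathsf c_{k-j}(\operatorname{Char}_{A(\Gamma)}).
\]
The $j=0$ term is just $\mathsf c_k(\operatorname{Char}_{A(\Gamma)})$ since $\mathsf c_0 = 1$, so the task reduces to controlling the tail $\sum_{j=1}^{k}\mathsf c_j(\operatorname{Char}_{A(\Lambda)})\,\mathsf c_{k-j}(\operatorname{Char}_{A(\Gamma)})$ modulo $2^{e-k}$.

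Next I would plug in (L1b): for $j \in \{1,\dots,e-1\}$ (which covers all relevant $j \leqslant k \leqslant e-1$), $\mathsf c_j(\operatorname{Char}_{A(\Lambda)})$ is divisible by $2^{e-j}$ when $j \ne f$, and is $\equiv 2^{e-f-1} \mod 2^{e-f}$ when $j = f$. Since the integer coefficients $\mathsf c_{k-j}(\operatorname{Char}_{A(\Gamma)})$ are arbitrary, each term with $j \ne f$ contributes a multiple of $2^{e-j} \geqslant 2^{e-k}$ (as $j \leqslant k$) and hence vanishes modulo $2^{e-k}$; the single term $j = f$ (present only when $f \leqslant k$, and contributing exactly when $f = k$, since for $f<k$ we would need $2^{e-f-1}\mathsf c_{k-f}(\operatorname{Char}_{A(\Gamma)})$ to vanish mod $2^{e-k}$, which holds because $e-f-1 \geqslant e-k$ is equivalent to $k-1 \geqslant f$, i.e. $k > f$) contributes $2^{e-f-1}\mathsf c_{0}(\operatorname{Char}_{A(\Gamma)}) = 2^{e-f-1}$ when $k = f$. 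I would present this as a short case split on whether $f \leqslant k$ and whether $f = k$, checking the valuation inequalities $e - j \geqslant e - k$ for $j < k$ and $e - f - 1 \geqslant e - k$ for $f < k$ carefully; this bookkeeping is the only place where an off-by-one could creep in, so it is the main (though modest) obstacle.

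Finally, the ``moreover'' clause is immediate: since $\mathbf 1^\transpose A(\Gamma \cup \Lambda)^k \mathbf 1 = \mathbf 1^\transpose A(\Gamma)^k \mathbf 1 + \mathbf 1^\transpose A(\Lambda)^k \mathbf 1$ and (L1a) gives $2^{e-2} \mid \mathbf 1^\transpose A(\Lambda)^k \mathbf 1$ for every $k \in \mathbb N \cup \{0\}$, we get $\mathbf 1^\transpose A(\Gamma \cup \Lambda)^k \mathbf 1 \equiv \mathbf 1^\transpose A(\Gamma)^k \mathbf 1 \mod 2^{e-2}$. I would close by noting that this block-additivity of $\mathbf 1^\transpose A(\cdot)^k \mathbf 1$ follows from the block-diagonal structure of the adjacency matrix and the block decomposition of $\mathbf 1$ accordingly.
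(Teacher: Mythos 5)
Your proposal is correct and follows essentially the same route as the paper: expand $\mathsf c_k(\operatorname{Char}_{A(\Gamma\cup\Lambda)})$ via the product formula for characteristic polynomials of disjoint unions, kill the cross terms using the divisibility from (L1b) (the paper packages this as the uniform bound $2^{e-1-j}\mid \mathsf c_j(\operatorname{Char}_{A(\Lambda)})$, while you split the $j=f$ case off separately, but the valuation bookkeeping is the same), and derive the ``moreover'' clause from (L1a) and additivity of $\mathbf 1^\transpose A(\cdot)^k\mathbf 1$ over disjoint unions.
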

\begin{proof}
    Since $2^{e-2}$ divides $\mathbf 1^\transpose A(\Lambda)^k \mathbf 1$ for each $k \in \mathbb N$, we have 
    \[
    \mathbf 1^\transpose A(\Gamma \cup \Lambda)^k \mathbf 1 = \mathbf 1^\transpose A(\Gamma)^k \mathbf 1+\mathbf 1^\transpose A(\Lambda)^k \mathbf 1 \equiv \mathbf 1^\transpose A(\Gamma)^k \mathbf 1 \mod{2^{e-2}}
    \]
     for each $k \in \mathbb N$.
    We have $\operatorname{Char}_{\Gamma\cup \Lambda}(x)=\operatorname{Char}_{\Gamma}(x)\operatorname{Char}_{\Lambda}(x)$, whence 
    \begin{align*}
    \mathsf c_k(\operatorname{Char}_{A(\Gamma \cup \Lambda)}) &=\sum_{j=0}^k\mathsf c_j(\operatorname{Char}_{A(\Lambda)})\mathsf c_{k-j}(\operatorname{Char}_{A(\Gamma)}) \\
        &= \mathsf c_{k}(\operatorname{Char}_{A(\Gamma)})+\mathsf c_k(\operatorname{Char}_{A(\Lambda)})+\sum_{j=1}^{k-1}\mathsf c_j(\operatorname{Char}_{A(\Lambda)})\mathsf c_{k-j}(\operatorname{Char}_{A(\Gamma)})
    \end{align*}
    for each $k \in \{1,2,\dots,e-1\}$.
    Since $2^{e-1-j}$ divides $\mathsf c_j(\operatorname{Char}_{A(\Lambda)})$ for each  $j \in \{1,2,\dots,e-1\}$, we have 
    \[
    \mathsf c_k(\operatorname{Char}_{A(\Gamma \cup \Lambda)}) \equiv \mathsf c_k(\operatorname{Char}_{A(\Gamma)})+\mathsf c_k(\operatorname{Char}_{A(\Lambda)}) \mod{2^{e-k}}
    \] 
    from which our conclusion follows.    
\end{proof}

Now we can prove that the cardinality of $\mathcal C_{e}(\mathsf S_N)$ is at least $2^{e-3}|\mathcal C_{e-1}(\mathsf S_n)|$ for $e \geqslant 4$ and $n$ and $N$ are both odd, with $N$ large enough relative to $n$.

\begin{lemma}
\label{lem:2em3below}
    Let $n \in \mathbb N$ be odd and $e \geqslant 4$ be an integer.
    Suppose $N = n+\sum_{f=2}^{e-2}(e-f-1)n_f$, where $n_f$ is the order of an $(e,f)$-lift graph of type I.
    Then $\left |\mathcal C_e \left (\mathsf S_N\right ) \right | \geqslant 2^{e-3} \left |\mathcal C_{e-1} \left (\mathsf S_n\right ) \right |$.
\end{lemma}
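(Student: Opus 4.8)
The strategy is to start from an arbitrary tuple in $\mathcal C_{e-1}(\mathsf S_n)$, first promote it to a tuple modulo $2^e$ using the trivial $x^{2^{e-2}}$-padding construction of Proposition~\ref{pro:lifting_vertices} (or rather its analogue that lifts a residue modulo $2^{e-1}$ to one modulo $2^e$), and then use the $(e,f)$-lift graphs of type~I, for $f = 2, 3, \dots, e-2$, to independently toggle each of the coefficients $\mathsf c_f(\operatorname{Char})$ by $2^{e-1}$ modulo $2^e$. Since there are $e-3$ values of $f$ in the range $\{2,\dots,e-2\}$, and each toggle can be independently switched on or off, this multiplies the count by $2^{e-3}$, provided we check that (i) these toggles do not interfere with one another or with the lower-order coefficients $\mathsf c_2,\dots,\mathsf c_{f-1}$ that are already pinned down modulo $2^e$, and (ii) each lifted matrix genuinely lies in $\mathsf S_N$ for the stated $N$.

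In more detail: fix $\mathbf a = (\overline{a_2}^{(e-1)},\dots,\overline{a_{e-1}}^{(e-1)}) \in \mathcal C_{e-1}(\mathsf S_n)$, realised by $\Gamma$ with $\mathsf c_i(\operatorname{Char}_{J-2A(\Gamma)}) \equiv a_i \bmod 2^{e-1}$. For each subset $T \subseteq \{2,\dots,e-2\}$ set $\Gamma_T = \Gamma \cup \bigcup_{f\in T}\Lambda_f \cup c_T P_1$, where $\Lambda_f$ is an $(e,f)$-lift graph of type~I of order $n_f$ and $c_T$ is chosen so that the total vertex count is exactly $N$; concretely $c_T = \sum_{f \in \{2,\dots,e-2\}\setminus T}(e-f-1)n_f + (\text{extra padding to reach }N)$, but the cleanest bookkeeping is to take $2\Lambda_f$ where needed (Lemma~\ref{lem:inductiveLiftConst}) so that each available lift graph has a fixed order. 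I would then compute $\mathsf c_k(\operatorname{Char}_{J-2A(\Gamma_T)})$ modulo $2^e$ by iterating Lemma~\ref{lem:shift_graph}: adding each $\Lambda_f$ shifts $\mathsf c_f(\operatorname{Char}_{A})$ by $2^{e-f-1}$ modulo $2^{e-f}$ and leaves the other $\mathsf c_k(\operatorname{Char}_A)$, $k < e$, unchanged modulo $2^{e-k}$; moreover $\mathbf 1^\transpose A(\Gamma_T)^k\mathbf 1 \equiv \mathbf 1^\transpose A(\Gamma)^k\mathbf 1 \bmod 2^{e-2}$ for all $k$. Feeding this into Lemma~\ref{lem:coefficient_link_formula} together with Proposition~\ref{pro:evensumwalks}, the factor $(-2)^k$ in $\mathsf c_k(\operatorname{Char}_{J-2A})$ upgrades each "$\bmod\ 2^{e-k}$'' congruence to a "$\bmod\ 2^e$'' congruence on $\mathsf c_k(\operatorname{Char}_{J-2A})$, and the walk-sum correction terms contribute multiples of $2^{e}$ to the relevant coefficients. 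One checks that adding $\Lambda_f$ changes $\mathsf c_k(\operatorname{Char}_{J-2A(\Gamma_T)})$ modulo $2^e$ only for $k = f$ (where it adds $2^{e-1}$, up to a unit that can be absorbed) and possibly for $k = f+1$ (via the $\mathbf 1^\transpose A^{f}\mathbf 1$ term, whose parity is controlled); the cumulative effect is that the vector of residues $(\overline{\mathsf c_2}^{(e)},\dots,\overline{\mathsf c_{e-1}}^{(e)})$ is obtained from the $T=\varnothing$ value by adding $2^{e-1}$ in each coordinate indexed by $T$ — an injective map on the $2^{e-3}$ subsets $T$. Finally, distinct $\mathbf a \in \mathcal C_{e-1}(\mathsf S_n)$ give rise to tuples that remain distinct modulo $2^e$ (their residues modulo $2^{e-1}$ already differ), so the $2^{e-3}$ lifts of one $\mathbf a$ never collide with those of another, yielding $|\mathcal C_e(\mathsf S_N)| \geqslant 2^{e-3}|\mathcal C_{e-1}(\mathsf S_n)|$.

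The main obstacle is the bookkeeping of the "cascade'' effect: applying Lemma~\ref{lem:shift_graph} only guarantees control of $\mathsf c_k(\operatorname{Char}_{A})$ modulo $2^{e-k}$, which is weaker for small $k$, and when passing through Lemma~\ref{lem:coefficient_link_formula} the correction sum $\tfrac12\sum_i \mathsf c_{k-i}(\operatorname{Char}_A)\mathbf 1^\transpose A^{i-1}\mathbf 1$ couples the modified high-index coefficient $\mathsf c_f$ into lower-modulus information about $\mathsf c_k(\operatorname{Char}_{J-2A})$ for $k > f$. I expect the resolution to be exactly as in Example~\ref{ex:P2P2}: toggling $\mathsf c_f(\operatorname{Char}_A)$ by $2^{e-f-1}$ affects $\mathsf c_k(\operatorname{Char}_{J-2A})$ for $k>f$ only through a term $2^{e}\,(\text{integer})\cdot\mathbf 1^\transpose A(\Gamma)^{k-f-1}\mathbf 1$, which vanishes modulo $2^e$ because $2^{e-f-1}\cdot 2^{f-1}\cdot(\text{even})$ already carries enough powers of $2$ — here Proposition~\ref{pro:evensumwalks} is essential. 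One must also verify that the chosen padding is non-negative, i.e.\ that $N$ as defined is large enough relative to $n$ and the orders $n_f$; this is where the hypothesis on the form of $N$ is used, and it forces $N$ (hence the eventual $N$ in Theorem~\ref{thm:m2}) to grow recursively in $e$. Parity of $N$ and $n$: since each $(e,f)$-lift graph of type~I has a definite order and we add $P_1$'s to fill, we must ensure the total parity is preserved — taking $n$ odd and the number of padding vertices even (which can always be arranged, adjusting by one more copy of a lift graph if needed) keeps $N$ odd, matching the statement.
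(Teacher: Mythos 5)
Your overall strategy --- realise a tuple of $\mathcal C_{e-1}(\mathsf S_n)$ by a graph $\Gamma$, attach $(e,f)$-lift graphs of type I for $f\in\{2,\dots,e-2\}$ to toggle coefficients by $2^{e-1}$, pad with isolated vertices, and conclude by injectivity --- is the same as the paper's, but the central computation contains a false step, and it is exactly the step on which the lemma hinges. You claim that toggling $\mathsf c_f(\operatorname{Char}_{A})$ by $2^{e-f-1}$ affects $\mathsf c_k(\operatorname{Char}_{J-2A})$ for $k>f$ only through a term that vanishes modulo $2^e$ thanks to Proposition~\ref{pro:evensumwalks}. For $k\geqslant f+2$ this is correct: the relevant cross-term is $(-2)^k\cdot\tfrac12\cdot 2^{e-f-1}(\text{odd})\cdot\mathbf 1^\transpose A(\Gamma)^{k-f-1}\mathbf 1$, and the walk count has positive length, hence is even, giving a multiple of $2^{e+k-f-1}$. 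But for $k=f+1$ the relevant walk count is $\mathbf 1^\transpose A(\Gamma^\prime)^{0}\mathbf 1=n^\prime$, to which Proposition~\ref{pro:evensumwalks} does not apply; since $n$ (hence $n^\prime$) is odd, this term contributes $2^{e-1}\cdot(\text{odd})\not\equiv 0\pmod{2^e}$. So a single copy of $\Lambda_f$ shifts \emph{both} $\mathsf c_f(\operatorname{Char}_{J-2A})$ and $\mathsf c_{f+1}(\operatorname{Char}_{J-2A})$ by $2^{e-1}$ modulo $2^e$ --- this is already visible in Example~\ref{ex:P2P2}, where the lift graph with $f=2$ moves the \emph{third} coefficient. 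Your asserted effect vector (``add $2^{e-1}$ in each coordinate indexed by $T$'') is therefore wrong, and the hypothesis that $n$ is odd --- which you invoke only for parity bookkeeping of $N$ --- is precisely what creates this cascade. (You do flag a possible interaction at $k=f+1$, but you attribute it to the change in $\mathbf 1^\transpose A^{f}\mathbf 1$, which in fact contributes nothing modulo $2^e$; the genuine source is the change in $\mathsf c_f(\operatorname{Char}_A)$ paired with the odd order $n^\prime$.)

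The count can still be rescued from your construction: the true effect of a subset $T$ on the coordinates $3,\dots,e-1$ is $\chi_T+\sigma(\chi_T)$ modulo $2$, where $\sigma$ shifts indices by one, and this is a unitriangular, hence injective, map on the $2^{e-3}$ subsets. The paper instead avoids the issue by taking $d_{e-1}+d_{e-2}+\cdots+d_{f+1}$ copies of $\Lambda_f$, so that at coordinate $k$ the direct contribution $(d_{e-1}+\cdots+d_{k+1})2^{e-1}$ and the cascaded contribution $(d_{e-1}+\cdots+d_{k})2^{e-1}$ telescope to $d_k2^{e-1}$ modulo $2^e$. Either repair works, but as written your argument asserts a false cancellation at $k=f+1$ and never verifies injectivity of the actual effect map, so it does not yet prove the lemma.
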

\begin{proof}
By Remark~\ref{rem:K2K2}, Lemma~\ref{lem:cycle_unionLiftI}, and Lemma~\ref{lem:inductiveLiftConst}, for each $f \in \{2,\dots,e-2\}$, there exists an $(e,f)$-lift graph $\Lambda_f$ of type I.
    Denote by $n_f$ the order of each graph $\Lambda_f$.
    Let $(c_3,\dots,c_{e-1}) \in \{1,\dots,2^e\}^{e-3}$ such that 
    \[
    \left ( \overline {0}^{(e-1)},\overline {c_3}^{(e-1)},\dots,\overline {c_{e-2}}^{(e-1)},\overline {c_{e-1}}^{(e-1)}\right) \in \mathcal C_{e-1}(\mathsf S_{n}).
    \]
    Let $\Gamma$ be an $n$-vertex graph such that 
    $\mathsf c_i(\operatorname{Char}_{J-2A(\Gamma)}) \equiv c_i \mod {2^{e-1}}$ for each $i \in \{3,\dots,e-1\}$.
    Hence, there must exist $(d_3,\dots,d_{e-1}) \in \{0,1\}^{e-3}$ such that $c_i \equiv \mathsf c_i(\operatorname{Char}_{J-2A(\Gamma)}) + 2^{e-1}d_i \mod{2^e}$.
    
    Let 
    \[
    \Gamma^\prime = \Gamma \cup d_{e-1} \Lambda_{e-2} \cup (d_{e-1}+d_{e-2})\Lambda_{e-3} \cup \dots \cup  (d_{e-1}+d_{e-2} + \dots + d_3) \Lambda_2
    \]
    and let $n^\prime$ be the order of $\Gamma^\prime$.
    Then $n^\prime \leqslant n +\sum_{f=2}^{e-2}(e-f-1)n_f =: N$ and  (L1a) implies that $N\equiv n^\prime \equiv n \mod{2^{e-2}}$.
    By Lemma \ref{lem:shift_graph}, for each $k \in \{1,\dots,e-1\}$, we have $\mathbf 1^\transpose A(\Gamma^\prime)^k \mathbf 1 \equiv \mathbf 1^\transpose A(\Gamma)^k \mathbf 1 \mod{2^{e-2}}$ and, by Lemma~\ref{lem:shift_graph},
      \begin{equation*}
      \label{eqn:bk}
           \mathsf c_k(\operatorname{Char}_{A(\Gamma^\prime)}) \equiv \mathsf c_k(\operatorname{Char}_{A(\Gamma)})+(d_{e-1}+\dots+d_{k+1})2^{e-k-1} \mod{2^{e-k}}.
      \end{equation*}

        Hence, for each $j \in \{3,\dots, e-1\}$ and each $k \in \{1,\dots,e-1\}$, we have
        \begin{equation}
        \label{eqn:2bk}
            (-2)^j \mathsf c_k(\operatorname{Char}_{A(\Gamma^\prime)}) \equiv 
        \begin{cases}
            (-2)^j \mathsf c_k(\operatorname{Char}_{A(\Gamma)}), & \text{ if } k<j,\\
            (-2)^k \mathsf c_k(\operatorname{Char}_{A(\Gamma)}) +2^{e-1}(d_{e-1}+\dots+d_{k+1}), & \text{ if } k=j
        \end{cases} \mod{2^{e}}.
        \end{equation}

Now we can deduce that $\mathsf c_k(\operatorname{Char}_{J-2A(\Gamma^\prime)})  \equiv c_k \mod{2^e}$ for each $k \in \{3,\dots, e-1\}$.
Indeed, using \eqref{eqn:2bk}, Lemma~\ref{lem:coefficient_link_formula}, and Proposition~\ref{pro:evensumwalks}, we find that
\begin{align*}
\mathsf c_k(\operatorname{Char}_{J-2A(\Gamma^\prime)}) &= (-2)^k\left ( \mathsf c_k(\operatorname{Char}_{A(\Gamma^\prime)}) +\frac{1}{2}\sum_{i=1}^{k}\mathsf c_{k-i}(\operatorname{Char}_{A(\Gamma^\prime)}) \cdot \mathbf 1^\transpose A(\Gamma^\prime)^{i-1} \mathbf 1 \right ) \\
&\equiv \mathsf c_{k}(\operatorname{Char}_{J-2A(\Gamma)})+2^{e-1}(d_{e-1}+\dots+d_{k+1})+2^{e-1}(d_{e-1}+\dots+d_{k})  \mod{2^e}\\
    &\equiv \mathsf c_{k}(\operatorname{Char}_{J-2A(\Gamma)})+d_k2^{e-1}  \mod{2^e}\\
    &\equiv c_k\mod{2^e}.
\end{align*}

Therefore, for some $c_e$, we have $\left ( \overline {0}^{(e)},\overline {c_3}^{(e)},\overline {c_4}^{(e)},\dots,\overline {c_{e-1}}^{(e)},\overline {c_e}^{(e)}\right)\in \mathcal C_{e}(\mathsf S_{n^\prime})$, which is a subset of $\mathcal C_{e}(\mathsf S_N)$ by Proposition~\ref{pro:lifting_vertices} together with the fact that $N\equiv n^\prime \equiv n \mod{2^{e-2}}$. 
\end{proof}

Using Lemma~\ref{lem:2em3below}, we can prove the odd case of Theorem~\ref{thm:m2} with a simple induction.

\begin{theorem}
\label{thm:m2odd}
    For each integer $e\geqslant 3$,
    there exists an integer $N_e$ such that $|\mathcal C_{e}(\mathsf S_n)| = 2^{\binom{e-2}{2}+1}$ for each odd integer $n \geqslant N_e$. 
\end{theorem}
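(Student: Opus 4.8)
The plan is to combine the upper bound from Theorem~\ref{thm:symUB} with a lower bound $|\mathcal C_e(\mathsf S_n)| \geq 2^{\binom{e-2}{2}+1}$ (for $n$ odd and large) proved by induction on $e \geq 3$. The inductive step is almost immediate from Lemma~\ref{lem:2em3below}; the only genuine computation is the base case $e = 3$.

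\textbf{Base case $e = 3$.} By Lemma~\ref{lem:first3coeffs}, every $S \in \mathsf S_n$ satisfies $\mathsf c_0(\operatorname{Char}_S) = 1$, $\mathsf c_1(\operatorname{Char}_S) = -n$, $\mathsf c_2(\operatorname{Char}_S) = 0$, and $4 \mid \mathsf c_3(\operatorname{Char}_S)$, so $\mathcal C_3(\mathsf S_n) \subseteq \{(\overline 0^{(3)},\overline 0^{(3)}),(\overline 0^{(3)},\overline 4^{(3)})\}$, giving $|\mathcal C_3(\mathsf S_n)| \leq 2$. For the reverse inequality, write $S = J - 2A(\Gamma)$ for a graph $\Gamma$ on $n$ vertices with $m$ edges. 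Using $\mathsf c_1(\operatorname{Char}_{A(\Gamma)}) = 0$, $\mathsf c_2(\operatorname{Char}_{A(\Gamma)}) = -m$, and the fact (Proposition~\ref{pro:evensumwalks}) that $\mathbf 1^\transpose A(\Gamma)^k \mathbf 1$ is even for all $k \geq 1$, Lemma~\ref{lem:coefficient_link_formula} gives $\mathsf c_3(\operatorname{Char}_S) \equiv 4mn \bmod 8$, which is $\overline 0^{(3)}$ or $\overline 4^{(3)}$ according to the parity of $mn$. Since $n$ is odd, taking $\Gamma$ to be the empty graph ($m=0$) and then $P_2 \cup (n-2)P_1$ ($m=1$) realizes both classes; hence $|\mathcal C_3(\mathsf S_n)| = 2 = 2^{\binom 12 + 1}$ for every odd $n \geq 3$, and we may take $N_3 = 3$.

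\textbf{Inductive step.} Let $e \geq 4$ and suppose the statement holds for $e-1$ with threshold $N_{e-1}$. For each $f \in \{2,\dots,e-2\}$, Remark~\ref{rem:K2K2}, Lemma~\ref{lem:cycle_unionLiftI}, and Lemma~\ref{lem:inductiveLiftConst} furnish an $(e,f)$-lift graph of type I; let $n_f$ denote its order. Property (L1a) (with $k=0$) forces $2^{e-2} \mid n_f$, so each $n_f$ is even, and hence $C := \sum_{f=2}^{e-2}(e-f-1)n_f$ is even. Set $N_e := N_{e-1} + C$. Given any odd $n^\star \geq N_e$, put $n := n^\star - C$; then $n$ is odd and $n \geq N_{e-1}$, so the inductive hypothesis yields $|\mathcal C_{e-1}(\mathsf S_n)| = 2^{\binom{e-3}{2}+1}$. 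Applying Lemma~\ref{lem:2em3below} to this $n$ (for which the associated value is $N = n + C = n^\star$) gives
\[
|\mathcal C_e(\mathsf S_{n^\star})| \;\geq\; 2^{e-3}\,|\mathcal C_{e-1}(\mathsf S_n)| \;=\; 2^{e-3 + \binom{e-3}{2} + 1} \;=\; 2^{\binom{e-2}{2}+1},
\]
using Pascal's identity $\binom{e-2}{2} = \binom{e-3}{2} + (e-3)$. As $n^\star$ is odd, Theorem~\ref{thm:symUB} supplies the matching upper bound $|\mathcal C_e(\mathsf S_{n^\star})| \leq 2^{\binom{e-2}{2}+1}$, so equality holds, completing the induction.

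\textbf{Expected obstacle.} Almost all of the work is already contained in Lemma~\ref{lem:2em3below} and the explicit constructions of lift graphs of type I, so I do not anticipate a real difficulty. The two points to handle with care are the base-case congruence $\mathsf c_3(\operatorname{Char}_S) \equiv 4mn \bmod 8$ and the parity bookkeeping: it is crucial that each $n_f$ is even (this is exactly (L1a) with $k=0$), so that $n \mapsto n + C$ is a bijection from the odd integers $\geq N_{e-1}$ onto the odd integers $\geq N_e$ --- otherwise one would only conclude the equality along an arithmetic progression of odd $n$ rather than for all large odd $n$.
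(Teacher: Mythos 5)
Your proposal is correct and follows essentially the same route as the paper: the matching upper bound from Theorem~\ref{thm:symUB}, the base case $e=3$ realised by $nP_1$ and $P_2\cup(n-2)P_1$, and the inductive step via Lemma~\ref{lem:2em3below} with $N_e = N_{e-1}+\sum_{f=2}^{e-2}(e-f-1)n_f$. Your explicit verification that each $n_f$ is even (so that $N_e$ stays odd and every odd $n\geqslant N_e$ is reached) makes precise a point the paper leaves implicit in the phrase ``$n = N_e+2k$''.
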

\begin{proof}
By Theorem~\ref{thm:symUB}, it suffices to show that $|\mathcal C_{e}(\mathsf S_n)| \geqslant 2^{\binom{e-2}{2}+1}$.
       We proceed by induction on $e$.
    For $e = 3$, we can take $N_3 = 3$ and $\{([0]_3,[0]_3),([0]_3,[4]_3)\} \subseteq \mathcal C_{3}(\mathsf S_n)$ for each odd $n \geqslant N_3$.
    Indeed, we have the graphs $\Gamma_1 = nP_1$ and $\Gamma_2 = P_2 \cup (n-2)P_1$.
    For the former $\mathsf c_{3}(\operatorname{Char}_{J-2A(\Gamma_1)}) \equiv 0 \mod 8$ and the latter $\mathsf c_{3}(\operatorname{Char}_{J-2A(\Gamma_1)})  \equiv 4 \mod 8$.

    Now we can assume $e \geqslant 4$ for the inductive step.
    Take $N_{e}=N_{e-1}+\sum_{f=2}^{e-2}(e-f-1)n_f$ where $n_f$ is the order of an $(e,f)$-lift graph of type I.

    Suppose $n = N_e + 2k$ for some nonnegative integer $k$.
    Then, by the inductive hypothesis, $|\mathcal C_{e-1}(\mathsf S_{N_{e-1}+2k})| = 2^{\binom{e-3}{2}+1}$.
    By Lemma~\ref{lem:2em3below}, we have $|\mathcal C_{e}(\mathsf S_n)| \geqslant 2^{e-3}2^{\binom{e-3}{2}+1} = 2^{\binom{e-2}{2}+1}$.
\end{proof}

The main utility of lift graphs of type II is the following lemma.

\begin{lemma}\label{lem:shift_graphII}
    Let $e$ be a positive integer, 
    $\Gamma$ be a graph,
    and $\Lambda$ be an $e$-lift graph of type II. 
    Then, for each $k \in \{1,2,\dots, e+1\}$, we have $\mathsf c_k(\operatorname{Char}_{A(\Gamma \cup \Lambda)}) \equiv \mathsf c_k(\operatorname{Char}_{A(\Gamma)}) \mod{2^{e+2-k}}$, and for each $k \in \{1,2,\dots, e\}$,
\begin{align*}
    \mathbf 1^\transpose{A(\Gamma \cup \Lambda)^k} \mathbf 1 &\equiv
    \begin{cases}
         \mathbf 1^\transpose {A(\Gamma)^k}\mathbf 1, & \text{if } k<e; \\
        \mathbf 1^\transpose {A(\Gamma)^e}\mathbf 1+2, & \text{if } k= e
    \end{cases} \mod{2^{e+2-k}}.
\end{align*}
\end{lemma}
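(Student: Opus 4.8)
The plan is to exploit the fact that $\Gamma \cup \Lambda$ is a disjoint union, so that $A(\Gamma \cup \Lambda)$ is block diagonal with diagonal blocks $A(\Gamma)$ and $A(\Lambda)$. This immediately gives the two structural identities that drive the whole argument: $\operatorname{Char}_{A(\Gamma \cup \Lambda)}(x) = \operatorname{Char}_{A(\Gamma)}(x)\,\operatorname{Char}_{A(\Lambda)}(x)$, and, for every $k \in \mathbb N \cup \{0\}$, $\mathbf 1^\transpose A(\Gamma \cup \Lambda)^k \mathbf 1 = \mathbf 1^\transpose A(\Gamma)^k \mathbf 1 + \mathbf 1^\transpose A(\Lambda)^k \mathbf 1$. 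Once these are in place, the lemma is just a matter of reading off conditions (L2a) and (L2b) in the definition of an $e$-lift graph of type II; this is the same template as the proof of Lemma~\ref{lem:shift_graph}.

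First I would dispatch the walk-count statement. For $k \in \{1,\dots,e-1\}$, condition (L2a) gives $\mathbf 1^\transpose A(\Lambda)^k \mathbf 1 \equiv 0 \mod{2^{e+2-k}}$, so the additive identity yields $\mathbf 1^\transpose A(\Gamma \cup \Lambda)^k \mathbf 1 \equiv \mathbf 1^\transpose A(\Gamma)^k \mathbf 1 \mod{2^{e+2-k}}$. For $k = e$ we have $2^{e+2-k} = 4$ and (L2a) gives $\mathbf 1^\transpose A(\Lambda)^e \mathbf 1 \equiv 2 \mod 4$, so $\mathbf 1^\transpose A(\Gamma \cup \Lambda)^e \mathbf 1 \equiv \mathbf 1^\transpose A(\Gamma)^e \mathbf 1 + 2 \mod 4$, which is the claimed shift.

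Next I would handle the coefficients. Expanding the product of characteristic polynomials gives $\mathsf c_k(\operatorname{Char}_{A(\Gamma \cup \Lambda)}) = \sum_{j=0}^{k} \mathsf c_j(\operatorname{Char}_{A(\Lambda)})\,\mathsf c_{k-j}(\operatorname{Char}_{A(\Gamma)})$. The $j = 0$ term equals $\mathsf c_k(\operatorname{Char}_{A(\Gamma)})$ because $\mathsf c_0 = 1$. For $1 \leqslant j \leqslant k$ with $k \leqslant e+1$, condition (L2b) says $2^{e+2-j}$ divides $\mathsf c_j(\operatorname{Char}_{A(\Lambda)})$, and since $j \leqslant k$ implies $e+2-j \geqslant e+2-k$, each such term is divisible by $2^{e+2-k}$. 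Summing over $j$ then gives $\mathsf c_k(\operatorname{Char}_{A(\Gamma \cup \Lambda)}) \equiv \mathsf c_k(\operatorname{Char}_{A(\Gamma)}) \mod{2^{e+2-k}}$ for each $k \in \{1,\dots,e+1\}$.

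There is essentially no genuine obstacle: the content of the lemma is entirely packaged into the definition of a type II lift graph. The only points requiring a moment's care are that the convolution index $j$ never leaves the range $\{1,\dots,e+1\}$ on which (L2b) is asserted — which holds because $k \leqslant e+1$ — and that $\mathsf c_{k-j}(\operatorname{Char}_{A(\Gamma)})$ is always a well-defined integer (equal to $0$ when $k-j$ exceeds the order of $\Gamma$), so no divisibility is ever lost in the estimate.
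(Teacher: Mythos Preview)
Your proof is correct and follows essentially the same approach as the paper's: both use the block-diagonal structure to obtain the product formula for the characteristic polynomials and the additive formula for the walk counts, then read off conditions (L2a) and (L2b) term by term. The only cosmetic difference is the order in which the two claims are handled and the labelling of the convolution index.
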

\begin{proof}
    Since $\operatorname{Char}_{\Gamma \cup \Lambda}(x)=\operatorname{Char}_{\Gamma}(x)\operatorname{Char}_{\Lambda}(x)$, we have, for each $k \in \{1,2,\dots, e+1\}$,
    \[
    \mathsf c_k(\operatorname{Char}_{A(\Gamma \cup \Lambda)})=\sum_{j=0}^k\mathsf c_j(\operatorname{Char}_{A(\Gamma)})\mathsf c_{k-j}(\operatorname{Char}_{A(\Lambda)}) = \mathsf c_k(\operatorname{Char}_{A(\Gamma)}) + \sum_{j=0}^{k-1}\mathsf c_j(\operatorname{Char}_{A(\Gamma)})\mathsf c_{k-j}(\operatorname{Char}_{A(\Lambda)}).
    \]
    Since, by (L2b), $2^{e+2-k+j}$ divides $\mathsf c_{k-j}(\operatorname{Char}_{A(\Lambda)})$ for each $j \in \{k-e,\dots,k-1\}$, we obtain 
    \[
    \mathsf c_k(\operatorname{Char}_{A(\Gamma \cup \Lambda)}) \equiv \mathsf c_k(\operatorname{Char}_{A(\Gamma)}) \mod{2^{e+2-k}}.
    \]
    Lastly, for each $k \in \mathbb N$, we have $\mathbf 1^\transpose {A(\Gamma \cup \Lambda)^k}\mathbf 1 = \mathbf 1^\transpose {A(\Gamma)^k}\mathbf 1 + \mathbf 1^\transpose {A(\Lambda)^k}\mathbf 1$.
    The rest of the lemma follows from (L2a).
\end{proof}

Now we can prove that the cardinality of $\mathcal C_{e}(\mathsf S_N)$ is at least $2^{e-3}|\mathcal C_{e-1}(\mathsf S_n)|$ for $e \geqslant 5$ and $n$ and $N$ are both even, with $N$ large enough relative to $n$.

\begin{lemma}
\label{lem:2em3belowEven}
    Let $n \in \mathbb N$ be even and $e \geqslant 5$ be an integer.
    Suppose $N = n+\sum_{f=3}^{e-2}n_f+m$, where $n_f$ is the order of an $(e,f)$-lift graph of type I and $m$ is the order of an $(e-2)$-lift graph of type II.
    Then $\left |\mathcal C_{e}\left (\mathsf S_N \right ) \right | \geqslant 2^{e-3}|\mathcal C_{e-1}(\mathsf S_n)|$.
\end{lemma}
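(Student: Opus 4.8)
The plan is to replay the argument of Lemma~\ref{lem:2em3below} in the even setting, where the parity of $n$ forces a structural change. Fix a tuple $\mathbf a=\left(\overline 0^{(e-1)},\overline{\gamma_3}^{(e-1)},\dots,\overline{\gamma_{e-1}}^{(e-1)}\right)\in\mathcal C_{e-1}(\mathsf S_n)$ (its first entry is $0$ by Lemma~\ref{lem:first3coeffs}) together with a witness graph $\Gamma$ on $n$ vertices such that $\mathsf c_k(\operatorname{Char}_{J-2A(\Gamma)})\equiv\gamma_k\pmod{2^{e-1}}$ for each $k\in\{3,\dots,e-1\}$; write $c_k:=\mathsf c_k(\operatorname{Char}_{J-2A(\Gamma)})$. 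For each $(d_3,\dots,d_{e-1})\in\{0,1\}^{e-3}$ we will construct a graph $\Gamma'$ on $n'\leqslant N$ vertices with $n'\equiv n\pmod{2^{e-2}}$ satisfying
\[
\mathsf c_k(\operatorname{Char}_{J-2A(\Gamma')})\equiv c_k+d_k2^{e-1}\pmod{2^{e}}\qquad\text{for each }k\in\{3,\dots,e-1\},
\]
with $\mathsf c_2(\operatorname{Char}_{J-2A(\Gamma')})=0$ holding automatically because $J-2A(\Gamma')\in\mathsf S_{n'}$. Each such $\operatorname{Char}_{J-2A(\Gamma')}$ then contributes a tuple to $\mathcal C_e(\mathsf S_{n'})\subseteq\mathcal C_e(\mathsf S_N)$, the inclusion following from repeated application of Proposition~\ref{pro:lifting_vertices} since $N\equiv n'\pmod{2^{e-2}}$. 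Distinct choices of the pair $\bigl(\mathbf a,(d_k)\bigr)$ produce distinct tuples in $\mathcal C_e(\mathsf S_N)$: reducing a produced tuple modulo $2^{e-1}$ recovers $\mathbf a$, and for fixed $\mathbf a$ the map $(d_k)\mapsto\bigl(\overline{c_k+d_k2^{e-1}}^{(e)}\bigr)_k$ is injective. Hence $|\mathcal C_e(\mathsf S_N)|\geqslant 2^{e-3}|\mathcal C_{e-1}(\mathsf S_n)|$.

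For the construction I first assemble the lift graphs. By Remark~\ref{rem:K2K2}, Lemma~\ref{lem:cycle_unionLiftI}, and Lemma~\ref{lem:inductiveLiftConst}, for each $f\in\{3,\dots,e-2\}$ there exists an $(e,f)$-lift graph $\Lambda_f$ of type I of some order $n_f$ (take the $(f+2,f)$-lift graph of Lemma~\ref{lem:cycle_unionLiftI} and double it $e-f-2$ times). By Lemma~\ref{shift_graph_using_path}, applied with parameter $e-2\geqslant 3$ (this is where $e\geqslant 5$ enters), there exists an $(e-2)$-lift graph $\Lambda_{\mathrm{II}}$ of type II of some order $m$. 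Set
\[
\Gamma'=\Gamma\;\cup\;d_3\Lambda_3\;\cup\;d_4\Lambda_4\;\cup\;\cdots\;\cup\;d_{e-2}\Lambda_{e-2}\;\cup\;d_{e-1}\Lambda_{\mathrm{II}},
\]
so $n'=n+\sum_{f=3}^{e-2}d_fn_f+d_{e-1}m\leqslant N$, and $n'\equiv n\pmod{2^{e-2}}$ because the orders $n_f$ and $m$ are divisible by $2^{e-2}$ by (L1a) and (L2a). In particular $n'$ is even, which will matter below.

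The remaining task is to verify the displayed congruences. Iterating Lemma~\ref{lem:shift_graph} over the included $\Lambda_f$ and then applying Lemma~\ref{lem:shift_graphII} with parameter $e-2$ for $\Lambda_{\mathrm{II}}$ yields, on the adjacency side, $\mathsf c_k(\operatorname{Char}_{A(\Gamma')})\equiv\mathsf c_k(\operatorname{Char}_{A(\Gamma)})+d_k2^{e-k-1}\pmod{2^{e-k}}$ for $k\in\{3,\dots,e-2\}$, $\mathsf c_{e-1}(\operatorname{Char}_{A(\Gamma')})\equiv\mathsf c_{e-1}(\operatorname{Char}_{A(\Gamma)})\pmod 2$, and walk numbers $\mathbf 1^\transpose A(\Gamma')^{k}\mathbf 1$ that agree with $\mathbf 1^\transpose A(\Gamma)^{k}\mathbf 1$ to sufficient $2$-adic precision for every $k\neq e-2$, whereas $\mathbf 1^\transpose A(\Gamma')^{e-2}\mathbf 1\equiv\mathbf 1^\transpose A(\Gamma)^{e-2}\mathbf 1+2d_{e-1}\pmod 4$. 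Substituting these into Lemma~\ref{lem:coefficient_link_formula} and bounding $2$-adic valuations using Proposition~\ref{pro:evensumwalks} (walk numbers are even) and Lemma~\ref{lem:first3coeffs} (so $2^{j-1}\mid\mathsf c_j(\operatorname{Char}_{A(\Gamma)})$ and $\mathsf c_1=0$ for a graph), one checks that, modulo $2^{e}$, every term is unchanged from $\mathsf c_k(\operatorname{Char}_{J-2A(\Gamma)})$ except two: for $k\leqslant e-2$, the direct term $(-2)^k\bigl(\mathsf c_k(\operatorname{Char}_{A(\Gamma')})-\mathsf c_k(\operatorname{Char}_{A(\Gamma)})\bigr)\equiv d_k2^{e-1}$; and for $k=e-1$, the $i=e-1$ summand of the sum in Lemma~\ref{lem:coefficient_link_formula}, namely $(-2)^{e-1}\cdot\tfrac12\bigl(\mathbf 1^\transpose A(\Gamma')^{e-2}\mathbf 1-\mathbf 1^\transpose A(\Gamma)^{e-2}\mathbf 1\bigr)\equiv d_{e-1}2^{e-1}$. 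The decisive difference from the odd case is that the ``off-by-one'' contributions (the $i=1$ summands, roughly $(-2)^k\cdot\tfrac12(\mathsf c_{k-1}(\operatorname{Char}_{A(\Gamma')})-\mathsf c_{k-1}(\operatorname{Char}_{A(\Gamma)}))\cdot n'$) now carry the even factor $n'$ and so vanish modulo $2^{e}$; this is exactly why a type-I lift graph cannot be used to move $\mathsf c_{e-1}$ in the even case (an $(e,e-1)$-lift graph of type I would need $\mathsf c_{e-1}(\operatorname{Char}_A)$ odd, impossible for a graph when $e-1$ is odd) and why the type-II graph $\Lambda_{\mathrm{II}}$ is brought in. I expect the main obstacle to be this $2$-adic bookkeeping — tracking, for each $k$, which indices $i$ and which of the three sources (a shifted adjacency coefficient, a shifted walk number, or a cross term in the product $\operatorname{Char}_{A(\Gamma)}\prod_f\operatorname{Char}_{A(\Lambda_f)}^{d_f}\cdot\operatorname{Char}_{A(\Lambda_{\mathrm{II}})}^{d_{e-1}}$) can possibly contribute below $2^{e}$ — but this runs entirely parallel to the computations in Example~\ref{ex:P2P2}, Lemma~\ref{lem:shift_graph}, Lemma~\ref{lem:shift_graphII}, and Lemma~\ref{lem:2em3below}.
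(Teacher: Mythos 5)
Your proposal is correct and follows essentially the same route as the paper: the same graph $\Gamma' = \Gamma \cup \bigcup_f d_f\Lambda_f \cup d_{e-1}\Lambda_{\mathrm{II}}$ built from $(e,f)$-lift graphs of type I for $f\in\{3,\dots,e-2\}$ and an $(e-2)$-lift graph of type II, the same use of Lemmas~\ref{lem:shift_graph}, \ref{lem:shift_graphII}, \ref{lem:coefficient_link_formula} and Proposition~\ref{pro:lifting_vertices}, and the same counting of lifted tuples. Your identification of the decisive parity point — that the $i=1$ summands carry the even factor $n'$ and vanish modulo $2^e$, forcing the type-II graph to move $\mathsf c_{e-1}$ via the walk-number shift at exponent $e-2$ — matches the paper's computation exactly.
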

\begin{proof}
    By Lemma~\ref{lem:cycle_unionLiftI} and Lemma~\ref{lem:inductiveLiftConst}, for each $f \in \{3,\dots,e-2\}$, there exists an $(e,f)$-lift graph $\Lambda_f$ of type I.
    By Lemma~\ref{shift_graph_using_path}, there exists an $(e-2)$-lift graph $\Lambda_{e-1}$ of type II.
    Denote by $n_f$ the order of each graph $\Lambda_f$ and by $m$ the order of $\Lambda_{e-1}$.
    Let $(c_3,\dots,c_{e-1}) \in \{1,\dots,2^e\}^{e-3}$ such that 
    \[
    \left ( \overline {0}^{(e-1)},\overline {c_3}^{(e-1)},\dots,\overline {c_{e-1}}^{(e-1)}\right) \in \mathcal C_{e-1}(\mathsf S_{n}).
    \]
    Let $\Gamma$ be an $n$-vertex graph such that 
    $\mathsf c_i(\operatorname{Char}_{J-2A(\Gamma)}) \equiv c_i \mod {2^{e-1}}$ for each $i \in \{3,\dots,e-1\}$.
    Hence, there must exist $(d_3,\dots,d_{e-1}) \in \{0,1\}^{e-3}$ such that $c_i \equiv \mathsf c_i(\operatorname{Char}_{J-2A(\Gamma)}) + 2^{e-1}d_i \mod{2^e}$.
    
    Let 
    \[
    \Gamma^\prime = \Gamma \cup d_{3} \Lambda_{3} \cup d_4\Lambda_4 \cup \dots \cup  d_{e-2} \Lambda_{e-2} \cup d_{e-1} \Lambda_{e-1}
    \]
    and let $n^\prime$ be the order of $\Gamma^\prime$.
    Then $n^\prime \leqslant n +\sum_{f=3}^{e-2} n_f + m =: N$ and Properties (L1a) and (L2a) imply that  $N \equiv n^\prime \equiv n \mod{2^{e-2}}$.
    By Lemma \ref{lem:shift_graph} and Lemma~\ref{lem:shift_graphII}, for each $k \in \{1,\dots,e-1\}$, we have
    \[  \mathsf c_k(\operatorname{Char}_{A(\Gamma^\prime)})\equiv
            \begin{cases}
             \mathsf c_k(\operatorname{Char}_{A(\Gamma)})+d_k2^{e-1-k}, &\text{if } k\leqslant e-2; \\
             \mathsf c_k(\operatorname{Char}_{A(\Gamma)}) , & \text{if } k=e-1
            \end{cases} \mod{2^{e-k}}.
        \]
    Furthermore, for each $k \in \{1,\dots,e-2\}$, we have
        \[  \mathbf 1^\transpose {A(\Gamma^\prime)^k}\mathbf 1\equiv
            \begin{cases}
             \mathbf 1^\transpose {A(\Gamma)^k} \mathbf 1, & \text{if } k\leqslant e-3;\\
             \mathbf 1^\transpose {A(\Gamma)^k} \mathbf 1+2d_{e-1}, & \text{if } k=e-2
            \end{cases} \mod{2^{e-k}}.
        \]

       Hence, using Lemma~\ref{lem:coefficient_link_formula} and Proposition~\ref{pro:evensumwalks}, when $k \in \{1,\dots,e-2\}$, we have
        \begin{align*}
                \mathsf c_k(\operatorname{Char}_{J-2A(\Gamma^\prime)})&= (-2)^k(\mathsf c_i(\operatorname{Char}_{A(\Gamma^\prime)})+\frac{1}{2}\sum_{i=1}^{k}\mathsf c_{k-i}(\operatorname{Char}_{A(\Gamma^\prime)}) \mathbf 1^\transpose A(\Gamma^\prime)^{i-1} \mathbf 1)\\
                &\equiv (-2)^k(\mathsf c_k(\operatorname{Char}_{A(\Gamma)})+d_k2^{e-1-k})-\sum_{i=1}^{k}\mathsf c_{k-i}(\operatorname{Char}_{A(\Gamma^\prime)})(-2)^{k-1}\cdot \mathbf 1^\transpose A(\Gamma)^{i-1}\mathbf 1 \mod{2^e}\\
                &\equiv (-2)^k\mathsf c_{k}(\operatorname{Char}_{A(\Gamma)})+\sum_{i=1}^{k}\mathsf c_{k-i}(\operatorname{Char}_{A(\Gamma)})(-2)^{k-1}\cdot \mathbf 1^\transpose A(\Gamma)^{i-1}\mathbf 1 +(-1)^kd_k2^{e-1}\mod{2^e}\\
                &\equiv \mathsf c_{k}(\operatorname{Char}_{J-2A(\Gamma)})+d_k2^{e-1} \equiv c_k \mod{2^e}
            \end{align*}
        Similarly, 
                \begin{align*}
                \mathsf c_{e-1}(\operatorname{Char}_{J-2A(\Gamma^\prime)})&= (-2)^{e-1}(\mathsf c_{e-1}(\operatorname{Char}_{A(\Gamma^\prime)})+\frac{1}{2}\sum_{i=1}^{e-1}\mathsf c_{e-1-i}(\operatorname{Char}_{A(\Gamma^\prime)}) \mathbf 1^\transpose A(\Gamma^\prime)^{i-1} \mathbf 1)\\
                &\equiv (-2)^{e-1}\mathsf c_{e-1}(\operatorname{Char}_{A(\Gamma)})-\sum_{i=1}^{e-1}\mathsf c_{e-1-i}(\operatorname{Char}_{A(\Gamma^\prime)})(-2)^{e-2}\cdot \mathbf 1^\transpose A(\Gamma^\prime)^{i-1}\mathbf 1 \mod{2^e}\\
                &\equiv \mathsf c_{e-1}(\operatorname{Char}_{J-2A(\Gamma)})+d_{e-1}2^{e-1} \equiv c_{e-1} \mod{2^e}.
            \end{align*}

Therefore, for some $c_e$, we have $\left ( \overline {0}^{(e)},\overline {c_3}^{(e)},\overline {c_4}^{(e)},\dots,\overline {c_{e-1}}^{(e)},\overline {c_e}^{(e)} \right )\in \mathcal C_{e}(\mathsf S_{n^\prime})$ which is a subset of $\mathcal C_{e}(\mathsf S_N)$ by Proposition~\ref{pro:lifting_vertices} together with the fact that $N\equiv n^\prime \equiv n \mod{2^{e-2}}$. 
\end{proof}

Using Lemma~\ref{lem:2em3belowEven}, we can prove the even counterpart of Theorem~\ref{thm:m2odd}.

\begin{theorem}
\label{thm:m2even}
    Let $e \geqslant 3$ be an integer.
    Then there exists $N_e \in \mathbb N$ such that
$|\mathcal C_{e}(\mathsf S_n)| = 2^{\binom{e-2}{2}}$ for each even integer $n \geqslant N_e$. 
\end{theorem}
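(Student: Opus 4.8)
The plan is to prove the matching lower bound $|\mathcal C_e(\mathsf S_n)| \geqslant 2^{\binom{e-2}{2}}$ for all sufficiently large even $n$ (the upper bound being Theorem~\ref{thm:symUB}), arguing by induction on $e$. The inductive step will mirror the proof of Theorem~\ref{thm:m2odd}, with Lemma~\ref{lem:2em3belowEven} playing the role of Lemma~\ref{lem:2em3below}; since Lemma~\ref{lem:2em3belowEven} is only available for $e \geqslant 5$, the cases $e = 3$ and $e = 4$ must be handled directly as base cases.

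For $e = 3$ we have $\binom{e-2}{2} = 0$, and the all-ones matrix $J \in \mathsf S_n$ satisfies $\operatorname{Char}_J(x) = x^{n-1}(x-n)$, so $(\overline 0^{(3)},\overline 0^{(3)}) \in \mathcal C_3(\mathsf S_n)$ and $|\mathcal C_3(\mathsf S_n)| \geqslant 1$; take $N_3 = 1$. For $e = 4$ we have $\binom{e-2}{2} = 1$, so it suffices to exhibit two distinct elements of $\mathcal C_4(\mathsf S_n)$ for every even $n$ above some threshold. The matrix $J$ again gives the tuple $(\overline 0^{(4)},\overline 0^{(4)},\overline 0^{(4)})$. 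For the second element take $\Gamma = P_3 \cup (n-3)P_1$ and $M = J - 2A(\Gamma) \in \mathsf S_n$; since $\operatorname{Char}_{A(\Gamma)}(x) = x^n - 2x^{n-2}$, $\mathbf 1^\transpose A(\Gamma)^0 \mathbf 1 = n$, $\mathbf 1^\transpose A(\Gamma)^1 \mathbf 1 = 4$, and $\mathbf 1^\transpose A(\Gamma)^2 \mathbf 1 = 6$, Lemma~\ref{lem:coefficient_link_formula} yields $\mathsf c_3(\operatorname{Char}_M) = 8n - 24 \equiv 8 \mod{16}$ for even $n$, so the corresponding tuple has third coordinate $\overline 8^{(4)} \ne \overline 0^{(4)}$. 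Hence $|\mathcal C_4(\mathsf S_n)| \geqslant 2$ for all even $n \geqslant 4$; take $N_4 = 4$.

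For the inductive step fix $e \geqslant 5$. By Lemma~\ref{lem:cycle_unionLiftI} and Lemma~\ref{lem:inductiveLiftConst} there is, for each $f \in \{3,\dots,e-2\}$, an $(e,f)$-lift graph of type I; let $n_f$ be its order. By Lemma~\ref{shift_graph_using_path} there is an $(e-2)$-lift graph of type II; let $m$ be its order. Evaluating (L1a) and (L2a) at $k = 0$ shows that all of $n_3,\dots,n_{e-2}$ and $m$ are even. Put $N_e = N_{e-1} + \sum_{f=3}^{e-2} n_f + m$. Given an even integer $n \geqslant N_e$, set $n^\prime = n - \sum_{f=3}^{e-2} n_f - m$; then $n^\prime$ is even and $n^\prime \geqslant N_{e-1}$, so by the inductive hypothesis $|\mathcal C_{e-1}(\mathsf S_{n^\prime})| = 2^{\binom{e-3}{2}}$. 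Applying Lemma~\ref{lem:2em3belowEven} with $n^\prime$ in the role of $n$, so that the quantity $N$ of that lemma equals $n$, we obtain
\[
|\mathcal C_e(\mathsf S_n)| \;\geqslant\; 2^{e-3}\,|\mathcal C_{e-1}(\mathsf S_{n^\prime})| \;=\; 2^{\,e-3+\binom{e-3}{2}} \;=\; 2^{\binom{e-2}{2}},
\]
using $\binom{e-2}{2} = \binom{e-3}{2} + (e-3)$; combined with Theorem~\ref{thm:symUB} this gives equality.

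The genuinely new content is the $e = 4$ base case: the generic inductive machinery does not reach it because Lemma~\ref{lem:2em3belowEven} requires $e \geqslant 5$, so one must produce a second residue class by an explicit small computation, which is the only place calculations are unavoidable. Beyond that, the main points requiring care are routine bookkeeping, namely verifying that the constant $\sum_{f=3}^{e-2} n_f + m$ is even so that $n^\prime$ has the correct parity, and checking that Lemma~\ref{lem:2em3belowEven} is invoked with all hypotheses satisfied and with its output order $N$ matching the target $n$ exactly. I do not anticipate any obstacle beyond these verifications.
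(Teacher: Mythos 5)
Your proposal is correct and follows essentially the same route as the paper's own proof: the upper bound from Theorem~\ref{thm:symUB}, induction on $e$ with the base case $e=4$ settled by the very same graph $P_3 \cup (n-3)P_1$, and the inductive step via Lemma~\ref{lem:2em3belowEven}. Your bookkeeping in the inductive step (choosing $n^\prime = n - \sum_{f=3}^{e-2} n_f - m$ so that the lemma's output order is exactly $n$, and checking the parity of the lift-graph orders via (L1a) and (L2a)) is if anything slightly more careful than the paper's, but it is not a different argument.
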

\begin{proof}
By Theorem~\ref{thm:symUB}, it suffices to show that $|\mathcal C_{e}(\mathsf S_n)| \geqslant 2^{\binom{e-2}{2}}$.
       We proceed by induction on $e$.
    For $e = 3$, the bound is trivial.
    For $e = 4$, we can take $N_4 = 4$.
    Then $\{([0]_4,[0]_4,[0]_4),([0]_4,[8]_4,[0]_4)\} \subseteq \mathcal C_4(\mathcal S_n)$ for each even $n \geqslant N_4$.
    Indeed, we have the graph $\Gamma_1 = nP_1$ and $\Gamma_2 = P_3 \cup (n-3)P_1$.
    For the former, one finds that $\mathsf c_3(\operatorname{Char}_{J-2A(\Gamma_1)}) \equiv 0 \mod {16}$ and for the latter, $\mathsf c_3(\operatorname{Char}_{J-2A(\Gamma_2)}) \equiv 8 \mod {16}$.

    Now we can assume $e \geqslant 5$ for the inductive step.
    Take $N_{e}=N_{e-1}+\sum_{f=3}^{e-2}(e-f-1)n_f+m$ where $n_f$ is the order of an $(e,f)$-lift graph of type I and $m$ is the order of an $(e-2)$-lift graph of type II.

    Suppose $n = N_e + 2k$ for some nonnegative integer $k$.
    Then, by the implicit inductive hypothesis, we have $|\mathcal C_{e-1}(\mathsf S_{N_{e-1}+2k})| \geqslant 2^{\binom{e-3}{2}}$.
    By Lemma~\ref{lem:2em3belowEven}, we have $|\mathcal C_e(\mathsf S_n)| \geqslant 2^{e-3}2^{\binom{e-3}{2}} = 2^{\binom{e-2}{2}}$.
\end{proof}

When $n$ is even, Lemma~\ref{lem:first3coeffs} can be extended to the statement that says $2^k$ divides $\mathsf c_k(\operatorname{Char}_M)$ for each $M \in \mathsf S_n$.
Combining with Theorem~\ref{thm:m2even} reveals that, for even $n$ large enough, we must have
        \[
    \mathcal C_{e}(\mathsf S_n) = \left \{ \left (\overline {0}^{(e)},\overline{a_3}^{(e)}, \overline{a_4}^{(e)},\dots,\overline{a_{e-1}}^{(e)}, \overline {0}^{(e)} \right ) \; : \; 2^i \text{ divides } a_i \text{ for each $i \in \{3,\dots,e-1\}$} \right \}.
    \]
When $n$ is odd, however, the analogous expression is more complicated and relies on a modular relation between the coefficients (see \cite[Lemma 3.12]{GY19}).

\section{Lift tournaments}
\label{sec:lt}

In this section, we define two types of lift tournaments and provide constructions of families of these tournaments.
First, we give the main definitions.

\subsection{Definitions}

For $e, f \in \mathbb N$, we call a tournament $\Gamma$ an $(e,f)$-\textbf{lift tournament of type I} if 
\begin{enumerate}
    \item[(LT1a)] $2^{e-k}$ divides $\mathbf 1^\transpose A(\Gamma)^k \mathbf 1$ for each $k \in \{0,\dots,e-1\}$;
    \item[(LT1b)] for each $k \in \{1,\dots,e-1\}$,
\[
\mathsf c_k(\operatorname{Char}_{A(\Gamma)}) \equiv \begin{cases}
        0,  & \text{ if } k \ne f;\\
        2^{e-f-1},  & \text{ if $k = f$}
    \end{cases} \mod{2^{e-k}}.
\]
\end{enumerate}
Our constructions of lift tournaments of type I are obtained from Lemma~\ref{lem:Tt1ffp1} and Lemma~\ref{lem:inductiveTLiftConst}.

For an integer $e \geqslant 2$, we call a tournament $\Gamma$ an $e$-\textbf{lift tournament of type II} if 
\begin{enumerate}
    \item[(LT2a)] $2^{e}$ divides $\mathbf 1^\transpose A(\Gamma)^k \mathbf 1$ for each $k \in \{0,\dots,e-3\}$;
    \item[(LT2b)] $\mathbf 1^\transpose A(\Gamma)^{e-2} \mathbf 1 \equiv 2 \mod 4$ and $\mathbf 1^\transpose A(\Gamma)^{e-1} \mathbf 1 \equiv 1 \mod 2$;
    \item[(LT2c)] $2^{e}$ divides $\mathsf c_{k}(\operatorname{Char}_{A(\Gamma)})$ for each $k \in \{1,\dots,e-2\}$.
\end{enumerate}
Lemma~\ref{lem:Tt2} provides constructions of lift tournaments of type II.

\subsection{The join of two tournaments}

For tournaments $\Gamma_1$ and $\Gamma_2$, we will refer $\Gamma_1\oplus \Gamma_2$ as the tournament that has adjacency matrix 
\[
    A(\Gamma_1\oplus \Gamma_2) = \begin{pmatrix}
        A(\Gamma_1) & O \\
        J & A(\Gamma_2)
    \end{pmatrix}.
\]
This means that the tournament $\Gamma_1\oplus \Gamma_2$ has two components $\Gamma_1$ and $\Gamma_2$ and edges from each vertex of $\Gamma_2$ to $\Gamma_1$. 
For a positive integer $d$ and tournament $\Gamma$, we use $d\Gamma$ to denote $\bigoplus_{i=1}^d \Gamma$.

\begin{lemma}\label{lem:sumProdAdjTourn}
    Let $\Gamma_1$ and $\Gamma_2$ be tournaments.
    Then, for each $k\in \mathbb{N}$, we have 
    $$
    \mathbf{1}^\transpose A(\Gamma_1\oplus \Gamma_2)^{k}\mathbf{1} = \mathbf{1}^\transpose A(\Gamma_1)^{k}\mathbf{1}+\mathbf{1}^\transpose A(\Gamma_2)^{k}\mathbf{1}+\sum_{i=1}^{k}\mathbf{1}^\transpose A(\Gamma_1)^{i-1}\mathbf{1}\cdot \mathbf{1}^\transpose A(\Gamma_2)^{k-i}\mathbf{1}.
    $$
\end{lemma}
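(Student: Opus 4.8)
The plan is to compute $A(\Gamma_1 \oplus \Gamma_2)^k$ in block form and then contract with $\mathbf{1}$ on both sides. Write $A_i = A(\Gamma_i)$ and let
\[
B = A(\Gamma_1 \oplus \Gamma_2) = \begin{pmatrix} A_1 & O \\ J & A_2 \end{pmatrix}.
\]
Since $B$ is block lower-triangular, the $(1,1)$ and $(2,2)$ blocks of $B^k$ are simply $A_1^k$ and $A_2^k$, while the $(1,2)$ block vanishes for all $k \geqslant 0$. The only interesting block is the $(2,1)$ block. The plan is to prove by induction on $k$ that the $(2,1)$ block of $B^k$ equals $\sum_{i=1}^{k} A_2^{k-i} J A_1^{i-1}$; the base case $k=1$ is immediate, and the inductive step follows by multiplying $B^k$ by $B$ on the left (or right) and using that the off-diagonal blocks of $B$ are $O$ and $J$.

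Next I would contract with all-ones vectors. Using $\mathbf{1}^\transpose A_2^{k-i} J A_1^{i-1} \mathbf{1} = (\mathbf{1}^\transpose A_2^{k-i} \mathbf{1})(\mathbf{1}^\transpose A_1^{i-1} \mathbf{1})$ — which holds because $J = \mathbf{1}\mathbf{1}^\transpose$, so the middle factor splits as a product of two scalars — the contribution of the $(2,1)$ block becomes $\sum_{i=1}^{k} \mathbf{1}^\transpose A_1^{i-1} \mathbf{1} \cdot \mathbf{1}^\transpose A_2^{k-i} \mathbf{1}$. Adding the contributions $\mathbf{1}^\transpose A_1^k \mathbf{1}$ and $\mathbf{1}^\transpose A_2^k \mathbf{1}$ from the diagonal blocks (and noting the $(1,2)$ block contributes nothing) yields exactly the claimed identity. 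Here one has to be slightly careful that the relevant all-ones vectors have the appropriate sizes: the $\mathbf{1}$ multiplying $A_1$-blocks has length $|V(\Gamma_1)|$ and the one multiplying $A_2$-blocks has length $|V(\Gamma_2)|$, but since $J$ in the definition of $B$ is the $|V(\Gamma_2)| \times |V(\Gamma_1)|$ all-ones matrix, everything is consistent.

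There is no real obstacle here; the only point requiring any care is getting the block-power formula right and tracking which index ($i-1$ versus $k-i$) attaches to which factor, since the asymmetry of the join ($J$ appears only in the lower-left block) breaks the symmetry between $\Gamma_1$ and $\Gamma_2$ inside the sum — though after contraction the sum $\sum_{i=1}^k \mathbf{1}^\transpose A_1^{i-1}\mathbf{1}\cdot\mathbf{1}^\transpose A_2^{k-i}\mathbf{1}$ is in fact symmetric in the two tournaments under reindexing $i \mapsto k+1-i$, consistent with the symmetric form of the stated identity.
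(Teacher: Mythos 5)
Your proof is correct. It takes a genuinely different route from the paper: you compute the block structure of $A(\Gamma_1\oplus\Gamma_2)^k$ directly, establishing by induction that the $(2,1)$ block equals $\sum_{i=1}^{k} A(\Gamma_2)^{k-i}\, J\, A(\Gamma_1)^{i-1}$, and then contract with all-ones vectors using the rank-one factorisation $J=\mathbf 1\mathbf 1^\transpose$ to split each summand into a product of two scalars. The paper instead argues combinatorially: it interprets $\mathbf 1^\transpose A(\Gamma_1\oplus\Gamma_2)^k\mathbf 1$ as the number of walks with $k$ arcs and partitions each walk by the point at which it passes from the $\Gamma_2$ part to the $\Gamma_1$ part (possible exactly once, since there are no arcs from $\Gamma_1$ to $\Gamma_2$ and every $\Gamma_2$--$\Gamma_1$ pair is an arc). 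The two arguments are of course dual --- matrix powers count walks --- but your algebraic version has the advantage of being entirely mechanical and of isolating the reusable identity for the $(2,1)$ block itself, while the paper's walk-partition argument is shorter on the page and makes the bijective content visible. Your closing remarks about matching vector lengths and about the apparent asymmetry of the join being restored after contraction (via the reindexing $i\mapsto k+1-i$) are both accurate and worth keeping.
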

\begin{proof}
    The number of walks $\mathfrak w_k$ consisting of $k$ arcs is equal to $\mathbf{1}^\transpose A(\Gamma_1\oplus \Gamma_2)^{k}\mathbf{1}$.
    On the other hand, $\mathfrak w_k$ is the number of walks with $k$ arcs, each consisting of $k+1$ vertices. 
    Since there are no arcs from $\Gamma_1$ to $\Gamma_2$, each walk has the form $(v_1,\dots,v_r, w_1,\dots,w_{k+1-r})$ for some $r\in \{0,1,\dots,k+1\}$, where $w_j \in V(\Gamma_1)$ and $v_i \in V(\Gamma_2)$ for each $i \in \{1,\dots,r\}$ and $j \in \{1,\dots,k+1-r\}$.
    Therefore, we must have 
    \[
    \mathfrak w_k = \mathbf{1}^\transpose A(\Gamma_1)^{k}\mathbf{1}+\mathbf{1}^\transpose A(\Gamma_2)^{r}\mathbf{1}+\sum_{r=1}^{k}\mathbf{1}^\transpose A(\Gamma_1)^{r-1}\mathbf{1}\cdot \mathbf{1}^\transpose A(\Gamma_2)^{k-r}\mathbf{1},
    \]
    as required.
\end{proof}

\subsection{The walk polynomial}
\label{sec:wp}

    Given a tournament $\Gamma$ and a positive integer $d$, we define the \textbf{walk polynomial} $\operatorname{Walk}_{\Gamma,d}(x)$ by
\[
\operatorname{Walk}_{\Gamma,d}(x):=x^d+\sum^{d}_{i=1}\mathbf{1}^\transpose A(\Gamma)^{i-1}\mathbf{1}x^{d-i}.
\]
Note that the coefficient of $x^{d-k}$ in $\operatorname{Walk}_{\Gamma,d}(x)$ is equal to the number of walks in $\Gamma$ on $k$ vertices, that is, $\mathsf c_k(\operatorname{Walk}_{\Gamma,d}) = \mathbf{1}^\transpose A(\Gamma)^{k-1}\mathbf{1}$ for $k \in \{1,\dots,d\}$ and $\mathsf c_0(\operatorname{Walk}_{\Gamma,d}) = 1$.

\begin{corollary}
\label{cor:powerSumSum}
    Let $d$ be a positive integer, and let $\Gamma_1$ and $\Gamma_2$ be tournaments.
    Suppose that $\Gamma=\Gamma_1\oplus \Gamma_2$.
    Then $\operatorname{Walk}_{\Gamma,d}(x) = \operatorname{Walk}_{\Gamma_1,d}(x)\operatorname{Walk}_{\Gamma_2,d}(x)$.
    Furthermore, for each $k\in \{1,\dots,d\}$, we have 
    $$\mathsf p_k(\operatorname{Walk}_{\Gamma,d})=\mathsf p_k(\operatorname{Walk}_{\Gamma_1,d})+\mathsf p_k(\operatorname{Walk}_{\Gamma_2,d}).
    $$
\end{corollary}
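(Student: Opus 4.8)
The statement has two parts, and the second part will follow almost immediately from the first by applying~\eqref{eqn:powerProdSum}, so the real content is the polynomial factorisation $\operatorname{Walk}_{\Gamma,d}(x)=\operatorname{Walk}_{\Gamma_1,d}(x)\operatorname{Walk}_{\Gamma_2,d}(x)$. My plan is to establish this factorisation by comparing coefficients: by the remark following the definition of the walk polynomial, $\mathsf c_k(\operatorname{Walk}_{\Gamma,d})=\mathbf 1^\transpose A(\Gamma)^{k-1}\mathbf 1$ for $k\in\{1,\dots,d\}$, with $\mathsf c_0=1$, and similarly for $\Gamma_1$ and $\Gamma_2$. So I need only check that for each $k\in\{0,1,\dots,d\}$ the coefficient of $x^{d-k}$ in the product $\operatorname{Walk}_{\Gamma_1,d}(x)\operatorname{Walk}_{\Gamma_2,d}(x)$ equals $\mathbf 1^\transpose A(\Gamma)^{k-1}\mathbf 1$ (interpreting the $k=0$ case as $\mathsf c_0 = 1$).

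The key step is the coefficient computation. Expanding the product, the coefficient of $x^{d-k}$ is $\sum_{j=0}^{k}\mathsf c_j(\operatorname{Walk}_{\Gamma_1,d})\,\mathsf c_{k-j}(\operatorname{Walk}_{\Gamma_2,d})$. Splitting off the $j=0$ and $j=k$ terms and using $\mathsf c_0=1$ in each factor, this is
\[
\mathsf c_k(\operatorname{Walk}_{\Gamma_1,d})+\mathsf c_k(\operatorname{Walk}_{\Gamma_2,d})+\sum_{j=1}^{k-1}\mathsf c_j(\operatorname{Walk}_{\Gamma_1,d})\,\mathsf c_{k-j}(\operatorname{Walk}_{\Gamma_2,d}),
\]
which, after substituting the walk-count expressions for the coefficients, becomes exactly
\[
\mathbf 1^\transpose A(\Gamma_1)^{k-1}\mathbf 1+\mathbf 1^\transpose A(\Gamma_2)^{k-1}\mathbf 1+\sum_{i=1}^{k-1}\mathbf 1^\transpose A(\Gamma_1)^{i-1}\mathbf 1\cdot\mathbf 1^\transpose A(\Gamma_2)^{k-i-1}\mathbf 1
\]
after the reindexing $i=j$, $k-i = (k-j)$. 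One checks that this matches the right-hand side of Lemma~\ref{lem:sumProdAdjTourn} (with $k$ there replaced by $k-1$): that lemma gives $\mathbf 1^\transpose A(\Gamma)^{k-1}\mathbf 1 = \mathbf 1^\transpose A(\Gamma_1)^{k-1}\mathbf 1+\mathbf 1^\transpose A(\Gamma_2)^{k-1}\mathbf 1+\sum_{i=1}^{k-1}\mathbf 1^\transpose A(\Gamma_1)^{i-1}\mathbf 1\cdot\mathbf 1^\transpose A(\Gamma_2)^{k-1-i}\mathbf 1$, so the two expressions coincide. Hence $\mathsf c_k(\operatorname{Walk}_{\Gamma,d})$ agrees with the coefficient of $x^{d-k}$ in the product for every $k\in\{1,\dots,d\}$, and the $x^d$ coefficients trivially both equal $1$; this proves the factorisation.

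For the second assertion, I apply~\eqref{eqn:powerProdSum} directly to the factorisation just established: for any $k$, $\mathsf p_k(\operatorname{Walk}_{\Gamma,d})=\mathsf p_k(\operatorname{Walk}_{\Gamma_1,d}\cdot\operatorname{Walk}_{\Gamma_2,d})=\mathsf p_k(\operatorname{Walk}_{\Gamma_1,d})+\mathsf p_k(\operatorname{Walk}_{\Gamma_2,d})$, which is exactly the claim. I do not anticipate any genuine obstacle here; the only mild care needed is bookkeeping in the reindexing between the index conventions of Lemma~\ref{lem:sumProdAdjTourn} (which uses $k$ for walk length) and the walk polynomial (whose $x^{d-k}$ coefficient counts walks on $k$ vertices, i.e.\ of length $k-1$), and confirming the boundary terms $k=0$ and $j\in\{0,k\}$ behave as claimed. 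One should also note that $\operatorname{Walk}_{\Gamma,d}$ has degree exactly $d$ for any tournament, so the product has degree $2d$ while $\operatorname{Walk}_{\Gamma,d}$ has degree $d$ — but this is not a contradiction: the identity $\operatorname{Walk}_{\Gamma,d}=\operatorname{Walk}_{\Gamma_1,d}\operatorname{Walk}_{\Gamma_2,d}$ as stated would be false for degree reasons, so I would instead read the intended identity as an equality of the relevant low-order coefficients, or (more likely the author's intent) simply record that it is the coefficient-matching above that is used, and state the power-sum identity as the clean consequence.
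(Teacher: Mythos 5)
Your argument is correct and is essentially the paper's proof, which likewise reads off the factorisation from Lemma~\ref{lem:sumProdAdjTourn} (applied with walk length $k-1$) and then invokes \eqref{eqn:powerProdSum}. Your caveat about degrees is a fair catch: since the product has degree $2d$, the displayed identity should be read as the agreement of $\mathsf c_k$ of the two sides for $k\in\{0,\dots,d\}$, and this suffices for the power-sum claim because, by \eqref{eqn:power_sum_elementary_symmetric_sum_relation}, $\mathsf p_k$ of a polynomial of degree at least $k$ is determined by $\mathsf c_0,\dots,\mathsf c_k$ alone.
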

\begin{proof}
    By Lemma~\ref{lem:sumProdAdjTourn}, we have $\operatorname{Walk}_{\Gamma,d}(x) = \operatorname{Walk}_{\Gamma_1,d}(x)\operatorname{Walk}_{\Gamma_2,d}(x)$, from which the lemma immediately follows.
\end{proof}

\subsection{Almost transitive tournaments}

We define the tournament $T_t$ as the tournament having $t+2$ vertices and, for each $i$ and $j$ satisfying $0\leqslant i<j \leqslant t+1$,
    \[  A(T_t)[i,j]=
            \begin{cases}
             0, & \text{if } i=0 \text{ and } j=t+1,\\
             1, & \text{otherwise};
            \end{cases} \quad \text{ and } \quad  A(T_t)[j,i]=
            \begin{cases}
             1, & \text{ if } i=0 \text{ and } j=t+1,\\
             0, & \text{ otherwise}.
            \end{cases}
        \]
The tournament $T_t$ can also be obtained by taking a transitive tournament with vertex set $\{0,1,\dots,t+1\}$ and replacing the arc $(0,t+1)$ with the arc $(t+1,0)$.
See Figure~\ref{fig:T3} for a picture of $T_3$, where an arrow from vertex $v$ to vertex $w$ indicates the arc $(v,w)$.
Accordingly, we may refer to the tournaments $T_t$ as \textbf{almost transitive}.

        \begin{figure}[h]
    \centering
    \begin{tikzpicture}[>=latex, every node/.style={circle, draw, inner sep=1.2pt, minimum size=14pt}]
  % Place vertices on a regular pentagon
  \node (v1) at (90:2.5cm)  {$0$};
  \node (v2) at (18:2.5cm)  {$1$};
  \node (v3) at (-54:2.5cm) {$2$};
  \node (v4) at (-126:2.5cm){$3$};
  \node (v5) at (162:2.5cm) {$4$};

  % Draw all edges with direction i -> j for i < j (transitive tournament)
  \foreach \i/\j in {
    1/2,1/3,1/4,5/1,
    2/3,2/4,2/5,
    3/4,3/5,
    4/5}
    \draw[->, line width=0.8pt, shorten >=1pt, shorten <=1pt] (v\i) -- (v\j);
\end{tikzpicture}
    \caption{The tournament $T_3$.}
    \label{fig:T3}
\end{figure}
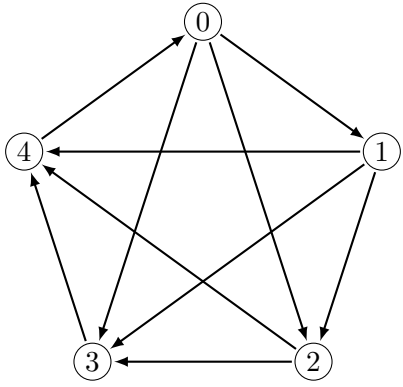

We use $\mathfrak S(n)$ to denote the symmetric group on $n$ symbols and $\operatorname{sgn}(\sigma)$ to denote the signature of a permutation $\sigma \in \mathfrak S(n)$.
Next, we give a formula for the characteristic polynomial of $A(T_t)$.

\begin{lemma}
\label{lem:characteristic_polynomial_of_Gt}
    Let $t\in \mathbb{N}$.
    Then
    \[
        \operatorname{Char}_{A(T_t)}(x)=x^{t+2}-\sum_{i=1}^{t}\binom{t}{i} \cdot x^{t-i}.
    \]
\end{lemma}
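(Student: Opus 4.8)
The plan is to compute $\operatorname{Char}_{A(T_t)}(x) = \det(xI - A(T_t))$ directly from the definition of the determinant as a signed sum over permutations, exploiting the fact that $T_t$ differs from the transitive tournament on $t+2$ vertices in exactly one arc. Write $A = A(T_t)$ and $B = xI - A$, so $[B]_{i,i} = x$ for all $i$, $[B]_{i,j} = -1$ whenever $(i,j)$ is an arc of $T_t$, and $[B]_{i,j} = 0$ otherwise. First I would observe that the only arc pointing ``backwards'' (from a higher-indexed vertex to a lower-indexed one) is $(t+1,0)$, so $[B]_{t+1,0} = -1$ while $[B]_{i,j} = 0$ for every other pair $i > j$ with $(i,j) \notin \{(t+1,0)\}$; in particular $[B]_{0,t+1} = 0$ as well, since that arc was deleted.

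Next I would split the permutation sum $\det B = \sum_{\sigma \in \mathfrak S(\{0,\dots,t+1\})} \operatorname{sgn}(\sigma)\prod_{i} [B]_{i,\sigma(i)}$ according to whether $\sigma$ uses the unique below-diagonal entry $[B]_{t+1,0}$. If $\sigma(t+1) \neq 0$, then $\sigma$ contributes a nonzero product only if it fixes every index outside some ``downward-closed'' set where it acts as the identity — more precisely, the support of such a $\sigma$ must be disjoint from the below-diagonal structure, forcing $\sigma$ to be a product of identity on some indices; working this out, the permutations with $\sigma(t+1) \ne 0$ that give nonzero products are exactly those that act as the identity on a subset $S \subseteq \{1,\dots,t\}$ and fix everything else, contributing $x^{|S^c|}(-1)^{|S|}$, and summing over all such $S$ gives the transitive-tournament contribution $\prod_{i}[xI - A(\text{transitive})]$, which telescopes to a clean expression. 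If $\sigma(t+1) = 0$, then the factor $[B]_{t+1,0} = -1$ is used; the remaining structure forces $\sigma$ to also map $0$ forward, and a careful bookkeeping of which cycle through $0$ and $t+1$ is available (again only products of the identity on subsets of $\{1,\dots,t\}$ are compatible) yields the correction term.

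Assembling the two contributions, I expect the transitive part to give $x^{t+2} - \sum_{i=1}^{t+1}\binom{t+1}{i}x^{t+1-i}\cdot(\text{something})$ and the correction from the reversed arc to cancel/adjust the $\binom{t+1}{i}$ down to $\binom{t}{i}$ and kill the $i = t+1$ term, leaving exactly $x^{t+2} - \sum_{i=1}^{t}\binom{t}{i}x^{t-i}$. A cleaner alternative, which I would actually prefer to carry out, is to use cofactor expansion along the last row (row $t+1$), whose only nonzero entries are $[B]_{t+1,0} = -1$ and $[B]_{t+1,t+1} = x$ together with the forward arcs $[B]_{t+1,j}=0$ for $1 \le j \le t$ — wait, in $T_t$ the vertex $t+1$ beats nobody except via the deleted structure, so actually row $t+1$ has $[B]_{t+1,t+1}=x$ and $[B]_{t+1,0}=-1$ only. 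Then $\det B = x \cdot \det B' - (-1)^{t+1}(-1)\det B''$ where $B'$ is $B$ with row and column $t+1$ deleted (the transitive tournament on $\{0,\dots,t\}$, whose characteristic-polynomial-type determinant is easily seen by induction or direct expansion to be $x^{t+1} - \sum_{i=1}^{t}\binom{t}{i}x^{t-i}$ — hmm, I would need the analogous formula for the genuinely transitive tournament first) and $B''$ is the minor deleting row $t+1$ and column $0$. I would compute $\det(xI - A(\text{transitive on } m \text{ vertices}))$ first as a lemma-within-the-proof: expanding along the last column one gets the recursion $D_m = x D_{m-1}$, giving $D_m = x^m$ for the transitive tournament, since a transitive tournament's adjacency matrix is strictly upper triangular and hence nilpotent — so $\det(xI - A(\text{transitive})) = x^{t+1}$ directly, no induction needed. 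Then $\det B' = x^{t+1}$, and $\det B''$ is the determinant of the matrix obtained from $xI - A(\text{transitive on }\{0,\dots,t+1\})$ by deleting row $t+1$ and column $0$; this is block-triangular after reordering and evaluates (by expanding along the forced entries, each column $j \in \{1,\dots,t\}$ still having its diagonal $x$ available plus the $-1$ from arc $(0,j)$ in row $0$) to $-\sum_{i=1}^{t}\binom{t}{i}x^{t-i} \cdot$ — here the binomial coefficients emerge from counting the sub-permutations of the intermediate vertices $\{1,\dots,t\}$. Putting $\det B = x\cdot x^{t+1} + \det B''$ then yields the claimed formula.

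The main obstacle will be the careful combinatorial bookkeeping in evaluating $\det B''$ (equivalently, tracking exactly which permutations survive in the $\sigma(t+1)=0$ part of the sum): one must argue that a permutation contributing nonzero must, apart from the forced $t+1 \mapsto 0$, map the ``intermediate'' vertices $\{1,\dots,t\}$ by the identity on some subset and ``skip over'' the rest using forward arcs through $0$, and that the number of ways to do this with exactly $i$ skipped vertices is $\binom{t}{i}$, with the correct sign $(-1)^{i}$ (reconciled against the $(-1)^{t+1}\cdot(-1)$ prefactor from the cofactor expansion) so that all signs come out as $-\binom{t}{i}$. I would verify the small cases $t = 0$ (giving $\operatorname{Char} = x^2 - 1$, matching $2P_2$-style reasoning for a $2$-cycle-like reversed arc) and $t=1$ ($x^3 - x$) by hand as a sanity check on the signs before writing the general argument.
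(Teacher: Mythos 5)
Your overall plan coincides with the paper's proof: expand $\det(xI-A(T_t))$ via Leibniz (equivalently, cofactors along row $t+1$), using the fact that $(t+1,0)$ is the unique backward arc so that $[xI-A]_{t+1,0}=-1$ is the only below-diagonal nonzero entry, and split according to whether $\sigma(t+1)=0$. You also correctly identify that the transitive part contributes $x^{t+2}$ (since the adjacency matrix of a transitive tournament is strictly upper triangular, hence nilpotent) and that the binomial coefficients should come from choosing increasing subsets of $\{1,\dots,t\}$. However, the proposal has a genuine gap: the entire content of the lemma lies in the enumeration of the permutations with $\sigma(t+1)=0$ (equivalently, in evaluating your minor $\det B''$), and that is exactly the step you defer as ``the main obstacle,'' with the sentence that would compute $\det B''$ trailing off unfinished. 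The paper completes it by showing that any such $\sigma$ consists of a single cycle $(t+1,0,a_1,\dots,a_s)$ with $1\leqslant a_1<\dots<a_s\leqslant t$ and all other points fixed, contributing $\operatorname{sgn}(\sigma)\cdot(-1)^{s+2}\cdot x^{t-s}=(-1)^{s-1}(-1)^{s}x^{t-s}=-x^{t-s}$, with $\binom{t}{s}$ choices of the $a_i$; none of this sign and cycle-structure bookkeeping appears in your write-up. Your first-pass claim that the $\sigma(t+1)\neq 0$ permutations contribute a sum $\sum_S x^{|S^c|}(-1)^{|S|}$ over subsets is also false (only the identity survives), though you correct it later.

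More worryingly, both of your proposed sanity checks are wrong in a way that suggests a misreading of $T_t$. For $t=0$ the tournament has the single arc $(1,0)$, so $A(T_0)$ is nilpotent and $\operatorname{Char}_{A(T_0)}(x)=x^2$ (the sum in the statement is empty), not $x^2-1$; for $t=1$ the tournament is the directed $3$-cycle, so $\operatorname{Char}_{A(T_1)}(x)=x^3-1$, not $x^3-x$. Your claimed values are what you would get if the reversed arc were replaced by a digon (both arcs present), which is not what the definition of $T_t$ says. Since your stated plan is to calibrate the signs against these small cases before writing the general argument, this error would have propagated into the sign analysis; you should redo the small cases and then carry out the cycle enumeration explicitly.
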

\begin{proof}
    Let $M=xI-A(T_t)$.
    Then Leibniz's formula yields
    $$\operatorname{Char}_{A(T_t)}(x)=\sum_{\sigma \in \mathfrak S(t+2)}\operatorname{sgn}(\sigma)\prod_{i=0}^{t+1}[M]_{i,\sigma(i)}.$$
    We examine all permutations $\sigma \in \mathfrak S(t+2)$ for which $\prod_{i=0}^{t+1}[M]_{i,\sigma(i)}$ is not $0$.
    Observe that $\sigma(t+1) \in \{0,t+1\}$.

    If $\sigma(t+1)=t+1$, then $\sigma(t)=t$ since only $[M]_{t,t}$ and $[M]_{t,t+1}$ are the only non-zero entries on the $t$th row of $M$.
    Similarly, we must have $\sigma(t-1)=t-1$ since only $[M]_{t-1,t-1}$, $[M]_{t-1,t}$, and $[M]_{t-1,t+1}$ are non-zero. 
    We can repeat this idea to deduce that $\sigma$ must be the identity permutation, which corresponds to the term $x^{t+2}$.
    
     If $\sigma(t+1)=0$, let $\gamma = (t+1,0,a_1,\dots,a_s)$ be the cycle of $\sigma$ that contains $t+1$.
     Since $[M]_{0,t+1} = 0$, we must have $s \geqslant 1$.
     We claim that $\sigma(i)=i$ for each $i$ outside of $\gamma$.
     Indeed, for all $j\neq t+1$ and $i<j$, we have $[M]_{j,i}=0$.
     Furthermore, it also follows that $a_i < a_{i+1}$ for each $i \in \{1,\dots,s-1\}$.
     The permutation $\sigma$ corresponds to the monomial $-x^{t-s}$, since $\operatorname{sgn}(\sigma)=(-1)^{s-1}$ and $[M]_{t+1,0}=[M]_{0,a_1}=[M]_{a_1,a_2}=\dots =[M]_{a_{s-1},a_s}=[M]_{a_s,t+1}=-1$. 
      For a fixed $t$, there are $\binom{t}{s}$ ways to choose $a_1,a_2,\dots,a_s$, which completes the proof.
\end{proof}

Observe that, by Lemma~\ref{lem:characteristic_polynomial_of_Gt}, the coefficients of $\operatorname{Char}_{A(T_t)}(x)$ are polynomials in $t$.
For the rest of this section, we will consider $t$ to be a variable.
To avoid confusion, we augment our notation as follows.
Denote by $\deg_t(p)$ the degree of $p(t)$, where $p(t)$ is a polynomial in the variable $t$.
Similarly, we denote by $\mathsf c_k^{(t)}(p)$ the coefficient of $t^{\deg_t(p)-k}$ in $p(t)$.
When the variable is clear from context, to avoid clutter, we continue to use the plain notation as used above.

\begin{corollary}
\label{cor:pChar}
    Let $k \geqslant 2$ be an integer.
    Then $\mathsf p_k(\operatorname{Char}_{A(T_t)})$ is a polynomial in $t$ of degree at most $k-2$.
    Furthermore, for $k \geqslant 3$, the degree $\deg_t(\mathsf p_k(\operatorname{Char}_{A(T_t)}))= k-2$ with top coefficient $\mathsf c_0^{(t)}(\mathsf p_k(\operatorname{Char}_{A(T_t)})) = \frac{k}{(k-2)!}$.
\end{corollary}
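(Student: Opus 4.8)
The plan is to work directly from the explicit formula for $\operatorname{Char}_{A(T_t)}(x)$ given in Lemma~\ref{lem:characteristic_polynomial_of_Gt}, namely $\mathsf c_0 = 1$, $\mathsf c_1 = 0$, and $\mathsf c_i = -\binom{t}{i}$ for $i \in \{2,\dots,t\}$, and feed these coefficients into Newton's identity~\eqref{eqn:power_sum_elementary_symmetric_sum_relation}. Since $\mathsf c_0 = 1$, that recursion reads $\mathsf p_k = -\sum_{i=1}^{k-1}\mathsf c_i\,\mathsf p_{k-i} - k\,\mathsf c_k$. Treating $t$ as a formal variable (as the paper sets up just before the statement), each $\mathsf c_i = -\binom{t}{i}$ is a polynomial in $t$ of degree exactly $i$, with leading term $-t^i/i!$. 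The claim is then a statement purely about degrees and leading coefficients in $t$, which I would prove by induction on $k$.

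First I would handle the degree bound $\deg_t \mathsf p_k \le k-2$. The base cases $k=2,3$ are immediate: $\mathsf p_2 = -2\mathsf c_2 = \binom{t}{2}\cdot 2 \cdot$ (up to the factor) — more precisely $\mathsf c_0 \mathsf p_2 = -2\mathsf c_2 = 2\binom{t}{2} = t(t-1)$, so $\deg_t \mathsf p_2 = 2$... wait, that would already violate the bound, so I need to be careful: the correct reading is that $\mathsf p_k(\operatorname{Char}_{A(T_t)})$ for $k\ge 2$ has degree at most $k-2$, and one checks $\mathsf p_2 = t(t-1)$ has degree $2 = k$, contradiction — so in fact I must recompute. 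Actually $\operatorname{Char}$ has degree $t+2$ in $x$, so $\mathsf p_2 = \sum \theta_i^2$; using $\mathsf c_1 = 0$ we get $\mathsf p_2 = -2\mathsf c_2 = 2\binom{t}{2}$, degree $2$. Hmm. The resolution must be that the intended claim uses $\mathsf c_2 = -\binom{t}{2}$ only when $t \ge 2$, and the degree in $t$ is genuinely $2$ for $k=2$ — so the ``$k-2$'' bound should be read as holding once we account for the precise statement; I would re-examine, but most likely the induction hypothesis is $\deg_t \mathsf p_j \le j-2$ for $j < k$ with the understanding that $\mathsf p_2$ is the exceptional small case, or that the $\mathsf c_i \mathsf p_{k-i}$ products and the $k\mathsf c_k$ term individually have degree $i + (k-i-2) = k-2$ and $k$ respectively. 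The key cancellation to exploit: in $\mathsf c_0 \mathsf p_k = -\sum_{i=1}^{k-1} \mathsf c_i \mathsf p_{k-i} - k\mathsf c_k$, the term $-k\mathsf c_k = k\binom{t}{k}$ has degree $k$ and leading coefficient $k/k! = 1/(k-1)!$, while the $i=1$ term vanishes ($\mathsf c_1 = 0$) and the $i = k-1$ term contributes $-\mathsf c_{k-1}\mathsf p_1 = 0$ as well (since $\mathsf p_1 = -\mathsf c_1 = 0$); the degree-$k$ part must therefore cancel against the $i$-ranging sum, and tracking this is exactly the induction step.

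Concretely, for the induction step I would write $\mathsf p_k = -k\mathsf c_k - \sum_{i=2}^{k-2}\mathsf c_i\mathsf p_{k-i}$ (using $\mathsf c_1 = \mathsf p_1 = 0$), note $-k\mathsf c_k = k\binom{t}{k}$ has degree $k$ with leading coefficient $1/(k-1)!$, and each product $\mathsf c_i\mathsf p_{k-i}$ for $2\le i\le k-2$ has degree at most $i + (k-i-2) = k-2$ by the inductive hypothesis — \emph{except} the degree of $-k\mathsf c_k$ is $k$, which is too big. So the real content is that the degree-$k$ and degree-$(k-1)$ terms of $\mathsf p_k$ must both vanish. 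To see this I would instead use the \emph{full} sum including $i=1$ and $i=k-1$ but now with $\mathsf c_i = -\binom{t}{i}$ for the relevant range, and observe that $\binom{t}{i}$ has the expansion $t^i/i! - \binom{i}{2}t^{i-1}/i! + \cdots$; matching the degree-$k$ coefficients on both sides of Newton's identity forces the leading term of $\mathsf p_k$ to have degree $\le k-1$, and matching degree-$(k-1)$ coefficients then forces degree $\le k-2$. This two-step ``peeling'' is the main obstacle: it requires carrying the top \emph{two} coefficients of each $\binom{t}{i}$ and of each $\mathsf p_j$ through the convolution, rather than just the top one. Alternatively — and this may be cleaner — I would prove it via the power-sum/generating-function identity: if $\operatorname{Char}_{A(T_t)}(x) = x^{t+2} - \sum_{i=2}^t \binom{t}{i}x^{t-i}$, compare with $x^{t+2} - x^2\big((1+1/x)^t - 1 - t/x\big) = x^{t+2} - x^{t+2}(1+1/x)^t + x^{t+2} + t x^{t+1}$ hmm this gives $\operatorname{Char}_{A(T_t)}(x) = x^{-?}$... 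Actually $\sum_{i=0}^t \binom{t}{i} x^{t-i} = x^t(1+1/x)^t \cdot$ — more simply $\sum_{i=0}^t\binom{t}{i}x^{t-i} = (x+1)^t$, so $\operatorname{Char}_{A(T_t)}(x) = x^{t+2} - \big((x+1)^t - x^t - t x^{t-1}\big)$. This closed form is extremely useful: the zeros $\theta$ satisfy $\theta^{t+2} = (\theta+1)^t - \theta^t - t\theta^{t-1}$, and $\mathsf p_k = \sum \theta^k$ can be extracted from the logarithmic derivative $-x\,\frac{d}{dx}\log\operatorname{Char}_{A(T_t)}(x) = \sum_k \mathsf p_k x^{-k}$.

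Given the closed form $\operatorname{Char}_{A(T_t)}(x) = x^{t+2} - (x+1)^t + x^t + t x^{t-1}$, I would compute $\sum_{k\ge 0}\mathsf p_k x^{-k} = \sum_i \frac{x}{x-\theta_i} = x\cdot\frac{\operatorname{Char}'(x)}{\operatorname{Char}(x)}$ hmm with the right sign; expanding $\log\operatorname{Char}_{A(T_t)}(x) = \log(x^{t+2}) + \log\big(1 - x^{-2}(1+1/x)^t\big|_{\text{normalized}}\big)$ — write $\operatorname{Char}_{A(T_t)}(x) = x^{t+2}\big(1 - x^{-2}((1+1/x)^t - 1 - t/x)\big)$, so with $u = 1/x$, the generating function $\sum_{k\ge1}\mathsf p_k u^k = -u\frac{d}{du}\log\big(1 - u^2((1+u)^t - 1 - tu)\big) = u^2\frac{d}{du}\big(u^2((1+u)^t-1-tu)\big) + O(u^4\text{-type higher order})$, and since $(1+u)^t - 1 - tu = \binom{t}{2}u^2 + O(u^3)$ as a power series whose coefficients are the polynomials $\binom{t}{j}$ of degree $j$ in $t$, extracting the coefficient of $u^k$ gives $\mathsf p_k$ as a $t$-polynomial where the coefficient of $u^k$ in $u^2\cdot\frac{d}{du}(\text{stuff})$ picks out $\binom{t}{k-2}$-type terms of degree $k-2$. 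The leading term: the coefficient of $u^k$ in $-u\frac{d}{du}\log(1 - v)$ where $v = u^2((1+u)^t - 1 - tu)$ is, to leading order in $t$, the coefficient of $u^k$ in $u\frac{v'}{1} = u\frac{d}{du}\big(u^2\sum_{j\ge2}\binom{t}{j}u^j\big) = \sum_j (j+2)\binom{t}{j}u^{j+2}$, giving $\mathsf p_k \sim (k)\binom{t}{k-2} \sim k\cdot t^{k-2}/(k-2)!$, i.e. leading coefficient $k/(k-2)!$, exactly as claimed; the higher-order terms of $-\log(1-v) = v + v^2/2 + \cdots$ contribute $v^2 = u^4(\cdots)$ whose $u^k$-coefficient involves products $\binom{t}{i}\binom{t}{j}$ with $i+j = k-4$, hence degree $k-4 < k-2$, so they don't affect the top coefficient. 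The main obstacle in this approach is purely bookkeeping: making the formal power series manipulation with $t$ as a parameter rigorous (it is, since everything is a polynomial identity in finitely many variables for each fixed $k$) and correctly tracking that \emph{no} term of degree $k-1$ or $k$ survives — which follows because $v$ and all its powers, after the $u^2$ prefactor, have $u^k$-coefficients that are $t$-polynomials of degree $\le k-2$, with equality only from the linear term $v$ at $j = k-2$. I would present the generating-function computation as the spine of the proof and relegate the degree/leading-coefficient extraction to a short explicit calculation.
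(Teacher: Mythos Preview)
Your entire difficulty stems from a misreading of Lemma~\ref{lem:characteristic_polynomial_of_Gt}. The formula there is
\[
\operatorname{Char}_{A(T_t)}(x)=x^{t+2}-\sum_{i=1}^{t}\binom{t}{i}\,x^{t-i},
\]
so the coefficient of $x^{(t+2)-k}$ is $\mathsf c_k=-\binom{t}{\,k-2\,}$ for $k\geqslant 3$, and crucially $\mathsf c_1=\mathsf c_2=0$. You instead took $\mathsf c_i=-\binom{t}{i}$, which is an index shift of two; this is exactly why your computation of $\mathsf p_2$ gave $t(t-1)$ and appeared to contradict the statement. With the correct coefficients one has $\mathsf p_1=-\mathsf c_1=0$ and $\mathsf p_2=-2\mathsf c_2=0$, so there is no contradiction at all.

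Once the indexing is fixed, no cancellation or two-step ``peeling'' is needed, and the generating-function detour is entirely unnecessary. Since $\mathsf c_1=\mathsf c_2=0$, Newton's identity reduces to
\[
\mathsf p_k \;=\; -\sum_{i=3}^{k-1}\mathsf c_i\,\mathsf p_{k-i}\;-\;k\,\mathsf c_k.
\]
By induction, each summand $\mathsf c_i\,\mathsf p_{k-i}$ has $t$-degree at most $(i-2)+(k-i-2)=k-4$, while $-k\,\mathsf c_k=k\binom{t}{k-2}$ has $t$-degree exactly $k-2$ with leading coefficient $k/(k-2)!$. Thus $\mathsf p_k$ has $t$-degree exactly $k-2$ and leading coefficient $k/(k-2)!$, which is precisely the paper's (one-line) argument. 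Your closed form $\operatorname{Char}_{A(T_t)}(x)=x^{t+2}-(x+1)^t+x^t+tx^{t-1}$ is correspondingly off by a $tx^{t-1}$ term; the correct closed form is $x^{t+2}-(x+1)^t+x^t$.
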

\begin{proof}
    By Lemma~\ref{lem:characteristic_polynomial_of_Gt}, $\mathsf c_1(\operatorname{Char}_{A(T_t)})=\mathsf c_2(\operatorname{Char}_{A(T_t)})=0$ and, for $k \geqslant 2$, we have $\mathsf c_k(\operatorname{Char}_{A(T_t)})=-{\binom{t}{k-2}}=-\frac{t(t-1)\dots(t-k+3)}{(k-2)!}$ which is a polynomial of degree $k-2$ for variable $t$. 
    Inductively applying \eqref{eqn:power_sum_elementary_symmetric_sum_relation} yields the conclusion of the lemma.
\end{proof}

Next, we show that the coefficients of the walk polynomial $T_t$ can also be expressed as polynomials in $t$.

\begin{lemma}
\label{lem:eWalk}
Let $d\geqslant 2$ be an integer.
Then, for each $k \in \{1,\dots,d\}$,
the coefficient $\mathsf c_k(\operatorname{Walk}_{T_t,d})$ is a polynomial in $t$ with
\begin{itemize}
    \item $\mathsf c_i(\operatorname{Walk}_{T_t,d})=\binom{t+2}{i}$ for each $i \in \{1,2\}$;
    \item $\deg(\mathsf c_k(\operatorname{Walk}_{T_t,d})) = k$;
    \item $\mathsf c^{(t)}_i \left (\mathsf c_k(\operatorname{Walk}_{T_t,d}) \right ) = \mathsf c_i^{(t)} \left ({\binom{t}{k}}+2{\binom{t}{k-1}}+{\binom{t}{k-2}}+ \frac{2^{k-2}}{(k-2)!}t^{k-2} \right )$ for $k \geqslant 3$ and $i \in \{0,1,2\}$.
\end{itemize}
\end{lemma}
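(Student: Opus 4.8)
The strategy is to work directly from the product formula for the walk polynomial of a join. Recall from Lemma~\ref{lem:characteristic_polynomial_of_Gt} that $\operatorname{Char}_{A(T_t)}(x) = x^{t+2} - \sum_{i=1}^{t}\binom{t}{i}x^{t-i}$, so that $\mathsf c_k(\operatorname{Char}_{A(T_t)}) = -\binom{t}{k-2}$ for $k \geqslant 2$ and $\mathsf c_0 = 1$, $\mathsf c_1 = 0$. The first bullet, $\mathsf c_i(\operatorname{Walk}_{T_t,d}) = \binom{t+2}{i}$ for $i \in \{1,2\}$, follows immediately from the definition of the walk polynomial: $\mathsf c_1(\operatorname{Walk}_{T_t,d}) = \mathbf 1^\transpose A(T_t)^0 \mathbf 1 = t+2$, and $\mathsf c_2(\operatorname{Walk}_{T_t,d}) = \mathbf 1^\transpose A(T_t)^1 \mathbf 1$ counts the arcs of $T_t$, which is $\binom{t+2}{2} - 1$ (one fewer than a transitive tournament on $t+2$ vertices)—wait, this must be reconciled with $\binom{t+2}{2}$; since reversing one arc does not change the total number of arcs, $\mathbf 1^\transpose A(T_t) \mathbf 1 = \binom{t+2}{2}$, as claimed. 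That each $\mathsf c_k(\operatorname{Walk}_{T_t,d})$ is a polynomial in $t$ is clear since it equals $\mathbf 1^\transpose A(T_t)^{k-1}\mathbf 1$, and by induction via Newton's identities (using \eqref{eqn:power_sum_elementary_symmetric_sum_relation} applied to $\operatorname{Walk}_{T_t,d}$, whose coefficients are the power-sum data only up to index $d$) one sees these are polynomial expressions in the $\binom{t}{j}$'s.

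For the degree and top-three-coefficient claims, the cleanest route is to pass through power sums. First I would establish that $\mathsf p_k(\operatorname{Walk}_{T_t,d})$ is a polynomial in $t$ of degree exactly $k$ with an explicit top coefficient. The key identity is that the walk polynomial relates to the characteristic polynomial: indeed $\operatorname{Walk}_{T_t,d}(x) \cdot (x-1) = x^{d+1} - \mathbf 1^\transpose(\text{something})$, or more usefully, one can use the standard fact that $\sum_{i \geqslant 0} \mathbf 1^\transpose A^i \mathbf 1\, x^{-i-1}$ is a rational function whose denominator is $\operatorname{Char}_A(x)$ and whose numerator encodes $\mathbf 1^\transpose \operatorname{adj}(xI-A)\mathbf 1$. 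Concretely, the power sums $\mathsf p_k(\operatorname{Walk}_{T_t,d})$ for $k \leqslant d$ can be computed from the $\mathsf c_j(\operatorname{Walk}_{T_t,d}) = \mathbf 1^\transpose A(T_t)^{j-1}\mathbf 1$ via Newton's identities, and the leading behaviour in $t$ can be tracked: since $T_t$ is "almost" the transitive tournament $TT_{t+2}$, and for the transitive tournament $\mathbf 1^\transpose A(TT_m)^{j}\mathbf 1 = \binom{m}{j+1}$, the correction from reversing one arc contributes a lower-order (in the structural sense) but for walk-counting purposes a term of the shape $2^{j-1}\cdot(\text{poly in }t\text{ of degree }j-1)$, reflecting walks that traverse the reversed arc. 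This is where the $\frac{2^{k-2}}{(k-2)!}t^{k-2}$ correction term originates.

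Thus the core computation is: show $\mathbf 1^\transpose A(T_t)^{k-1}\mathbf 1 = \binom{t+2}{k} + (\text{correction})$, where the correction is a polynomial in $t$ of degree $k-2$ whose top coefficient is $\frac{2^{k-2}}{(k-2)!}$, and whose contributions to the $t^{k-1}$ and $t^k$ coefficients vanish. The walks in $T_t$ from $\mathbf 1$ to $\mathbf 1$ of length $k-1$ split into those avoiding the reversed arc $(t+1,0)$—which are exactly walks in the transitive tournament $TT_{t+2}$, counting $\binom{t+2}{k}$—and those using it at least once. A walk using the reversed arc $r$ times decomposes into $r+1$ monotone increasing segments glued at copies of the arc $(t+1,0)$; each such configuration is counted by a product of binomial coefficients in $t$ and, after summing over the internal choices, yields at most $\binom{t}{k-2}$-type terms times powers of $2$ from the segmentation. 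Carefully bounding: using the reversed arc once forces the walk to pass through both $0$ and $t+1$, killing two "degrees of freedom" in the vertex choices, hence degree $\leqslant k-2$; and the top coefficient $\frac{2^{k-2}}{(k-2)!}$ comes from counting the ordered ways to interleave. Matching this with the claimed formula $\binom{t}{k} + 2\binom{t}{k-1} + \binom{t}{k-2} + \frac{2^{k-2}}{(k-2)!}t^{k-2}$ modulo terms of degree $\leqslant k-3$ (which is all the third bullet asserts, via $\mathsf c_i^{(t)}$ for $i \in \{0,1,2\}$) then follows by comparing leading terms, using $\binom{t+2}{k} = \binom{t}{k} + 2\binom{t}{k-1} + \binom{t}{k-2}$ (Vandermonde).

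\textbf{Main obstacle.} The delicate part is the bookkeeping of walks that traverse the reversed arc, in particular verifying that the degree-$(k-2)$ top coefficient of the correction is \emph{exactly} $\frac{2^{k-2}}{(k-2)!}$ and not something with an extra combinatorial factor. I expect one wants to recognise the generating-function identity $\sum_{j\geqslant 0}\mathbf 1^\transpose A(T_t)^j\mathbf 1\, y^j = \frac{\text{num}(y)}{\det(I - yA(T_t))}$ and extract asymptotics in $t$ from a clean closed form, rather than combinatorially summing nested binomial sums; alternatively, inducting on $k$ via \eqref{eqn:power_sum_elementary_symmetric_sum_relation} keeping track only of the top three $t$-coefficients at each step, using Corollary~\ref{cor:pChar} for the $\mathsf p_k(\operatorname{Char}_{A(T_t)})$ contributions, should push everything through with controlled effort. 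The fact that only $\mathsf c_i^{(t)}$ for $i \in \{0,1,2\}$ need to match—i.e. we only care about the top three coefficients in $t$—is what makes the induction tractable, since lower-order terms can be discarded at every stage.
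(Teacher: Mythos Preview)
Your combinatorial approach---splitting walks by whether they use the reversed arc and then invoking the Vandermonde identity $\binom{t+2}{k}=\binom{t}{k}+2\binom{t}{k-1}+\binom{t}{k-2}$---is essentially the paper's strategy, but organised differently. The paper instead partitions walks by the \emph{number of occurrences of the special vertices $0$ and $t+1$} and exploits the structural fact that any interior occurrence of $0$ must be immediately preceded by $t+1$ (and any non-terminal occurrence of $t+1$ must be immediately followed by $0$). This yields directly: zero occurrences give $\binom{t}{k}$; one occurrence gives $2\binom{t}{k-1}$; two occurrences give $\binom{t}{k-2}+\sum_{i=1}^{k-1}\binom{t}{i-1}\binom{t}{k-1-i}$; and three or more occurrences contribute only in degree $\leqslant k-3$. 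The Vandermonde sum $\sum_{i}\binom{t}{i-1}\binom{t}{k-1-i}=\binom{2t}{k-2}$ then supplies the leading coefficient $\frac{2^{k-2}}{(k-2)!}$ without further work.

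Your arc-based split is equivalent in spirit but introduces an avoidable subtlety: walks in $T_t$ not using the arc $(t+1,0)$ are \emph{not} all walks in the transitive tournament $TT_{t+2}$, but rather those that additionally avoid the removed arc $(0,t+1)$. For $k\geqslant 3$ this restriction is vacuous (an increasing sequence of length $\geqslant 3$ in $\{0,\dots,t+1\}$ cannot have $0$ and $t+1$ adjacent), so the count $\binom{t+2}{k}$ is indeed correct---but you should make this explicit. Likewise, when enumerating walks that \emph{do} use the reversed arc, the monotone segments before and after must also avoid $(0,t+1)$, which complicates the bookkeeping compared with the paper's vertex-based partition; the paper's framing sidesteps this because the segments live entirely inside $\{1,\dots,t\}$.

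Finally, discard the power-sum/Newton route you sketch as an alternative: Corollary~\ref{cor:pChar} concerns $\mathsf p_k(\operatorname{Char}_{A(T_t)})$, not $\mathsf p_k(\operatorname{Walk}_{T_t,d})$, and the walk-polynomial power sums are handled in Lemma~\ref{lem:pWalk}, which \emph{depends on the present lemma}. Inducting via \eqref{eqn:power_sum_elementary_symmetric_sum_relation} on the walk polynomial would therefore be circular.
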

\begin{proof}
    It is straightforward to calculate $\mathsf c_1(\operatorname{Walk}_{T_t,d})=t+2$ and $\mathsf c_2(\operatorname{Walk}_{T_t,d})=\binom{t+2}{2}$.
    Observe that $\mathsf c_k(\operatorname{Walk}_{T_t,d})=\mathbf 1^\transpose A(T_t)^{k-1}\mathbf 1$ is equal to the number of walks in $T_t$ consisting of $k$ vertices.
    For a walk with $k$ vertices $ \mathbf x = (x_1,x_2,\dots,x_k)$ we first consider relative locations for the vertices $0$ and $t+1$ in $\mathbf x$.

    If $x_j = 0$ with $j > 1$ then we must have $x_{j-1} = t+1$ and, if $j < k$ then $x_{j+1} \ne t+1$.
    Similarly, if $x_j = t+1$ with $j < k$ then we must have $x_{j+1} = 0$ and, if $j > 1$ then $x_{j-1} \ne 0$.
    There are $\binom{t}{r}$ possibilities for $x_{k+1}, x_{k+2}, \dots, x_{k+r}$ where each vertex is not in $\{0,t+1\}$.

    Now, we partition the walk $\mathbf x$ by the positions of $0$ and $t+1$.
    Let $m_1, m_2,\dots,m_s$ be the numbers of consecutive entries $x_j \not \in \{0,t+1\}$ of $\mathbf x$ between entries in $\{0,t+1\}$.
    For example, if $k = 10$ and $x_2=t+1$, $x_3=0$, $x_7=t+1$, $x_8 = 0$, and $x_{10}=t+1$ then $\mathbf x$ is partitioned into three (nonempty) parts with $m_1 = 1$, $m_2=3$, and $m_3=1$.
    Given $m_1, m_2,\dots,m_s$, the number of corresponding walks is equal to ${\binom{t}{m_1}}{\binom{t}{m_2}}\cdots{\binom{t}{m_s}}$, which is a polynomial in $t$ of degree $m_1 + m_2 + \dots + m_s \leqslant k$.
    When $s = 1$ (there are no occurrences of $0$ and $t+1$ in $\mathbf x$), then $m_s = k$ and the number of corresponding walks is $\binom{t}{k}$, which is a polynomial in $t$ of degree $k$.
    Hence $\mathsf c_k(\operatorname{Walk}_{T_t,d})$ is a polynomial in $t$ of degree $k$.

    Finally, we focus on the top three coefficients of $\mathsf c_r(\operatorname{Walk}_{T_t,d})$, which depends on the number of walks with $m_1 + m_2 + \dots + m_s \geqslant k-2$.
    This is equivalent to considering walks with at most two occurrences of $0$ and $t+1$.
    The number of walks with zero occurrences of $0$ and $t+1$ is $\binom{t}{k}$.
    The walks with just one occurrence of $0$ and $t+1$ have either $x_1 = 0$ or $x_k = t+1$.
    In each case, the number of corresponding walks is $\binom{t}{k-1}$.
    The walks with two occurrences of $0$ and $t+1$ have the form $x_1 = 0$ and $x_k=t+1$ or the form $x_i = t+1$ and $x_{i+1}=0$ for each $i \in \{1,\dots,k-1\}$.
    The number of corresponding walks is $\binom{t}  {k-2}$ or $\binom{t} {i-1} \binom{t}{k-1-i}$ respectively.

    The lemma follows since
    \[
    {\binom{t}{k}}+2{\binom{t}{k-1}}+{\binom{t}{k-2}}+\sum_{i=1}^{k-1}{\binom{t}{i-1}}{\binom{t}{k-1-i}} = {\binom{t}{k}}+2{\binom{t}{k-1}}+{\binom{t}{k-2}}+ \frac{2^{k-2}}{(k-2)!}t^{k-2}. \qedhere
    \]
\end{proof}

This subsection culminates in a technical result on expressing the power sums $\mathsf p_k(\operatorname{Walk}_{T_t,d})$ as polynomials in $t$.

\begin{lemma}
\label{lem:pWalk}
 Let $d\in\mathbb{N}$ with $d \geqslant 3$.
 For each $k \in \mathbb N$, the power sum $\mathsf p_k(\operatorname{Walk}_{T_t,d})$ is a polynomial in $t$ with
 \begin{itemize}
     \item $\mathsf p_1(\operatorname{Walk}_{T_t,d})=-t-2$, $\mathsf p_2(\operatorname{Walk}_{T_t,d})=t+2$, and $\mathsf p_3(\operatorname{Walk}_{T_t,d})=-7t-2$;
     \item $\deg_t(\mathsf p_k(\operatorname{Walk}_{T_t,d}))\leqslant k-3$ if $k \geqslant 4$ and $k$ is even;
     \item $\deg_t(\mathsf p_k(\operatorname{Walk}_{T_t,d}))=k-2$ if $k \geqslant 3$ and $k$ is odd;
     \item $\mathsf c_{0}^{(t)} (\mathsf p_k(\operatorname{Walk}_{T_t,d}))=\frac{-2k}{(k-2)!}$ if $k \geqslant 4$ and $k$ is odd.
 \end{itemize}
\end{lemma}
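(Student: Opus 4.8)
The plan is to compute $\mathsf p_k(\operatorname{Walk}_{T_t,d})$ from the coefficients $\mathsf c_k(\operatorname{Walk}_{T_t,d})$ via Newton's identities \eqref{eqn:power_sum_elementary_symmetric_sum_relation}, feeding in the information from Lemma~\ref{lem:eWalk}. The small cases $k\in\{1,2,3\}$ are a direct calculation: $\mathsf c_1=t+2$, $\mathsf c_2=\binom{t+2}{2}$, and $\mathsf c_3(\operatorname{Walk}_{T_t,d}) = \binom{t}{3}+2\binom{t}{2}+\binom{t}{1}+2t = \binom{t+2}{3}+2t$ (using the last bullet of Lemma~\ref{lem:eWalk} with $k=3$), and then $\mathsf p_1=-\mathsf c_1$, $\mathsf p_2=\mathsf c_1^2-2\mathsf c_2$, $\mathsf p_3=-\mathsf c_1^3+3\mathsf c_1\mathsf c_2-3\mathsf c_3$ give the stated values $-t-2$, $t+2$, $-7t-2$ after simplification.

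For general $k\geqslant 4$, the key is a parity/degree bookkeeping argument. From Lemma~\ref{lem:eWalk}, for $k\geqslant 3$ the coefficient $\mathsf c_k := \mathsf c_k(\operatorname{Walk}_{T_t,d})$ is a degree-$k$ polynomial in $t$ whose top three coefficients agree with those of $g_k(t):=\binom{t}{k}+2\binom{t}{k-1}+\binom{t}{k-2}+\frac{2^{k-2}}{(k-2)!}t^{k-2}$. Note $\binom{t}{k}+2\binom{t}{k-1}+\binom{t}{k-2}=\binom{t+2}{k}$, so modulo terms of degree $\leqslant k-3$ we have $\mathsf c_k(t)\equiv \binom{t+2}{k} + \tfrac{2^{k-2}}{(k-2)!}t^{k-2}$. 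Now consider the substitution $t\mapsto -t-4$, under which $\binom{t+2}{k}=\binom{t+2}{k}$ maps to $\binom{-t-2}{k}=(-1)^k\binom{t+1+k}{k}$; comparing with $\binom{t+2}{k}$, the two differ by a polynomial of degree $\leqslant k-1$, and more precisely one checks $\binom{t+2}{k}+(-1)^{k-1}\binom{t+2}{k}\big|_{t\mapsto -t-4}$ is $O(t^{k-2})$. The cleaner route is to exploit the \emph{functional equation} for the walk polynomial itself. Since $T_t$ has $t+2$ vertices, one expects a symmetry of $\operatorname{Walk}_{T_t,d}(x)$ under a reflection coming from reversing all arcs (which sends $T_t$ to a relabelling of $T_t$), forcing $\mathbf 1^\transpose A(T_t)^{i}\mathbf 1=\mathbf 1^\transpose (A(T_t)^\transpose)^i\mathbf 1$ trivially, but more usefully giving a relation between walk counts and closed-walk-type quantities. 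I would package this as: $\mathsf p_k(\operatorname{Walk}_{T_t,d})$, being a symmetric function of the zeros, is (by \eqref{eqn:powerProdSum} and Corollary~\ref{cor:powerSumSum} applied to a decomposition of $T_t$ relative to the vertices $0$ and $t+1$) a sum of $\mathsf p_k$ of smaller walk polynomials plus a low-degree correction, and induct on $k$.

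The degree claims then follow by induction using \eqref{eqn:power_sum_elementary_symmetric_sum_relation} in the form $\mathsf p_k = -\big(\sum_{i=1}^{k-1}\mathsf c_i\mathsf p_{k-i} + k\mathsf c_k\big)$. The dominant contribution to $\deg_t$ comes from the terms $\mathsf c_1\mathsf p_{k-1}$ (degree $\leqslant 1 + \deg_t \mathsf p_{k-1}$), $\mathsf c_2\mathsf p_{k-2}$ (degree $\leqslant 2+\deg_t\mathsf p_{k-2}$), and $k\mathsf c_k$ (degree exactly $k$). By induction, when $k$ is odd, $\deg_t\mathsf p_{k-2}=k-4$ contributes degree $\leqslant k-2$, $\deg_t\mathsf p_{k-1}\leqslant k-4$ contributes degree $\leqslant k-3$, and $k\mathsf c_k$ contributes degree $k$ — so the naive bound is $k$, and the whole point is that the top three coefficients cancel. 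This is where the explicit top-three-coefficient data of Lemma~\ref{lem:eWalk} is essential: one shows that in $\mathsf p_k = -k\mathsf c_k - \mathsf c_1\mathsf p_{k-1} - \dots$, the coefficients of $t^k$, $t^{k-1}$, $t^{k-2}$ all vanish, using the matching of $\mathsf c_k$ with $g_k$ and the inductive top-degree data for the $\mathsf p_j$. For odd $k$ the cancellation stops at degree $k-2$, leaving leading coefficient $\mathsf c_0^{(t)}(\mathsf p_k)=\frac{-2k}{(k-2)!}$; tracking which term survives, it is precisely the $\frac{2^{k-2}}{(k-2)!}t^{k-2}$ tail of $\mathsf c_k$ times $-k$, corrected by the $\mathsf c_1\mathsf p_{k-1}$ and $\mathsf c_2\mathsf p_{k-2}$ contributions, and a short computation confirms the coefficient is $\frac{-2k}{(k-2)!}$ (consistent with $k=3$: $\frac{-6}{1}=-6$, but $\mathsf p_3=-7t-2$ has leading coefficient $-7$, so the clean formula $\frac{-2k}{(k-2)!}$ holds only for $k\geqslant 4$, matching the statement). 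For even $k\geqslant 4$ one gets one extra order of cancellation, down to degree $k-3$; this stronger cancellation reflects the extra relation $\mathsf c_k(\operatorname{Char}_{A(T_t)})$ having no $t^{k-1}$ term beyond what $\binom{t+2}{k}$-type symmetry provides.

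The main obstacle is the even-$k$ case: proving $\deg_t\mathsf p_k\leqslant k-3$ rather than merely $\leqslant k-2$ requires showing that \emph{two} consecutive leading coefficients vanish, which needs the second-order ($t^{k-1}$) coefficient of $\mathsf c_k(\operatorname{Walk}_{T_t,d})$ — and Lemma~\ref{lem:eWalk} only pins down the \emph{top three} coefficients as a block, so one must carefully separate the $\binom{t+2}{k}$ part (whose sub-leading coefficients are explicit binomial-coefficient expressions with a clean parity behaviour under $t\mapsto -t-4$) from the $\frac{2^{k-2}}{(k-2)!}t^{k-2}$ part. I expect the parity cancellation to follow from the antisymmetry $\mathsf p_k(\operatorname{Walk}_{T_t,d})\big|_{t\mapsto -t-4} = (-1)^?\mathsf p_k(\operatorname{Walk}_{T_t,d}) + (\text{lower order})$, which I would establish by the join decomposition and Corollary~\ref{cor:powerSumSum}; making this precise, rather than the degree induction itself, is the crux of the argument.
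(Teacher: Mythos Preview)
Your framework---Newton's identities plus induction on $k$, with the cases $k\in\{1,2,3\}$ handled by direct computation---is exactly what the paper does. Your simplification $\binom{t}{k}+2\binom{t}{k-1}+\binom{t}{k-2}=\binom{t+2}{k}$ is correct and tidies the top-three-coefficient data from Lemma~\ref{lem:eWalk}, though the paper does not exploit this.

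Where you diverge from the paper is at the step you yourself flag as ``the crux''. You propose that the even-$k$ cancellation down to degree $k-3$ should come from a functional equation under $t\mapsto -t-4$, derived from arc-reversal or a join decomposition of $T_t$. The paper uses no such symmetry. Instead it proves three claims by brute-force coefficient tracking: (i) the coefficient of $t^k$ in $\mathsf p_k$ vanishes (only $\mathsf c_{k-1}\mathsf p_1$ and $k\mathsf c_k$ contribute at that degree, and their leading coefficients $-1/(k-1)!$ and $k\cdot 1/k!$ cancel); (ii) the coefficient of $t^{k-1}$ vanishes (contributions from $\mathsf c_{k-2}\mathsf p_2$, $\mathsf c_{k-1}\mathsf p_1$, $k\mathsf c_k$, handled via the falling-factorial recursion $\mathsf c_i(f_k)=\mathsf c_i(f_{k-1})-(k-1)\mathsf c_{i-1}(f_{k-1})$); (iii) the coefficient of $t^{k-2}$ is computed explicitly as a parity-alternating sum $\sum_{k-i\text{ odd}}\frac{2(k-i)}{(k-i-2)!\,i!}$ coming from the inductive leading-coefficient data, plus four boundary terms from $i\in\{k-3,k-2,k-1,k\}$, and this sum is shown to equal $0$ for $k$ even and $-2k/(k-2)!$ for $k$ odd. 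All three claims rest on Lemma~\ref{lem:eWalk} and nothing else.

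Your symmetry route is not substantiated: arc-reversal sends $T_t$ to an isomorphic tournament, so walk counts are literally unchanged and no relation results; there is no evident action realising $t\mapsto -t-4$ on the family $\{T_t\}$. This is a genuine gap, not a detail, since you identify it as where the proof lives. Note also that your degree bookkeeping has the indices swapped: in $\mathsf p_k=-\sum_{i=1}^{k-1}\mathsf c_i\mathsf p_{k-i}-k\mathsf c_k$, the high-degree terms are $\mathsf c_{k-1}\mathsf p_1$ (degree $k$), $\mathsf c_{k-2}\mathsf p_2$ (degree $k-1$), $\mathsf c_{k-3}\mathsf p_3$ (degree $k-2$), not $\mathsf c_1\mathsf p_{k-1}$ and $\mathsf c_2\mathsf p_{k-2}$ as you wrote; the latter have degree at most $k-2$ by the inductive hypothesis. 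Getting this right is what makes the explicit cancellation in (i)--(iii) tractable.
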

\begin{proof}
    By Lemma~\ref{lem:eWalk}, $\mathsf c_1(\operatorname{Walk}_{T_t,d})=t+2$, $\mathsf c_2(\operatorname{Walk}_{T_t,d})=\binom{t+2}{2}$, and $\mathsf c_3(\operatorname{Walk}_{T_t,d})={\binom{t}{3}}+2{\binom{t}{2}}+3{\binom{t}{1}}$. 
    Using \eqref{eqn:power_sum_elementary_symmetric_sum_relation}, we calculate $\mathsf p_1(\operatorname{Walk}_{T_t,d})=-t-2$, $\mathsf p_2(\operatorname{Walk}_{T_t,d})=t+2$, and $\mathsf p_3(\operatorname{Walk}_{T_t,d})=-7t-2$. 
    Furthermore, Lemma~\ref{lem:eWalk} and \eqref{eqn:power_sum_elementary_symmetric_sum_relation} together imply that $\mathsf p_k(\operatorname{Walk}_{T_t,d})$ is a polynomial in $t$.
    
    Now, we prove by induction that for $k\geqslant 4$,
if $k$ is even then $\deg_t(\mathsf p_k(\operatorname{Walk}_{T_t,d}))<k-2$ otherwise, if $k$ is odd then $\deg_t(\mathsf p_k(\operatorname{Walk}_{T_t,d}))=k-2$ with $\mathsf c_{0}^{(t)}(\mathsf p_k(\operatorname{Walk}_{T_t,d}))=\frac{2k}{(k-2)!}$.
     Assume the lemma is true for each $k=j$ such that $4\leqslant j<k^\prime$ for some $k^\prime \in \mathbb N$. 
     Now suppose $k = k^\prime$.
     Firstly, we bound the degree $\deg_t(\mathsf p_k(\operatorname{Walk}_{T_t,d}))$. 
     From \eqref{eqn:power_sum_elementary_symmetric_sum_relation}, we have 
     \begin{equation}
         \label{eqn:new2}
         \mathsf p_k(\operatorname{Walk}_{T_t,d})=-\sum_{i=1}^{k-1}\mathsf c_i(\operatorname{Walk}_{T_t,d})\mathsf p_{k-i}(\operatorname{Walk}_{T_t,d})-k\mathsf c_k(\operatorname{Walk}_{T_t,d}).
     \end{equation}
    By the inductive hypothesis and Lemma~\ref{lem:eWalk}, the polynomials $\mathsf c_i(\operatorname{Walk}_{T_t,d})\mathsf p_{k-i}(\operatorname{Walk}_{T_t,d})$ and $k\mathsf c_k(\operatorname{Walk}_{T_t,d})$ both have degree at most $k$.
    Hence, $\mathsf p_{k}(\operatorname{Walk}_{T_t,d})$ has degree at most $k$.

    For a polynomial $p(t) \in \mathbb Q[t]$, let $\widehat {\mathsf c_r}(p)$ denote the coefficient of $t^r$ in $p(t)$.
    Next, we show that $\widehat{\mathsf c_k}(\mathsf p_{k}(\operatorname{Walk}_{T_t,d})) = \widehat{\mathsf c_{k-1}}(\mathsf p_{k}(\operatorname{Walk}_{T_t,d}))= 0$.
    
    \paragraph{Claim $\widehat{\mathsf c_k}(\mathsf p_{k}(\operatorname{Walk}_{T_t,d})) = 0$.}
    By the inductive hypothesis and Lemma~\ref{lem:eWalk}, for $i < k-1$, the polynomial $\mathsf c_i(\operatorname{Walk}_{T_t,d})\mathsf p_{k-i}(\operatorname{Walk}_{T_t,d})$ has degree at most $k-1$.
    By Lemma~\ref{lem:eWalk}, the polynomial $\mathsf c_{k-1}(\operatorname{Walk}_{T_t,d})\mathsf p_{1}(\operatorname{Walk}_{T_t,d})$ has degree $k$ with $\widehat{\mathsf c_k}(\mathsf c_{k-1}(\operatorname{Walk}_{T_t,d})\mathsf p_{1}(\operatorname{Walk}_{T_t,d})) = 1/(k-1)!$ and $\mathsf c_{k}(\operatorname{Walk}_{T_t,d})$ has degree $k$ with $\widehat{\mathsf c_k}(\mathsf c_{k}(\operatorname{Walk}_{T_t,d}) = 1/k!$.
    The claim now follows from \eqref{eqn:new2}.
     
     Define the falling factorial function $f_k(x) := \prod_{l=0}^{k-1}(x-l)$.
     Note that, for each $i \in \{1,\dots,k-1\}$,
     \begin{equation}
     \label{eqn:ef}
         \mathsf c_i(f_k) = \mathsf c_i(f_{k-1}) - (k-1)\mathsf c_{i-1}(f_{k-1}).
     \end{equation}

    \paragraph{Claim $\widehat{\mathsf c_{k-1}}(\mathsf p_{k}(\operatorname{Walk}_{T_t,d})) = 0$.}
    By the inductive hypothesis and Lemma~\ref{lem:eWalk}, for $i < k-2$, the polynomial $\mathsf c_i(\operatorname{Walk}_{T_t,d})\mathsf p_{k-i}(\operatorname{Walk}_{T_t,d})$ has degree at most $k-2$.
    Since $\mathsf p_{2}(\operatorname{Walk}_{T_t,d})) = t+2$, by Lemma~\ref{lem:eWalk}, we have $\widehat{\mathsf c_{k-1}}(\mathsf c_{k-2}(\operatorname{Walk}_{T_t,d})\mathsf p_{2}(\operatorname{Walk}_{T_t,d})) = 1/(k-2)!$.
    Since $\mathsf p_{1}(\operatorname{Walk}_{T_t,d})) = -t-2$, by Lemma~\ref{lem:eWalk}, we have
    \[
    \widehat{\mathsf c_{k-1}}(\mathsf c_{k-1}(\operatorname{Walk}_{T_t,d})\mathsf p_{1}(\operatorname{Walk}_{T_t,d})) = -\mathsf c_1(f_{k-1})/(k-1)!-2/(k-1)!-2/(k-2)!.
    \]
    Again, by Lemma~\ref{lem:eWalk}, we have $\widehat{\mathsf c_{k-1}}(\mathsf c_{k}(\operatorname{Walk}_{T_t,d})) = \mathsf c_1(f_k)/k!+2/(k-1)!$.
    The claim now follows from \eqref{eqn:new2} and \eqref{eqn:ef}.

    It remains to prove our final claim: 
    \[
    \widehat{\mathsf c_{k-2}}(\mathsf p_{k}(\operatorname{Walk}_{T_t,d})) = \begin{cases}
        0 & \text{ if $k$ is even;} \\
        \frac{-2k}{(k-2)!} & \text{ if $k$ is odd.}
    \end{cases}
    \]
    By the inductive hypothesis and Lemma~\ref{lem:eWalk}, for $i < k-3$ such that $k-i$ is even, it follows that the polynomial $\mathsf c_i(\operatorname{Walk}_{T_t,d})\mathsf p_{k-i}(\operatorname{Walk}_{T_t,d})$ has degree at most $k-3$.
    Similarly, for $i < k-3$ such that $k-i$ is odd, we have 
    \[
    \widehat{\mathsf c_{k-2}}(\mathsf c_{i}(\operatorname{Walk}_{T_t,d})\mathsf p_{k-i}(\operatorname{Walk}_{T_t,d})) = \frac{-2(k-i)}{(k-i-2)!i!}.
    \]
    Next, we find that
    \[
    \sum_{\substack{1 \leqslant i \leqslant k-4 \\ k-i \text{ odd }}}\frac{2(k-i)}{(k-i-2)!i!}  = \frac{2^{k-3}}{(k-3)!}+\frac{2^{k-1}}{(k-2)!}-\frac{6}{(k-3)!}- \begin{cases}
        0, & \text{ if $k$ is even;} \\
        \frac{2k}{(k-2)!}, & \text{ if $k$ is odd,} \\
    \end{cases}
    \]
    which follows from a straightforward induction, using the observation that
    \begin{align*}
       \sum_{\substack{1 \leqslant i \leqslant k-4 \\ k-i \text{ odd }}}\frac{2(k-i)}{(k-i-2)!i!}  &= \frac{2}{(k-3)!} \sum_{\substack{1 \leqslant i \leqslant k-4 \\ k-i \text{ odd }}}\binom{k-3}{i}+ \frac{4}{(k-2)!} \sum_{\substack{1 \leqslant i \leqslant k-4 \\ k-i \text{ odd }}}\binom{k-2}{i}.
    \end{align*}
    Furthermore,
    \begin{align*}
        \widehat{\mathsf c_{k-2}}(\mathsf c_{k-3}(\operatorname{Walk}_{T_t,d})\mathsf p_{3}(\operatorname{Walk}_{T_t,d})) &= \frac{-7}{(k-3)!}; \\
        \widehat{\mathsf c_{k-2}}(\mathsf c_{k-2}(\operatorname{Walk}_{T_t,d})\mathsf p_{2}(\operatorname{Walk}_{T_t,d})) &= \frac{\mathsf c_1(f_{k-2})+2}{(k-2)!}+\frac{2}{(k-3)!}; \\
        \widehat{\mathsf c_{k-2}}(\mathsf c_{k-1}(\operatorname{Walk}_{T_t,d})\mathsf p_{1}(\operatorname{Walk}_{T_t,d})) &= -\frac{\mathsf c_2(f_{k-1})+2\mathsf c_1(f_{k-1})}{(k-1)!}-\frac{4+2\mathsf c_1(f_{k-2})}{(k-2)!}-\frac{2^{k-3}+1}{(k-3)!}; \\
        \widehat{\mathsf c_{k-2}}(\mathsf c_{k}(\operatorname{Walk}_{T_t,d}) &= \frac{\mathsf c_2(f_{k})}{k!}+\frac{2\mathsf c_1(f_{k-1})}{(k-1)!}+\frac{2^{k-2}+1}{(k-2)!}.
    \end{align*}
    Using \eqref{eqn:ef}, we have
    \[
    \widehat{\mathsf c_{k-2}} \left ( k\mathsf c_{k}(\operatorname{Walk}_{T_t,d}) + \sum_{i=1}^3\mathsf c_{k-i}(\operatorname{Walk}_{T_t,d})\mathsf p_{i}(\operatorname{Walk}_{T_t,d}) \right ) = \frac{2^{k-3}}{(k-3)!}+\frac{2^{k-1}}{(k-2)!}-\frac{6}{(k-3)!}.
    \]
    The claim then follows from \eqref{eqn:new2}.
    \end{proof}

\subsection{Construction of lift tournaments}

Now we are ready to provide constructions of lift tournaments of types I and II.

Given a polynomial $p(x) \in \mathbb Z[x]$, denote by $\nu_{\min}(p)$ the minimum of the $2$-valuations of its coefficients, i.e., 
\[
\nu_{\min}(p) := \min \{ \nu_2(\mathsf c_i(p)) \; : \; i \in \{0,\dots, \deg p\}\}.
\]

The proof of the next result is routine and left to the reader.

\begin{proposition}
\label{pro:polySum}
Let $f, m \in \mathbb N$ with $e \geqslant 2$ and $(a_0,\dots,a_{e-2}) \in \mathbb N^{e-1}$ such that, for each $t \in \{0,\dots,e-2\}$,
\[
a_0\cdot0^t+a_1\cdot1^t+\cdots+a_{e-2}\cdot (e-2)^t \equiv c_t \mod{2^m}
\]
for some $c_t \in \mathbb Z$.
Suppose $p(x) \in \mathbb Q[x]$ with degree at most $e-2$ and $\nu_{\min}(p) \geqslant -m$.
Then
\[
a_0\cdot p(0)+a_1\cdot p(1)+\cdots+a_{e-2}\cdot p(e-2)\equiv \mathsf c_0(p)\cdot c_{0}+\mathsf c_{1}(p)\cdot c_{1}+\dots+\mathsf c_{e-2}(p)\cdot c_{e-2} \mod{2^{m+\nu_{\min}(p)}}.
\]
\end{proposition}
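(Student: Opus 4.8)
The plan is to expand $p$ in the monomial basis, apply the hypothesis on the weighted power sums one coefficient at a time, and then control the resulting error by a single $2$-adic valuation estimate; the role of the condition $\nu_{\min}(p)\geqslant -m$ is precisely to make that estimate go through.

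In more detail, write $p(x)=\sum_{t=0}^{e-2}q_t x^t$, where $q_t$ is the coefficient of $x^t$ in $p(x)$ (padding with zeros if $\deg p<e-2$, which does not change $\nu_{\min}(p)$ since $\nu_2(0)=\infty$); note that $\{q_0,\dots,q_{e-2}\}$ is just the set of coefficients $\mathsf c_0(p),\dots,\mathsf c_{e-2}(p)$ and that $\nu_2(q_t)\geqslant \nu_{\min}(p)$ for every $t$ by definition of $\nu_{\min}$. Because $\nu_{\min}(p)\geqslant -m$, the exponent $m+\nu_{\min}(p)$ is a nonnegative integer, so the asserted congruence makes sense (interpreted, where the quantities involved are not integers, via $2$-adic valuations of rationals as in Section~\ref{sec:pelim}). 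By hypothesis, for each $t\in\{0,\dots,e-2\}$ there is an integer $\delta_t$ with $\sum_{i=0}^{e-2}a_i i^t=c_t+2^m\delta_t$, using the convention $0^0=1$. Substituting and using linearity,
\[
\sum_{i=0}^{e-2}a_i\,p(i)=\sum_{t=0}^{e-2}q_t\sum_{i=0}^{e-2}a_i i^t=\sum_{t=0}^{e-2}q_t c_t+2^m\sum_{t=0}^{e-2}q_t\delta_t .
\]
For the error term, for each $t$ we have $\nu_2\!\left(2^m q_t\delta_t\right)=m+\nu_2(q_t)+\nu_2(\delta_t)\geqslant m+\nu_{\min}(p)$ since $\delta_t\in\mathbb Z$, so $2^m\sum_t q_t\delta_t$ has $2$-adic valuation at least $m+\nu_{\min}(p)$. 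Hence $\sum_i a_i p(i)\equiv\sum_t q_t c_t\mod{2^{m+\nu_{\min}(p)}}$, which, after matching the indexing convention for the coefficients $\mathsf c_t(p)$, is the stated right-hand side.

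There is no real obstacle here: the proof is pure linearity plus one valuation inequality. The only point needing attention is that $p$ may have non-integral (rational) coefficients, so one has to track negative $2$-adic valuations carefully; the crux is simply that multiplying a coefficient of valuation at least $-m$ by $2^m$ yields an integer, which is exactly why the hypothesis $\nu_{\min}(p)\geqslant -m$ is imposed and why every error contribution $2^m q_t\delta_t$ lies in $2^{\,m+\nu_{\min}(p)}\mathbb Z$.
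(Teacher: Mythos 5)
Your proof is correct, and since the paper explicitly leaves this proposition to the reader as routine, your argument (expand $p$ in the monomial basis, substitute $\sum_i a_i i^t = c_t + 2^m\delta_t$ termwise, and bound the error $2^m\sum_t q_t\delta_t$ via $\nu_2(q_t)\geqslant \nu_{\min}(p)\geqslant -m$, phrased throughout in terms of $2$-adic valuations of rationals) is exactly the intended one. Your parenthetical about matching the indexing convention is also warranted: with the paper's convention that $\mathsf c_t(p)$ is the coefficient of $x^{\deg p - t}$, the displayed right-hand side should strictly pair $\mathsf c_t(p)$ with $c_{e-2-t}$, and the only microscopic imprecision in your write-up is the final claim that each error term lies in $2^{m+\nu_{\min}(p)}\mathbb Z$ (it need only have $2$-adic valuation at least $m+\nu_{\min}(p)$, which is what your main estimate actually establishes and is all that the congruence requires).
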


Lemma~\ref{lem:pWalk} and Corollary~\ref{cor:pChar} allow us to establish the following existence result about tournaments whose characteristic polynomial and walk polynomial have certain properties.
These properties will be used to show the existence of lift tournaments of type I and type II.

\begin{proposition}
\label{pro:tournyP}
    Let $e \geqslant 4$ be an integer.
    Then there exists $(a_0,\dots,a_{e-2}) \in \mathbb N^{e-1}$ such that, for each $k \in \{1,\dots,e\}$, the tournament $\Gamma = \bigoplus_{t=0}^{e-2} a_t T_t$ satisfies
    \begin{align*}
        \mathsf p_k(\operatorname{Walk}_{\Gamma,e}) &\equiv \begin{cases}
            0, & \text{ $k \in \{1,\dots,e-1\}$} \\
            0, & \text{ $k = e$ and $e$ is even} \\
            -2e, & \text{ $k = e$ and $e$ is odd}
        \end{cases} \mod{2^{e+1+\nu_2((e+2)!)}}; \\
        \mathsf p_k(\operatorname{Char}_{A(\Gamma)}) &\equiv \begin{cases}
            0, & \text{ $k \in \{1,\dots,e-1\}$} \\
            e, & \text{ $k = e$}
        \end{cases} \mod{2^{e+1+\nu_2((e+2)!)}}.
    \end{align*}
\end{proposition}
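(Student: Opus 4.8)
The plan is to reduce everything to a Vandermonde-type linear system over $\mathbb Z/2^M\mathbb Z$, where $M = e+1+\nu_2((e+2)!)$, and then solve it. By Corollary~\ref{cor:powerSumSum} (applied iteratively to the join $\Gamma = \bigoplus_{t=0}^{e-2} a_t T_t$) together with \eqref{eqn:powerProdSum}, the power sums are additive over joins: $\mathsf p_k(\operatorname{Walk}_{\Gamma,e}) = \sum_{t=0}^{e-2} a_t\,\mathsf p_k(\operatorname{Walk}_{T_t,e})$ and $\mathsf p_k(\operatorname{Char}_{A(\Gamma)}) = \sum_{t=0}^{e-2} a_t\,\mathsf p_k(\operatorname{Char}_{A(T_t)})$. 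By Lemma~\ref{lem:pWalk} and Corollary~\ref{cor:pChar}, for each relevant $k$ the quantity $\mathsf p_k(\operatorname{Walk}_{T_t,e})$ (resp.\ $\mathsf p_k(\operatorname{Char}_{A(T_t)})$) is a fixed polynomial in $t$ of degree at most $e-2$ (indeed of degree at most $k-2$), with controlled $2$-adic denominators. So the two displayed systems of congruences, one for each $k \in \{1,\dots,e\}$, become a finite list of linear conditions on the vector $(a_0,\dots,a_{e-2})$, each condition being of the form $\sum_t a_t\, q(t) \equiv (\text{target}) \bmod 2^M$ for a rational polynomial $q$ of degree $\le e-2$.

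The key device for decoupling these is Proposition~\ref{pro:polySum}: if one first arranges the ``moment conditions'' $\sum_{t=0}^{e-2} a_t\, t^j \equiv c_j \bmod 2^{m}$ for $j = 0,\dots,e-2$ and a suitably large $m$, then for \emph{any} polynomial $q$ of degree $\le e-2$ with $\nu_{\min}(q) \ge -m$ one automatically gets $\sum_t a_t\, q(t) \equiv \sum_j \mathsf c_j(q)\, c_j \bmod 2^{m+\nu_{\min}(q)}$. Thus I would choose $m$ large enough (e.g.\ $m = e+1+\nu_2((e+2)!) + \max_{t,k}(-\nu_{\min}(\mathsf p_k(\operatorname{Walk}_{T_t,e})),\, -\nu_{\min}(\mathsf p_k(\operatorname{Char}_{A(T_t)})))$, which is finite and bounded in terms of $e$ by the degree and denominator bounds in Lemma~\ref{lem:pWalk} and Corollary~\ref{cor:pChar}), and reduce the whole problem to: \emph{find nonnegative integers $a_0,\dots,a_{e-2}$ with prescribed power-moments $c_0,\dots,c_{e-2}$ modulo $2^m$.} This is a Vandermonde system: the matrix $(t^j)_{0 \le t,j \le e-2}$ has determinant $\prod_{0 \le s < t \le e-2}(t-s)$, whose $2$-adic valuation is exactly $\nu_2\!\bigl(\prod_{t=1}^{e-2} t!\bigr)$, so it is invertible modulo $2^{m'}$ for $m' $ large, and hence for a given target residue vector there is an integer solution modulo $2^{m + \nu_2(\prod t!)}$; inflating $m$ accordingly from the start and adding large multiples of $2^{(\cdot)}$ to each $a_t$ makes them nonnegative without disturbing any congruence.

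Concretely, the steps in order: (1) Fix $M = e+1+\nu_2((e+2)!)$ and, using Lemma~\ref{lem:pWalk} and Corollary~\ref{cor:pChar}, write down the finitely many polynomials $q^{\mathrm{W}}_k(t) := \mathsf p_k(\operatorname{Walk}_{T_t,e})$ and $q^{\mathrm{C}}_k(t) := \mathsf p_k(\operatorname{Char}_{A(T_t)})$ for $k = 1,\dots,e$, recording $\deg_t q \le e-2$ and the lower bound on $\nu_{\min}$. (2) Translate the target congruences into requirements $\sum_j \mathsf c_j(q^{\mathrm{W}}_k) c_j \equiv (\text{RHS}_k) \bmod 2^{M}$ and similarly for $q^{\mathrm C}_k$; note the right-hand sides are $0$ except for $k = e$, where they are $0$ or $-2e$ (walk case) and $e$ (char case) according to parity. (3) Observe this is an overdetermined but \emph{consistent} linear system in the unknowns $c_0,\dots,c_{e-2}$ — here one must check the RHS vector lies in the column space; the decisive input is that $\mathsf c^{(t)}_0(\mathsf p_k(\operatorname{Char}_{A(T_t)})) = k/(k-2)!$ and $\mathsf c^{(t)}_0(\mathsf p_k(\operatorname{Walk}_{T_t,e})) = -2k/(k-2)!$ (the top-coefficient formulas), so the $k=e$ equations are ``aligned'' and are really the same equation up to the factor $-2$, which is why the prescribed values $e$ and $-2e$ are exactly what is forced. (4) Solve for $(c_0,\dots,c_{e-2})$ modulo $2^{M}$ (choosing $m \ge M + (\text{valuation slack})$ throughout so Proposition~\ref{pro:polySum} applies cleanly). (5) Invert the Vandermonde matrix modulo the appropriate power of $2$ to recover integers $a_0,\dots,a_{e-2}$ realising those moments, then add a large common multiple of a suitable $2$-power to each $a_t$ to ensure $a_t \in \mathbb N$. (6) Assemble $\Gamma = \bigoplus_{t=0}^{e-2} a_t T_t$ and read off the two displayed congruences via additivity.

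The main obstacle I expect is bookkeeping the $2$-adic denominators so that Proposition~\ref{pro:polySum} really buys back enough precision: each $\mathsf p_k(\operatorname{Walk}_{T_t,e})$ has denominators dividing $(e-2)!$-type factorials, and after the change of basis to the moments $c_j$ and the Vandermonde inversion (another factor $\prod t!$) one accumulates several $\nu_2$-losses, so one has to verify that starting from modulus $2^m$ with $m$ chosen as above, the final output genuinely lands modulo $2^{e+1+\nu_2((e+2)!)}$ and not something weaker. The other delicate point — more conceptual than computational — is the consistency check in step (3): one needs the specific values $0$ and $-2e$ (resp.\ $e$) on the right to be precisely the ones dictated by the leading-coefficient identities of Lemma~\ref{lem:pWalk} and Corollary~\ref{cor:pChar}, since an arbitrary target would be unattainable with only $e-1$ degrees of freedom against $e$ equations for each of the two polynomial families.
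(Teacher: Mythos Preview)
Your overall framework---additivity of power sums over joins, reduction via Proposition~\ref{pro:polySum} to prescribing the power moments $c_j = \sum_t a_t\, t^j$, and reading off the targets from the leading-coefficient formulas in Lemma~\ref{lem:pWalk} and Corollary~\ref{cor:pChar}---matches the paper's approach. The gap is in step~(5).

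You claim that the Vandermonde matrix $(t^j)_{0 \le t,j \le e-2}$ ``is invertible modulo $2^{m'}$ for $m'$ large''. This is false: for $e \ge 4$ its determinant $\prod_{t=1}^{e-2} t!$ is even, so the matrix is \emph{never} invertible over $\mathbb Z/2^{m'}\mathbb Z$ for any $m' \ge 1$. Equivalently, the map $a \mapsto (\sum_t a_t t^j)_j$ is not surjective modulo $2$ (for $j \ge 1$ the row $(0^j,1^j,\dots,(e-2)^j)$ reduces to the fixed parity pattern $(0,1,0,1,\dots)$, so the mod-$2$ rank is at most $2$). Hence you cannot ``invert'' to realise an arbitrary moment vector, and padding $m$ does not help.

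The paper sidesteps this entirely: rather than solving for $(c_0,\dots,c_{e-2})$ and then inverting, it \emph{writes down} a specific achievable moment vector, namely $c_0 = \cdots = c_{e-3} = 0$ and $c_{e-2} = (e-2)!$, via the finite-difference identity
\[
\sum_{t=0}^{e-2} (-1)^{e-2-t}\binom{e-2}{t} t^{\,j} \;=\; \begin{cases} 0, & j \in \{0,\dots,e-3\},\\ (e-2)!, & j = e-2, \end{cases}
\]
so that $a_t \equiv (-1)^{e-2-t}\binom{e-2}{t} \pmod{2^m}$ (shifted into $\mathbb N$) does the job directly. With this choice, your step~(3) collapses: since each $\mathsf p_k(\operatorname{Walk}_{T_t,e})$ and $\mathsf p_k(\operatorname{Char}_{A(T_t)})$ has $t$-degree at most $e-3$ for $k < e$, the vanishing of $c_0,\dots,c_{e-3}$ forces those sums to $0$; and for $k=e$ only the $t^{e-2}$-coefficient survives, so Proposition~\ref{pro:polySum} gives exactly $\frac{e}{(e-2)!}\cdot(e-2)! = e$ and $\frac{-2e}{(e-2)!}\cdot(e-2)! = -2e$ (or $0$ when $e$ is even, since then $\deg_t \mathsf p_e(\operatorname{Walk}) \le e-3$). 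So the ``consistency'' you were worried about is automatic once the right moment vector is chosen, and no inversion is needed.
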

\begin{proof}
        Suppose that $$
    m = e+1+\nu_2((e+2)!) - \min \left \{ \min \{\nu_{\min}(\mathsf p_k(\operatorname{Char}_{A(T_t)})),\nu_{\min}(\mathsf p_k(\operatorname{Walk}_{T_t,e}))\} \; : \; k \in \{0,\dots,e\}\right \}.
    $$
    It is straightforward to verify~\cite[Section 1.2.3, Exercise 40]{knuth1997art} that
    \[
    \sum_{i=0}^{e-2} (-1)^{e-2-i} \binom{e-2}{i}i^t = \begin{cases}
        0, & \text{ if $t \in \{0,\dots,e-3\}$}; \\
        (e-2)!, & \text{ if $t = e-2$.}
    \end{cases}
    \]
    Thus, there exists $(a_0,\dots,a_{e-2}) \in \mathbb N^{e-1}$ such that 
    \begin{equation}
        \label{eqn:sumCongr}
        a_0\cdot0^t+a_1\cdot1^t+\cdots+a_{e-2}\cdot (e-2)^t\equiv
            \begin{cases}
             0, & \text{ if $t \in \{0,\dots,e-3\}$}; \\
             (e-2)!, & \text{ if $t = e-2$}
            \end{cases} \mod{2^{m}}.
    \end{equation}
    Let $\Gamma=\bigoplus_{t=0}^{e-2} a_t T_t$.
    Then, by \eqref{eqn:powerProdSum} and  Corollary~\ref{cor:powerSumSum}, we have
    \[
    \mathsf p_k(\operatorname{Char}_{A(\Gamma)})=\sum_{t=0}^{e-2} a_t \mathsf p_k(\operatorname{Char}_{A(T_t)})
         \]
         and 
         \[
        \mathsf p_k(\operatorname{Walk}_{\Gamma,d})=\sum_{t=0}^{e-2} a_t \mathsf p_k(\operatorname{Walk}_{T_t,d}).
         \]
    For each $k \in \{0,1,\dots,e\}$, by Lemma~\ref{lem:pWalk} and Corollary~\ref{cor:pChar},  we have that both $\mathsf p_k(\operatorname{Walk}_{T_t,e})$ and $\mathsf p_k(\operatorname{Char}_{A(T_t)}) \in \mathbb Q[t]$, that is, we can express $\mathsf p_k(\operatorname{Walk}_{T_t,e})$ and $\mathsf p_k(\operatorname{Char}_{A(T_t)})$ as rational polynomials with variable $t$, but note that their coefficients may not be integers.
    
    Furthermore, by Lemma~\ref{lem:pWalk}, for each $k \in \{0,1,\dots,e\}$, we have $\deg_t \mathsf p_k(\operatorname{Walk}_{T_t,e}) \leqslant e-2$.  
    If $k$ is even then $\deg_t \mathsf p_k(\operatorname{Walk}_{T_t,e}) \leqslant k-3$, otherwise $\deg_t \mathsf p_k(\operatorname{Walk}_{T_t,e}) = k-2$ with $\mathsf c_{0}^{(t)}(\mathsf p_k(\operatorname{Walk}_{T_t,e})) = -2k/(k-2)!$.
    Similarly, by Corollary~\ref{cor:pChar}, for each $k \in \{0,1,\dots,e\}$, we have $\deg_t \mathsf p_k(\operatorname{Char}_{A(T_t)}) \leqslant k-2$ and $\deg_t \mathsf p_k(\operatorname{Char}_{A(T_t)}) = k-2$ with $\mathsf c_{0}^{(t)} \left (\mathsf p_k(\operatorname{Char}_{A(T_t)})\right ) = k/(k-2)!$.
    The lemma then follows from the application of Proposition~\ref{pro:polySum}.
\end{proof}

Now we can give our first construction of lift tournaments.

\begin{lemma}
\label{lem:Tt1ffp1}
    Let $e \geqslant 4$ be an even integer.
    Then there exists $(a_0,\dots,a_{e-2}) \in \mathbb N^{e-1}$ such that $\bigoplus_{t=0}^{e-2} a_t T_t$ is an $(e+1,e)$-lift tournament of type I.
\end{lemma}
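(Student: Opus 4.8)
The plan is to take the tuple $(a_0,\dots,a_{e-2}) \in \mathbb N^{e-1}$ supplied by Proposition~\ref{pro:tournyP} (with $e$ even, so that $\mathsf p_e(\operatorname{Walk}_{\Gamma,e}) \equiv 0$ and $\mathsf p_e(\operatorname{Char}_{A(\Gamma)}) \equiv e \mod 2^{e+1+\nu_2((e+2)!)}$) and verify that $\Gamma = \bigoplus_{t=0}^{e-2} a_t T_t$ satisfies (LT1a) and (LT1b) with parameters $(e+1,e)$. First I would record that the characteristic polynomial of a join $\Gamma_1 \oplus \Gamma_2$ of tournaments satisfies $\operatorname{Char}_{A(\Gamma_1 \oplus \Gamma_2)}(x) = \operatorname{Char}_{A(\Gamma_1)}(x)\operatorname{Char}_{A(\Gamma_2)}(x)$, since the adjacency matrix is block triangular; hence $\mathsf p_k(\operatorname{Char}_{A(\Gamma)}) = \sum_t a_t\,\mathsf p_k(\operatorname{Char}_{A(T_t)})$ as in the proof of Proposition~\ref{pro:tournyP}, and similarly $\mathbf 1^\transpose A(\Gamma)^{k-1}\mathbf 1 = \mathsf c_k(\operatorname{Walk}_{\Gamma,e})$ by the definition of the walk polynomial together with Corollary~\ref{cor:powerSumSum}.

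For (LT1a), I need $2^{e+1-k}$ to divide $\mathbf 1^\transpose A(\Gamma)^k\mathbf 1$ for $k \in \{0,\dots,e\}$, equivalently $2^{e+2-k}$ divides $\mathsf c_k(\operatorname{Walk}_{\Gamma,e})$ for $k \in \{1,\dots,e+1\}$ (and the order of $\Gamma$ is divisible by $2^{e+1}$ for $k=0$). The input from Proposition~\ref{pro:tournyP} gives me $\mathsf p_k(\operatorname{Walk}_{\Gamma,e}) \equiv 0 \mod 2^{e+1+\nu_2((e+2)!)}$ for $k \in \{1,\dots,e\}$ (using $e$ even), and then I would apply Lemma~\ref{lem:ptoe} with $p = \operatorname{Walk}_{\Gamma,e}$, $n = e$, $m = e+1+\nu_2((e+2)!)$ (I must check $m > n$, which holds), noting $\mathsf c_0(\operatorname{Walk}_{\Gamma,e}) = 1$. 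This yields $\mathsf c_k(\operatorname{Walk}_{\Gamma,e}) \equiv 0 \mod 2^{m - \nu_2(k!)}$ for $k \in \{1,\dots,e-1\}$, and $\mathsf c_0\mathsf p_e \equiv -e\,\mathsf c_e \mod 2^{m-\nu_2((e-1)!)}$. Since $m - \nu_2(k!) \geqslant e+2-k$ for all $k \leqslant e+1$ (because $\nu_2(k!) \leqslant \nu_2((e+2)!) - (e+2-k)$ roughly — this is the Legendre-formula bookkeeping I'd have to do carefully), I get the divisibilities I want for $k \leqslant e-1$; for $k = e$ I use $\mathsf p_e \equiv 0$ so $\mathsf c_e \equiv 0$ up to the factor $e = 2\cdot(\text{odd}\cdot 2^{\nu_2(e)-1})$... here I must be attentive, since $e$ is even this kills one power of $2$, but $m$ was chosen with enough slack; and for $k = e+1$ I'd run the last displayed computation in Lemma~\ref{lem:ptoe}. (Alternatively, I can invoke Lemma~\ref{lem:ptoe} directly with $n = e+1$ if the hypothesis on $\mathsf p_k$ holds for $k \in \{1,\dots,e\}\cup\{e+2\}$ — but I only control up to $\mathsf p_e$, so the two-stage version is safer.)

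For (LT1b), I need $\mathsf c_k(\operatorname{Char}_{A(\Gamma)}) \equiv 0 \mod 2^{e+1-k}$ for $k \ne e$, $k \in \{1,\dots,e\}$, and $\mathsf c_e(\operatorname{Char}_{A(\Gamma)}) \equiv 2^{(e+1)-e-1} = 2^0 = 1 \mod 2^{0}$ — wait, $2^{e-f-1}$ with $f = e$ is $2^{-1}$, which is vacuous, and the modulus $2^{e-k} = 2^{e-e} = 2^0 = 1$ is also vacuous, so at $k = e$ there is \emph{nothing} to check; the content of (LT1b) is purely $\mathsf c_k(\operatorname{Char}_{A(\Gamma)}) \equiv 0 \mod 2^{e+1-k}$ for $k \in \{1,\dots,e-1\}$. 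This follows by applying Lemma~\ref{lem:ptoe} to $p = \operatorname{Char}_{A(\Gamma)}$ exactly as above, using $\mathsf p_k(\operatorname{Char}_{A(\Gamma)}) \equiv 0 \mod 2^{e+1+\nu_2((e+2)!)}$ for $k \in \{1,\dots,e-1\}$ and $\mathsf p_e \equiv e \not\equiv 0$; here $\mathsf c_0(\operatorname{Char}_{A(\Gamma)}) = 1$, so Lemma~\ref{lem:ptoe} with $n = e-1$... but the hypothesis needs $\mathsf p_k \equiv 0$ for $k \in \{1,\dots,n-1\}\cup\{n+1\} = \{1,\dots,e-2\}\cup\{e\}$, and $\mathsf p_e \equiv e$ is \emph{not} $0$, so I should instead use $n = e-1$ only with the weaker conclusion, or better: apply the induction inside Lemma~\ref{lem:ptoe}'s proof directly, which only uses $\mathsf p_1,\dots,\mathsf p_{e-1} \equiv 0$ to get $\mathsf c_1,\dots,\mathsf c_{e-1} \equiv 0 \mod 2^{m-\nu_2(k!)}$, and $m - \nu_2(k!) \geqslant e+1-k$ for $k \leqslant e-1$. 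That's clean.

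\textbf{Main obstacle.} The delicate part is the $2$-adic bookkeeping: checking that the single global slack $\nu_2((e+2)!)$ built into the modulus in Proposition~\ref{pro:tournyP} really does dominate $\nu_2(k!)$ in every invocation of Lemma~\ref{lem:ptoe} simultaneously for $k$ up to $e+1$, including the extra loss of a factor in the $k=e$ and $k=e+1$ cases coming from the $-e\,\mathsf c_e$ and the $\nu_2(n+1)$ terms in Lemma~\ref{lem:ptoe}, and the fact that $e$ itself is even. I expect this amounts to the inequality $e+1+\nu_2((e+2)!) - \nu_2(k!) \geqslant e+2-k$ for $1 \leqslant k \leqslant e+1$, i.e. $\nu_2((e+2)!) - \nu_2(k!) \geqslant 1-k$, which is immediate since the left side is nonnegative and grows, plus a slightly more careful check at the two top indices; none of it is deep, but it needs to be written out.
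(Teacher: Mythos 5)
Your overall strategy is the paper's: feed the congruences supplied by Proposition~\ref{pro:tournyP} into Lemma~\ref{lem:ptoe}, once for $\operatorname{Walk}_{\Gamma,e}$ to obtain (LT1a) and once for $\operatorname{Char}_{A(\Gamma)}$ to obtain (LT1b). But there is a genuine error, and it sits exactly on the only non-trivial part of the statement. The tournament being built is an $(e+1,e)$-lift tournament, so the first parameter in the definition of (LT1b) is $e+1$, not $e$; at $k=f=e$ the condition reads
\[
\mathsf c_e(\operatorname{Char}_{A(\Gamma)}) \equiv 2^{(e+1)-e-1}=1 \pmod{2^{(e+1)-e}},
\]
that is, $\mathsf c_e(\operatorname{Char}_{A(\Gamma)})$ must be \emph{odd}. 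You compute the residue ($=1$) correctly, but then substitute $f=e$ into $2^{e-f-1}$ and $2^{e-k}$ using the lemma's $e$ where the definition's parameter (here $e+1$) belongs, obtain $2^{-1}$ and $2^{0}$, and conclude that at $k=e$ there is nothing to check. That conclusion is false, and it throws away the payload of the construction: the oddness of $\mathsf c_e(\operatorname{Char}_{A(\Gamma)})$ is precisely what makes this tournament shift a coefficient in Lemma~\ref{lem:tournament_lift_graph_effect} (and, after doubling via Lemma~\ref{lem:inductiveTLiftConst}, what produces the $2^{e-f-1}$ term for general $(e,f)$). It is also the reason Proposition~\ref{pro:tournyP} arranges $\mathsf p_e(\operatorname{Char}_{A(\Gamma)}) \equiv (-1)^{e+1}e \not\equiv 0$: the paper combines this with the congruence $\mathsf c_0(p)\mathsf p_e(p) \equiv -e\,\mathsf c_e(p) \pmod{2^{m-\nu_2((e-1)!)}}$ from Lemma~\ref{lem:ptoe} to get $e\bigl(\mathsf c_e(\operatorname{Char}_{A(\Gamma)})-1\bigr)\equiv 0$ modulo a power of $2$ exceeding $\nu_2(e)$, whence $\mathsf c_e(\operatorname{Char}_{A(\Gamma)})\equiv 1 \pmod 2$. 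Your write-up never performs this step, so as written it only verifies that $\Gamma$ does nothing, not that it lifts.

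Apart from this, the proposal follows the paper. The $2$-adic bookkeeping you identify as the main obstacle is real but is exactly what the slack $\nu_2((e+2)!)$ in the modulus of Proposition~\ref{pro:tournyP} is designed to absorb; the paper simply cites Lemma~\ref{lem:ptoe} and records the uniform conclusions $\mathbf 1^\transpose A(\Gamma)^{k-1}\mathbf 1 \equiv 0 \pmod{2^{e+1}}$ and $\mathsf c_k(\operatorname{Char}_{A(\Gamma)})\equiv 0 \pmod{2^{e+1}}$ for $k\leqslant e-1$, which are stronger than the graded divisibilities you aim for. (Your side remark about handling the top index of (LT1a) via $\mathsf c_{e+1}(\operatorname{Walk}_{\Gamma,e})$ cannot work as stated, since $\operatorname{Walk}_{\Gamma,e}$ has degree $e$; but the paper is equally terse at that index, so I would not count it against you.) Once the $k=e$ verification of (LT1b) is restored, your argument coincides with the paper's.
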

\begin{proof}
    By Proposition~\ref{pro:tournyP}, there exists $(a_0,\dots,a_{e-2}) \in \mathbb N^{e-1}$ such that the tournament $\Gamma = \bigoplus_{t=0}^{e-2} a_t T_t$ satisfies
    \begin{align*}
        \mathsf p_k(\operatorname{Walk}_{\Gamma,e}) &\equiv 0 \mod{2^{e+1+\nu_2((e+2)!)}} \text{ for each } k \in \{1,\dots,e\}; \\
        \mathsf p_k(\operatorname{Char}_{A(\Gamma)}) &\equiv \begin{cases}
            0, & \text{ $k \in \{1,\dots,e-1\}$} \\
            (-1)^{e+1}e, & \text{ $k = e$}
        \end{cases} \mod{2^{e+1+\nu_2((e+2)!)}}.
    \end{align*}
    Now apply Lemma~\ref{lem:ptoe} to deduce $\mathsf c_k(\operatorname{Walk}_{\Gamma,e}) = \mathbf 1^\transpose A^{k-1} \mathbf 1 \equiv 0 \mod{2^{e+1}}$ for each $k \in \{1,\dots,e\}$.
    Whence, we have established (LT1a).
    
    Similarly, apply Lemma~\ref{lem:ptoe} to deduce that 
    $\mathsf c_k(\operatorname{Char}_{A(\Gamma)}) \equiv 0 \mod{2^{e+1}}$ for each $k \in \{1,\dots,e-1\}$ and $\mathsf c_e(\operatorname{Char}_{A(\Gamma)}) \equiv 1 \mod{2}$.
    Thus, $\Gamma$ satisfies (LT1b).
\end{proof}

Together with Lemma~\ref{lem:Tt1ffp1}, the following lemma allows us to construct $(e,f)$-lift tournaments of type I for all integers $e$ and $f$ satisfying $e \geqslant f+1 \geqslant 5$ with $f$ even. 

\begin{lemma}\label{lem:inductiveTLiftConst}
    If $\Gamma$ is an $(e,f)$-lift tournament of type I, then $2\Gamma$ is an $(e+1,f)$-lift tournament of type I.
\end{lemma}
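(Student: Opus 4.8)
The plan is to mirror the proof of Lemma~\ref{lem:inductiveLiftConst} (the lift-graph analogue), replacing disjoint union with the tournament self-join $d\Gamma = \Gamma \oplus \Gamma$ and using the multiplicativity of the walk polynomial from Corollary~\ref{cor:powerSumSum}. I must verify the three defining properties (LT1a), (LT1b) for the parameters $(e+1,f)$ applied to $2\Gamma = \Gamma \oplus \Gamma$, given that they hold for $(e,f)$ applied to $\Gamma$. The key observation is that for a two-fold join, both $\operatorname{Char}_{A(2\Gamma)}$ and $\operatorname{Walk}_{2\Gamma,d}$ factor as the square of the corresponding polynomial for $\Gamma$: the characteristic polynomial factors because $A(2\Gamma)$ is block lower-triangular, so $\operatorname{Char}_{A(2\Gamma)}(x) = \operatorname{Char}_{A(\Gamma)}(x)^2$, and the walk polynomial factors by Corollary~\ref{cor:powerSumSum}, giving $\operatorname{Walk}_{2\Gamma,d}(x) = \operatorname{Walk}_{\Gamma,d}(x)^2$.

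First I would handle (LT1a). Since $\operatorname{Walk}_{2\Gamma,d}(x) = \operatorname{Walk}_{\Gamma,d}(x)^2$, comparing coefficients of $x^{d-k}$ gives, for each $k \in \{1,\dots,d\}$, that $\mathsf c_k(\operatorname{Walk}_{2\Gamma,d}) = \sum_{j=0}^{k}\mathsf c_j(\operatorname{Walk}_{\Gamma,d})\mathsf c_{k-j}(\operatorname{Walk}_{\Gamma,d})$; recalling that $\mathsf c_k(\operatorname{Walk}_{\Gamma,d}) = \mathbf 1^\transpose A(\Gamma)^{k-1}\mathbf 1$ and $\mathsf c_0 = 1$, this reads
\[
\mathbf 1^\transpose A(2\Gamma)^{k-1}\mathbf 1 = 2\,\mathbf 1^\transpose A(\Gamma)^{k-1}\mathbf 1 + \sum_{j=1}^{k-1}\mathbf 1^\transpose A(\Gamma)^{j-1}\mathbf 1\cdot \mathbf 1^\transpose A(\Gamma)^{k-1-j}\mathbf 1,
\]
which of course also follows directly from Lemma~\ref{lem:sumProdAdjTourn}. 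By (LT1a) for $\Gamma$, each factor $\mathbf 1^\transpose A(\Gamma)^{j}\mathbf 1$ with $j \in \{0,\dots,e-1\}$ is divisible by $2^{e-j}$, so each product term $\mathbf 1^\transpose A(\Gamma)^{j-1}\mathbf 1\cdot \mathbf 1^\transpose A(\Gamma)^{k-1-j}\mathbf 1$ is divisible by $2^{(e-j+1)+(e-k+j)} = 2^{2e-k+1}$, and the leading term $2\,\mathbf 1^\transpose A(\Gamma)^{k-1}\mathbf 1$ is divisible by $2^{e-k+2}$; since $k \leqslant e$ one checks $2e-k+1 \geqslant e-k+2$, so $\mathbf 1^\transpose A(2\Gamma)^{k-1}\mathbf 1 \equiv 0 \bmod{2^{(e+1)-(k-1)}}$, which is exactly (LT1a) for $(e+1,f)$.

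Next (LT1b). From $\operatorname{Char}_{A(2\Gamma)}(x) = \operatorname{Char}_{A(\Gamma)}(x)^2$ we get $\mathsf c_k(\operatorname{Char}_{A(2\Gamma)}) = \sum_{j=0}^{k}\mathsf c_j(\operatorname{Char}_{A(\Gamma)})\mathsf c_{k-j}(\operatorname{Char}_{A(\Gamma)}) = 2\,\mathsf c_k(\operatorname{Char}_{A(\Gamma)}) + \sum_{j=1}^{k-1}\mathsf c_j(\operatorname{Char}_{A(\Gamma)})\mathsf c_{k-j}(\operatorname{Char}_{A(\Gamma)})$ for each $k \in \{1,\dots,e-1\}$. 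By (LT1b) for $\Gamma$, $2^{e-j}$ divides $\mathsf c_j(\operatorname{Char}_{A(\Gamma)})$ for all $j \in \{1,\dots,e-1\}$ (with the value $2^{e-f-1}$ at $j=f$), so each cross term is divisible by $2^{(e-j)+(e-k+j)} = 2^{2e-k} $, which is $\geqslant 2^{e-k}$, hence $\mathsf c_k(\operatorname{Char}_{A(2\Gamma)}) \equiv 2\,\mathsf c_k(\operatorname{Char}_{A(\Gamma)}) \bmod{2^{(e+1)-k}}$. When $k \ne f$ this is $0$, and when $k = f$ this is $2\cdot 2^{e-f-1} = 2^{(e+1)-f-1} \bmod{2^{(e+1)-f}}$, which is precisely (LT1b) for $(e+1,f)$. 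The routine part is just checking the valuation inequalities; the only thing requiring a moment's care is confirming that the self-join's characteristic polynomial really is the square (block triangularity of $A(\Gamma_1 \oplus \Gamma_2)$), but this is immediate from the displayed block form, so there is no genuine obstacle here—the lemma is a direct analogue of Lemma~\ref{lem:inductiveLiftConst}.
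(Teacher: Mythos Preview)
Your proof is correct and follows essentially the same approach as the paper: both use Lemma~\ref{lem:sumProdAdjTourn} (you equivalently via Corollary~\ref{cor:powerSumSum}) for (LT1a) and the block-triangular factorisation $\operatorname{Char}_{A(2\Gamma)} = \operatorname{Char}_{A(\Gamma)}^2$ for (LT1b). There are a couple of harmless arithmetic slips (the cross-term valuation in (LT1a) is actually $2e-k+2$, not $2e-k+1$; and for $j=f$ one only has $2^{e-f-1}\mid \mathsf c_f$, not $2^{e-f}$, so the worst cross-term valuation in (LT1b) is $2e-k-2$), but the required inequality $\geqslant e+1-k$ still holds in the relevant range, so the conclusion stands.
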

\begin{proof}
    By Lemma~\ref{lem:sumProdAdjTourn},
        \[
            \mathbf{1}^\transpose A(2\Gamma)^{i}\mathbf{1} = 2\mathbf{1}^\transpose A(\Gamma)^{i}\mathbf{1}+\sum_{j=1}^{i}\mathbf{1}^\transpose A(\Gamma)^{j-1}\mathbf{1}\cdot \mathbf{1}^\transpose A(\Gamma)^{i-j}\mathbf{1}. 
        \]
        Since $2^{e-i+1}$ divides $2\mathbf{1}^\transpose A(\Gamma)^{i}\mathbf{1}$ and $2^{e-i+1} = 2^{e-(j-1)}\cdot 2^{e-(i-j)}$ divides $\mathbf{1}^\transpose A(\Gamma)^{j-1}\mathbf{1}\cdot \mathbf{1}^\transpose A(\Gamma)^{i-j}\mathbf{1}$, we find that $2\Gamma$ satisfies (LT1a).
    Since $\operatorname{Char}_{A(2\Gamma)}(x) = \operatorname{Char}_{A(\Gamma)}^2(x)$, we can deduce that $\mathsf c_k(\operatorname{Char}_{A(2\Gamma)}) \equiv 2 \mathsf c_k(\operatorname{Char}_{A(\Gamma)}) \mod {2^{e-k}}$ for each $k \in \{1,\dots,e-1\}$, from which (LT1b) follows.
\end{proof}

The last lemma of this subsection provides a construction of $e$-lift tournaments for each even integer $e \geqslant 6$.

\begin{lemma}
\label{lem:Tt2}
    Let $e \geqslant 6$ be an even integer.
    Then there exists $(a_0,\dots,a_{e-2}) \in \mathbb N^{e-1}$ such that $\bigoplus_{t=0}^{e-2} a_t T_t$ is an $e$-lift tournament of type II.
\end{lemma}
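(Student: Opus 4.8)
The plan is to follow the blueprint of Lemma~\ref{lem:Tt1ffp1}: realise the required tournament as a join $\Gamma=\bigoplus_{t}a_tT_t$ of almost transitive tournaments and control the coefficients of $\operatorname{Char}_{A(\Gamma)}$ and of $\operatorname{Walk}_{\Gamma,e}$ through their power sums, which by~\eqref{eqn:powerProdSum} and Corollary~\ref{cor:powerSumSum} are the $a_t$-weighted sums of the power sums of the individual $T_t$. The genuinely new feature, compared with the type~I case, is the second part of (LT2b): the number $\mathbf 1^\transpose A(\Gamma)^{e-1}\mathbf 1$ of walks of length $e-1$ in $\Gamma$ must be odd. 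Since this quantity equals $\operatorname{Walk}_{\Gamma,e}(0)=\prod_{t}\bigl(\mathbf 1^\transpose A(T_t)^{e-1}\mathbf 1\bigr)^{a_t}$, it can be odd only if $\mathbf 1^\transpose A(T_t)^{e-1}\mathbf 1$ is odd for every $t$ occurring in the join; in particular $a_0=0$, since $\operatorname{Walk}_{T_0,e}(x)=x^{e-2}(x+1)^2$ vanishes at $0$.

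Accordingly, I would first isolate an index set $I$ of integers $t$ with $\mathbf 1^\transpose A(T_t)^{e-1}\mathbf 1$ odd and with $|I|$ at least $e-2$, so that enough free parameters remain; the parity of these walk counts is accessible from the explicit formula for $\operatorname{Char}_{A(T_t)}$ in Lemma~\ref{lem:characteristic_polynomial_of_Gt} and the linear recursion it imposes on $\mathbf 1^\transpose A(T_t)^k\mathbf 1$. I would then solve a Vandermonde-type congruence supported on $I$, exactly as in Proposition~\ref{pro:tournyP} but with the divided-difference identity adapted to $I$: choose $(a_t)_{t\in I}\in\mathbb N^{|I|}$ so that $\sum_{t\in I}a_t t^{\,j}$ is, modulo a large power $2^{m}$, a prescribed multiple of $\delta_{j,r}$ for $j\in\{0,\dots,e-3\}$, where $r$ is an appropriate top degree and $m$ is chosen large enough to absorb the finitely many negative $2$-valuations appearing among the coefficients of $\mathsf p_k(\operatorname{Walk}_{T_t,e})$ and $\mathsf p_k(\operatorname{Char}_{A(T_t)})$, viewed as polynomials in $t$, for $k\leqslant e$. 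By Corollary~\ref{cor:pChar} and Lemma~\ref{lem:pWalk}, for $k\in\{1,\dots,e-2\}$ these polynomials have degree strictly below $e-3$, so the weighted sums vanish modulo $2^{m}$; whereas for $k=e-1$ (odd) the walk power sum $\mathsf p_{e-1}(\operatorname{Walk}_{T_t,e})$ has degree exactly $e-3$ with leading coefficient $-2(e-1)/(e-3)!$, which will give $\nu_2\bigl(\mathsf p_{e-1}(\operatorname{Walk}_{\Gamma,e})\bigr)=1$.

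From these power-sum congruences I would pass to the coefficients as in the type~I proof: applying Lemma~\ref{lem:ptoe} (with $n=e-3$) to the integer polynomials $\operatorname{Char}_{A(\Gamma)}$ and $\operatorname{Walk}_{\Gamma,e}$ yields $2^{e}\mid\mathsf c_k(\operatorname{Char}_{A(\Gamma)})$ and $2^{e}\mid\mathsf c_k(\operatorname{Walk}_{\Gamma,e})=\mathbf 1^\transpose A(\Gamma)^{k-1}\mathbf 1$ for $k\in\{1,\dots,e-2\}$ --- these are (LT2c) and (LT2a). Then the Newton relation~\eqref{eqn:power_sum_elementary_symmetric_sum_relation} for $\mathsf c_{e-1}(\operatorname{Walk}_{\Gamma,e})$, together with $e-1$ being odd and all cross terms being highly $2$-divisible, gives $\mathbf 1^\transpose A(\Gamma)^{e-2}\mathbf 1=\mathsf c_{e-1}(\operatorname{Walk}_{\Gamma,e})\equiv 2\bmod 4$, the first part of (LT2b). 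Finally $\mathbf 1^\transpose A(\Gamma)^{e-1}\mathbf 1=\prod_{t\in I}\bigl(\mathbf 1^\transpose A(T_t)^{e-1}\mathbf 1\bigr)^{a_t}$ is a product of odd numbers, hence odd, which completes (LT2b).

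The main obstacle is the compatibility between the parity requirement and the Vandermonde step. On the one hand, one must verify that there are enough indices $t$ for which $\mathbf 1^\transpose A(T_t)^{e-1}\mathbf 1$ is odd (small $t$ such as $t=0$, and sometimes $t=2$, are ruled out). On the other hand --- since the top-degree coefficient of the relevant power-sum polynomials may vanish or may have the wrong $2$-valuation --- one must choose the degree $r$ in the divided-difference combination, and the set $I$ itself, so that the $2$-valuations produced by Proposition~\ref{pro:polySum} for $\mathsf p_{e-1}(\operatorname{Walk}_{\Gamma,e})$, for $\mathsf p_k(\operatorname{Walk}_{\Gamma,e})$ with $k\leqslant e-2$, and for the $\operatorname{Char}$ power sums all land where we want them. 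Making this work requires pushing the coefficient analysis of Lemma~\ref{lem:pWalk} slightly further than in the type~I case; once those finitely many valuation computations are in hand, the rest is routine bookkeeping with Newton's identities and $2$-adic valuations.
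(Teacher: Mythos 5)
Your overall scaffolding (join of the $T_t$, Vandermonde congruence, passage from power sums to coefficients via Lemma~\ref{lem:ptoe}) matches the paper, and your degree/valuation bookkeeping for $\mathsf p_k(\operatorname{Walk}_{T_t,e})$ and $\mathsf p_k(\operatorname{Char}_{A(T_t)})$ up to $k=e-1$ is essentially what the paper does to get (LT2a), (LT2c) and the first half of (LT2b). The gap is in the one genuinely new step, the oddness of $\mathbf 1^\transpose A(\Gamma)^{e-1}\mathbf 1$, and it is a real error rather than an omitted detail. Your identity
\[
\mathbf 1^\transpose A(\Gamma)^{e-1}\mathbf 1=\operatorname{Walk}_{\Gamma,e}(0)=\prod_{t}\bigl(\mathbf 1^\transpose A(T_t)^{e-1}\mathbf 1\bigr)^{a_t}
\]
is false. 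The factorisation in Corollary~\ref{cor:powerSumSum} is an identity of the top $d+1$ coefficients only (the two sides have different degrees), so while $\mathbf 1^\transpose A(\Gamma)^{e-1}\mathbf 1=\mathsf c_e(\operatorname{Walk}_{\Gamma,e})$ equals the coefficient of $x^{\deg-e}$ in $\prod_t\operatorname{Walk}_{T_t,e}(x)^{a_t}$ --- a full convolution $\sum_{j_1+\cdots=e}\prod\mathsf c_{j_i}(\cdot)$ by Lemma~\ref{lem:sumProdAdjTourn} --- it is not the product of the constant terms. A two-line counterexample: for $T_0\oplus T_0$ one has $\operatorname{Walk}_{T_0,3}(x)=x^3+2x^2+x$, so $\mathbf 1^\transpose A(T_0\oplus T_0)^{2}\mathbf 1=4$ (the coefficient of $x^3$ in $(x^3+2x^2+x)^2$), whereas your formula predicts $0\cdot 0=0$. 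Consequently the parity of $\mathbf 1^\transpose A(\Gamma)^{e-1}\mathbf 1$ is \emph{not} controlled factor-by-factor, your deduction that $a_0$ must vanish is unfounded, and the entire programme of restricting the Vandermonde system to an index set $I$ of "odd" nodes attacks the wrong quantity (besides leaving unresolved the solvability and $2$-adic loss of a Vandermonde system on an arbitrary node set, which you acknowledge but do not settle).

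The paper proves the oddness by a completely different, indirect mechanism that you may want to look at: it forms $\Gamma^\prime=P_1\oplus\Gamma$, so that $J-2A(\Gamma^\prime)$ is a skew-symmetric $\{\pm1\}$-matrix of odd order $n^\prime\equiv 1\pmod{2^{e+\nu_2((e+1)!)}}$, and then plays the coefficient relation forced by skew-symmetry (Lemma~\ref{lem:TcoeffRel}, in the form \eqref{eqn:akeq2}) against the expansion of $\mathsf c_e(\operatorname{Char}_{J-2A(\Gamma^\prime)})$ from Lemma~\ref{lem:coefficient_link_formula}. The binomial terms $\binom{n^\prime}{e+1}$ and $\binom{n^\prime-1}{e}$ are killed by the $2$-adic largeness of $n^\prime-1$, forcing $\mathsf c_e(\operatorname{Char}_{J-2A(\Gamma^\prime)})\equiv 0\pmod{2^e}$, which in turn reads $2^{e-1}(1+\mathbf 1^\transpose A(\Gamma^\prime)^{e-1}\mathbf 1)\equiv 0\pmod{2^e}$ and hence gives the required odd parity. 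Without this (or some other correct replacement for your product formula), your proof of (LT2b) does not go through.
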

\begin{proof}
By Proposition~\ref{pro:tournyP}, there exists $(a_0,\dots,a_{e-2}) \in \mathbb N^{e-1}$ such that the tournament $\Gamma = \bigoplus_{t=0}^{e-2} a_t T_t$ satisfies
    \begin{align*}
        \mathsf p_k(\operatorname{Walk}_{\Gamma,e}) &\equiv \begin{cases}
            0, & \text{ $k \in \{1,\dots,e-2\}$} \\
            2(1-e), & \text{ $k = e-1$}
        \end{cases} \mod{2^{e+\nu_2((e+1)!)}}; \\
        \mathsf p_k(\operatorname{Char}_{A(\Gamma)}) &\equiv \begin{cases}
            0, & \text{ $k \in \{1,\dots,e-2\}$} \\
            e-1, & \text{ $k = e-1$}
        \end{cases} \mod{2^{e+\nu_2((e+1)!)}}.
    \end{align*}
    Using Lemma~\ref{lem:ptoe}, with $m = e+\nu_2((e+1)!)$ and $n = e-1$, we find that
    \begin{align*}
        \mathsf c_k(\operatorname{Walk}_{\Gamma,e}) = \mathbf 1^\transpose A(\Gamma)^{i-1} \mathbf 1 &\equiv \begin{cases}
            0 , & \text{ $k \in \{1,\dots,e-2\}$} \\
            -2, & \text{ $k = e-1$}
        \end{cases} \mod{2^{e+\nu_2((e+1)!)-\nu_2(k!)}}; \\
        \mathsf c_k(\operatorname{Char}_{A(\Gamma)}) &\equiv \begin{cases}
            0 , & \text{ $k \in \{1,\dots,e-2\}$} \\
            1, & \text{ $k = e-1$}
        \end{cases} \mod{2^{e+\nu_2((e+1)!)-\nu_2(k!)}}.
    \end{align*}
    This shows that $\Gamma$ satisfies (LT2a) and (LT2c).
    To establish (LT2b), it remains to show that $\mathbf 1^\transpose A(\Gamma)^{e-1} \mathbf 1$ is odd.

    Let $\Gamma^\prime = P_1 \oplus \Gamma$ and let $n^\prime$ be the order of $\Gamma^\prime$.
    On the other hand, by Lemma~\ref{lem:coefficient_link_formula},
    \begin{align}
    \label{eqn:GpWalki}
        \mathsf c_{e}(\operatorname{Char}_{J-2A(\Gamma^\prime)}) &= (-2)^{e}\left ( \mathsf c_{e}(\operatorname{Char}_{A(\Gamma^\prime)}) + \frac{1}{2}\sum_{i=1}^{e} \mathsf c_{e-i}(\operatorname{Char}_{A(\Gamma^\prime)})\mathbf 1^\transpose A(\Gamma^\prime)^{i-1} \mathbf 1 \right ).
    \end{align}
    Since $\operatorname{Char}_{\Gamma^\prime}(x) = x\operatorname{Char}_{\Gamma}(x)$, we have $\mathsf c_{k}(\operatorname{Char}_{A(\Gamma^\prime)}) = \mathsf c_{k}(\operatorname{Char}_{A(\Gamma)})$ for all $k \in \{ 0,1,\dots, n^\prime - 1 \}$.

    By Lemma~\ref{lem:sumProdAdjTourn}, we have
    \begin{align*}
        \mathbf{1}^\transpose A(\Gamma^\prime)^{i}\mathbf{1} &= \mathbf{1}^\transpose A(\Gamma)^{i}\mathbf{1}+\mathbf{1}^\transpose A(P_1)^{i}\mathbf{1}+\sum_{j=1}^{i}\mathbf{1}^\transpose A(\Gamma)^{j-1}\mathbf{1}\cdot \mathbf{1}^\transpose A(P_1)^{i-j}\mathbf{1}.
    \end{align*}
    It follows that $n^\prime = \mathsf p_1(\operatorname{Walk}_{\Gamma,e}) \equiv 1 \mod {2^{e+\nu_2((e+1)!)}}$, $\mathbf{1}^\transpose A(\Gamma^\prime)^{e-1}\mathbf{1} \equiv \mathbf{1}^\transpose A(\Gamma)^{e-1}\mathbf{1} \mod {2}$, and  $\mathbf{1}^\transpose A(\Gamma^\prime)^{i}\mathbf{1} \equiv 0 \mod 2$ for each $i \in \{1,\dots,e-2\}$.
    Therefore, \eqref{eqn:GpWalki} becomes
    \begin{align}
        \mathsf c_{e}(\operatorname{Char}_{J-2A(\Gamma^\prime)}) &\equiv 2^{e-1}(\mathsf c_{e-1}(\operatorname{Char}_{\Gamma^\prime})\mathbf{1}^\transpose A(\Gamma^\prime)^{0}\mathbf{1}+\mathsf c_{0}(\operatorname{Char}_{\Gamma^\prime})\mathbf{1}^\transpose A(\Gamma^\prime)^{e-1}\mathbf{1}) \mod{2^{e}} \nonumber \\
        &\equiv 2^{e-1}(1+\mathbf{1}^\transpose A(\Gamma^\prime)^{e-1}\mathbf{1}) \mod{2^{e}}. \label{eqn:in3}
    \end{align}
    Furthermore, by Lemma~\ref{lem:coefficient_link_formula}, we have 
    \begin{align*}
        \mathsf c_{k}(\operatorname{Char}_{J-2A(\Gamma^\prime)}) &\equiv \begin{cases}
            1 , & \text{ for } k \in \{0,1\} \\
            0 , & \text{ for } k \in \{2,\dots,e-1\}
        \end{cases} \mod{2^e}.
    \end{align*}
      Hence,
        \begin{equation}
            \label{eqn:in1}
            \sum^{e-1}_{i=0}{{n^\prime-i}\choose {n^\prime-e-1}}{\mathsf c_{i}(\operatorname{Char}_{J-2A(\Gamma^\prime)})}\equiv {{n^\prime }\choose {e+1}}+{{n^\prime-1}\choose {e}} \mod{2^{e}}.
        \end{equation}
        By Lemma~\ref{lem:TcoeffRel}, we have
        \begin{equation}
            \label{eqn:in2}
            \sum^{e-1}_{i=0}{{n^\prime-i}\choose {n^\prime-e-1}}{\mathsf c_{i}(\operatorname{Char}_{J-2A(\Gamma^\prime)})}+(n^\prime-e)\mathsf c_{e}(\operatorname{Char}_{J-2A(\Gamma^\prime)}) \equiv 0 \mod{2^{e}}.
        \end{equation}
    Next observe that $\nu_2({{n^\prime}\choose {e+1}}) \geqslant \nu_2(n^\prime-1)-\nu_2((e+1)!)\geqslant e$ and, similarly, $\nu_2({{n^\prime-1}\choose {e}})\geqslant \nu_2(n^\prime-1)-\nu_2((e+1)!)\geqslant e$. 
    Combining this with \eqref{eqn:in1} and \eqref{eqn:in2}, we find that $\mathsf c_{e}(\operatorname{Char}_{J-2A(\Gamma^\prime)}) \equiv 0 \mod{2^{e}}$.
    Finally, from \eqref{eqn:in3}, we deduce that $\mathbf{1}^\transpose A(\Gamma)^{e-1}\mathbf{1}$ is odd, as required.
\end{proof}

\section{Proof of Theorem~\ref{thm:m3}}
\label{sec:pm3}

In this section, we prove Theorem~\ref{thm:m3}.
First, we obtain the upper bound for the cardinality of $\mathcal C_e(\mathsf T_n)$.

\subsection{Upper bound}

\begin{lemma}
\label{lem:Tub}
Let $e \geqslant 4$ be an integer.
Then, for all $n \in \mathbb N$, we have
    \[
    |\mathcal C_e(\mathsf T_n)| \leqslant  \begin{cases}
        2^{\lfloor \frac{e-1}{2} \rfloor\lfloor \frac{e-2}{2} \rfloor}, & \text{ if $n$ is even}; \\
        2^{\lfloor \frac{e-2}{2} \rfloor\lfloor \frac{e-3}{2} \rfloor}, & \text{ if $n$ is odd}. \\
    \end{cases}
    \]
\end{lemma}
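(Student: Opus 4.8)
The plan is to bound, for an arbitrary $M\in\mathsf T_n$, the number of admissible residue classes modulo $2^e$ of each coefficient $a_k:=\mathsf c_k(\operatorname{Char}_M)$ separately, and then multiply. Write $\operatorname{Char}_M(x)=\sum_{i=0}^n a_ix^{n-i}$. By Lemma~\ref{lem:first3coeffs}, $a_0=1$, $a_1=-n$, $a_2=n(n-1)$, and $2^{k-1}$ divides $a_k$ for every $k\in\{1,\dots,n\}$; in particular $\overline{a_k}^{(e)}$ is uniquely determined for $k\in\{0,1,2\}$ (and for $k>n$, where $a_k=0$). Next, equation~\eqref{eqn:akeq} of Lemma~\ref{lem:TcoeffRel} expresses $a_k$ for odd $k$ as a fixed integer combination of $a_0,\dots,a_{k-1}$, so once the lower classes are chosen, $\overline{a_k}^{(e)}$ is forced: odd indices contribute a factor $1$. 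Hence the only indices that are free are the even $k$ with $4\le k\le e$, and for each such $k$ the divisibility $2^{k-1}\mid a_k$ leaves at most $2^{e-k+1}$ choices for $\overline{a_k}^{(e)}$.

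For $n$ even this already suffices: the number of tuples is at most $\prod_k 2^{e-k+1}$, the product being over even $k\in\{4,\dots,e\}$. Writing $k=2j$, the exponent sum $\sum_j (e-2j+1)$ is an arithmetic progression, and a short computation (splitting on the parity of $e$) evaluates it to $\lfloor\frac{e-1}{2}\rfloor\lfloor\frac{e-2}{2}\rfloor$; the degenerate value $e=4$ gives $1$, consistent with the claimed $2^1$. This is the even-$n$ bound.

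For $n$ odd I would additionally invoke~\eqref{eqn:akeq2}: for odd $k\ge 3$ one has $(n-k+1)a_{k-1}+\sum_{i=0}^{k-2}\binom{n-i}{n-k}a_i\equiv 0\pmod{2^{k-1}}$. Since $n$ and $k$ are both odd, $n-k+1$ is odd and hence a unit modulo $2^{k-1}$, so this relation determines $a_{k-1}$ modulo $2^{k-1}$ in terms of $a_0,\dots,a_{k-2}$. As $k-1$ ranges over the even integers in $\{4,\dots,e\}$, combining this with $2^{(k-1)-1}\mid a_{k-1}$ from Lemma~\ref{lem:first3coeffs} shows that $\overline{a_{k-1}}^{(e)}$ takes at most $2^{e-(k-1)}$ values — one power of $2$ fewer, per even index, than in the even-$n$ case (a priori the two constraints could even be incompatible, which would only shrink the count further). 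Replacing the exponents $e-k+1$ by $e-k$ in the product above and re-summing yields $\lfloor\frac{e-2}{2}\rfloor\lfloor\frac{e-3}{2}\rfloor$, the odd-$n$ bound.

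The argument is entirely elementary; the only real work is the bookkeeping of the two exponent sums, with a routine case split on the parity of $e$ and a check of the small values of $e$ permitted by the hypothesis $e\ge 4$. I do not foresee any genuine obstacle.
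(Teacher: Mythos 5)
Your proposal is correct and follows essentially the same route as the paper: fix $a_0,a_1,a_2$ via Lemma~\ref{lem:first3coeffs}, use \eqref{eqn:akeq} to force the odd-index coefficients, count $2^{e-k+1}$ classes for each free even index $k$ from the divisibility $2^{k-1}\mid a_k$, and for odd $n$ use \eqref{eqn:akeq2} (with $n-k+1$ a unit modulo $2^{k-1}$) to cut each even-index count by a further factor of $2$. The exponent sums evaluate exactly as you indicate, so no changes are needed.
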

\begin{proof}
    Let $M \in \mathsf T_n$.
    First, we consider the case of $n$ even.
    We count the number of possible congruence classes of each coefficient $a_i$ modulo $2^e$.
    By Lemma~\ref{lem:first3coeffs}, the coefficients $\mathsf c_1(\operatorname{Char}_M)$ and $\mathsf c_2(\operatorname{Char}_M)$ are determined by $n$ and $2^{j-1}$ divides $\mathsf c_j(\operatorname{Char}_M)$ for each $j \in \{1,\dots,n\}$.
    For the $k$th coefficient $\mathsf c_k(\operatorname{Char}_M)$, suppose that the congruence classes for the $k$ previous coefficients $\mathsf c_0(\operatorname{Char}_M), \dots, \mathsf c_{k-1}(\operatorname{Char}_M)$ have been given.
    If $k$ is even, then, since $2^{k-1}$ divides $\mathsf c_k(\operatorname{Char}_M)$, there are $2^{e-k+1}$ possible congruence classes for $\mathsf c_k(\operatorname{Char}_M)$ modulo $2^e$.
    If $k$ is odd, then the congruence class for $\mathsf c_k(\operatorname{Char}_M)$ modulo $2^e$ is uniquely determined by \eqref{eqn:akeq}.
    Hence, 
    $$|\mathcal C_e(\mathsf U_n)| \leqslant 2^{(e-3)+(e-5)+\dots+(e-2\lfloor (e-1)/2\rfloor - 1)},$$
    as required.

    Finally, suppose that $n$ is odd.
    Using \eqref{eqn:akeq2}, we find that the congruence class of $\mathsf c_{k-1}(\operatorname{Char}_M)$ modulo $2^{k-1}$ is determined by the congruence classes of $\mathsf c_{0}(\operatorname{Char}_M),\dots,\mathsf c_{k-2}(\operatorname{Char}_M)$ modulo $2^{k-1}$.
    Now, in a similar way to the above, we obtain
    \[
    |\mathcal C_e(\mathsf U_n)| \leqslant 2^{(e-4)+(e-6)+\cdots +(e-2{\lfloor \frac{e-2}{2}\rfloor}-2)}. \qedhere
    \]
\end{proof}

\subsection{Lower bound}

We begin with an inclusion result on $\mathcal C_e(\mathsf T_n)$, which is analogous to Proposition~\ref{pro:lifting_vertices}.

\begin{proposition}\label{prop:lifting_vertices_tournament}
    Let $n, e\in \mathbb{N}$.
    Then $\mathcal C_e(\mathsf T_n) \subset \mathcal C_e(\mathsf T_{n+2^{e}})$.
\end{proposition}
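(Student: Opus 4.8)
The plan is to mimic the proof of Proposition~\ref{pro:lifting_vertices}, but, since a disjoint union of tournaments is not a tournament, I would replace the $2^{e-2}$ isolated vertices by the transitive tournament $\Delta$ of order $2^{e}$ (equivalently, the $2^{e}$-fold join $\bigoplus_{i=1}^{2^{e}} P_1$). Concretely: given $\mathbf a = \left(\overline{a_2}^{(e)},\dots,\overline{a_e}^{(e)}\right)\in\mathcal C_e(\mathsf T_n)$, pick $M\in\mathsf T_n$ realising it; since $(J-M)/2$ is the adjacency matrix of a tournament $\Gamma$ on $n$ vertices, we have $M=J-2A(\Gamma)$ with $\mathsf c_i(\operatorname{Char}_{J-2A(\Gamma)})\equiv a_i \bmod{2^e}$ for each $i\in\{2,\dots,e\}$. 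Set $\Gamma'=\Delta\oplus\Gamma$; then $J-2A(\Gamma')\in\mathsf T_{n+2^e}$, and it suffices to show $\mathsf c_k(\operatorname{Char}_{J-2A(\Gamma')})\equiv a_k\bmod{2^e}$ for each $k\in\{2,\dots,e\}$.

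Two ingredients are needed. First, $A(\Delta)$ is nilpotent, so $\operatorname{Char}_{A(\Delta)}(x)=x^{2^e}$; since $A(\Gamma')$ is block triangular this gives $\operatorname{Char}_{A(\Gamma')}(x)=x^{2^e}\operatorname{Char}_{A(\Gamma)}(x)$, whence $\mathsf c_j(\operatorname{Char}_{A(\Gamma')})=\mathsf c_j(\operatorname{Char}_{A(\Gamma)})$ for all $j$. Second, $\mathbf 1^\transpose A(\Delta)^m\mathbf 1$ is the number of walks of length $m$ in the transitive tournament $\Delta$, which is $\binom{2^e}{m+1}$, and $\nu_2\!\binom{2^e}{m+1}=e-\nu_2(m+1)$. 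Feeding these into Lemma~\ref{lem:sumProdAdjTourn} applied to $\Gamma'=\Delta\oplus\Gamma$, I would obtain the estimate
\[
\nu_2\!\left(\mathbf 1^\transpose A(\Gamma')^{i-1}\mathbf 1-\mathbf 1^\transpose A(\Gamma)^{i-1}\mathbf 1\right)\geqslant e-i+1\qquad\text{for all }i\geqslant 1,
\]
since for $i=1$ the left-hand difference is just $2^e$, while for $i\geqslant 2$ it is a $\mathbb Z$-linear combination of the binomial coefficients $\binom{2^e}{1},\dots,\binom{2^e}{i}$ (the coefficients being walk counts in $\Gamma$), each of which has $2$-adic valuation at least $e-\lfloor\log_2 i\rfloor\geqslant e-i+1$.

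Finally, I would apply Lemma~\ref{lem:coefficient_link_formula} to $A(\Gamma')$ and to $A(\Gamma)$ and subtract. The leading terms $(-2)^k\mathsf c_k(\operatorname{Char}_{A(\cdot)})$ cancel by the first ingredient, and, using $\mathsf c_{k-i}(\operatorname{Char}_{A(\Gamma')})=\mathsf c_{k-i}(\operatorname{Char}_{A(\Gamma)})$, the remainder equals
\[
\mathsf c_k(\operatorname{Char}_{J-2A(\Gamma')})-\mathsf c_k(\operatorname{Char}_{J-2A(\Gamma)})=(-1)^k2^{k-1}\sum_{i=1}^{k}\mathsf c_{k-i}(\operatorname{Char}_{A(\Gamma)})\left(\mathbf 1^\transpose A(\Gamma')^{i-1}\mathbf 1-\mathbf 1^\transpose A(\Gamma)^{i-1}\mathbf 1\right).
\]
By the estimate above each summand is divisible by $2^{e-i+1}$, hence by $2^{e-k+1}$ since $i\leqslant k$, so the displayed difference is divisible by $2^{k-1}\cdot 2^{e-k+1}=2^e$. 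Therefore $\mathsf c_k(\operatorname{Char}_{J-2A(\Gamma')})\equiv a_k\bmod{2^e}$ for each $k\in\{2,\dots,e\}$, so $\mathbf a\in\mathcal C_e(\mathsf T_{n+2^e})$. The only point requiring care — and the only place the specific order $2^{e}$ enters — is this binomial bookkeeping: unlike the disjoint union in the symmetric case, the join introduces walk cross-terms, and one needs $2^{\,e-\nu_2(l)}\mid\binom{2^e}{l}$ to absorb them, so a transitive tournament of smaller order would not supply enough powers of $2$ (this is also compatible with the fact that $\mathsf c_2(\operatorname{Char}_{J-2A(\Gamma')})=(n+2^e)(n+2^e-1)\equiv n(n-1)\bmod{2^e}$, so the second coefficient is preserved automatically).
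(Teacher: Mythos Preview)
Your proof is correct and follows essentially the same route as the paper: both join $\Gamma$ with the transitive tournament $2^{e}P_1$ (you write $\Delta\oplus\Gamma$, the paper writes $\Gamma\oplus 2^{e}P_1$, an immaterial difference), observe that the adjacency characteristic polynomial is unchanged in its first $n$ coefficients, and then control the change in the walk sums $\mathbf 1^\transpose A(\cdot)^{i-1}\mathbf 1$ before plugging into Lemma~\ref{lem:coefficient_link_formula}. Your $2$-adic bookkeeping via $\nu_2\binom{2^e}{l}=e-\nu_2(l)$ is in fact more explicit than the paper's, which records only the leading cross-term $2^{e}\,\mathbf 1^\transpose A(\Gamma)^{i-1}\mathbf 1$ and tacitly absorbs the remaining binomial contributions; your estimate $\nu_2\bigl(\mathbf 1^\transpose A(\Gamma')^{i-1}\mathbf 1-\mathbf 1^\transpose A(\Gamma)^{i-1}\mathbf 1\bigr)\geqslant e-i+1$ makes this step transparent.
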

\begin{proof}
    Let $ \mathbf a = \left ( \overline{a_2}^{(e)},\overline{a_3}^{(e)},\dots,\overline{a_e}^{(e)}\right) \in \mathcal C_e(\mathsf T_n)$ and 
    let $\Gamma$ be an $n$-vertex tournament such that the coefficient $\mathsf c_i(\operatorname{Char}_{J-2A(\Gamma)}) \equiv a_i \mod{2^e}$ for each $i \in \{2,\dots,e\}$.
    Let $\Gamma^\prime = \Gamma \oplus 2^{e}P_1$. 
Since $\operatorname{Char}_{A(\Gamma^\prime)}(x) = \operatorname{Char}_{A(\Gamma)}(x)x^{2^e}$, we have $\mathsf c_i(\operatorname{Char}_{A(\Gamma^\prime)})=\mathsf c_i(\operatorname{Char}_{A(\Gamma)})$ for each $i \in \{1,2,\dots,e\}$. 

By Lemma~\ref{lem:sumProdAdjTourn}, we have $\mathbf 1^\transpose A(\Gamma^\prime)^0 \mathbf 1 = \mathbf 1^\transpose A(\Gamma)^0 \mathbf 1 + 2^e$ and $\mathbf 1^\transpose A(\Gamma^\prime)^i \mathbf 1 = \mathbf 1^\transpose A(\Gamma)^i \mathbf 1 + 2^e\mathbf 1^\transpose A(\Gamma)^{i-1} \mathbf 1$ for $i \in \mathbb N$.
 By Lemma~\ref{lem:coefficient_link_formula} together with Lemma~\ref{lem:first3coeffs}, we have $\mathsf c_i(\operatorname{Char}_{J-2A(\Gamma^\prime)}) \equiv \mathsf c_i(\operatorname{Char}_{J-2A(\Gamma)}) \mod {2^e}$ for each $i \in \{1,2,\dots,e\}$.
 Hence, $ \mathbf a \in \mathcal C_e(\mathsf T_{n+2^e})$.
\end{proof}

The main utility of lift tournaments of type I is the following lemma.

\begin{lemma}\label{lem:tournament_lift_graph_effect}
Let $f\geqslant 4$ be an even integer and let $e > f$ be an integer.
Let $\Gamma$ be a tournament and $\Lambda$ be an $(e,f)$-lift tournament of type I.
Then, for each even $k \in \{2,3,\dots,e-1\}$, we have
    \[ \mathsf c_{k}(\operatorname{Char}_{J-2A(\Gamma \oplus \Lambda)}) \equiv
    \begin{cases}
        \mathsf c_{k}(\operatorname{Char}_{J-2A(\Gamma)}) , & \text{ if } k\ne f\\
        \mathsf c_{k}(\operatorname{Char}_{J-2A(\Gamma)})+2^{e-1}, & \text{ if $k = f$}
    \end{cases} \mod{2^{e}}.
    \]
\end{lemma}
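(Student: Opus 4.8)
The plan is to follow the template of Lemma~\ref{lem:shift_graph} from the symmetric case, but — since for tournaments the walk counts $\mathbf 1^\transpose A(\Gamma\oplus\Lambda)^i\mathbf 1$ do not simply add, by Lemma~\ref{lem:sumProdAdjTourn} — to fold the effect of the join on $\operatorname{Char}_{A(\Gamma\oplus\Lambda)}$ and the passage through Lemma~\ref{lem:coefficient_link_formula} into a single computation. Write $A=A(\Gamma)$, $B=A(\Lambda)$, $C=A(\Gamma\oplus\Lambda)$, let $n$ be the order of $\Gamma$, and note $f\leqslant e-1$ and $k\leqslant e-1$. Since $A(\Gamma\oplus\Lambda)$ is block lower-triangular with diagonal blocks $A$ and $B$, we have $\operatorname{Char}_C=\operatorname{Char}_A\operatorname{Char}_B$, hence $\mathsf c_{k'}(\operatorname{Char}_C)=\sum_{j=0}^{k'}\mathsf c_j(\operatorname{Char}_B)\mathsf c_{k'-j}(\operatorname{Char}_A)$.

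I would first isolate two modular estimates. From (LT1b), for each $k'\in\{1,\dots,e-1\}$ with $k'\neq f$ every summand with $1\leqslant j\leqslant k'$ has $2$-adic valuation at least $e-k'$, so $\mathsf c_{k'}(\operatorname{Char}_C)\equiv\mathsf c_{k'}(\operatorname{Char}_A)\bmod 2^{e-k'}$; whereas for $k'=f$ the summands with $1\leqslant j\leqslant f-1$ vanish modulo $2^{e-f}$, while the $j=f$ summand equals $\mathsf c_f(\operatorname{Char}_B)\equiv 2^{e-f-1}\bmod 2^{e-f}$, giving $\mathsf c_f(\operatorname{Char}_C)\equiv\mathsf c_f(\operatorname{Char}_A)+2^{e-f-1}\bmod 2^{e-f}$. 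Separately, by Lemma~\ref{lem:sumProdAdjTourn} together with (LT1a), for each $l\in\{0,\dots,e-1\}$ the quantities $\mathbf 1^\transpose B^l\mathbf 1$ and $\mathbf 1^\transpose A^{j-1}\mathbf 1\cdot\mathbf 1^\transpose B^{l-j}\mathbf 1$ with $1\leqslant j\leqslant l$ are all divisible by $2^{e-l}$, so $\mathbf 1^\transpose C^l\mathbf 1\equiv\mathbf 1^\transpose A^l\mathbf 1\bmod 2^{e-l}$.

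Then I would substitute these into Lemma~\ref{lem:coefficient_link_formula}, which, after absorbing the factor $\frac{1}{2}$, reads $\mathsf c_k(\operatorname{Char}_{J-2C})=(-2)^k\mathsf c_k(\operatorname{Char}_C)+(-1)^k2^{k-1}\sum_{i=1}^{k}\mathsf c_{k-i}(\operatorname{Char}_C)\,\mathbf 1^\transpose C^{i-1}\mathbf 1$ (and similarly with $C$ replaced by $A$). Multiplying the first estimate at $k'=k$ by $(-2)^k$ and using that $k$ is even yields $(-2)^k\mathsf c_k(\operatorname{Char}_C)\equiv(-2)^k\mathsf c_k(\operatorname{Char}_A)+[k=f]\,2^{e-1}\bmod 2^e$. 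For the sum, set $Y_i=\mathsf c_{k-i}(\operatorname{Char}_C)-\mathsf c_{k-i}(\operatorname{Char}_A)$ and $Z_i=\mathbf 1^\transpose C^{i-1}\mathbf 1-\mathbf 1^\transpose A^{i-1}\mathbf 1$, so that the $i$th term changes by $Y_i\mathbf 1^\transpose A^{i-1}\mathbf 1+\mathsf c_{k-i}(\operatorname{Char}_A)Z_i+Y_iZ_i$; the two estimates give $\nu_2(\mathsf c_{k-i}(\operatorname{Char}_A)Z_i)\geqslant e-i+1\geqslant e-k+1$, $\nu_2(Y_iZ_i)\geqslant 2e-k\geqslant e-k+1$, and $\nu_2(Y_i\mathbf 1^\transpose A^{i-1}\mathbf 1)\geqslant e-k+1$, the last because $\nu_2(Y_i)\geqslant e-k+i$ whenever $k-i\neq f$, while if $k-i=f$ then $i=k-f\geqslant 2$ (here $k>f$ and $k-f$ is even) which compensates for the weaker bound $\nu_2(Y_i)\geqslant e-f-1$, and the one index with $i-1=0$ is $i=1$, where $k-i=k-1$ is odd and hence $\neq f$. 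Hence $2^{k-1}$ times the two sums agree modulo $2^e$, and combining with the leading term gives $\mathsf c_k(\operatorname{Char}_{J-2C})\equiv\mathsf c_k(\operatorname{Char}_{J-2A})+[k=f]\,2^{e-1}\bmod 2^e$, as claimed.

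The main obstacle is exactly this last bookkeeping: the bound $\nu_2(Y_i\mathbf 1^\transpose A^{i-1}\mathbf 1)\geqslant e-k+1$ is tight at $i=1$ and at $i=k-f$, and it is the hypothesis that $k$ and $f$ are both even — forcing $k-1\neq f$ and $|k-f|\neq 1$ — that prevents these tight cases from coinciding with the lone index at which $\mathbf 1^\transpose A^{i-1}\mathbf 1$ may be odd. One small point to keep in mind is that, in contrast to matrices in $\mathsf U_n$, the coefficients $\mathsf c_{k'}(\operatorname{Char}_A)$ of a tournament's adjacency matrix need not obey the divisibility of Lemma~\ref{lem:first3coeffs}; but the argument above only ever uses $\mathsf c_0(\operatorname{Char}_A)=1$, so this causes no difficulty.
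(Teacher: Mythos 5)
Your proof is correct and takes essentially the same route as the paper's: you establish the same two congruences, namely $\mathsf c_{k'}(\operatorname{Char}_{A(\Gamma\oplus\Lambda)})\equiv\mathsf c_{k'}(\operatorname{Char}_{A(\Gamma)})+[k'=f]2^{e-f-1}\bmod 2^{e-k'}$ from (LT1b) and $\mathbf 1^\transpose A(\Gamma\oplus\Lambda)^{l}\mathbf 1\equiv\mathbf 1^\transpose A(\Gamma)^{l}\mathbf 1\bmod 2^{e-l}$ from Lemma~\ref{lem:sumProdAdjTourn} and (LT1a), and then push them through Lemma~\ref{lem:coefficient_link_formula}, merely spelling out the $2$-adic bookkeeping (including the role of $k$ and $f$ both being even) that the paper leaves implicit. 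One cosmetic point: your closing remark about ``the lone index at which $\mathbf 1^\transpose A(\Gamma)^{i-1}\mathbf 1$ may be odd'' is not accurate for a general tournament, but this is harmless since your argument only ever uses $\nu_2(Y_i)\geqslant e-k+1$ and never the parity of those walk counts.
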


\begin{proof}   
   First, we prove that, for each $k\in \{0,1,\dots,e-1\}$,
\begin{equation}
\label{eqn:bip1}
     \mathsf c_{k}(\operatorname{Char}_{A(\Gamma\oplus \Lambda)}) \equiv
        \begin{cases}
            \mathsf c_{k}(\operatorname{Char}_{A(\Gamma)}) , & \text{ if } k\neq f;\\
            \mathsf c_{k}(\operatorname{Char}_{A(\Gamma)})+2^{e-1-k}, & \text{ if $k = f$}
        \end{cases} \mod{2^{e-k}}.
\end{equation}
    Indeed, since $\operatorname{Char}_{A(\Gamma \oplus \Lambda)}(x)=\operatorname{Char}_{A(\Gamma)}(x)\operatorname{Char}_{A(\Lambda)}(x)$, using (LT1b), we have 
    \begin{align*}
        \mathsf c_{k}(\operatorname{Char}_{A(\Gamma\oplus \Lambda)}) &= \sum_{i=0}^k \mathsf c_{i}(\operatorname{Char}_{A(\Gamma)})\mathsf c_{k-i}(\operatorname{Char}_{A(\Lambda)}) 
        \equiv \mathsf c_{k}(\operatorname{Char}_{A(\Gamma)}) + \mathsf c_{k}(\operatorname{Char}_{A(\Lambda)}) \mod {2^{e-k}}.
    \end{align*}

    Similarly, using Lemma~\ref{lem:sumProdAdjTourn} together with (LT1a), it follows that, for each $k \in \{1,\dots,e-1\}$,
    \begin{equation}
        \label{eqn:oAo1}
        \mathbf{1}^\transpose A(\Gamma \oplus \Lambda)^{k-1}\mathbf{1}\equiv \mathbf{1}^\transpose A(\Gamma)^{k-1}\mathbf{1}\mod{2^{e-k+1}}.
    \end{equation}

    In view of \eqref{eqn:bip1}, it suffices to show that, for each even $k\in \{2,\dots,e -1\}$,
    \begin{equation}
    \label{eqn:coeffDiff}
        \mathsf c_{k}(\operatorname{Char}_{J-2A(\Gamma \oplus \Lambda)}) - \mathsf c_{k}(\operatorname{Char}_{J-2A(\Gamma)}) \equiv 2^{k}\left (\mathsf c_{k}(\operatorname{Char}_{A(\Gamma \oplus \Lambda)})-\mathsf c_{k}(\operatorname{Char}_{A(\Gamma)})\right ) \mod{2^{e}},
    \end{equation}
    which follows from Lemma~\ref{lem:coefficient_link_formula} together with \eqref{eqn:oAo1} and \eqref{eqn:bip1}.
\end{proof}

Now we can prove that the cardinality of $\mathcal C_{e}(\mathsf T_N)$ is at least $2^{\lceil e/2\rceil -2}|\mathcal C_{e-1}(\mathsf T_n)|$ for $e \geqslant 3$ and integers $n$ and $N$, with $N$ large enough relative to $n$.

\begin{lemma}
\label{lem:Tournament_lower_bound}
    Let $n \in \mathbb N$ 
    and $e \geqslant 3$ be an integer.
    Suppose $N = n+\sum_{f=2}^{\lceil e/2\rceil -1}n_{2f}$, where $n_f$ is the order of an $(e,f)$-lift tournament of type I.
    Then $\left |\mathcal C_{e}\left (\mathsf T_N \right ) \right | \geqslant 2^{\lceil e/2\rceil -2}|\mathcal C_{e-1}(\mathsf T_n)|$.
\end{lemma}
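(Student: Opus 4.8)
The approach mirrors the proof of Lemma~\ref{lem:2em3below}, using lift tournaments of type~I in place of lift graphs of type~I and the join $\oplus$ in place of disjoint union. Write $E$ for the set of even integers in $\{4,\dots,e-1\}$, i.e.\ $E=\{2f : 2\leqslant f\leqslant \lceil e/2\rceil-1\}$, so that $|E|=\lceil e/2\rceil-2$ and $N=n+\sum_{k\in E}n_k$, where $n_k$ is the order of an $(e,k)$-lift tournament $\Lambda_k$ of type~I. By Lemma~\ref{lem:Tt1ffp1} and Lemma~\ref{lem:inductiveTLiftConst} such a $\Lambda_k$ exists for every $k\in E$ (each such $k$ is even with $4\leqslant k\leqslant e-1$), and by (LT1a) with $k=0$ each $n_k$ is divisible by $2^e$, so $N\equiv n \pmod{2^e}$. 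If $e\in\{3,4\}$ then $E=\emptyset$ and $N=n$, and the claim is just the surjectivity of the reduction map $\mathcal C_e(\mathsf T_n)\to\mathcal C_{e-1}(\mathsf T_n)$; so assume $e\geqslant 5$.

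First I would record the structural observation that drives everything: by Lemma~\ref{lem:first3coeffs} and \eqref{eqn:akeq}, for every $M\in\mathsf T_n$ the coefficients $\mathsf c_0,\mathsf c_1,\mathsf c_2$ and every odd-indexed $\mathsf c_k(\operatorname{Char}_M)$ are determined recursively by $n$ together with the even-indexed coefficients. Hence the projection $\pi:\mathcal C_{e-1}(\mathsf T_n)\to(\mathbb Z/2^{e-1}\mathbb Z)^E$ onto the coordinates indexed by $E$ is injective. Fix, for each $\mathbf b\in\mathcal C_{e-1}(\mathsf T_n)$, a tournament $\Gamma_{\mathbf b}$ on $n$ vertices realising $\mathbf b$, i.e.\ with $\mathsf c_i(\operatorname{Char}_{J-2A(\Gamma_{\mathbf b})})$ in the residue class prescribed by $\mathbf b$ for each $i$.

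Next, for $\mathbf b\in\mathcal C_{e-1}(\mathsf T_n)$ and $\mathbf d=(d_k)_{k\in E}\in\{0,1\}^E$, set $\Gamma_{\mathbf b,\mathbf d}=\Gamma_{\mathbf b}\oplus\bigoplus_{k\in E,\,d_k=1}\Lambda_k$. Since the adjacency matrix of a join is block lower-triangular, $\oplus$ is associative, $\operatorname{Char}_{A(\cdot)}$ is multiplicative over it, and the walk polynomials multiply by Lemma~\ref{lem:sumProdAdjTourn} and Corollary~\ref{cor:powerSumSum}; thus applying Lemma~\ref{lem:tournament_lift_graph_effect} once for each factor $\Lambda_k$ (legitimate as each such $k$ is even with $4\leqslant k<e$) yields, for every even $k\in\{2,\dots,e-1\}$,
\[
\mathsf c_k(\operatorname{Char}_{J-2A(\Gamma_{\mathbf b,\mathbf d})}) \equiv \mathsf c_k(\operatorname{Char}_{J-2A(\Gamma_{\mathbf b})}) + d_k\,2^{e-1} \pmod{2^e},
\]
with the convention $d_2:=0$. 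The order of $\Gamma_{\mathbf b,\mathbf d}$ is $n+\sum_{k:\,d_k=1}n_k$, which is at most $N$ and congruent to $N$ modulo $2^e$, so by Proposition~\ref{prop:lifting_vertices_tournament} the class $\mathbf c(\mathbf b,\mathbf d)$ of $J-2A(\Gamma_{\mathbf b,\mathbf d})$ lies in $\mathcal C_e(\mathsf T_N)$.

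Finally I would show that $(\mathbf b,\mathbf d)\mapsto\mathbf c(\mathbf b,\mathbf d)$ is injective, which gives $|\mathcal C_e(\mathsf T_N)|\geqslant|\mathcal C_{e-1}(\mathsf T_n)|\cdot 2^{|E|}=2^{\lceil e/2\rceil-2}\,|\mathcal C_{e-1}(\mathsf T_n)|$, as required. Indeed, for $k\in E$ the $k$-th coordinate of $\mathbf c(\mathbf b,\mathbf d)$ reduces modulo $2^{e-1}$ to the $k$-th coordinate of $\pi(\mathbf b)$, so by injectivity of $\pi$ it recovers $\mathbf b$; then, $\Gamma_{\mathbf b}$ now being determined, the same coordinates modulo $2^e$ recover each $d_k$. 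The only genuinely delicate point is the structural observation that an element of $\mathcal C_{e-1}(\mathsf T_n)$ is pinned down by $n$ and its even coordinates: this is exactly what makes the absence of any control over odd coefficients in Lemma~\ref{lem:tournament_lift_graph_effect} harmless, and it is the reason the multiplicative gain is $2^{\lceil e/2\rceil-2}$ rather than something larger.
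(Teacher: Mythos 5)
Your proposal is correct and follows essentially the same route as the paper: the same joins of $(e,2f)$-lift tournaments of type I supplied by Lemma~\ref{lem:Tt1ffp1} and Lemma~\ref{lem:inductiveTLiftConst}, the same application of Lemma~\ref{lem:tournament_lift_graph_effect} to shift each even coefficient by $d_k 2^{e-1}$, and the same reduction to $\mathcal C_e(\mathsf T_N)$ via Proposition~\ref{prop:lifting_vertices_tournament}. The only difference is presentational: you make explicit the injectivity of the projection onto even coordinates (via Lemma~\ref{lem:first3coeffs} and \eqref{eqn:akeq}) that the paper's counting uses implicitly.
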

\begin{proof}
    By Lemma~\ref{lem:Tt1ffp1} and Lemma~\ref{lem:inductiveTLiftConst}, for each $f \in \{2,\dots,\lceil e/2\rceil -1\}$, there exists an $(e,2f)$-lift tournament $\Lambda_{2f}$ of type I.
    Denote by $n_{f}$ the order of $\Lambda_f$.
    Let $(c_4,c_6,\dots,c_{2\lceil e/2\rceil -2}) \in \{1,\dots,2^e\}^{\lceil e/2\rceil -2}$ such that $\left ( \overline {c_2}^{(e-1)},\overline {c_3}^{(e-1)},\dots,\overline {c_{e-1}}^{(e-1)}\right) \in \mathcal C_{e-1}(\mathsf T_{n})$ for some integers $c_2$ and $c_{2i+1}$ for $i \in \{1,2,\dots,\lfloor e/2\rfloor - 1\}$.
    Let $\Gamma$ be an $n$-vertex tournament such that 
    $\mathsf c_i(\operatorname{Char}_{J-2A(\Gamma)}) \equiv c_i \mod {2^{e-1}}$ for each $i \in \{2,3,\dots,e-1\}$. 
    Thus, for each $i \in \{4,6,\dots,2\lceil e/2\rceil -2\}$, there exists $(d_4,d_6,\dots,d_{2\lceil e/2\rceil -2}) \in \{0,1\}^{\lceil e/2\rceil -2}$ such that $c_i \equiv \mathsf c_i(\operatorname{Char}_{J-2A(\Gamma)}) + 2^{e-1}d_i \mod{2^e}$.
    
    Let 
    \[
    \Gamma^\prime = \Gamma \cup d_{4} \Lambda_{4} \cup d_{6}\Lambda_{6} \cup \dots \cup  d_{2\lceil e/2\rceil -2}\Lambda_{2\lceil e/2\rceil -2}
    \]
    and let $n^\prime$ be the order of $\Gamma^\prime$.
    Then $n^\prime \leqslant n +\sum_{f=2}^{\lceil e/2\rceil -1}n_{2f} =: N$ and (LT1a) implies that $N \equiv n^\prime \equiv n \mod{2^{e}}$. 

    By Lemma~\ref{lem:tournament_lift_graph_effect}, 
    \[
    \mathsf c_i(\operatorname{Char}_{J-2A(\Gamma^\prime)}) \equiv \mathsf c_i(\operatorname{Char}_{J-2A(\Gamma)})+d_i2^{e-1}\equiv c_i \mod{2^e}
    \]
    for each $i\in \{4,6,\dots,2\lceil e/2\rceil -2\}$. 
    Hence, for some integers $c_{2}^\prime, c_3^\prime, \dots, c^\prime_e$ such that $c^\prime_i = c_i$ for each $i\in \{4,6,\dots,2\lceil e/2\rceil -2\}$, we have $\left ( \overline {c^\prime_2}^{(e)},\overline {c^\prime_3}^{(e)},\dots,\overline {c_{e-1}^\prime}^{(e)}, \overline {c^\prime_{e}}^{(e)}\right)\in \mathcal C_{e}(\mathsf T_{n^\prime})$.
    The lemma follows since $\mathcal C_{e}(\mathsf T_{n^\prime}) \subseteq \mathcal C_{e}(\mathsf T_N)$ by Proposition~\ref{prop:lifting_vertices_tournament} and the fact that $N\equiv n^\prime \equiv n \mod{2^{e}}$. 
\end{proof}

Using Lemma~\ref{lem:Tournament_lower_bound}, we can prove the odd case of Theorem~\ref{thm:m3} with a simple induction.

\begin{theorem}
\label{thm:m3odd}
   For each integer $e\geqslant 2$,
    there exists an integer $N_e$ such that $\left |\mathcal C_{e}\left (\mathsf T_n \right ) \right| =  2^{\lfloor \frac{e-2}{2} \rfloor\lfloor \frac{e-3}{2} \rfloor}$ for each odd integer $n \geqslant N_e$.
\end{theorem}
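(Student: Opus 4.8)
The plan is to follow the blueprint of Theorem~\ref{thm:m2odd}. By Lemma~\ref{lem:Tub}, when $e \geqslant 4$ and $n$ is odd we already have $|\mathcal C_e(\mathsf T_n)| \leqslant 2^{\lfloor (e-2)/2 \rfloor \lfloor (e-3)/2 \rfloor}$, so it suffices to establish the matching lower bound, which I would prove by induction on $e$. For the base cases $e \in \{2,3\}$ the claimed cardinality is $2^0 = 1$: the set $\mathsf T_n$ is nonempty for every $n$ (it contains $J - 2A$ for $A$ the adjacency matrix of any tournament on $n$ vertices), Lemma~\ref{lem:first3coeffs} forces $\mathsf c_2(\operatorname{Char}_M) = n(n-1)$, and for $e = 3$ equation~\eqref{eqn:akeq} of Lemma~\ref{lem:TcoeffRel} with $k = 3$ forces $\mathsf c_3(\operatorname{Char}_M)$; hence $|\mathcal C_e(\mathsf T_n)| = 1$ for all odd $n$, and one may take $N_2 = N_3 = 1$.

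For the inductive step, fix $e \geqslant 4$, assume the statement for $e-1$ with threshold $N_{e-1}$, and let $n_{2f}$ denote the order of an $(e,2f)$-lift tournament of type I, which exists for each $f \in \{2,\dots,\lceil e/2\rceil - 1\}$ by Lemma~\ref{lem:Tt1ffp1} and Lemma~\ref{lem:inductiveTLiftConst}. Set $N_e := N_{e-1} + \sum_{f=2}^{\lceil e/2 \rceil - 1} n_{2f}$. Given an odd integer $n \geqslant N_e$, put $n' := n - \sum_{f=2}^{\lceil e/2 \rceil - 1} n_{2f}$. Property (LT1a) with $k = 0$ gives $2^e \mid n_{2f}$, so $\sum n_{2f}$ is even; thus $n'$ is odd and, by construction, $n' \geqslant N_{e-1}$. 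The inductive hypothesis gives $|\mathcal C_{e-1}(\mathsf T_{n'})| = 2^{\lfloor (e-3)/2 \rfloor \lfloor (e-4)/2 \rfloor}$, and applying Lemma~\ref{lem:Tournament_lower_bound} with $n'$ in place of its parameter $n$ (so that its $N$ equals $n' + \sum n_{2f} = n$) yields
\[
|\mathcal C_e(\mathsf T_n)| \;\geqslant\; 2^{\lceil e/2 \rceil - 2}\, \bigl| \mathcal C_{e-1}(\mathsf T_{n'}) \bigr| \;=\; 2^{\,\lceil e/2 \rceil - 2 \,+\, \lfloor (e-3)/2 \rfloor \lfloor (e-4)/2 \rfloor}.
\]

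It then remains to check the elementary identity
\[
\lceil e/2 \rceil - 2 + \lfloor (e-3)/2 \rfloor \lfloor (e-4)/2 \rfloor \;=\; \lfloor (e-2)/2 \rfloor \lfloor (e-3)/2 \rfloor ,
\]
which follows by splitting into the cases $e = 2m$ (both sides equal $(m-1)(m-2)$) and $e = 2m+1$ (both sides equal $(m-1)^2$). Combined with Lemma~\ref{lem:Tub}, this gives $|\mathcal C_e(\mathsf T_n)| = 2^{\lfloor (e-2)/2 \rfloor \lfloor (e-3)/2 \rfloor}$ for all odd $n \geqslant N_e$, completing the induction. I do not expect a genuine obstacle in the argument itself --- it is essentially bookkeeping --- and the only points needing care are tracking parity so that $n'$ remains odd, and checking that the required $(e,2f)$-lift tournaments of type I are available throughout the stated range (Lemma~\ref{lem:Tt1ffp1} supplies $(g+1,g)$-lift tournaments for every even $g \geqslant 4$, and $2\lceil e/2 \rceil - 2 \leqslant e - 1$ for every $e$). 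The real content lies upstream, in Lemma~\ref{lem:Tournament_lower_bound} and in the existence of lift tournaments of type I (Lemma~\ref{lem:Tt1ffp1}), which rests in turn on Proposition~\ref{pro:tournyP} and the walk-polynomial estimates of Section~\ref{sec:wp}.
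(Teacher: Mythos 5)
Your proposal is correct and follows essentially the same route as the paper: upper bound from Lemma~\ref{lem:Tub}, induction on $e$ with $N_e = N_{e-1} + \sum_{f=2}^{\lceil e/2\rceil-1} n_{2f}$, and one application of Lemma~\ref{lem:Tournament_lower_bound} per step, with the parity of $n'$ preserved because (LT1a) forces $2^e \mid n_{2f}$. The only (harmless) difference is bookkeeping at the bottom: you treat $e\in\{2,3\}$ explicitly and start the induction at $e=4$, while the paper absorbs everything up to $e=4$ into the trivial base case (where the claimed cardinality is $2^0=1$) and begins the induction at $e=5$.
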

    \begin{proof}
    By Lemma~\ref{lem:Tub}, it suffices to show that $|\mathcal C_{e}(\mathsf T_n)| \geqslant 2^{\lfloor \frac{e-2}{2} \rfloor\lfloor \frac{e-3}{2} \rfloor}$.
         We proceed by induction on $e$.
    For $e \geqslant 4$, the claim is vacuous: one can take $N_e = 1$.

    Now we can assume $e \geqslant 5$ for the inductive step. 
    Take $N_{e}=N_{e-1}+\sum_{f=2}^{\lceil e/2\rceil -1}n_{2f}$ where $n_f$ is the order of an $(e,f)$-lift tournament of type I.
    Suppose $n = N_e + 2k$ for some nonnegative integer $k$.
    Then, by the implicit inductive hypothesis, $|\mathcal C_{e-1}(\mathsf T_{N_{e-1}+2k})| = 2^{\lfloor \frac{e-3}{2} \rfloor\lfloor \frac{e-4}{2} \rfloor}$.
    By Lemma~\ref{lem:Tournament_lower_bound}, we have
    \[
    |\mathcal C_{e}(\mathsf T_n)| \geqslant 2^{\lfloor \frac{e-3}{2} \rfloor\lfloor \frac{e-4}{2} \rfloor} 2^{\lceil e/2\rceil -2} = 2^{\lfloor \frac{e-2}{2} \rfloor\lfloor \frac{e-3}{2} \rfloor}. \qedhere
    \]
    \end{proof}

    The main utility of lift tournaments of type II is the following lemma.

\begin{lemma}\label{lem:tournament_lift_graph_for_odd_index_effect}
Let $e$ be an even integer, $\Gamma$ be a tournament of even order, and $\Lambda$ be an $e$-lift tournament of type II.
Then, for each even $k \in \{2,\dots,e\}$, we have
    \[ \mathsf c_{k}(\operatorname{Char}_{J-2A(\Gamma\oplus \Lambda)}) \equiv
    \begin{cases}
        \mathsf c_{k}(\operatorname{Char}_{J-2A(\Gamma)}), &\text{ if } k \leqslant e-2\\
        \mathsf c_{e}(\operatorname{Char}_{J-2A(\Gamma)})+2^{e-1}, & \text{ if $k = e$}
    \end{cases} \mod{2^{e}}.
    \]
\end{lemma}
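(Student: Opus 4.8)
The plan is to follow the template of the proof of Lemma~\ref{lem:tournament_lift_graph_effect}, but to track $2$-adic valuations more carefully near the top index $k=e$ and to exploit the hypothesis that $\Gamma$ has even order. Abbreviate $A=A(\Gamma)$, $B=A(\Lambda)$, and $A'=A(\Gamma\oplus\Lambda)$; note that $\Gamma\oplus\Lambda$ is again a tournament, so $J-2A'-I$ is skew-symmetric and $\operatorname{Char}_{A'}(x)=\operatorname{Char}_{A}(x)\operatorname{Char}_{B}(x)$. First I would record two auxiliary families of congruences. Expanding $\mathsf c_j(\operatorname{Char}_{A'})=\sum_{i=0}^{j}\mathsf c_i(\operatorname{Char}_{A})\mathsf c_{j-i}(\operatorname{Char}_{B})$ and invoking (LT2c) (which gives $2^e\mid\mathsf c_{j-i}(\operatorname{Char}_{B})$ for $1\leqslant j-i\leqslant e-2$) yields
\[
\mathsf c_j(\operatorname{Char}_{A'})\equiv\mathsf c_j(\operatorname{Char}_{A})\bmod 2^e\qquad\text{for all }j\in\{0,\dots,e-2\}.
\]
Likewise, Lemma~\ref{lem:sumProdAdjTourn} together with (LT2a) gives $\mathbf 1^\transpose(A')^{\ell}\mathbf 1\equiv\mathbf 1^\transpose A^{\ell}\mathbf 1\bmod 2^e$ for each $\ell\in\{0,\dots,e-3\}$; and, since $\mathbf 1^\transpose A^{0}\mathbf 1=|V(\Gamma)|$ is even by hypothesis while $\mathbf 1^\transpose B^{0}\mathbf 1=|V(\Lambda)|$ is even by (LT2a) (or by (LT2b) when $e=2$), the same lemma together with (LT2b) gives
\[
\mathbf 1^\transpose(A')^{e-1}\mathbf 1-\mathbf 1^\transpose A^{e-1}\mathbf 1=\mathbf 1^\transpose B^{e-1}\mathbf 1+\sum_{j=1}^{e-1}\mathbf 1^\transpose A^{j-1}\mathbf 1\cdot\mathbf 1^\transpose B^{e-1-j}\mathbf 1\equiv 1\bmod 2,
\]
because $\mathbf 1^\transpose B^{e-1}\mathbf 1$ is odd while every cross term is even ($2^e\mid\mathbf 1^\transpose B^{e-1-j}\mathbf 1$ for $j\geqslant 2$, and $\mathbf 1^\transpose A^{0}\mathbf 1$ is even for $j=1$).

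Next I would apply Lemma~\ref{lem:coefficient_link_formula} to both $A'$ and $A$. For even $k$ we have $(-2)^k=2^k$ and $(-2)^k/2=(-1)^k2^{k-1}=2^{k-1}$, so
\[
\mathsf c_k(\operatorname{Char}_{J-2M})=2^k\mathsf c_k(\operatorname{Char}_{M})+2^{k-1}\sum_{i=1}^{k}\mathsf c_{k-i}(\operatorname{Char}_{M})\,\mathbf 1^\transpose M^{i-1}\mathbf 1\qquad(M\in\{A,A'\}).
\]
For even $k\in\{2,\dots,e-2\}$, every coefficient index appearing is at most $e-2$ and every walk exponent $i-1$ with $i\leqslant k$ is at most $e-3$, so by the two auxiliary families each summand for $M=A'$ is congruent modulo $2^e$ to the corresponding one for $M=A$; multiplying by $2^k$ or $2^{k-1}$ preserves this, and hence $\mathsf c_k(\operatorname{Char}_{J-2A'})\equiv\mathsf c_k(\operatorname{Char}_{J-2A})\bmod 2^e$, as required.

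Finally, for $k=e$ the term $2^e\mathsf c_e(\operatorname{Char}_{M})$ vanishes modulo $2^e$ for both $M$, so it remains to show that
\[
\Delta:=\sum_{i=1}^{e}\Bigl(\mathsf c_{e-i}(\operatorname{Char}_{A'})\,\mathbf 1^\transpose(A')^{i-1}\mathbf 1-\mathsf c_{e-i}(\operatorname{Char}_{A})\,\mathbf 1^\transpose A^{i-1}\mathbf 1\Bigr)
\]
is odd, whence $\mathsf c_e(\operatorname{Char}_{J-2A'})-\mathsf c_e(\operatorname{Char}_{J-2A})\equiv 2^{e-1}\Delta\equiv 2^{e-1}\bmod 2^e$. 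For $2\leqslant i\leqslant e-2$ both factors agree modulo $2^e$ by the auxiliary congruences; the $i=1$ term carries the factor $\mathbf 1^\transpose M^{0}\mathbf 1$, which is even for both $M$ since $|V(\Gamma)|$ and $|V(\Lambda)|$ are even; the $i=e-1$ term has coefficient $\mathsf c_1(\operatorname{Char}_{M})=0$ because tournaments have zero trace; and the $i=e$ term equals $\mathbf 1^\transpose(A')^{e-1}\mathbf 1-\mathbf 1^\transpose A^{e-1}\mathbf 1$, which is odd by the second auxiliary computation. So $\Delta$ is odd and the proof is complete. The \emph{main obstacle} is exactly this last step: pinning the discrepancy at $k=e$ down to precisely $2^{e-1}$ rather than merely $0$ or some multiple of $2^{e-1}$, which is where property (LT2b) and the even-order hypothesis on $\Gamma$ (which kills the cross term $\mathbf 1^\transpose A^{0}\mathbf 1\cdot\mathbf 1^\transpose B^{e-2}\mathbf 1$) are indispensable.
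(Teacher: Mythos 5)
Your proposal is correct and follows essentially the same route as the paper's proof: the same congruences for $\mathsf c_k(\operatorname{Char}_{A(\Gamma\oplus\Lambda)})$ and $\mathbf 1^\transpose A(\Gamma\oplus\Lambda)^{i}\mathbf 1$ derived from (LT2a)--(LT2c) and Lemma~\ref{lem:sumProdAdjTourn}, the same application of Lemma~\ref{lem:coefficient_link_formula}, and the same isolation of the $k=e$ discrepancy to the parity of $\mathbf 1^\transpose A(\Gamma\oplus\Lambda)^{e-1}\mathbf 1-\mathbf 1^\transpose A(\Gamma)^{e-1}\mathbf 1$, using (LT2b) together with the even order of $\Gamma$ to kill the cross term and the vanishing of $\mathsf c_1$ to kill the $i=e-1$ term. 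Your identification of the ``main obstacle'' is exactly where the paper's argument also does the real work.
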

\begin{proof}
    Using (LT2c) with the identity $\operatorname{Char}_{A(\Gamma \oplus \Lambda)}(x)=\operatorname{Char}_{A(\Gamma)}(x)\operatorname{Char}_{A(\Lambda)}(x)$, it follows that, for each $k \in \{0,1,\dots,e-2\}$,
    \begin{align}
        \mathsf c_{k}(\operatorname{Char}_{A(\Gamma\oplus \Lambda)}) = \sum_{i=0}^k \mathsf c_{i}(\operatorname{Char}_{A(\Gamma)})\mathsf c_{k-i}(\operatorname{Char}_{A(\Lambda)})
        &\equiv \mathsf c_{k}(\operatorname{Char}_{A(\Gamma)}) \mod {2^{e}} \label{eqn:ck}
    \end{align}

       By Lemma~\ref{lem:sumProdAdjTourn} together with (LT2a), it follows that, for each $i \in \{1,\dots,e-2\}$, we have
    \begin{equation}
        \label{eqn:oAo}
        \mathbf{1}^\transpose A(\Gamma \oplus \Lambda)^{i-1}\mathbf{1}\equiv \mathbf{1}^\transpose A(\Gamma)^{i-1}\mathbf{1}\mod{2^{e}}.
    \end{equation}

    By Lemma~\ref{lem:coefficient_link_formula}, \eqref{eqn:ck},
    and \eqref{eqn:oAo}, for even $k\in \{2,\dots,e-2\}$,
    \begin{align*}
         \mathsf c_{k}(\operatorname{Char}_{J-2A(\Gamma \oplus \Lambda)})&=2^{k} \left (\mathsf c_{k}(\operatorname{Char}_{A(\Gamma\oplus \Lambda)})+\frac{1}{2}\sum_{i=1}^{k}\mathsf c_{k-i}(\operatorname{Char}_{A(\Gamma\oplus \Lambda)})\mathbf{1}^\transpose A(\Gamma \oplus \Lambda)^{i-1}\mathbf{1} \right )\\
         &\equiv 2^{k} \left (\mathsf c_{k}(\operatorname{Char}_{A(\Gamma)})+\frac{1}{2}\sum_{i=1}^{k}\mathsf c_{k-i}(\operatorname{Char}_{A(\Gamma)})\mathbf{1}^\transpose A(\Gamma)^{i-1}\mathbf{1} \right ) \mod{2^{e}}\\
         &\equiv \mathsf c_{k}(\operatorname{Char}_{J-2A(\Gamma)}) \mod{2^{e}}.
    \end{align*}
 
    Similarly, since $\mathsf c_{1}(\operatorname{Char}_{A(\Gamma\oplus \Lambda)}) = \mathsf c_{1}(\operatorname{Char}_{A(\Gamma)}) = 0$ and both  $\mathbf{1}^\transpose A(\Gamma \oplus \Lambda)^{0}\mathbf{1}$ and $\mathbf{1}^\transpose A(\Gamma)^{0}\mathbf{1}$ are even, we have
    \begin{align}
    \label{eqn:ae}
         \mathsf c_{e}(\operatorname{Char}_{J-2A(\Gamma \oplus \Lambda)})
         &\equiv 2^{e-1} \left(\mathbf{1}^\transpose A(\Gamma \oplus \Lambda)^{e-1}\mathbf{1}+\sum_{i=2}^{e-2}\mathsf c_{e-i}(\operatorname{Char}_{A(\Gamma)})\mathbf{1}^\transpose A(\Gamma)^{i-1}\mathbf{1} \right ) \mod{2^{e}};
         \\
\label{eqn:ae2}
\mathsf c_{e}(\operatorname{Char}_{J-2A(\Gamma)})
         &\equiv 2^{e-1} \left(\mathbf{1}^\transpose A(\Gamma)^{e-1}\mathbf{1}+\sum_{i=2}^{e-2}\mathsf c_{e-i}(\operatorname{Char}_{A(\Gamma)})\mathbf{1}^\transpose A(\Gamma)^{i-1}\mathbf{1} \right ) \mod{2^{e}}.
    \end{align}
    Furthermore, using Lemma~\ref{lem:sumProdAdjTourn} together with (LT2a) and (LT2b), we have
    \[
    2^{e-1}\mathbf{1}^\transpose A(\Gamma \oplus \Lambda)^{e-1}\mathbf{1}\equiv 2^{e-1}\mathbf{1}^\transpose A(\Gamma)^{e-1}\mathbf{1}+2^{e-1} \mod{2^e} 
    \]
    Combining with \eqref{eqn:ae} and \eqref{eqn:ae2}, yields the congruence $\mathsf c_{e}(\operatorname{Char}_{J-2A(\Gamma \oplus \Lambda)}) \equiv \mathsf c_{e}(\operatorname{Char}_{J-2A(\Gamma)}) + 2^{e-1} \mod{2^e}$.
\end{proof}

Now we can prove that the cardinality of $\mathcal C_{e}(\mathsf T_N)$ is at least $2^{e/2 -1}|\mathcal C_{e-1}(\mathsf T_n)|$ for even integers $e \geqslant 6$ and $n$ and $N$ are both even, with $N$ large enough relative to $n$.

\begin{lemma}
\label{lem:2em3belowT1}
    Let $n \in \mathbb N$ be even and $e \geqslant 6$ be an even integer.
    Suppose $N = n+\sum_{f=2}^{e/2 -1} n_{2f} + m$
    where $n_f$ is the order of an $(e,f)$-lift tournament of type I, and $m$ is the order of an $e$-lift tournament graph type II.
    Then $\left |\mathcal C_{e}\left (\mathsf T_N \right ) \right | \geqslant 2^{e/2 -1}|\mathcal C_{e-1}(\mathsf T_n)|$.
\end{lemma}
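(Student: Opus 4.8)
The plan is to adapt the proof of Lemma~\ref{lem:Tournament_lower_bound}: there a family of type I lift tournaments realises all sign patterns of the $2^{e-1}$-bit of the even coefficients $\mathsf c_4,\dots,\mathsf c_{e-2}$ of the characteristic polynomial of $J-2A(\cdot)$, and this already gives the factor $2^{e/2-2}$. When $n$ is even the relation~\eqref{eqn:akeq2} no longer pins down $\mathsf c_e$ modulo $2^e$, so there is one extra bit to exploit, and I will produce it with a single $e$-lift tournament of type II; this is precisely the range ($e\ge 6$ even) in which Lemma~\ref{lem:Tt2} supplies such a tournament.

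First I would assemble the ingredients. By Lemma~\ref{lem:Tt1ffp1} and Lemma~\ref{lem:inductiveTLiftConst}, for each even $f\in\{4,6,\dots,e-2\}$ there is an $(e,f)$-lift tournament $\Lambda_f$ of type I, of order $n_f$; by Lemma~\ref{lem:Tt2} there is an $e$-lift tournament $\Lambda_e$ of type II, of order $m$. Properties (LT1a) and (LT2a) force $2^e\mid n_f$ and $2^e\mid m$, so all these orders are even. Fix $\mathbf c=(\overline{c_2}^{(e-1)},\dots,\overline{c_{e-1}}^{(e-1)})\in\mathcal C_{e-1}(\mathsf T_n)$ and a tournament $\Gamma_{\mathbf c}$ on $n$ vertices with $\mathsf c_i(\operatorname{Char}_{J-2A(\Gamma_{\mathbf c})})\equiv c_i\bmod 2^{e-1}$ for each $i$. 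For $S\subseteq\{4,6,\dots,e-2\}$ and $b\in\{0,1\}$, put $\Gamma_S=\Gamma_{\mathbf c}\oplus\bigoplus_{k\in S}\Lambda_k$ and $\Gamma_{S,b}=\Gamma_S$ if $b=0$, $\Gamma_{S,b}=\Gamma_S\oplus\Lambda_e$ if $b=1$. Since $n$ and every $n_k$ are even, $\Gamma_S$ has even order, so Lemma~\ref{lem:tournament_lift_graph_for_odd_index_effect} applies to $\Gamma_S\oplus\Lambda_e$.

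Next I would track the coefficients. Applying Lemma~\ref{lem:tournament_lift_graph_effect} successively along the joins that build $\Gamma_S$ gives, for each even $k\in\{2,\dots,e-1\}$, $\mathsf c_k(\operatorname{Char}_{J-2A(\Gamma_S)})\equiv\mathsf c_k(\operatorname{Char}_{J-2A(\Gamma_{\mathbf c})})+[k\in S]\,2^{e-1}\bmod 2^e$; then Lemma~\ref{lem:tournament_lift_graph_for_odd_index_effect} leaves these untouched and replaces $\mathsf c_e$ by $\mathsf c_e(\operatorname{Char}_{J-2A(\Gamma_S)})+b\,2^{e-1}\bmod 2^e$. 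The order $n'$ of $\Gamma_{S,b}$ satisfies $n'\le N$ and, again by (LT1a) and (LT2a), $n'\equiv n\equiv N\bmod 2^e$; since $J-2A(\Gamma_{S,b})\in\mathsf T_{n'}$, a repeated application of Proposition~\ref{prop:lifting_vertices_tournament} places the tuple $\tau_{\mathbf c,S,b}$ of $J-2A(\Gamma_{S,b})$ in $\mathcal C_e(\mathsf T_N)$.

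Finally I would count, and this is the only delicate point. The key observation is that $\mathcal C_{e-1}(\mathsf T_n)$ is parametrised by the even coordinates of indices $4,6,\dots,e-2$: by Lemma~\ref{lem:first3coeffs} the index-$2$ coordinate is $n(n-1)\bmod 2^{e-1}$, and by~\eqref{eqn:akeq} every odd-index coordinate is a $\mathbb Z$-linear combination (with coefficients determined by $n$) of the lower ones. Hence distinct $\mathbf c,\mathbf c'\in\mathcal C_{e-1}(\mathsf T_n)$ already differ at some even index $k_0\in\{4,\dots,e-2\}$, and since the $k_0$-coordinate of $\tau_{\mathbf c,S,b}$ is $\equiv c_{k_0}\bmod 2^{e-1}$ for every $S,b$, no tuple arising from $\mathbf c$ equals one arising from $\mathbf c'$. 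For fixed $\mathbf c$, changing $S$ moves some even coordinate in $\{4,\dots,e-2\}$ by $2^{e-1}$ modulo $2^e$, and with $S$ fixed changing $b$ moves the $e$th coordinate by $2^{e-1}$; so $(\mathbf c,S,b)\mapsto\tau_{\mathbf c,S,b}$ is injective, yielding $|\mathcal C_e(\mathsf T_N)|\ge 2^{|\{4,6,\dots,e-2\}|+1}\,|\mathcal C_{e-1}(\mathsf T_n)|=2^{e/2-1}\,|\mathcal C_{e-1}(\mathsf T_n)|$. The substantive inputs — existence of the lift tournaments and their advertised effect on the characteristic polynomial — are Lemmas~\ref{lem:Tt1ffp1}, \ref{lem:Tt2}, \ref{lem:tournament_lift_graph_effect} and \ref{lem:tournament_lift_graph_for_odd_index_effect}; what remains here is bookkeeping, the one subtle check being the even-order hypothesis needed to invoke Lemma~\ref{lem:tournament_lift_graph_for_odd_index_effect}.
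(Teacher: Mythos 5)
Your proposal is correct and follows essentially the same route as the paper: build the lift tournaments via Lemmas~\ref{lem:Tt1ffp1}, \ref{lem:inductiveTLiftConst} and \ref{lem:Tt2}, join them onto a realising tournament, apply Lemmas~\ref{lem:tournament_lift_graph_effect} and \ref{lem:tournament_lift_graph_for_odd_index_effect} to shift the even coefficients independently, and finish with Proposition~\ref{prop:lifting_vertices_tournament}. Your final paragraph actually makes explicit two points the paper leaves implicit --- that the injectivity of the count reduces, via Lemma~\ref{lem:first3coeffs} and \eqref{eqn:akeq}, to the even coordinates $4,6,\dots,e-2$, and that the even-order hypothesis of Lemma~\ref{lem:tournament_lift_graph_for_odd_index_effect} is met because $2^e$ divides each $n_f$ --- which is a welcome tightening rather than a deviation.
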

\begin{proof}
    By Lemma~\ref{lem:Tt1ffp1} and Lemma~\ref{lem:inductiveTLiftConst}, for each $f \in \{2,\dots,e/2 -1\}$, there exists an $(e,2f)$-lift tournament of type I $\Lambda_{2f}$ and, by Lemma~\ref{lem:Tt2}, there exists an $e$-lift tournament of type II $\Lambda_e$.
    Denote by $n_f$ the order of each tournament $\Lambda_f$, and $m$ the order of the tournament $\Lambda_e$.
    Note that $n_f= \mathbf 1^\transpose A(\Lambda_f)^{0} \mathbf 1$ and $m= \mathbf 1^\transpose A(\Lambda_e)^{0} \mathbf 1$.
    Hence, (LT1a) and (LT2a) imply that $2^e$ divides both $n_f$ and $m$.
    
    Let $(c_4,c_6,\dots,c_{e-2}) \in \{1,\dots,2^e\}^{e/2 -2}$ such that $\left ( \overline {c_2}^{(e-1)},\overline {c_3}^{(e-1)},\dots,\overline {c_{e-1}}^{(e-1)}\right) \in \mathcal C_{e-1}(\mathsf T_{n})$.
    Let $\Gamma$ be an $n$-vertex graph such that 
 $\mathsf c_i(\operatorname{Char}_{J-2A(\Gamma)}) \equiv c_i \mod {2^{e-1}}$ for each $i \in \{2,3,\dots,e-1\}$. 
 Furthermore, by Lemma~\ref{lem:first3coeffs}, we have  $\mathsf c_e(\operatorname{Char}_{J-2A(\Gamma)}) \equiv 0 \mod {2^{e-1}}$.
 Accordingly, let $c_e \in \{0,2^{e-1}\}$.
 Thus, there exists $(d_4,d_6\dots,d_{e}) \in \{0,1\}^{e/2-1}$ such that $c_{2i} \equiv \mathsf c_{2i}(\operatorname{Char}_{J-2A(\Gamma)}) + 2^{e-1}d_{2i} \mod{2^e}$ for each $i \in \{2,3,\dots,e/2\}$.
    Let 
    \[
    \Gamma^\prime = \Gamma \cup d_{4} \Lambda_{4} \cup d_{6}\Lambda_{6} \cup \dots \cup  d_{e -2}\Lambda_{e -2}\cup d_{e} \Lambda_e
    \]
    and let $n^\prime$ be the order of $\Gamma^\prime$.
    Then $n^\prime \leqslant n +\sum_{f=2}^{e/2-1}n_{2f}+m =: N$ and (LT1a) and (LT2a) imply that $N \equiv n^\prime \equiv n \mod{2^{e}}$. 
    Then, by Lemma~\ref{lem:tournament_lift_graph_effect} and Lemma~\ref{lem:tournament_lift_graph_for_odd_index_effect}, we have 
    \[
    \mathsf c_{2i}(\operatorname{Char}_{J-2A(\Gamma^\prime)}) \equiv \mathsf c_{2i}(\operatorname{Char}_{J-2A(\Gamma)})+d_{2i}2^{e-1}\equiv c_{2i} \mod{2^e}
    \]
    for $i\in \{2,3,\dots,e/2\}$. 
     Hence, for some integers $c_{2}^\prime, c_3^\prime, \dots, c^\prime_e$ such that $c^\prime_{2i} = c_{2i}$ for each $i\in \{2,3,\dots,e/2\}$, we have $\left ( \overline {c^\prime_2}^{(e)},\overline {c^\prime_3}^{(e)},\dots,\overline {c_{e-1}^\prime}^{(e)}, \overline {c^\prime_{e}}^{(e)}\right)\in \mathcal C_{e}(\mathsf T_{n^\prime})$.
    The lemma follows since $\mathcal C_{e}(\mathsf T_{n^\prime}) \subseteq \mathcal C_{e}(\mathsf T_N)$ by Proposition~\ref{prop:lifting_vertices_tournament} and the fact that $N\equiv n^\prime \equiv n \mod{2^{e}}$. 
\end{proof}

Using Lemma~\ref{lem:Tournament_lower_bound} and Lemma~\ref{lem:2em3belowT1}, we can prove the even counterpart of Theorem~\ref{thm:m3odd}, thereby completing the proof of Theorem~\ref{thm:m3}.

\begin{theorem}
\label{thm:m3even}
     For each integer $e\geqslant 4$,
    there exists an integer $N_e$ such that $\left |\mathcal C_{e}\left (\mathsf T_n \right ) \right| =  2^{\lfloor \frac{e-1}{2} \rfloor\lfloor \frac{e-2}{2} \rfloor}$ for each even integer $n \geqslant N_e$.
\end{theorem}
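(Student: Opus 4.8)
The plan is to mirror the symmetric even case (Theorem~\ref{thm:m2even}). By Lemma~\ref{lem:Tub} the upper bound $|\mathcal C_e(\mathsf T_n)|\leqslant 2^{\lfloor\frac{e-1}{2}\rfloor\lfloor\frac{e-2}{2}\rfloor}$ already holds for every $n$, so it suffices to establish the matching lower bound for all sufficiently large even $n$. I would prove this by induction on $e$, staying in the world of even orders throughout: each lifting construction used below produces an order $N$ with $N\equiv n\bmod 2^{e}$, and by Proposition~\ref{prop:lifting_vertices_tournament} a bound on $\mathcal C_e(\mathsf T_{n^\prime})$ transfers to $\mathcal C_e(\mathsf T_N)$ whenever $N\equiv n^\prime\bmod 2^{e}$, hence to all even $n$ at least some threshold $N_e$.

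\textbf{Base case $e=4$.} For even $n$, Lemma~\ref{lem:first3coeffs} fixes $\mathsf c_2(\operatorname{Char}_M)=n(n-1)$ and \eqref{eqn:akeq} pins down $\mathsf c_3(\operatorname{Char}_M)$ as a function of $n$, while $2^{3}\mid\mathsf c_4(\operatorname{Char}_M)$ forces $\mathsf c_4(\operatorname{Char}_M)\bmod 16\in\{0,8\}$; I would realise both values. Let $TT_n=nP_1$ be the transitive tournament and let $\Gamma=T_1\oplus(n-3)P_1$, where $T_1$ is the directed $3$-cycle (so $\operatorname{Char}_{A(T_1)}=x^{3}-1$ by Lemma~\ref{lem:characteristic_polynomial_of_Gt} and $\mathbf 1^\transpose A(T_1)^{j}\mathbf 1=3$ for all $j$). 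Since $\operatorname{Char}_{A(TT_n)}=x^{n}$ and $\operatorname{Char}_{A(\Gamma)}=(x^{3}-1)x^{n-3}$, Lemma~\ref{lem:coefficient_link_formula} gives $\mathsf c_4(\operatorname{Char}_{J-2A(TT_n)})=8\,\mathbf 1^\transpose A(TT_n)^{3}\mathbf 1$ and $\mathsf c_4(\operatorname{Char}_{J-2A(\Gamma)})=8\bigl(\mathbf 1^\transpose A(\Gamma)^{3}\mathbf 1-n\bigr)$, the $-n$ coming from the $\mathsf c_3(\operatorname{Char}_{A(\Gamma)})=-1$ term. A short computation with Lemma~\ref{lem:sumProdAdjTourn} (writing both $TT_n$ and $\Gamma$ as a join with $(n-3)P_1$) yields $\mathbf 1^\transpose A(\Gamma)^{3}\mathbf 1-\mathbf 1^\transpose A(TT_n)^{3}\mathbf 1=2n-3$, so the fourth coefficients differ by $8(n-3)\equiv 8\bmod 16$ for even $n$. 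Hence $|\mathcal C_4(\mathsf T_n)|\geqslant 2$ for all even $n\geqslant 4$.

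\textbf{Inductive step.} Fix $e\geqslant 5$ and assume the bound for $e-1$ (with even order). If $e$ is odd, apply Lemma~\ref{lem:Tournament_lower_bound}, whose hypotheses are met since the required $(e,2f)$-lift tournaments of type I exist by Lemma~\ref{lem:Tt1ffp1} and Lemma~\ref{lem:inductiveTLiftConst}: for a suitable even $N$ one gets $|\mathcal C_e(\mathsf T_N)|\geqslant 2^{\lceil e/2\rceil-2}|\mathcal C_{e-1}(\mathsf T_n)|$, and with $e=2m+1$ this is $\geqslant 2^{m-1}\cdot 2^{(m-1)^{2}}=2^{m(m-1)}=2^{\lfloor\frac{e-1}{2}\rfloor\lfloor\frac{e-2}{2}\rfloor}$. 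If $e$ is even (so $e\geqslant 6$), apply Lemma~\ref{lem:2em3belowT1} instead, using also the $e$-lift tournament of type II from Lemma~\ref{lem:Tt2}: $|\mathcal C_e(\mathsf T_N)|\geqslant 2^{e/2-1}|\mathcal C_{e-1}(\mathsf T_n)|$, and with $e=2m$ this is $\geqslant 2^{m-1}\cdot 2^{(m-1)(m-2)}=2^{(m-1)^{2}}=2^{\lfloor\frac{e-1}{2}\rfloor\lfloor\frac{e-2}{2}\rfloor}$. In both cases the step reduces to the elementary identities $\lceil e/2\rceil-2+\lfloor\frac{e-2}{2}\rfloor\lfloor\frac{e-3}{2}\rfloor=\lfloor\frac{e-1}{2}\rfloor\lfloor\frac{e-2}{2}\rfloor$ ($e$ odd) and $e/2-1+\lfloor\frac{e-2}{2}\rfloor\lfloor\frac{e-3}{2}\rfloor=\lfloor\frac{e-1}{2}\rfloor\lfloor\frac{e-2}{2}\rfloor$ ($e$ even), together with the recursive choice of $N_e$ prescribed by those lemmas; since the lower bound is realised at some even $n^\prime\leqslant N_e$, Proposition~\ref{prop:lifting_vertices_tournament} propagates it to all even $n\geqslant N_e$.

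\textbf{Main obstacle.} Given the machinery of Section~\ref{sec:lt}, the theorem itself is a clean induction; the only computational points are the base-case determinant evaluation and the two exponent identities, neither of which is hard. The genuine difficulty — producing the $(e,f)$-lift tournaments of type I and the $e$-lift tournaments of type II invoked in the step — has already been overcome via the walk-polynomial estimates (Lemma~\ref{lem:pWalk}, Proposition~\ref{pro:tournyP}), so here those results are simply cited.
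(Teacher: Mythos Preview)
Your proof is correct and follows essentially the same approach as the paper's: the same upper bound citation, the same induction with Lemma~\ref{lem:Tournament_lower_bound} for odd $e$ and Lemma~\ref{lem:2em3belowT1} for even $e$, and the same exponent bookkeeping. Your base-case tournament $T_1\oplus(n-3)P_1$ is in fact (up to relabelling) exactly the paper's $\Gamma_2$---the transitive tournament with the arc $(n-3,n-1)$ reversed has the directed $3$-cycle on $\{n-3,n-2,n-1\}$ sitting above a transitive block---and your computation of the $2n-3$ discrepancy via Lemma~\ref{lem:sumProdAdjTourn} is a cleaner route to the same number than the paper's direct walk count.
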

\begin{proof}
By Lemma~\ref{lem:Tub}, it suffices to show that $|\mathcal C_{e}(\mathsf T_n)| \geqslant 2^{\lfloor \frac{e-1}{2} \rfloor\lfloor \frac{e-2}{2} \rfloor}$.
    We proceed by induction on $e$.
    For $e = 4$, we can take $N_4 = 4$ and we prove that $\left |\mathcal C_{4}\left (\mathsf T_n \right ) \right|= 2$ for each even $n \geqslant N_4$.
    
    Let $V_n = \{0,\dots,n-1\}$.
    Define the tournament $\Gamma_1 =(V_n,E_1)$ such that $(i,j) \in E_1$ if and only if $i < j$.
    Define the tournament $\Gamma_2 =(V_n,E_2)$ such that $(i,j) \in E_2$ if and only if $i < j$ except for $i = n-3$ and $j = n-1$ where $(n-1,n-3) \in E_2$.
    It is clear that $\mathbf 1^\transpose A(\Gamma_1)^{3}\mathbf 1= {n\choose{4}}$.
    Indeed, each $4$-walk $(v_1,v_2,v_3,v_4)$ in $\Gamma_1$ must have $v_1<v_2<v_3<v_4$. 
    For a $4$-walk $(v_1,v_2,v_3,v_4)$ in $\Gamma_2$, compared to the $4$-walks in $\Gamma_1$, we lose the $4$-walks in $\Gamma_1$ with $(v_1,v_2,n-3,n-1)$ where $v_1,v_2 \in \{1,\dots,n-4\}$ and we gain $(n-1,n-3,n-2,n-1)$,
    $(v_1,n-1,n-3,n-2)$ where $v_1 \in \{1,\dots,n-4\}\cup \{n-2\}$, and $(v_1,v_2,n-1,n-3)$ where $v_2 = n-2$ and $v_1 \in \{1,\dots,n-3\}$ or $v_1,v_2 \in \{1,\dots,n-4\}$.
    Hence, $\mathbf 1^\transpose A(\Gamma_2)^{3}\mathbf 1= {n\choose{4}}+2n-3$.
    We claim that, for each $i \in \{1,2\}$,
    \[
    \mathsf c_4(\operatorname{Char}_{J-2A(\Gamma_i)}) \equiv 8 \mathbf 1^\transpose A(\Gamma_i)^3 \mathbf 1 \mod {16}.
    \]
    Indeed, apply Lemma~\ref{lem:coefficient_link_formula}, together with
 $\mathsf c_1(\operatorname{Char}_{A(\Gamma_i)})=\mathsf c_2(\operatorname{Char}_{A(\Gamma_i)})=0$ and $\mathbf 1^\transpose A(\Gamma_i)^{0}\mathbf 1$ is even. 
    Hence, $\left |\mathcal C_{4}\left (\mathsf T_n \right ) \right|= 2$.

      Now we can assume $e \geqslant 5$ for the inductive step. 
    Take 
    \[
    N_{e}=N_{e-1}+\sum_{f=2}^{\lceil e/2\rceil -1} n_{2f}+ \begin{cases}
        m_e & \text{ if $e$ is even; } \\
        0 & \text{ if $e$ is odd, } \\
    \end{cases}
    \]
    where $n_f$ is the order of $(e,2f)$-lift tournament of type I and $m_e$ the order of an $e$-lift tournament of type II.
    Let $n \geqslant N_e$ be an even integer.
    Suppose $n = N_e + 2k$ for some nonnegative integer $k$.
    Then, by the inductive hypothesis, $|\mathcal C_{e-1}(\mathsf T_{N_{e-1}+2k})| = 2^{\lfloor \frac{e-2}{2} \rfloor\lfloor \frac{e-3}{2} \rfloor}$.
    By Lemma~\ref{lem:Tournament_lower_bound} or Lemma~\ref{lem:2em3belowT1}, we have
    \[
    |\mathcal C_{e}(\mathsf T_n)| \geqslant 2^{\lfloor \frac{e-3}{2} \rfloor\lfloor \frac{e-2}{2} \rfloor} 2^{\lfloor  e/2\rfloor -1} = 2^{\lfloor \frac{e-2}{2} \rfloor\lfloor \frac{e-1}{2} \rfloor},
    \]
    as required.
\end{proof}

\bibliographystyle{plain}

\bibliography{bib}

\begin{thebibliography}{10}

\bibitem{bh}
Andries~E. Brouwer and Willem~H. Haemers.
\newblock {\em Spectra of graphs}.
\newblock Springer Science \& Business Media, 2011.

\bibitem{dsouza2013combinatorial}
Keith Dsouza and Mike Krebs.
\newblock A combinatorial trace method: Counting closed walks to assay graph
  eigenvalues.
\newblock {\em The Rocky Mountain Journal of Mathematics}, pages 469--478,
  2013.

\bibitem{eberhard2022characteristic}
Sean Eberhard.
\newblock The characteristic polynomial of a random matrix.
\newblock {\em Combinatorica}, 42(4):491--527, 2022.

\bibitem{fox2021acyclic}
Jacob Fox, Matthew Kwan, and Benny Sudakov.
\newblock Acyclic subgraphs of tournaments with high chromatic number.
\newblock {\em Bulletin of the London Mathematical Society}, 53(2):619--630,
  2021.

\bibitem{GS24}
Gary R.~W. Greaves and Jeven Syatriadi.
\newblock Real equiangular lines in dimension 18 and the {J}acobi identity for
  complementary subgraphs.
\newblock {\em J. Combin. Theory Ser. A}, 201:Paper No. 105812, 27 pp., 2024.

\bibitem{GSY21}
Gary R.~W. Greaves, Jeven Syatriadi, and Pavlo Yatsyna.
\newblock Equiangular lines in low dimensional {E}uclidean spaces.
\newblock {\em Combinatorica}, 41(6):839--872, 2021.

\bibitem{GSY23}
Gary R.~W. Greaves, Jeven Syatriadi, and Pavlo Yatsyna.
\newblock Equiangular lines in {E}uclidean spaces: {D}imensions 17 and 18.
\newblock {\em Math. Comp.}, 92(342):1867--1903, 2023.

\bibitem{GY19}
Gary R.~W. Greaves and Pavlo Yatsyna.
\newblock On equiangular lines in 17 dimensions and the characteristic
  polynomial of a {S}eidel matrix.
\newblock {\em Math. Comp.}, 88(320):3041--3061, 2019.

\bibitem{HararySchwenk1979}
Frank Harary and Allen~J. Schwenk.
\newblock The spectral approach to determining the number of walks in a graph.
\newblock {\em Pacific Journal of Mathematics}, 80(2):443--449, 1979.

\bibitem{SeidelTourn24}
Sarah Klanderman, MurphyKate Montee, Andrzej Piotrowski, Alex Rice, and Bryan
  Shader.
\newblock Determinants of {S}eidel tournament matrices.
\newblock {\em Linear Algebra and its Applications}, 707:126--151, 2025.

\bibitem{knuth1997art}
Donald~E Knuth et~al.
\newblock {\em The Art Of Computer Programming, Volume 1, 3/E}.
\newblock Pearson Education, 1997.

\bibitem{T20}
Konstantin Tikhomirov.
\newblock Singularity of random {B}ernoulli matrices.
\newblock {\em Ann. of Math.}, 191(2):593--634, 2020.

\end{thebibliography}

\end{document}